		\def\l{3em}
\newcounter{Question} 
\newtheorem{theorem}{Theorem}[section]
\newtheorem{corollary}{Corollary}[theorem]
\newtheorem{proposition}{Proposition}[theorem]
\newtheorem{lemma}[theorem]{Lemma}
\theoremstyle{remark}
\newtheorem*{remark}{Remark}
\theoremstyle{definition}
\newtheorem{definition}{Definition}[section]
\title{Approximation schemes for McKean-Vlasov and Boltzmann type equations (error analysis in total variation distance)}
\author[*]{Yifeng Qin}
\affil[*]{Universit\'e Gustave Eiffel, LAMA (UMR CNRS, UPEMLV, UPEC), MathRisk INRIA, F-77454 Marne-la-Vall\'ee, France.
Email address: gzevonqin@gmail.com}
\date{Jan. 2023}
\begin{document}

\maketitle

\textbf{Abstract}
We deal with  Mckean-Vlasov and Boltzmann type  jump equations. This means that the coefficients of the stochastic equation depend on the law of the solution, and the equation is driven by a Poisson point measure with intensity measure which depends on the law of the solution as well. In $\cite{ref1}$, Alfonsi and Bally have proved that under some suitable conditions, the solution $X_t$ of such equation exists and is unique. One also proves that $X_t$ is the probabilistic interpretation of an analytical weak equation. Moreover, the Euler scheme $X_t^{\mathcal{P}}$ of this equation  converges to $X_t$ in Wasserstein distance. In this paper, under more restricted assumptions, we show that the Euler scheme $X_t^{\mathcal{P}}$ converges to $X_t$ in total variation distance and $X_t$ has a smooth density (which is a function solution of the analytical weak equation). On the other hand, in view of simulation, we use a truncated Euler scheme $X^{\mathcal{P},M}_t$ which has a finite numbers of jumps in any compact interval. We prove that $X^{\mathcal{P},M}_{t}$ also converges to $X_t$ in total variation distance. Finally, we give an algorithm based on a particle system associated to $X^{\mathcal{P},M}_t$ in order to approximate the density of the law of $X_t$. Complete estimates of the error are obtained.

\textbf{Key words:}
Mckean-Vlasov equation, Boltzmann equation, Malliavin calculus, Total variation distance, Wasserstein distance, Particle system

\tableofcontents

\section{Introduction}
In this paper, we consider a $d-$dimensional Mckean-Vlasov and Boltzmann type jump equation as follows. \begin{eqnarray}
X_{t}&=&X_0+\int_{0}^{t}b(r,X_{r},\rho_{r})dr +\int_{0}^{t}\int_{\mathbb{R}^d\times\mathbb{R}^d}c(r,v,z,X_{r-},\rho_{r-})N_{\rho_{r-}}(dv,dz,dr), \label{Intro1}
\end{eqnarray}
where $\rho_{t}(dv)=\mathbb{P}(X_{t}\in dv)$ is the law of $X_{t}$, $t\in[0,T]$, $N_{\rho_{t}}$ is a Poisson point measure on the state space $\mathbb{R}^d\times\mathbb{R}^d$ with intensity measure $\rho_{t}(dv)\mu(dz)dr$, $\mu$ is a positive $\sigma$-finite measure on $\mathbb{R}^d$, $X_0$ is the initial random variable independent of the Poisson point measure $N_{\rho_{t}}$,    and $b,c$ are functions which verify some regularity and ellipticity conditions (see \textbf{Hypotheses 2.1$\sim$2.4} in Section 2.2  for precise statements). In particular, we assume that for every multi-indices $\beta_1,\beta_2$, there exists a non-negative  function $\bar{c}:\mathbb{R}^d\rightarrow\mathbb{R}_{+}$ such that 
\[
\vert {c}(r,v,z,x,\rho)\vert+\vert \partial_z^{\beta_2}\partial_x^{\beta_1} {c}(r,v,z,x,\rho)\vert\leq\bar{c}(z),
\]with $\int_{\mathbb{R}^d}\vert \bar{c}(z)\vert^p\mu(dz)<\infty,\ \forall p\geq1.$ We  also assume that  there exists a non-negative function $\underline{c}:\mathbb{R}^d\rightarrow\mathbb{R}_{+}$ such that for every $ \zeta\in\mathbb{R}^d$,
\[
\sum_{j=1}^d\langle\partial_{z_j}{c}(r,v,z,x,\rho),\zeta\rangle^{2}\geq \underline{c}%
(z)\vert\zeta\vert^2.
\] We remark that we use the notations from $\cite{ref8}$ and we refer to $\cite{ref51}$, $\cite{ref6}$,  $\cite{ref8}$,  $\cite{ref12}$, $\cite{ref13}$, $\cite{ref36}$ and $\cite{ref37}$ for the basic theory of the classical jump equations.  We stress that our equation is a more general kind of jump equation (than the classical one) in the following sense. The coefficients $b$ and $c$ depend on the law of the solution, so our equation is of Mckean-Vlasov type. One can see for example $\cite{ref7}$ for a mathematical approach to  this kind of equation and see $\cite{ref44}$, $\cite{ref43}$, $\cite{ref38}$, $\cite{ref42}$, $\cite{ref41}$, $\cite{ref39}$ and $\cite{ref40}$  for the approximation schemes of a Mckean-Vlasov equation. Moreover, the intensity of the Poisson point measure $N_{\rho_{t}}$ depends on the law of the solution as well, so our equation is also of Boltzmann type. The probabilistic approach to the Boltzmann equation is initiated by Tanaka in $\cite{ref20}$, $\cite{ref21}$, and followed by many others in $\cite{ref23}$, $\cite{ref26}$, $\cite{ref29}$, $\cite{ref30}$, $\cite{ref14}$,   $\cite{ref31}$ and $\cite{ref32}$ for example. One can also see $ \cite{ref27}$ and $\cite{ref28}$ for the analytical Boltzmann equation and $\cite{ref25}$ for the physical background. Recently, there is also some work on inhomogeneous Boltzmann equations (see for instance  $\cite{ref33}$, $\cite{ref34}$ and $\cite{ref35}$).  We have to mention however that our equation (\ref{Intro1}) does not cover the general physical Boltzmann equation for the following reason. In that equation, the intensity of the jumps $\mu(dz)$ is replaced by $\gamma(r,v,z,x,\rho)\mu(dz)$ which depends on the position $x=X_{r-}$ of the solution of the equation. At least at this time, we are not able to include this case in our study. The simplified model that we treat in our paper corresponds to Maxwell molecules (see  $\cite{ref29}$ for example).

Now we construct the Euler scheme. For any partition $\mathcal{P}=\{0=r_0<r_1<\cdots<r_{n-1}<r_n=T\}$ of the time interval $[0,T]$, we define $\tau(r)=r_k$ when $r\in[r_k,r_{k+1})$, and we consider the equation
\begin{eqnarray}
X^{\mathcal{P}}_{t}&=&X_0+\int_{0}^{t}b(\tau(r),X^{\mathcal{P}}_{\tau(r)},\rho^{\mathcal{P}}_{\tau(r)})dr  \nonumber\\
&+&\int_{0}^{t}\int_{\mathbb{R}^d\times\mathbb{R}^d}{c}(\tau(r),v,z,X^{\mathcal{P}}_{\tau(r)-},\rho^{\mathcal{P}}_{\tau(r)-})N_{\rho^{\mathcal{P}}_{\tau(r)-}}(dv,dz,dr), \label{IntroEul}
\end{eqnarray}
where $\rho^{\mathcal{P}}_{t}$ is the law of $X^{\mathcal{P}}_{t}$,  and $N_{\rho^{\mathcal{P}}_{t}}(dv,dz,dr)$ is a Poisson point measure with intensity  $\rho^{\mathcal{P}}_{t}(dv)\mu(dz)dr$, independent of $X_0$. We remark that for the classical jump equations (the coefficients and the Poisson point measures do not depend on the law of the solution), there is a huge amount of work on the convergence of the Euler scheme. One can see for example $\cite{ref50}$, $\cite{ref48}$, $\cite{ref45}$, $\cite{ref47}$, $\cite{ref46}$,  $\cite{ref49}$,   $\cite{ref16}$ and the references therein. For the equation (\ref{Intro1}), $\cite{ref1}$ has proved recently that under some regularity conditions on the coefficients $b$ and $c$, the solution of the equation (\ref{Intro1}) exists and is unique, and $X_t$ is the probabilistic interpretation of the following analytical weak equation. \begin{eqnarray}
&&\forall\phi\in C_b^1(\mathbb{R}^d) (the\ space\ of\ differentiable\ and\ bounded\ functions\ with\ bounded\ derivatives),\nonumber\\ &&\int_{\mathbb{R}^d}\phi(x)\rho_{t}(dx)=\int_{\mathbb{R}^d}\phi(x)\rho_{0}(dx)+\int_{0}^{t}\int_{\mathbb{R}^d}\langle b(r,x,\rho_{r}),\nabla\phi(x)\rangle\rho_{r}(dx)dr  \nonumber\\
&&+\int_{0}^{t}\int_{\mathbb{R}^d\times\mathbb{R}^d}\rho_{r}(dx)\rho_{r}(dv)\int_{\mathbb{R}^d}(\phi(x+c(r,v,z,x,\rho_{r}))-\phi(x))\mu(dz)dr. \label{Intro2}
\end{eqnarray} Moreover, $\cite{ref1}$ has proved that the Euler scheme $X^{\mathcal{P}}_{t}$ converges to $X_t$ in Wasserstein distance (of order 1) $W_1$.  In our paper, under supplementary hypotheses, we prove a stronger result. We prove (see \textbf{Theorem 2.1}) that  the Euler scheme $X^{\mathcal{P}}_{t}$ converges to $X_t$ in total variation distance: for any $\varepsilon>0$, there exists a constant $C$  such that
\begin{eqnarray}d_{TV}(X^{\mathcal{P}}_{t},X_t)\leq C\vert\mathcal{P}\vert^{1-\varepsilon}\rightarrow0, \label{IntroTV1}\end{eqnarray}as $\vert\mathcal{P}\vert\rightarrow0$, with $\vert\mathcal{P}\vert:=\max\limits_{k\in\{0,\cdots,n-1\}}(r_{k+1}-r_k)$. We also show that the law of $X_t$ has a smooth density $p_t(x)$, which is a function solution of the analytical weak equation (\ref{Intro2}).

Since we have infinite numbers of jumps (due to \textbf{Hypothesis 2.4} in Section 2.2), we have $\mu(\mathbb{R}^d)=\infty$. In view of simulation, we need to work with a truncated Poisson point measure, which has a finite number of jumps in any compact time interval. For $M\in\mathbb{N}$, we denote $B_M=\{z\in\mathbb{R}^d:\vert z\vert\leq M\}$, $c_M(r,v,z,x,\rho):=c(r,v,z,x,\rho)\mathbbm{1}_{B_M}(z)$ and $a^M_{T}:=\sqrt{T\int_{\{\vert z\vert>M\}}\underline{c}(z)  \mu(dz)}$. Now we cancel the jumps of size $\vert z\vert>M$ and we replace them by a Gaussian random variable. 
\begin{eqnarray}
X^{\mathcal{P},M}_{t}&=&X_0+a^M_{T}\Delta+\int_{0}^{t}b(\tau(r),X^{\mathcal{P},M}_{\tau(r)},\rho^{\mathcal{P},M}_{\tau(r)})dr \nonumber \\
&+&\int_{0}^{t}\int_{\mathbb{R}^d\times \mathbb{R}^d}{c}_M(\tau(r),v,z,X^{\mathcal{P},M}_{\tau(r)-},\rho^{\mathcal{P},M}_{\tau(r)-})N_{\rho^{\mathcal{P},M}_{\tau(r)-}}(dv,dz,dr), \label{Intro3}
\end{eqnarray}where $\rho^{\mathcal{P},M}_{t}$ is the law of $X^{\mathcal{P},M}_{t}$, $N_{\rho^{\mathcal{P},M}_{t}}(dv,dz,dr)$ is a Poisson point measure independent of $X_0$ with intensity  $\rho^{\mathcal{P},M}_{t}(dv)\mu(dz)dr$, $\Delta$ is a $d-$dimensional standard Gaussian random variable independent of $X_0$ and of $N_{\rho^{\mathcal{P},M}_{t}}$.  We prove (see \textbf{Theorem 2.2}) that  $X^{\mathcal{P},M}_{t}$ converges to $X_t$ in total variation distance: for any $\varepsilon>0$, there exists a constant $C$ such that 
\begin{eqnarray}d_{TV}(X^{\mathcal{P},M}_{t},X_t)\leq C(\sqrt{\varepsilon_M}+\vert\mathcal{P}\vert)^{1-\varepsilon}\rightarrow0,\label{IntroTV2}\end{eqnarray}as $\vert\mathcal{P}\vert\rightarrow0$ and $M\rightarrow\infty$, with $\varepsilon_M:=\int_{\{\vert z\vert>M\}}\vert\bar{c}(z)\vert^2\mu(dz)+\vert\int_{\{\vert z\vert>M\}}\bar{c}(z)\mu(dz)\vert^2$. Moreover, the law of $X^{\mathcal{P},M}_{t}$ has a smooth density. 

In order to construct an approximation scheme which is appropriate for simulation, we need to compute $\rho^{\mathcal{P},M}_{t}$ as well, so we use the following particle system.
We take an initial vector $(X^1_0,\cdots,X^N_0)$ with components which are independent and identically distributed with common law $\rho_0$ (which is the law of $X_0$), and  $(\Delta^1,\cdots,\Delta^N)$ which is a $N\times d-$dimensional standard Gaussian random variable independent of $(X^1_0,\cdots,X^N_0)$. Then we construct the particle system $\overrightarrow{\bm{X}}^{\mathcal{P},M}_{t}=(X^{\mathcal{P},M,1}_{t},\cdots,X^{\mathcal{P},M,N}_{t})$:
\begin{eqnarray}
X^{\mathcal{P},M,i}_{t}&=&X_0^i+a^M_{T}\Delta^i+\int_{0}^{t}b(\tau(r),X^{\mathcal{P},M,i}_{\tau(r)},\widehat{\rho}(\overrightarrow{\bm{X}}^{\mathcal{P},M}_{\tau(r)}))dr  \nonumber\\
&+&\int_{0}^{t}\int_{\mathbb{R}^d\times \mathbb{R}^d}{c}_M(\tau(r),v,z,X^{\mathcal{P},M,i}_{\tau(r)-},\widehat{\rho}(\overrightarrow{\bm{X}}^{\mathcal{P},M}_{\tau(r)-}))N^i_{\widehat{\rho}(\overrightarrow{\bm{X}}^{\mathcal{P},M}_{\tau(r)-})}(dv,dz,dr),\ i=1,\cdots,N,\label{Intropart}
\end{eqnarray}where \begin{eqnarray*}
\widehat{\rho}(\overrightarrow{\bm{X}}^{\mathcal{P},M}_{t})(dv)=\frac{1}{N}\sum_{i=1}^N\delta_{X^{\mathcal{P},M,i}_{t}}(dv)
\end{eqnarray*}is the empirical measure of $\overrightarrow{\bm{X}}^{\mathcal{P},M}_{t}$ (with $\delta_x(dv)$ the Dirac measure), $N^i_{\widehat{\rho}(\overrightarrow{\bm{X}}^{\mathcal{P},M}_{t})}(dv,dz,dr),\ i=1,\cdots,N$ are Poisson point measures that are independent each other conditionally to $\overrightarrow{\bm{X}}^{\mathcal{P},M}_{t}$  and independent of $(X^1_0,\cdots,X^N_0,\Delta^1,\cdots,\Delta^N)$  with intensity $\widehat{\rho}(\overrightarrow{\bm{X}}^{\mathcal{P},M}_{t})(dv)\mu(dz)dr$. It is clear that $\overrightarrow{\bm{X}}^{\mathcal{P},M}_{t}$ may be simulated in an explicit way (see (\ref{fictiveshock*})).

We denote \begin{eqnarray*}
V_N:=\mathbbm{1}_{d=1}N^{-\frac{1}{2}}+\mathbbm{1}_{d=2}N^{-\frac{1}{2}}\log (1+N)+\mathbbm{1}_{d\geq3}N^{-\frac{1}{d}}, 
\end{eqnarray*}
and we  consider the following $d-$dimensional regularization kernels 
 \begin{eqnarray*}
\varphi (x)=\frac{1}{(2\pi )^{d/2}}e^{-\frac{\left\vert x\right\vert ^{2}}{2}%
},\quad \varphi _{\delta }(x)=\frac{1}{\delta ^{d}}\varphi (\frac{x}{\delta }),\quad0<\delta\leq1.
\end{eqnarray*}We have proved in \textbf{Theorem 2.1} that the law of $X_t$ has a density function $p_t(x)$.
Now we obtain in \textbf{Theorem 2.3} the following results concerning the approximation of  the density  $p_t(x)$. We take \[\delta=(\vert\mathcal{P}\vert+\sqrt{\varepsilon_M})^{\frac{1}{d+3}},\quad \text{and take}\ N\ \text{such that}\quad V_N\leq\vert\mathcal{P}\vert+\sqrt{\varepsilon_M}.\] Then we have
\begin{eqnarray}p_t(x)=\frac{1}{N}\sum_{i=1}^N\mathbb{E}\varphi_\delta(X^{\mathcal{P},M,i}_t-x)+\mathcal{O}((\vert\mathcal{P}\vert+\sqrt{\varepsilon_M})^{\frac{2}{d+3}}),\label{Introold}\end{eqnarray} where  $\mathcal{O}(\bullet)$ is the big $\mathcal{O}$ notation (i.e. for a strictly positive function $g$ defined on $\mathbb{R}_{+}$, $\exists  C>0$, s.t. $\vert\mathcal{O}(g(y))\vert\leq Cg(y)$). If we take  \[\delta=(\vert\mathcal{P}\vert+\sqrt{\varepsilon_M})^{\frac{1}{d+5}},\quad \text{and take}\ N\ \text{such that}\quad V_N\leq\vert\mathcal{P}\vert+\sqrt{\varepsilon_M},\] then we get moreover by Romberg method that
\begin{eqnarray}p_t(x)=\frac{2}{N}\sum_{i=1}^N\mathbb{E}\varphi_{\delta/\sqrt{2}}(X^{\mathcal{P},M,i}_t-x)-\frac{1}{N}\sum_{i=1}^N\mathbb{E}\varphi_\delta(X^{\mathcal{P},M,i}_t-x)+\mathcal{O}((\vert\mathcal{P}\vert+\sqrt{\varepsilon_M})^{\frac{4}{d+5}}). \label{Introold*} \end{eqnarray}It is clear that the approximation scheme based on Romberg method gives a better accuracy: we have the power $\frac{4}{d+5}>\frac{2}{d+3}$. So we are able to simulate the density function of $X_t$ in an explicit way, with error $\mathcal{O}((\vert\mathcal{P}\vert+\sqrt{\varepsilon_M})^{\frac{4}{d+5}})$. We notice however, that the speed of convergence of the error depends on the dimension $d$, so it converges slowly when $d$ is large. In \textbf{Theorem 2.4}, we  prove an alternative approximation result. We give up the approximation of the density, and we focus on the approximation in total variation distance. We take supplementally $\widetilde{\Delta}$ a $d-$dimensional standard Gaussian random variable independent of $\overrightarrow{\bm{X}}_t^{\mathcal{P},M}$. For any $\varepsilon>0$, we take \begin{eqnarray*}\delta=(\vert\mathcal{P}\vert+\varepsilon_M)^{\frac{1}{2}(1-\varepsilon^\prime)}\quad \text{and take}\ N\ \text{such that}\quad V_N\leq(\vert\mathcal{P}\vert+\varepsilon_M)^{\frac{d+3}{2}(1-\varepsilon^{\prime\prime})},\end{eqnarray*}with $\varepsilon^\prime=\frac{\varepsilon}{2-\varepsilon}$ and $\varepsilon^{\prime\prime}=\frac{(d+5)\varepsilon-2\varepsilon^2}{(d+3)(2-\varepsilon)}$. For every measurable and bounded function $f$, we prove that \begin{eqnarray}
\int_{\mathbb{R}^d}f(x)p_t(x)dx
&=&\frac{1}{N}\sum_{i=1}^N\mathbb{E}f(X_t^{\mathcal{P},M,i}+\delta\widetilde{\Delta})+\Vert f\Vert_\infty\times \mathcal{O}((\vert\mathcal{P}\vert+\sqrt{\varepsilon_M})^{1-\varepsilon}).\label{Intronew}
\end{eqnarray}We notice that the speed of convergence in (\ref{Intronew}) no longer depends on the dimension $d$, so it still behaves well for large dimension. We also stress that the speed of convergence in (\ref{Intronew}) is  the same as in (\ref{IntroTV2}) for the truncated Euler scheme. Moreover, for any $\varepsilon>0$,   we take \begin{eqnarray*}\delta=(\vert\mathcal{P}\vert+\varepsilon_M)^{\frac{1}{4}(1-\varepsilon_\prime)}\quad \text{and take}\ N\ \text{such that}\quad V_N\leq(\vert\mathcal{P}\vert+\varepsilon_M)^{\frac{d+5}{4}(1-\varepsilon_{\prime\prime})},\end{eqnarray*}with $\varepsilon_\prime=\frac{\varepsilon^2}{2-\varepsilon}$ and $\varepsilon_{\prime\prime}=\frac{8\varepsilon+(d-3)\varepsilon^2}{(d+5)(2-\varepsilon)}$. 
Then for every measurable and bounded function $f$, we get  by Romberg method that
\begin{eqnarray}
\int_{\mathbb{R}^d}f(x)p_t(x)dx
&=&\frac{2}{N}\sum_{i=1}^N\mathbb{E}f(X^{\mathcal{P},M,i}_t+\frac{\delta}{\sqrt{2}}\widetilde{\Delta})-\frac{1}{N}\sum_{i=1}^N\mathbb{E}f(X^{\mathcal{P},M,i}_t+\delta\widetilde{\Delta})+\Vert f\Vert_\infty\times \mathcal{O}((\vert\mathcal{P}\vert+\sqrt{\varepsilon_M})^{1-\varepsilon}).\quad\quad\label{Intronew*}
\end{eqnarray}
We remark that (\ref{Intronew*}) is even a better simulation scheme than (\ref{Intronew}) in the sense that the numbers of particles $N$  is smaller than the one in (\ref{Intronew}) and $\delta$ is larger than the one in (\ref{Intronew}). 

\bigskip

We give now a general view on the strategy used in the paper. Notice that
the Poisson process which appears in the equation (\ref{Intro1}) has intensity $\mu (dz)$
which is an infinite measure. As we mentioned before, it is convenient, both from the point of view
of Malliavin calculus and for simulation, to introduce an intermediary
equation driven by a Poison point measure with intensity $\mathbbm{1}_{\{\left\vert
z\right\vert \leq M\}}\mu (dz)$ which is a finite measure. We denote by $%
X_{t}^{M}$ the solution of this equation (which is a truncated version of (\ref{Intro1}), see (\ref{truncationM}) for precise expression). Since $X_{t}^{M}$ depends only on
a finite number of jumps in any compact time interval, this will be a
"simple functional" in the Malliavin calculus with respect to the amplitudes
of the jumps. We also replace the jumps larger then $M$ (which have been
canceled) by a Gaussian noise - this is necessary in order to obtain the non
degeneracy for $X_{t}^{M}.$ Moreover, in order to be able to establish
integration by parts formulas, we assume  (see \textbf{Hypothesis 2.4} $b)$) that the measure $\mu$ is absolutely continuous with respect to the Lebesgue measure: $\mu(dz)=h(z)dz$, where $h$ is infinitely differentiable and $\ln h$ has bounded derivatives of any order. Using the convergence $X_{t}^{M}\rightarrow X_{t}$, we are
able to prove that $X_{t}$ is smooth in the sense of Malliavin calculus for
jump processes. We use this calculus in order to prove that the law of $%
X_{t}$ is absolutely continuous with respect to the Lebesgue measure, with
smooth density $p_{{t}}(dx).$ 

Moreover, we construct an explicit algorithm which allows us to use Monte
Carlo simulation in order to approximate $X_{t}$ and $p_{{t}}.$ To do
it, we consider the Euler scheme $X_{t}^{\mathcal{P}}$ and the truncated
Euler scheme $X_{t}^{\mathcal{P},M}$ (see (\ref{IntroEul}) and (\ref{Intro3})). Now we focus on three equations 
with solutions $X_{t},X_{t}^{M}$ and $X_{t}^{\mathcal{P},M}.$
There is a supplementary difficulty which appears here: the Poisson point measures which
govern these equations have an intensity which depends on the law of the
solution of each of these equations. It is convenient to use similar
equations  driven by the same Poisson point measure. This is obtained by a
coupling procedure: we construct $x_{t},x_{t}^{M}$ and $x_{t}^{%
\mathcal{P},M}$ which have the same law as $X_{t},X_{t}^{M}$ and $%
X_{t}^{\mathcal{P},M}$ but are defined on the same probability space and
verify equations driven by the same Poisson point measure (this is done in Section 2.7). This allows us to
compare them by using an $L^{1}$ calculus. This is why all our computations
will concern these last equations. 

In $\cite{ref1}$, one obtains estimates of the Wasserstein distance between
these processes. In order to estimate the total variation distance between
them, we will use Malliavin integration by parts techniques (which are
presented in Section 3) together with some results from $\cite{ref2}$ which
allows us to pass from estimates in Wassestein distance to estimates in total
variation distance. Consequently a large part of the technical effort in the
paper will concern estimates of the Malliavin-Sobolev norms of $x_{t},x_{t}^{M}$ and $x_{t}^{\mathcal{P},M}$ as well as the proof of the non-degeneracy of these random variables (see Section 4). 

Our paper is organized in the following way. In Section 2, we   state our problems and give the hypotheses. We define the main equation $X_t$, the Euler scheme $X^{\mathcal{P}}_t$, the truncated Euler scheme  $X^{\mathcal{P},M}_{t}$ and the particle system $X^{\mathcal{P},M,i}_{t}$, $i=1,\cdots,N$. Then we state our main results: \textbf{Theorem 2.1, 2.2} (see (\ref{IntroTV1}) and (\ref{IntroTV2})) and \textbf{Theorem 2.3, 2.4} (see (\ref{Introold}), (\ref{Introold*}), (\ref{Intronew}) and (\ref{Intronew*})).  We also give some typical examples to apply our main results. At the end of this section, we  make a coupling argument to construct $x^M_t$, $x^{\mathcal{P},M}_{t}$ and $x_t$. In Section 3, we  give an abstract integration by parts framework (of Malliavin type) and then apply these abstract results to the solutions of our equations. There are two types of results that we have to prove in order to make the integration by parts machinery works. First, we prove that the Malliavin-Sobolev norms of $x^M_t$,  $x^{\mathcal{P},M}_{t}$ and $x_t$ are bounded, uniformly with respect to $\mathcal{P}$ and $M$ (see \textbf{Lemma 3.7}). Moreover we have to check the non-degeneracy condition for the Malliavin covariance matrix. This is done in \textbf{Lemma 3.8}. Both these two lemmas are rather technical so we leave the proofs for Section 4. Once these lemmas are proved, \textbf{Proposition 3.6.1} allows us to conclude that $X^{\mathcal{P},M}_{t}\rightarrow X_t$ in total variation distance. We also prove that the Euler scheme $X^{\mathcal{P}}_{t}\rightarrow X_t$ in total variation distance in a similar way. Furthermore,  we  obtain an algorithm based on the particle system $X^{\mathcal{P},M,i}_{t}$, $i=1,\cdots,N$ in order to compute the density function $p_t(x)$ of the law of $X_t$, and we estimate the error.

\section{Main results}
\subsection{Basic notations and the main equation}
We give a time horizon $T>0$ and let $0<t\leq T$. To begin, we introduce some notations which will be used through our paper. For a multi-index $\beta$, we denote $\vert\beta\vert$ to be the length of $\beta$. We denote $C_b^{l}(\mathbb{R}^d)$ the space of $l-$times differential and bounded functions on $\mathbb{R}^d$ with bounded derivatives up to order $l$, and $\left\Vert f \right\Vert _{l,\infty }:=\sum\limits_{ \vert\beta\vert\leq l}\left\Vert  \partial^\beta f\right\Vert _{\infty }$ for a function $f\in C_b^{l}(\mathbb{R}^d)$. We  also denote $\mathcal{P}_l(\mathbb{R}^d)$ the space of all probability measures on $\mathbb{R}^d$ with finite $l-$moment. For $\rho_1,\rho_2\in\mathcal{P}_1(\mathbb{R}^d)$, we define the Wasserstein distance $W_1$ by 
\begin{eqnarray}
W_1(\rho_1,\rho_2)=\sup\limits_{Lip(f)\leq1}\big\vert\int_{\mathbb{R}^d}f(x)\rho_1(dx)-\int_{\mathbb{R}^d}f(x)\rho_2(dx)\big\vert,\label{W1}
\end{eqnarray}with $Lip(f):=\sup\limits_{x\neq y}\frac{\vert f(x)-f(y)\vert}{\vert x-y\vert}$ the Lipschitz constant of $f$, and we define the total variation distance $d_{TV}$ by 
\begin{eqnarray}d_{TV}(\rho_1,\rho_2)=\sup\limits_{\Vert f\Vert_{\infty}\leq1}\big\vert\int_{\mathbb{R}^d}f(x)\rho_1(dx)-\int_{\mathbb{R}^d}f(x)\rho_2(dx)\big\vert.\label{dTV}\end{eqnarray} For $F,G\in L^1(\Omega)$, we also denote $W_1(F,G)=W_1(\mathcal{L}(F),\mathcal{L}(G))$ and $d_{TV}(F,G)=d_{TV}(\mathcal{L}(F),\mathcal{L}(G))$, with $\mathcal{L}(F)$(respectively $\mathcal{L}(G)$) the law of the random variable $F$(respectively $G$). In addition, along the paper, $C$ will be a constant which may change from a line to another. It may depend on some parameters and sometimes the dependence is precised in the notation (ex. $C_l$ is a constant depending on $l$).

In this paper, we consider the $d-$dimensional stochastic differential equation with jumps
\begin{eqnarray}
X_{t}&=&X_0+\int_{0}^{t}b(r,X_{r},\rho_{r})dr +\int_{0}^{t}\int_{\mathbb{R}^d\times\mathbb{R}^d}c(r,v,z,X_{r-},\rho_{r-})N_{\rho_{r-}}(dv,dz,dr),  \label{1.1}
\end{eqnarray}
where $\rho_{t}(dv)=\mathbb{P}(X_{t}\in dv)$ is the law of $X_{t}$, $N_{\rho_{t}}$ is a Poisson point measure on the state space $\mathbb{R}^d\times\mathbb{R}^d$ with intensity measure $\widehat{N}_{\rho_{t}}(dv,dz,dr)=\rho_{t}(dv)\mu(dz)dr$, $X_0$ is the initial random variable with law $\rho_0$ independent of the Poisson point measure $N_{\rho_{t}}$,  $\mu$ is a positive $\sigma$-finite measure on $\mathbb{R}^d$, and $b:[0,T]\times\mathbb{R}^d\times\mathcal{P}_1(\mathbb{R}^d)\rightarrow\mathbb{R}^d$,  $c:[0,T]\times\mathbb{R}^d\times\mathbb{R}^d\times\mathbb{R}^d\times\mathcal{P}_1(\mathbb{R}^d)\rightarrow\mathbb{R}^d$.
\begin{remark}
We remark that we will assume in the following that $\int_{\mathbb{R}^d}\sup\limits_{r\in[0,T]}\sup\limits_{v,x\in\mathbb{R}^d}\sup\limits_{\rho\in\mathcal{P}_1(\mathbb{R}^d)}\vert {c}(r,v,z,x,\rho)\vert\mu(dz)<\infty$, so we are in the finite variation case. The integral with respect to the Poisson point measure is not compensated.
\end{remark}

\subsection{Hypotheses}
Here we give our hypotheses.  

\textbf{Hypothesis 2.1} (\textbf{Regularity})
We assume that  the function $x\mapsto b(r,x,\rho)$   is infinitely differentiable with bounded derivatives of any orders, and that  $\rho_0\in\bigcap_{p=1}^\infty\mathcal{P}_p(\mathbb{R}^d)$. 
We also assume that the function $(z,x)\mapsto c(r,v,z,x,\rho)$ is infinitely differentiable and for every multi-indices $\beta_1,\beta_2$, there exists a non-negative function $\bar{c}:\mathbb{R}^d\rightarrow\mathbb{R}_{+}$ depending on $\beta_1,\beta_2$ such that  we have
\[
\sup_{r\in[0,T]}\sup_{v,x\in\mathbb{R}^d}\sup_{\rho\in\mathcal{P}_1(\mathbb{R}^d)}(\vert {c}(r,v,z,x,\rho)\vert+\vert \partial_z^{\beta_2}\partial_x^{\beta_1} {c}(r,v,z,x,\rho)\vert)\leq\bar{c}(z), \quad\forall z\in\mathbb{R}^d,
\]
\begin{eqnarray}
with\quad \int_{\mathbb{R}^d}\vert \bar{c}(z)\vert^p\mu(dz):=\bar{c}_p<\infty,\quad\forall p\geq1. \label{cbar}
\end{eqnarray}

Moreover, there exists a constant $L_b>0$ such that for any $r_1,r_2\in[0,T], v_1,v_2,x\in\mathbb{R}^d, z\in\mathbb{R}^d,  \rho_1,\rho_2\in\mathcal{P}_1(\mathbb{R}^d)$,
\begin{eqnarray*}
&&\vert b(r_1,x,\rho_1)-b(r_2,x,\rho_2)\vert \leq L_b(\vert r_1-r_2\vert+W_1(\rho_1,\rho_2)),
\end{eqnarray*}
\begin{eqnarray*}and\quad
&&\vert {c}(r_1,v_1,z,x,\rho_1)-{c}(r_2,v_2,z,x,\rho_2)\vert\\ 
&&+\vert \nabla_{z}{c}(r_1,v_1,z,x,\rho_1)-\nabla_{z}{c}(r_2,v_2,z,x,\rho_2)\vert+\vert \nabla_{x}{c}(r_1,v_1,z,x,\rho_1)-\nabla_{x}{c}(r_2,v_2,z,x,\rho_2)\vert\\
&&\leq \bar{c}(z)(\vert r_1-r_2\vert+\vert v_1-v_2\vert+W_1(\rho_1,\rho_2)).
\end{eqnarray*}

\begin{remark}
We will use several times the following consequence of (\ref{cbar}) and of Burkholder inequality (see for example the Theorem 2.11 in $\cite{ref12}$, see also in $\cite{ref13}$): Let ${\Phi}(r,v,z,\omega,\rho):[0,T]\times\mathbb{R}^d\times\mathbb{R}^d\times\Omega\times\mathcal{P}_1(\mathbb{R}^d)\rightarrow\mathbb{R}_{+}$ and  ${\varphi}(r,v,\omega,\rho):[0,T]\times\mathbb{R}^d\times\Omega\times\mathcal{P}_1(\mathbb{R}^d)\rightarrow\mathbb{R}_{+}$ be two  functions such that 
\[
\vert{\Phi}(r,v,z,\omega,\rho)\vert\leq\vert\bar{c}(z)\vert\vert{\varphi}(r,v,\omega,\rho)\vert.
\]
Then for any $p\geq2,\rho\in\mathcal{P}_1(\mathbb{R}^d)$,
\begin{eqnarray}
	\mathbb{E}\Big\vert\int_0^t\int_{\mathbb{R}^d\times\mathbb{R}^d}{\Phi}(r,v,z,\omega,\rho)N_{\rho}(dv,dz,dr)\Big\vert^p
	\leq C \mathbb{E}\int_0^t\int_{\mathbb{R}^d}\vert{\varphi}(r,v,\omega,\rho)\vert^p \rho(dv)dr, \label{BurkN}
\end{eqnarray}
where $C$ is a constant depending on $p$, $\bar{c}_1$, $\bar{c}_2$, $\bar{c}_p$ and $T$.
\end{remark}

\begin{proof}
By compensating $N_\rho$ and using Burkholder inequality and (\ref{cbar}), we have
\begin{eqnarray}
&&\mathbb{E}\vert\int_0^t\int_{\mathbb{R}^d\times\mathbb{R}^d}{\Phi}(r,v,z,\omega,\rho)N_{\rho}(dv,dz,dr)\vert^p \nonumber\\
&&\leq C[\mathbb{E}(\int_0^t\int_{\mathbb{R}^d\times\mathbb{R}^d}\vert{\Phi}(r,v,z,\omega,\rho)\vert^2 \rho(dv)\mu(dz)dr)^{\frac{p}{2}} +\mathbb{E}\int_0^t\int_{\mathbb{R}^d\times\mathbb{R}^d}\vert{\Phi}(r,v,z,\omega,\rho)\vert^p \rho(dv)\mu(dz)dr \nonumber\\
&&+\mathbb{E}\vert\int_0^t\int_{\mathbb{R}^d\times\mathbb{R}^d}\vert{\Phi}(r,v,z,\omega,\rho)\vert \rho(dv)\mu(dz)dr\vert^{p}] \label{Burk} \\
&&\leq C\mathbb{E}\int_0^t\int_{\mathbb{R}^d}\vert{\varphi}(r,v,\omega,\rho)\vert^p\rho(dv)dr. \nonumber
\end{eqnarray}
\end{proof}
For the sake of simplicity of notations, in the following, for a  constant $C$, we do not precise the dependence  on the regularity constants of the function $b$ and $c$ (such as $\Vert\nabla_xb\Vert_{\infty}$, $L_b$ and $\bar{c}_p$).

\textbf{Hypothesis 2.2}
We assume that there exists a non-negative function $\breve{c}:\mathbb{R}^d\rightarrow\mathbb{R}_{+}$ such that $\int_{\mathbb{R}^d}\vert \breve{c}(z)\vert^p\mu(dz):=\breve{c}_p<\infty,\ \forall p\geq1$, and
\[
\left\Vert\nabla_{x}{c}(r,v,z,x,\rho)(I_d+\nabla_{x}{c}(r,v,z,x,\rho))^{-1}\right\Vert\leq\breve{c}(z),\quad\forall r\in[0,T], v,x\in\mathbb{R}^d, z\in\mathbb{R}^d, \rho\in\mathcal{P}_1(\mathbb{R}^d),
\]with $I_d$ the $d-$dimensional identity matrix. To avoid overburdening notation, since both hypotheses 2.1 and 2.2 apply, we take $\breve{c}(z)=\bar{c}(z)$ and $\breve{c}_p=\bar{c}_p$.
\begin{remark}
We need this hypothesis to prove the regularity of the inverse tangent flow (see Section 4.2 (\ref{cM2})).
\end{remark}

\textbf{Hypothesis 2.3} (\textbf{Ellipticity})
There exists a non-negative function $\underline{c}:\mathbb{R}^d\rightarrow\mathbb{R}_{+}$ such that for every $r\in[0,T], v,x\in\mathbb{R}^d, z\in\mathbb{R}^d, \rho\in\mathcal{P}_1(\mathbb{R}^d), \zeta\in\mathbb{R}^d$, we have
\[
\sum_{j=1}^d\langle\partial_{z_j}{c}(r,v,z,x,\rho),\zeta\rangle^{2}\geq \underline{c}%
(z)\vert\zeta\vert^2.
\]
\begin{remark}
We notice that together with \textbf{Hypothesis 2.1}, we have $\underline{c}(z)\leq\vert\bar{c}(z)\vert^2,\ \forall z\in\mathbb{R}^d$.
\end{remark}

\textbf{Hypothesis 2.4} 

We give some supplementary hypotheses concerning the function $\underline{c}$ and the measure $\mu$.

$a)$ We assume that there exists a $\theta>0$ such that
\begin{eqnarray}
\underline{\lim}_{u\rightarrow+\infty}\frac{1}{\ln u}\nu\{\underline{c}\geq \frac{1}{u}\}:=\theta>0, \label{undergamma}
\end{eqnarray}with \[\nu(dz)=\sum_{k=1}^{\infty}\mathbbm{1}_{[k-\frac{3}{4},k-\frac{1}{4}]}(\vert z\vert)\mu(dz).\] This means that $\underline{c}$ could not be too small so that we could have enough noises to deduce the non-degeneracy of the Malliavin covariance matrix (see Section 4.2 (\ref{cm6})). 
\begin{remark}
If $\mu(\mathbb{R}^d)<\infty$, then $\theta=0$. So (\ref{undergamma}) implies that $\mu(\mathbb{R}^d)=\infty$.
\end{remark}

\bigskip

$b)$ We assume that $\mu$ is absolutely continuous with respect to the Lebesgue measure: $\mu(dz)=h(z)dz$, where $h$ is infinitely differentiable and $\ln h$ has bounded derivatives of any order. 
\begin{remark}
We need this hypothesis to construct the integration by parts framework for the jump equations.
\end{remark}

\bigskip\bigskip

\subsection{The Euler scheme}
Now we construct the Euler scheme. For any partition $\mathcal{P}=\{0=r_0<r_1<\cdots<r_{n-1}<r_n=T\}$ of the interval $[0,T]$, we define $\tau(r)=r_k$ when $r\in[r_k,r_{k+1})$, and we consider the equation
\begin{eqnarray}
X^{\mathcal{P}}_{t}&=&X_0+\int_{0}^{t}b(\tau(r),X^{\mathcal{P}}_{\tau(r)},\rho^{\mathcal{P}}_{\tau(r)})dr  \nonumber\\
&+&\int_{0}^{t}\int_{\mathbb{R}^d\times\mathbb{R}^d}{c}(\tau(r),v,z,X^{\mathcal{P}}_{\tau(r)-},\rho^{\mathcal{P}}_{\tau(r)-})N_{\rho^{\mathcal{P}}_{\tau(r)-}}(dv,dz,dr),  \label{1.3}
\end{eqnarray}
where $\rho^{\mathcal{P}}_{t}$ is the law of $X^{\mathcal{P}}_{t}$  and $N_{\rho^{\mathcal{P}}_{t}}(dv,dz,dr)$ is a Poisson point measure with intensity measure $\rho^{\mathcal{P}}_{t}(dv)\mu(dz)dr$, independent of $X_0$.

In $\cite{ref1}$(Theorem 3.5, 3.7, 3.8, Proposition 3.9), Alfonsi and Bally have proved that under some suitable regularity conditions on the coefficients $b$ and $c$ (which are some conditions weaker than the \textbf{Hypothesis 2.1} in this paper), the strong solution of the equation (\ref{1.1}) exists and is unique, and the following statements are true.

$a)$ There exists a constant $C$ depending on $T$  such that for every $0<t\leq T$ and every partition $\mathcal{P}$ of $[0,T]$, 
\begin{eqnarray}
W_1(X^{\mathcal{P}}_{t},X_{t})\leq C\vert\mathcal{P}\vert,\quad \text{with}\quad \vert\mathcal{P}\vert:=\max\limits_{k\in\{0,\cdots,n-1\}}(r_{k+1}-r_k). \label{AB}
\end{eqnarray}

$b)$ The solution of the following weak equation exists.
\begin{eqnarray}
\forall\phi\in C_b^1(\mathbb{R}^d),\ &&\int_{\mathbb{R}^d}\phi(x)\rho_{t}(dx)=\int_{\mathbb{R}^d}\phi(x)\rho_{0}(dx)+\int_{0}^{t}\int_{\mathbb{R}^d}\langle b(r,x,\rho_{r}),\nabla\phi(x)\rangle\rho_{r}(dx)dr  \nonumber\\
&&+\int_{0}^{t}\int_{\mathbb{R}^d\times\mathbb{R}^d}\rho_{r}(dx)\rho_{r}(dv)\int_{\mathbb{R}^d}(\phi(x+c(r,v,z,x,\rho_{r}))-\phi(x))\mu(dz)dr. \quad\quad  \label{1.4}
\end{eqnarray}
And the solution of the equation (\ref{1.1}) is the probabilistic interpretation of  (\ref{1.4}) in the sense that $\rho_t=\mathcal{L}(X_t)$ (the law of $X_t$) solves (\ref{1.4}).

We recall the notation  $\theta$ in \textbf{Hypothesis 2.4}.
One aim of this paper is to prove the following error estimate.
\begin{theorem}
Under the \textbf{Hypothesis 2.1, Hypothesis 2.2, Hypothesis 2.3} and \textbf{Hypothesis 2.4}, we have 

a) For any $0<t\leq T$, when $t>\frac{8d(l+d)}{\theta}$, the law of $X_{t}$ has a $l-$times differentiable density $p_{t}$: 
\begin{eqnarray}\mathbb{P}(X_{t}\in dx)=\rho_{t}(dx)=p_{t}(x)dx,\label{001}\end{eqnarray}and the density $p_{t}$ is a function solution of the equation (\ref{1.4}).

b) For any $\varepsilon>0$, there exists a constant $C$ depending on $\varepsilon,d$ and $T$ such that for every partition $\mathcal{P}$ of $[0,T]$ with $\vert\mathcal{P}\vert\leq1$,  when $t>\frac{8d}{\theta}(\frac{8}{\varepsilon}+1)$,
\begin{eqnarray}d_{TV}(X^\mathcal{P}_{t},X_{t})\leq C\vert\mathcal{P}\vert^{1-\varepsilon}.\label{002}\end{eqnarray} 
\end{theorem}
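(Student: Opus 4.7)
The plan is to prove parts (a) and (b) within a single Malliavin integration by parts framework applied to a truncated, coupled version of the equation. First I would introduce the truncated process $X_t^M$ (as in (\ref{Intro3})) whose jumps of size $\vert z\vert>M$ are replaced by the Gaussian perturbation $a_T^M\Delta$, and the coupled processes $x_t^M$, $x_t^{\mathcal{P},M}$, $x_t$ driven by a common Poisson point measure (the coupling announced for Section 2.7). Since $X_t^M$ involves only finitely many jumps on any compact time interval, it is a simple functional to which a finite dimensional Malliavin calculus in the jump amplitudes, supplemented by the Gaussian noise $\Delta$, applies directly. I would then perform integration by parts on $x_t^M$ and its counterparts, and pass to the limit $M\to\infty$ (using the Wasserstein convergence inherited from \cite{ref1}) to transfer smoothness to $x_t$.

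For part (a), I would establish the two ingredients that correspond to the announced Lemmas 3.7 and 3.8: uniform-in-$M$ bounds on the Malliavin--Sobolev norms of $x_t^M$, and non-degeneracy of its Malliavin covariance matrix $\sigma_{x_t^M}$ with inverse moments of arbitrary order. The non-degeneracy is where Hypothesis 2.4 $a)$ enters: the ellipticity $\sum_j\langle\partial_{z_j}c,\zeta\rangle^2\geq\underline{c}(z)\vert\zeta\vert^2$ combined with the logarithmic lower bound $\underline{\lim}_u\frac{1}{\ln u}\nu\{\underline{c}\geq 1/u\}\geq\theta$ controls the Laplace transform of the smallest eigenvalue of $\sigma_{x_t^M}$ and yields $\mathbb{E}\Vert\sigma_{x_t^M}^{-1}\Vert^q<\infty$ as soon as $t\theta$ exceeds a constant multiple of $dq$. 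Choosing $q$ of order $l+d$ produces the threshold $t>8d(l+d)/\theta$ needed to apply the standard formula $\mathbb{E}(\partial^\beta\phi)(x_t^M)=\mathbb{E}\phi(x_t^M)H_\beta$ for $\vert\beta\vert\leq l$ and to obtain a $C^l$ density $p_t$ in the limit. Finally, applying It\^o's formula to $\phi(X_t)$ for $\phi\in C_b^1(\mathbb{R}^d)$ and taking expectation yields the weak equation (\ref{1.4}), so the density $p_t$ is a function solution.

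For part (b), the same Malliavin machinery is combined with the Wasserstein estimate (\ref{AB}) on the coupled pair $(x_t^{\mathcal{P}},x_t)$. For any measurable bounded $f$ with $\Vert f\Vert_\infty\leq 1$, one regularizes by convolution with $\varphi_\delta$ and writes
\begin{equation*}
\mathbb{E}f(x_t^{\mathcal{P}})-\mathbb{E}f(x_t)=[\mathbb{E}(f\ast\varphi_\delta)(x_t^{\mathcal{P}})-\mathbb{E}(f\ast\varphi_\delta)(x_t)]+\mathbb{E}(f-f\ast\varphi_\delta)(x_t^{\mathcal{P}})-\mathbb{E}(f-f\ast\varphi_\delta)(x_t).
\end{equation*}
The smooth part is handled via the Wasserstein bound and $\Vert\nabla(f\ast\varphi_\delta)\Vert_\infty\leq C\delta^{-1}$, giving an error $C\delta^{-1}\vert\mathcal{P}\vert$; the residual parts are handled by Malliavin integration by parts applied to both $x_t$ and $x_t^{\mathcal{P}}$, yielding a bound of the form $C\delta^{k}$, where $k$ is controlled by the number of integrations by parts one can afford, which in turn requires inverse moments of $\sigma_{x_t}$ and $\sigma_{x_t^{\mathcal{P}}}$ of an order $q$ proportional to $1/\varepsilon$, explaining the threshold $t>\frac{8d}{\theta}(\frac{8}{\varepsilon}+1)$. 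Balancing $\delta$ as a power of $\vert\mathcal{P}\vert$ then yields the announced rate $\vert\mathcal{P}\vert^{1-\varepsilon}$. This is the abstract passage from Wasserstein to total variation packaged in Proposition 3.6.1 via the results of \cite{ref2}.

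The main obstacle is Lemma 3.8, the uniform non-degeneracy of $\sigma_{x_t^M}$ and $\sigma_{x_t^{\mathcal{P},M}}$: one must show that their smallest eigenvalues have negative moments of arbitrary order, uniformly in $M$ and in $\mathcal{P}$. This requires a delicate analysis of the contribution of the jumps lying in the annuli $\{k-3/4\leq\vert z\vert\leq k-1/4\}$ entering the definition of $\nu$ in Hypothesis 2.4 $a)$, combined with a Laplace-transform argument exploiting the logarithmic lower bound. A secondary difficulty is to keep the Malliavin--Sobolev norms of $x_t^{\mathcal{P},M}$ bounded uniformly in $\mathcal{P}$ and $M$; this forces one to analyze the tangent flow and its inverse, and is precisely where Hypothesis 2.2 on $\nabla_x c\,(I_d+\nabla_x c)^{-1}$ is needed.
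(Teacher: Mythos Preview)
Your outline for part (a) matches the paper. The gap is in part (b).

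You propose to bound $\mathbb{E}(f-f\ast\varphi_\delta)(x_t^{\mathcal{P}})$ by integration by parts on $x_t^{\mathcal{P}}$, explicitly invoking ``inverse moments of $\sigma_{x_t}$ and $\sigma_{x_t^{\mathcal{P}}}$'', and you list as the main obstacle the uniform non-degeneracy of $\sigma_{x_t^{\mathcal{P},M}}$. But the paper points out (Remark following Lemma 3.8) that uniform-in-$\mathcal{P}$ non-degeneracy of $\sigma_{x_t^{\mathcal{P},M}}$ \emph{cannot} be established: the tangent flow of the Euler scheme is not invertible, so the representation (\ref{cM3}) that drives the covariance lower bound is unavailable. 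Your ``main obstacle'' therefore actually fails, and the two-sided regularization you describe --- which corresponds to Lemma 3.6, not Proposition 3.6.1 --- does not close.

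Proposition 3.6.1 is precisely the device that circumvents this. It requires non-degeneracy of only one of the two variables (here $\bar x_t$), at the price of an extra term $\|D\bar x_t^{\mathcal{P}}-D\bar x_t\|_{L^2(\Omega;\mathcal H)}^2$ on the right-hand side. The missing ingredient in your plan is thus an estimate $\|D\bar x_t^{\mathcal{P}}-D\bar x_t\|_{L^2}\leq C|\mathcal P|^{1/(2+\varepsilon_\ast)}$ (Lemma 3.13 c), obtained by passing $M\to\infty$ in Lemma 3.12 c)). The fact that the \emph{square} of this norm enters Proposition 3.6.1 --- via the refined estimate (\ref{BT0}) taken from \cite{ref52} --- is what preserves the rate $|\mathcal P|^{1-\varepsilon}$; with the norm itself one would only obtain $|\mathcal P|^{1/2-\varepsilon}$.

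A minor misattribution: Hypothesis 2.2 is not used for the Sobolev-norm bounds (Lemma 3.7 requires only Hypotheses 2.1 and 2.4 b)); it enters solely through the inverse tangent flow (\ref{inverse}) in the covariance estimate for $x_t^M$.
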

\begin{remark}
In the case $\theta=\infty$, the results in \textbf{Theorem 2.1}  hold for every $0<t\leq T$.
\end{remark}
The proof of this theorem will be given in Section 3.3.
The main methods we will use in the proofs are the Malliavin calculus techniques introduced in $\cite{ref2}$. We will discuss them in Section 3. 

\subsection{The truncated Euler scheme}
Since we have $\mu(\mathbb{R}^d)=\infty$ (which is a consequence of (\ref{undergamma})), we have infinitely many jumps. We use a truncation argument in order to have finite numbers of jumps and obtain a representation by means of a compound Poisson process. This is necessary in order to obtain a scheme which may be simulated. For $M\in\mathbb{N}$, we denote $B_M=\{z\in\mathbb{R}^d:\vert z\vert\leq M\}$, $c_M(r,v,z,x,\rho):=c(r,v,z,x,\rho)\mathbbm{1}_{B_M}(z)$, and \begin{eqnarray}a^M_{T}:=\sqrt{T\int_{\{\vert z\vert>M\}}\underline{c}(z)  \mu(dz)}.\label{aMt}\end{eqnarray} This is a deterministic sequence such that $ a^M_{T}\rightarrow0$ as $M\rightarrow\infty$. We also denote $\Delta=(\Delta_1,\cdots,\Delta_d)$ to be a $d-$dimensional standard Gaussian random variable independent of $X_0$ and $N_{\rho}$. Now we cancel the "big jumps" (the jumps of size $\vert z\vert>M$) and replace them by a Gaussian random variable $a^M_{t}\Delta$. 
\begin{eqnarray}
X^{\mathcal{P},M}_{t}&=&X_0+a^M_{T}\Delta+\int_{0}^{t}b(\tau(r),X^{\mathcal{P},M}_{\tau(r)},\rho^{\mathcal{P},M}_{\tau(r)})dr  \nonumber\\
&+&\int_{0}^{t}\int_{\mathbb{R}^d\times \mathbb{R}^d}{c}_M(\tau(r),v,z,X^{\mathcal{P},M}_{\tau(r)-},\rho^{\mathcal{P},M}_{\tau(r)-})N_{\rho^{\mathcal{P},M}_{\tau(r)-}}(dv,dz,dr),  \label{truncation}
\end{eqnarray}
where  $\rho^{\mathcal{P},M}_{t}$ is the law of $X^{\mathcal{P},M}_{t}$, and $N_{\rho^{\mathcal{P},M}_{t}}(dv,dz,dr)$ is a Poisson point measure with intensity measure $\rho^{\mathcal{P},M}_{t}(dv)\mu(dz)dr$, independent of $X_0$ and of $\Delta$.
We remark that $\Delta$ is necessary in order to obtain the non-degeneracy of the Malliavin covariance matrix, which will be discussed in detail in Section 4.2. 

The advantage of considering $X^{\mathcal{P},M}_{t}$ is that we may represent it by means of compound Poisson processes.
For $k\in\mathbb{N}$, we denote $I_1=B_1$, $I_k=B_{k}\backslash B_{k-1}$ for $k\geq2$ and take $({J}_t^{k})_{t\in[0,T]}$ a Poisson process of intensity $\mu (I_k)$. We denote by $({T}_i^{k})_{i\in\mathbb{N}}$ the jump times of $({J}_t^{k})_{t\in[0,T]}$ and we consider some sequences of independent random variables ${Z}_i^{k}\sim\mathbbm{1}_{I_k}(z)\frac{\mu(dz)}{\mu(I_k)}$,  and ${V}_{k,i}^{\mathcal{P},M}\sim\rho^{\mathcal{P},M}_{\tau({T}_i^{k})-}(dv)$, $k,i\in\mathbb{N}$.  Moreover, $(({J}_t^{k})_{\substack{t\in[0,T]\\k\in\mathbb{N}}}, ({Z}_i^{k})_{k,i\in\mathbb{N}}$, $  ({V}_{k,i}^{\mathcal{P},M})_{k,i\in\mathbb{N}}, X_0,\Delta)$ are taken to be independent. Then in order to do the simulation, we  represent the jump's parts of the equation  (\ref{truncation}) by  compound Poisson processes: 
\begin{eqnarray}
X^{\mathcal{P},M}_{t}&=&X_0+a^M_{T}\Delta+\int_{0}^{t}b(\tau(r),X^{\mathcal{P},M}_{\tau(r)},\rho^{\mathcal{P},M}_{\tau(r)})dr  +\sum_{k=1}^{M}\sum_{i=1}^{J^k_t}{c}(\tau({T}_i^{k}),{V}_{k,i}^{\mathcal{P},M},{Z}_i^{k},X^{\mathcal{P},M}_{\tau({T}_i^{k})-},\rho^{\mathcal{P},M}_{\tau({T}_i^{k})-}).  \nonumber\\ \label{fictiveshock}
\end{eqnarray}Notice that the solution of the equation (\ref{fictiveshock}) may be constructed in an explicit way (except for $\rho^{\mathcal{P},M}_{\tau(r)}$ and $\rho^{\mathcal{P},M}_{\tau({T}_i^{k})-}$ which will be discussed in detail in Section 2.5).

We denote \begin{eqnarray}
\varepsilon_M:=\int_{\{\vert z\vert>M\}}\vert\bar{c}(z)\vert^2\mu(dz)+\vert\int_{\{\vert z\vert>M\}}\bar{c}(z)\mu(dz)\vert^2, \label{epsM}
\end{eqnarray}and recall the notation  $\theta$ in \textbf{Hypothesis 2.4}.
We obtain the following error estimate for $X^{\mathcal{P},M}_{t}$.
\begin{theorem}
Under the \textbf{Hypothesis 2.1, Hypothesis 2.2, Hypothesis 2.3} and \textbf{Hypothesis 2.4}, we have 

a) For any $0<t\leq T$, the law of $X^{\mathcal{P},M}_{t}$ has a smooth density $p^{\mathcal{P},M}_{t}$.

b) For any $\varepsilon>0$, there exists a constant $C$ depending on $\varepsilon,d$ and $T$ such that for  every partition $\mathcal{P}$ of $[0,T]$ with $\vert\mathcal{P}\vert\leq1$,  every $M\in\mathbb{N}$ with $\varepsilon_M\leq 1$ and $\vert\bar{c}(z)\vert^2\mathbbm{1}_{\{\vert z\vert>M\}}\leq 1$, when $t>\frac{8d}{\theta}(\frac{8}{\varepsilon}+1)$,
\begin{eqnarray}d_{TV}(X^{\mathcal{P},M}_{t},X_{t})\leq C(\sqrt{\varepsilon_M}+\vert\mathcal{P}\vert)^{1-\varepsilon}.\label{002*}\end{eqnarray} 

\end{theorem}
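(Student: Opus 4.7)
The plan is to run the same Malliavin-based strategy as for Theorem 2.1, but now exploiting the extra Gaussian perturbation $a^M_T\Delta$ and the finiteness of the jump measure on $B_M$ to get cleaner non-degeneracy estimates, and then to pass from $L^1$/Wasserstein control to total variation via the regularization machinery of \cite{ref2}.

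\textbf{Step 1 (Smooth density, part a).} First I would switch to the coupled version $x^{\mathcal{P},M}_t$ constructed in Section~2.7, which has the same law as $X^{\mathcal{P},M}_t$ but is driven by a single Poisson point measure, so Malliavin calculus applies directly. Because $c_M$ cancels jumps of size $|z|>M$, in any finite time interval $[0,t]$ the jump part of $x^{\mathcal{P},M}_t$ is a sum of a Poisson-almost-surely finite number of terms indexed by the compound Poisson representation \eqref{fictiveshock}, so $x^{\mathcal{P},M}_t$ is a simple functional of the Gaussian vector $\Delta$ together with the jump amplitudes $Z^k_i$ and fictitious shocks $V^{\mathcal{P},M}_{k,i}$. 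I would apply the abstract integration-by-parts framework of Section~3 to this simple functional: the bounds on Malliavin–Sobolev norms follow from \textbf{Lemma~3.7} and the non-degeneracy from \textbf{Lemma~3.8}, with the Gaussian piece $a^M_T\Delta$ handling the contribution of the deleted large jumps in the covariance matrix. This gives smoothness of $p^{\mathcal{P},M}_t$ in the usual way (via Fourier inversion / successive integration by parts).

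\textbf{Step 2 (From $L^1$ coupling to total variation, part b).} On the common probability space from Section~2.7, I would first estimate
\begin{equation*}
\mathbb{E}\bigl|x_t - x^{\mathcal{P},M}_t\bigr| \;\leq\; C\bigl(\sqrt{\varepsilon_M}+|\mathcal{P}|\bigr),
\end{equation*}
which is obtained by Gronwall on the coupled equations: the $|\mathcal{P}|$ piece is standard Euler-scheme error as in \cite{ref1} (cf.~\eqref{AB}), while the $\sqrt{\varepsilon_M}$ piece comes from (i) the removed jumps, whose $L^2$ norm is controlled by $\int_{|z|>M}|\bar c(z)|^2\mu(dz)$ plus a drift contribution $|\int_{|z|>M}\bar c(z)\mu(dz)|^2$ (exactly $\varepsilon_M$), and (ii) the Gaussian replacement $a^M_T\Delta$, whose $L^1$ norm is $\mathcal{O}(a^M_T)=\mathcal{O}(\sqrt{\varepsilon_M})$ by \eqref{aMt} and the remark that $\underline c\leq |\bar c|^2$. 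Then I would invoke the regularization lemma used to prove \textbf{Proposition~3.6.1} (whose role is precisely to convert an $L^1$ bound on two Malliavin-smooth, non-degenerate random variables into a total variation bound): it yields, for any $\varepsilon>0$,
\begin{equation*}
d_{TV}(X^{\mathcal{P},M}_t,X_t)\;\leq\; C_\varepsilon\,\bigl(\text{Malliavin norms}+\text{inverse moments of det}\bigr)^{\varepsilon}\,\bigl(\mathbb{E}|x_t-x^{\mathcal{P},M}_t|\bigr)^{1-\varepsilon}.
\end{equation*}
Combining with Step~1 and the $L^1$ estimate above gives \eqref{002*}.

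\textbf{Step 3 (The time threshold).} The condition $t>\tfrac{8d}{\theta}(\tfrac{8}{\varepsilon}+1)$ is inherited from the non-degeneracy of the Malliavin covariance matrix. Following the same computation as in the proof of Theorem~2.1, Hypothesis~2.4(a) gives, through Chebyshev on $\nu\{\underline c\geq 1/u\}\gtrsim\theta\ln u$, polynomial inverse-moment estimates $\mathbb{E}[(\det\sigma_{x^{\mathcal{P},M}_t})^{-p}]\leq C t^{-\alpha(p)}$ only when $t$ exceeds a threshold depending linearly on $d/\theta$ and on the targeted moment order $p$; translating the required $p$ into the exponent $\varepsilon$ in the TV inequality yields exactly $\tfrac{8d}{\theta}(\tfrac{8}{\varepsilon}+1)$.

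\textbf{Main obstacle.} The hardest step is not the coupling calculation (which is routine Gronwall) but the uniform-in-$(\mathcal{P},M)$ version of \textbf{Lemmas~3.7} and \textbf{3.8} — in particular the non-degeneracy \textbf{Lemma~3.8}. The truncation destroys the ellipticity contribution of jumps with $|z|>M$, and one has to check that it is precisely compensated by $a^M_T\Delta$ (whose variance $(a^M_T)^2=T\int_{|z|>M}\underline c\,d\mu$ was chosen for this purpose) uniformly as $M\to\infty$ and $|\mathcal{P}|\to 0$, while still leaving enough small-jump noise on $B_M$ to drive the $\theta$-dependent log estimate from Hypothesis~2.4(a). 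Once these uniform bounds are in place, parts (a) and (b) follow by the abstract machinery as above.
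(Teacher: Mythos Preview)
Your proposal has a genuine gap at the heart of both parts: you assume uniform non-degeneracy of the Malliavin covariance of the \emph{Euler scheme} $x^{\mathcal{P},M}_t$, but \textbf{Lemma~3.8} only gives this for $x^M_t$ and $x_t$, not for $x^{\mathcal{P},M}_t$. The paper is explicit about this (see the remark following Lemma~3.8): the tangent flow of the Euler scheme is not invertible, so the decomposition $D^Z_{(k,i,j)}x^{\mathcal{P},M}_t = Y^{\mathcal{P},M}_t (\widetilde{Y}^{\mathcal{P},M}_{T^k_i})\,\partial_{z_j}c(\cdots)$ that drives the lower bound on $\lambda^M_t$ is unavailable, and one cannot prove $\sup_{\mathcal{P},M}\mathbb{E}(\det\sigma_{x^{\mathcal{P},M}_t})^{-p}<\infty$. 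Your Step~3, which claims such inverse-moment bounds for $x^{\mathcal{P},M}_t$, therefore fails.

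This forces two corrections. For part~(a), the paper does not use Lemma~3.8 at all: it observes that $(\mathcal{S},D^\Delta,L^\Delta)$ is itself an IbP framework, and since $a^M_T>0$ for fixed $M$, integration by parts in the Gaussian directions $\Delta$ alone already yields a smooth density for $X^{\mathcal{P},M}_t$. For part~(b), the paper cannot use the two-sided regularization bound you wrote; instead it applies \textbf{Proposition~3.6.1}, which requires non-degeneracy of only one variable ($x_t$) but carries the extra penalty term $\Vert Dx^{\mathcal{P},M}_t - Dx_t\Vert_{L^2(\Omega;l_2\times\mathbb{R}^d)}^2$. Controlling this term is the content of \textbf{Lemma~3.9}, which you did not invoke; it gives $\Vert Dx^{\mathcal{P},M}_t - Dx_t\Vert_{L^2}\leq C(\varepsilon_M+|\mathcal{P}|)^{1/(2+\varepsilon_\ast)}$, and the interplay between this exponent and the outer $(\cdot)^{1-\varepsilon}$ is precisely what produces the threshold $t>\tfrac{8d}{\theta}(\tfrac{8}{\varepsilon}+1)$ after choosing $\varepsilon_\ast=\bar\varepsilon/(1-\bar\varepsilon)$, $\varepsilon=\bar\varepsilon/2$. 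The $W_1$ piece is handled separately by \textbf{Lemma~3.10}, not by the $L^{2+\varepsilon_\ast}$ coupling of Lemma~2.6.
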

\begin{remark}
In the case $\theta=\infty$, the results in  \textbf{Theorem 2.2} hold for every $0<t\leq T$.
\end{remark}

The proof of this theorem will be given in Section 3.3
by using some Malliavin integration by parts techniques. 

\subsection{The particle system}
We notice that we still cannot compute ${\rho}^{\mathcal{P},M}_{\tau(r)}$ and $\rho^{\mathcal{P},M}_{\tau({T}_i^{k})-}$ directly in (\ref{fictiveshock}), so we construct the particle system as follows in order to obtain an explicit scheme of simulation.
For a random vector $X=(X^1,\cdots, X^N),\ X^i\in\mathbb{R}^d,\ i=1,\cdots,N$ with a fixed dimension $N$, we associate the (random) empirical measure
\begin{eqnarray}
\widehat{\rho}(X)(dv)=\frac{1}{N}\sum_{i=1}^N\delta_{X^i}(dv),\label{empirical}
\end{eqnarray}where  $\delta_x(dv)$ is the Dirac measure. Now we consider an initial vector $(X^1_0,\cdots,X^N_0)$ with components which are independent and identically distributed with common law $\rho_0$ (we recall that $\rho_0$ is the law of $X_0$ in (\ref{1.1})), and we consider $(\Delta^1,\cdots,\Delta^N)$ which is a $N\times d-$dimensional standard Gaussian random variable independent of $(X^1_0,\cdots,X^N_0)$. Then we construct the particle system $\overrightarrow{\bm{X}}^{\mathcal{P},M}_t=(X^{\mathcal{P},M,1}_t,\cdots,X^{\mathcal{P},M,N}_t)$:
\begin{eqnarray}
X^{\mathcal{P},M,i}_{t}&=&X_0^i+a^M_{T}\Delta^i+\int_{0}^{t}b(\tau(r),X^{\mathcal{P},M,i}_{\tau(r)},\widehat{\rho}(\overrightarrow{\bm{X}}^{\mathcal{P},M}_{\tau(r)}))dr  \nonumber\\
&+&\int_{0}^{t}\int_{\mathbb{R}^d\times \mathbb{R}^d}{c}_M(\tau(r),v,z,X^{\mathcal{P},M,i}_{\tau(r)-},\widehat{\rho}(\overrightarrow{\bm{X}}^{\mathcal{P},M}_{\tau(r)-}))N^i_{\widehat{\rho}(\overrightarrow{\bm{X}}^{\mathcal{P},M}_{\tau(r)-})}(dv,dz,dr),\ i=1,\cdots,N,\label{particle}
\end{eqnarray}where  $N^i_{\widehat{\rho}(\overrightarrow{\bm{X}}^{\mathcal{P},M}_{t})}(dv,dz,dr),\ i=1,\cdots,N$ are Poisson point measures that are independent each other conditionally to $\overrightarrow{\bm{X}}^{\mathcal{P},M}_{t}$ and independent of $(X^1_0,\cdots,X^N_0,\Delta^1,\cdots,\Delta^N)$ with intensity $\widehat{\rho}(\overrightarrow{\bm{X}}^{\mathcal{P},M}_{t})(dv)\mu(dz)dr$. We give now the representation of the above equation in terms of compound Poisson processes. This is necessary in order to obtain an explicit simulation algorithm. We recall that we denote $I_1=B_1$, $I_k=B_{k}\backslash B_{k-1}$ for $k\geq2$. Now for  $i=1,\cdots,N$, $k\in\mathbb{N}$, we take $({J}_t^{k,i})_{t\in[0,T]}$ a Poisson process of intensity $\mu (I_k)$. We denote by $({T}_l^{k,i})_{l\in\mathbb{N}}$ the jump times of $({J}_t^{k,i})_{t\in[0,T]}$ and we consider some sequences of independent random variables ${Z}_l^{k,i}\sim\mathbbm{1}_{I_k}(z)\frac{\mu(dz)}{\mu(I_k)}$ and $U_l^{k,i}$ uniformly distributed on $\{1,\cdots,N\}$, for all $i=1,\cdots,N$, $k,l\in\mathbb{N}$.  Moreover, $({J}_t^{k,i})_{\substack{t\in[0,T]}}, {Z}_l^{k,i}$, $  {U}_l^{k,i}, \Delta^i, X^i_0,\ i=1,\cdots,N,\ k,l\in\mathbb{N}$ are taken to be independent. Then  we  represent the jump's parts of the equation  (\ref{particle}) by  compound Poisson processes to give an explicit scheme of simulation.
\begin{eqnarray}
X^{\mathcal{P},M,i}_{t}&=&X_0^i+a^M_{T}\Delta^i+\int_{0}^{t}b(\tau(r),X^{\mathcal{P},M,i}_{\tau(r)},\widehat{\rho}(\overrightarrow{\bm{X}}^{\mathcal{P},M}_{\tau(r)}))dr \nonumber\\ &+&\sum_{k=1}^{M}\sum_{l=1}^{J^{k,i}_t}{c}(\tau({T}_l^{k,i}),X^{\mathcal{P},M,U_l^{k,i}}_{\tau(T_l^{k,i})-},{Z}_l^{k,i},X^{\mathcal{P},M,i}_{\tau({T}_l^{k,i})-},\widehat{\rho}(\overrightarrow{\bm{X}}^{\mathcal{P},M}_{\tau({T}_l^{k,i})-})).   \label{fictiveshock*}
\end{eqnarray}So now the solution of the equation (\ref{fictiveshock*}) is constructed in an explicit way.

We denote \begin{eqnarray}
V_N:=\mathbbm{1}_{d=1}N^{-\frac{1}{2}}+\mathbbm{1}_{d=2}N^{-\frac{1}{2}}\log (1+N)+\mathbbm{1}_{d\geq3}N^{-\frac{1}{d}}, \label{VN}
\end{eqnarray}
and we  consider the $d-$dimensional regularization kernels 
 \begin{eqnarray}
\varphi (x)=\frac{1}{(2\pi )^{d/2}}e^{-\frac{\left\vert x\right\vert ^{2}}{2}%
},\quad \varphi _{\delta }(x)=\frac{1}{\delta ^{d}}\varphi (\frac{x}{\delta }),\quad0<\delta\leq1.\label{kernel}
\end{eqnarray}
We recall the notations $\varepsilon_M$ in (\ref{epsM}) and $\theta$ in \textbf{Hypothesis 2.4}. In \textbf{Theorem 2.1}, we proved that under appropriate hypotheses, $\mathcal{L}(X_t)(dx)=p_t(x)dx$. We give now some approximation results for $p_t(x)$.
\begin{theorem}

Under the \textbf{Hypothesis 2.1, Hypothesis 2.2, Hypothesis 2.3} and \textbf{Hypothesis 2.4},   for  every partition $\mathcal{P}$ of $[0,T]$ and  every $M\in\mathbb{N}$ with $\vert\mathcal{P}\vert+\sqrt{\varepsilon_M}\leq1$, we have the following:

$i)$ We take \[\delta=(\vert\mathcal{P}\vert+\sqrt{\varepsilon_M})^{\frac{1}{d+3}},\quad \text{and take}\ N\ \text{such that}\quad V_N\leq\vert\mathcal{P}\vert+\sqrt{\varepsilon_M}.\] When $t>\frac{8d}{\theta}(2+d)$,
\begin{eqnarray}p_t(x)=\frac{1}{N}\sum_{i=1}^N\mathbb{E}\varphi_\delta(X^{\mathcal{P},M,i}_t-x)+\mathcal{O}((\vert\mathcal{P}\vert+\sqrt{\varepsilon_M})^{\frac{2}{d+3}}),\label{003}\end{eqnarray} where $\mathcal{O}(\bullet)$ is the big $\mathcal{O}$ notation (i.e. for a strictly positive function $g$ defined on $\mathbb{R}_{+}$, $\exists C>0$, s.t. $\vert\mathcal{O}(g(y))\vert\leq Cg(y)$).  

$ii)$ \textbf{(Romberg)} We take  \[\delta=(\vert\mathcal{P}\vert+\sqrt{\varepsilon_M})^{\frac{1}{d+5}},\quad \text{and take}\ N\ \text{such that}\quad V_N\leq\vert\mathcal{P}\vert+\sqrt{\varepsilon_M}.\] When $t>\frac{8d}{\theta}(4+d)$,
\begin{eqnarray}p_t(x)=\frac{2}{N}\sum_{i=1}^N\mathbb{E}\varphi_{\delta/\sqrt{2}}(X^{\mathcal{P},M,i}_t-x)-\frac{1}{N}\sum_{i=1}^N\mathbb{E}\varphi_\delta(X^{\mathcal{P},M,i}_t-x)+\mathcal{O}((\vert\mathcal{P}\vert+\sqrt{\varepsilon_M})^{\frac{4}{d+5}}).\label{003*}\end{eqnarray} 
\end{theorem}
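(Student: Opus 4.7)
The plan is to telescope the error into three independent pieces and bound each one using ingredients already in place:
\begin{align*}
p_t(x) - \frac{1}{N}\sum_{i=1}^N\mathbb{E}\varphi_\delta(X_t^{\mathcal{P},M,i}-x)
&= \underbrace{p_t(x) - \mathbb{E}\varphi_\delta(X_t-x)}_{\text{(A) regularization bias}} \\
&\quad + \underbrace{\mathbb{E}\varphi_\delta(X_t-x) - \mathbb{E}\varphi_\delta(X_t^{\mathcal{P},M}-x)}_{\text{(B) Euler + truncation}} \\
&\quad + \underbrace{\mathbb{E}\varphi_\delta(X_t^{\mathcal{P},M}-x) - \tfrac{1}{N}\sum_i\mathbb{E}\varphi_\delta(X_t^{\mathcal{P},M,i}-x)}_{\text{(C) particle system}}.
\end{align*}
For (A), Theorem 2.1 gives $p_t\in C^2$ whenever $t>8d(2+d)/\theta$; because the Gaussian kernel $\varphi_\delta$ is symmetric its odd moments vanish, so Taylor-expanding $p_t(x-y)$ against $\varphi_\delta(y)\,dy$ to second order gives $|(A)|=|(p_t*\varphi_\delta)(x)-p_t(x)|\leq C\delta^2$. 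For (B), I would combine the Lipschitz bound $\|\nabla\varphi_\delta\|_\infty\leq C\delta^{-(d+1)}$ with $W_1(X_t,X_t^{\mathcal{P},M})\leq C(|\mathcal{P}|+\sqrt{\varepsilon_M})$, the latter obtained by triangulating through $X_t^\mathcal{P}$: the Alfonsi--Bally rate (\ref{AB}) handles $W_1(X_t,X_t^\mathcal{P})\leq C|\mathcal{P}|$, while $W_1(X_t^\mathcal{P},X_t^{\mathcal{P},M})\leq C\sqrt{\varepsilon_M}$ follows from a direct $L^1$ comparison of (\ref{1.3}) and (\ref{truncation}) using $a_T^M=O(\sqrt{\varepsilon_M})$ and the Burkholder estimate (\ref{BurkN}) applied to $c\,\mathbbm{1}_{\{|z|>M\}}$. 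This yields $|(B)|\leq C\delta^{-(d+1)}(|\mathcal{P}|+\sqrt{\varepsilon_M})$.

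Piece (C) is the genuine propagation-of-chaos term. By exchangeability of the particle system, all $X_t^{\mathcal{P},M,i}$ share the same law, so $\frac{1}{N}\sum_i\mathbb{E}\varphi_\delta(X_t^{\mathcal{P},M,i}-x)=\mathbb{E}\varphi_\delta(X_t^{\mathcal{P},M,1}-x)$. I would then couple $X_t^{\mathcal{P},M,1}$ with an i.i.d.\ copy $\bar X_t^1$ of $X_t^{\mathcal{P},M}$ by driving them with the same Poisson noise and Gaussian (in the spirit of the coupling in Section 2.7), use the Lipschitz dependence of $b,c$ on the $W_1$-argument (Hypothesis 2.1), and close a Gr\"onwall inequality with the Fournier--Guillin rate $\mathbb{E}\,W_1(\widehat{\rho}(\overrightarrow{\bm X}^{\mathcal{P},M}_s),\rho^{\mathcal{P},M}_s)\leq CV_N$ for the empirical measure of the $\bar X^i$'s. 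This gives $\mathbb{E}|X_t^{\mathcal{P},M,1}-\bar X_t^1|\leq CV_N$ and therefore $|(C)|\leq\|\nabla\varphi_\delta\|_\infty\cdot CV_N\leq C\delta^{-(d+1)}V_N$. Substituting $\delta=(|\mathcal{P}|+\sqrt{\varepsilon_M})^{1/(d+3)}$ and $V_N\leq|\mathcal{P}|+\sqrt{\varepsilon_M}$ makes the three bounds equal to $O((|\mathcal{P}|+\sqrt{\varepsilon_M})^{2/(d+3)})$, which is (i).

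For (ii), the Romberg combination $2\varphi_{\delta/\sqrt 2}-\varphi_\delta$ is built so that the leading $\delta^2$ coefficient of the convolution Taylor expansion cancels: with $p_t\in C^4$ (granted by Theorem 2.1 under $t>8d(4+d)/\theta$), one checks $[2(p_t*\varphi_{\delta/\sqrt 2})-(p_t*\varphi_\delta)](x)-p_t(x)=O(\delta^4)$ using that the Gaussian fourth-moment tensor is $\delta^4(\delta_{ij}\delta_{kl}+\delta_{ik}\delta_{jl}+\delta_{il}\delta_{jk})$ and arithmetic. So $|(A)|$ improves to $O(\delta^4)$, while (B) and (C) are bounded exactly as before (the Lipschitz constant of the combined kernel is still $O(\delta^{-(d+1)})$). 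Choosing $\delta=(|\mathcal{P}|+\sqrt{\varepsilon_M})^{1/(d+5)}$ balances everything at $O((|\mathcal{P}|+\sqrt{\varepsilon_M})^{4/(d+5)})$.

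The hardest step is piece (C): establishing $\mathbb{E}|X_t^{\mathcal{P},M,1}-\bar X_t^1|\leq CV_N$ with $C$ uniform in $\mathcal{P}$ and $M$ requires carefully propagating the $W_1$-Lipschitz estimates of $b$ and $c$ through the compound-Poisson representation (\ref{fictiveshock*}) and invoking the Fournier--Guillin empirical-measure rate on the truncated jump intensity. Everything else reduces to a balance-of-exponents calculation once (A), (B), (C) are in hand.
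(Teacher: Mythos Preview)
Your decomposition into the three pieces (A), (B), (C) and the balancing of exponents match the paper's proof exactly. The execution differs in minor ways. For (A), the paper invokes Corollary~3.5.1 (which uses the Malliavin integration-by-parts representation $p_t(x)=\mathbb{E}[\partial^\beta\prod_j\mathbbm{1}_{[0,\infty)}(X_{t,j}-x_j)]$ together with Lemma~3.5(A)(i)) rather than Taylor-expanding $p_t$ directly; your route is valid but tacitly needs $\|D^2 p_t\|_\infty<\infty$, which follows from the IbP framework though it is not stated explicitly in Theorem~2.1. For (B), the paper uses Lemma~3.10, a direct $W_1$ coupling of $X_t$ and $X_t^{\mathcal{P},M}$, instead of triangulating through $X_t^{\mathcal{P}}$; note that your second leg $W_1(X_t^{\mathcal{P}},X_t^{\mathcal{P},M})$ also requires a coupling, since (\ref{1.3}) and (\ref{truncation}) are driven by Poisson measures with \emph{different} intensities ($\rho^{\mathcal{P}}$ versus $\rho^{\mathcal{P},M}$), so ``direct $L^1$ comparison'' is not quite enough as written. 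For (C), the paper simply cites estimate~(4.6) of Theorem~4.1 in \cite{ref1}, which packages exactly the propagation-of-chaos bound you sketch.
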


\begin{theorem}

We suppose \textbf{Hypothesis 2.1, Hypothesis 2.2, Hypothesis 2.3} and \textbf{Hypothesis 2.4} hold true. We take supplementally $\widetilde{\Delta}$ to be a $d-$dimensional standard Gaussian random variable independent of $\overrightarrow{\bm{X}}_t^{\mathcal{P},M}$. Let $\mathcal{P}$ be a  partition of $[0,T]$ with $\vert\mathcal{P}\vert\leq1$,  and let $M\in\mathbb{N}$ with $\varepsilon_M\leq 1$ and $\vert\bar{c}(z)\vert^2\mathbbm{1}_{\{\vert z\vert>M\}}\leq 1$. For any $\varepsilon>0$,  for every measurable and bounded function $f$, when $t>\frac{8d}{\theta}(\frac{16}{\varepsilon}+1)$,  we have the followings.

$i)$ We take \begin{eqnarray*}\delta=(\vert\mathcal{P}\vert+\varepsilon_M)^{\frac{1}{2}(1-\varepsilon^\prime)}\quad \text{and take}\ N\ \text{such that}\quad V_N\leq(\vert\mathcal{P}\vert+\varepsilon_M)^{\frac{d+3}{2}(1-\varepsilon^{\prime\prime})},\end{eqnarray*}with $\varepsilon^\prime=\frac{\varepsilon}{2-\varepsilon}$ and $\varepsilon^{\prime\prime}=\frac{(d+5)\varepsilon-2\varepsilon^2}{(d+3)(2-\varepsilon)}$.
Then
\begin{eqnarray}
\int_{\mathbb{R}^d}f(x)p_t(x)dx
&=&\frac{1}{N}\sum_{i=1}^N\mathbb{E}f(X_t^{\mathcal{P},M,i}+\delta\widetilde{\Delta})+\Vert f\Vert_\infty\times \mathcal{O}((\vert\mathcal{P}\vert+\sqrt{\varepsilon_M})^{1-\varepsilon}).\label{004}
\end{eqnarray}

$ii)$ \textbf{(Romberg)} We take \begin{eqnarray*}\delta=(\vert\mathcal{P}\vert+\varepsilon_M)^{\frac{1}{4}(1-\varepsilon_\prime)}\quad \text{and take}\ N\ \text{such that}\quad V_N\leq(\vert\mathcal{P}\vert+\varepsilon_M)^{\frac{d+5}{4}(1-\varepsilon_{\prime\prime})},\end{eqnarray*}with $\varepsilon_\prime=\frac{\varepsilon^2}{2-\varepsilon}$ and $\varepsilon_{\prime\prime}=\frac{8\varepsilon+(d-3)\varepsilon^2}{(d+5)(2-\varepsilon)}$.
Then
\begin{eqnarray}
\int_{\mathbb{R}^d}f(x)p_t(x)dx
&=&\frac{2}{N}\sum_{i=1}^N\mathbb{E}f(X^{\mathcal{P},M,i}_t+\frac{\delta}{\sqrt{2}}\widetilde{\Delta})-\frac{1}{N}\sum_{i=1}^N\mathbb{E}f(X^{\mathcal{P},M,i}_t+\delta\widetilde{\Delta})+\Vert f\Vert_\infty\times \mathcal{O}((\vert\mathcal{P}\vert+\sqrt{\varepsilon_M})^{1-\varepsilon}).\quad\quad\label{004*}
\end{eqnarray}

\end{theorem}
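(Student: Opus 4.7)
I would prove Theorem 2.4 by a three-term decomposition of the error, bounding each term separately with tools already developed in the paper. Write $\int_{\mathbb{R}^d} f(x)p_t(x)\,dx = \mathbb{E}f(X_t)$ and split
\begin{align*}
\mathbb{E}f(X_t) - \tfrac{1}{N}\sum_{i=1}^{N}\mathbb{E}f(X_t^{\mathcal{P},M,i}+\delta\widetilde{\Delta}) &= A + B + C,\\
A &:= \mathbb{E}f(X_t) - \mathbb{E}f(X_t+\delta\widetilde{\Delta}),\\
B &:= \mathbb{E}f(X_t+\delta\widetilde{\Delta}) - \mathbb{E}f(X_t^{\mathcal{P},M}+\delta\widetilde{\Delta}),\\
C &:= \mathbb{E}f(X_t^{\mathcal{P},M}+\delta\widetilde{\Delta}) - \tfrac{1}{N}\sum_{i=1}^{N}\mathbb{E}f(X_t^{\mathcal{P},M,i}+\delta\widetilde{\Delta}).
\end{align*}
Here $A$ is the Gaussian smoothing bias, $B$ is the truncated Euler/jump-truncation error, and $C$ is the particle fluctuation.

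For $A$, since $\widetilde{\Delta}$ is independent of $X_t$, we have $\mathbb{E}f(X_t+\delta\widetilde{\Delta}) = \int f\cdot(p_t\ast\varphi_\delta)$ by Theorem 2.1. A second-order Taylor expansion, combined with $\int y\,\varphi_\delta(y)\,dy=0$ and $\int |y|^2\varphi_\delta(y)\,dy = d\delta^2$, gives $\lVert p_t - p_t\ast\varphi_\delta\rVert_{L^1}\le \tfrac12 d\delta^2\,\lVert \nabla^2 p_t\rVert_{L^1}$, hence $|A|\le C\lVert f\rVert_\infty\delta^2$. The $L^1$ bound on $\nabla^2 p_t$ (uniform in the relevant parameters) is extracted from the Malliavin integration-by-parts representation of $p_t$ already used in Theorem 2.1, which requires $t$ to exceed the threshold stated in Theorem 2.4. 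For the Romberg variant $(ii)$, the coefficient of $\delta^2$ in $p_t\ast\varphi_\delta - p_t$ scales exactly as $\delta^2$, so the linear combination $2\,p_t\ast\varphi_{\delta/\sqrt{2}}-p_t\ast\varphi_\delta$ kills the $O(\delta^2)$ term, leaving $|A_{\mathrm{Romberg}}|\le C\lVert f\rVert_\infty\delta^4$ (now requiring $\lVert \nabla^4 p_t\rVert_{L^1}<\infty$, hence the stronger threshold on $t$). For $B$, set $g(x):=\mathbb{E}f(x+\delta\widetilde{\Delta})$; then $\lVert g\rVert_\infty\le\lVert f\rVert_\infty$, so by the definition of $d_{TV}$ and Theorem 2.2,
\[
|B| = |\mathbb{E}g(X_t)-\mathbb{E}g(X_t^{\mathcal{P},M})| \le \lVert f\rVert_\infty\,d_{TV}(X_t,X_t^{\mathcal{P},M}) \le C\lVert f\rVert_\infty(\sqrt{\varepsilon_M}+|\mathcal{P}|)^{1-\varepsilon}.
\]

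For $C$, the same $g$ is Lipschitz with $\mathrm{Lip}(g)\le \lVert f\rVert_\infty\lVert\nabla\varphi_\delta\rVert_{L^1}\le C\lVert f\rVert_\infty/\delta$. Recognizing
\[
\tfrac{1}{N}\sum_{i=1}^{N}\mathbb{E}f(X_t^{\mathcal{P},M,i}+\delta\widetilde{\Delta}) = \mathbb{E}\!\int g\,d\widehat{\rho}(\overrightarrow{\bm{X}}_t^{\mathcal{P},M}),\qquad \mathbb{E}f(X_t^{\mathcal{P},M}+\delta\widetilde{\Delta}) = \int g\,d\rho_t^{\mathcal{P},M},
\]
Kantorovich duality gives $|C|\le \mathrm{Lip}(g)\cdot \mathbb{E}\bigl[W_1(\widehat{\rho}(\overrightarrow{\bm{X}}_t^{\mathcal{P},M}),\rho_t^{\mathcal{P},M})\bigr]\le C\lVert f\rVert_\infty V_N/\delta$, using the standard two-step bound $\mathbb{E}[W_1(\widehat{\rho}(\overrightarrow{\bm{X}}_t^{\mathcal{P},M}),\rho_t^{\mathcal{P},M})]\le CV_N$ obtained by coupling the particle system to $N$ iid copies of $X_t^{\mathcal{P},M}$ (propagation of chaos, using the $W_1$-Lipschitz dependence of $b,c$ on the measure) and applying the Fournier--Guillin rate $V_N$ to the iid empirical measure; the moment bounds on $X_t^{\mathcal{P},M}$ uniform in $\mathcal{P},M$ come from Hypothesis 2.1.

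Finally, insert the choices of $\delta,N$. In case $(i)$, $\delta^2 = (|\mathcal{P}|+\varepsilon_M)^{1-\varepsilon'}$ and $V_N/\delta\le (|\mathcal{P}|+\varepsilon_M)^{(d+3)(1-\varepsilon'')/2 - (1-\varepsilon')/2}$; the values $\varepsilon'=\varepsilon/(2-\varepsilon)$ and $\varepsilon''=((d+5)\varepsilon-2\varepsilon^2)/((d+3)(2-\varepsilon))$ are tuned precisely so that both $\delta^2$ and $V_N/\delta$ are dominated by $(\sqrt{\varepsilon_M}+|\mathcal{P}|)^{1-\varepsilon}$, matching the $B$ bound. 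Case $(ii)$ is identical with $\delta^4$ in place of $\delta^2$ and the corresponding balance with $\varepsilon_\prime,\varepsilon_{\prime\prime}$. The main obstacle I would expect is the propagation-of-chaos/Fournier--Guillin estimate $\mathbb{E}[W_1(\widehat{\rho}(\overrightarrow{\bm{X}}_t^{\mathcal{P},M}),\rho_t^{\mathcal{P},M})]\le CV_N$ uniformly in $\mathcal{P}$ and $M$, since the particle interaction is both through $b,c$ and through the Poisson intensity, requiring a careful Grönwall argument on the coupling error on top of the Lipschitz regularity in $W_1$ from Hypothesis 2.1; the second technical point is obtaining $L^1$ bounds on the derivatives of $p_t$ rather than the $L^\infty$ bounds that Theorem 2.1 provides directly, which I would do by combining the Malliavin IBP formula with the tail estimates of $X_t$ inherited from the moment hypothesis on $\rho_0$.
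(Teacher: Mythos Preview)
Your decomposition is correct and in fact slightly cleaner than the paper's. The difference is in the order of the first two operations. The paper writes
\[
\mathbb{E}f(X_t)-\mathbb{E}f(X_t^{\mathcal P,M})\;+\;\mathbb{E}f(X_t^{\mathcal P,M})-\mathbb{E}f_\delta(X_t^{\mathcal P,M})\;+\;\text{(particle term)},
\]
i.e.\ it first uses Theorem~2.2 to pass from $X_t$ to $X_t^{\mathcal P,M}$, and \emph{then} regularizes at the level of $X_t^{\mathcal P,M}$. Because the Euler scheme has no uniform non-degeneracy (the tangent flow is not invertible, cf.\ the remark after Lemma~3.8), the paper cannot use Lemma~3.5(A) for this second step and must invoke the localized regularization Lemma~3.5(B)(iii), comparing $F=x_t^{\mathcal P,M}$ with the non-degenerate $G=x_t$; this is where the parameters $\eta,\rho,q$ and the delicate optimisation enter. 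You instead regularize first, at the level of $X_t$, where non-degeneracy \emph{is} available; your bound $|A|\le C\|f\|_\infty\delta^2$ is then nothing but Lemma~3.5(A)(i) with $\beta=\emptyset$ and $F=x_t$, so your detour through $\|\nabla^2 p_t\|_{L^1}$ is unnecessary and the obstacle you flag (extracting $L^1$ rather than $L^\infty$ bounds on derivatives of $p_t$) simply does not arise. Your $B$ and $C$ are then the paper's Step~1 and Step~3, unchanged.

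Two side remarks. Your Lipschitz estimate $\mathrm{Lip}(f_\delta)\le C\|f\|_\infty/\delta$ (from $\|\nabla\varphi_\delta\|_{L^1}=C/\delta$) is sharper than the bound $\|\nabla f_\delta\|_\infty\le C\|f\|_\infty/\delta^{d+1}$ the paper uses in Step~3; with the theorem's prescribed $N$ your $V_N/\delta$ is automatically dominated since $\delta\le 1$, so the conclusion follows a fortiori. For the particle term you sketch propagation of chaos plus Fournier--Guillin by hand; the paper shortcuts this by citing estimate~(4.6) of Theorem~4.1 in~\cite{ref1}, which already packages both ingredients and is uniform in $\mathcal P,M$.
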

\begin{remark}
In the case $\theta=\infty$, the results in  \textbf{Theorem 2.3} and \textbf{Theorem 2.4} hold for every $0<t\leq T$.
\end{remark}\begin{remark}We remark that we have determined $\delta, N$, and we obtain an explicit formula to simulate the density function $p_t(x)$, which is a function solution of the analytical equation (\ref{1.4}). We also give the error of this simulation scheme explicitly. We notice that (\ref{003*}), the scheme based on Romberg method, gives a faster speed of convergence than (\ref{003}): we have the power $\frac{4}{d+5}>\frac{2}{d+3}$.
\end{remark}\begin{remark}
We mention that we obtain the results of \textbf{Theorem 2.3}  directly without using the previous estimates (\textbf{Theorem 2.2}), but  the speed of convergence  depends on the dimension $d$. So when $d$ is large, the speed of convergence is very slow. However for \textbf{Theorem 2.4},  we need to use the previous estimates \textbf{Theorem 2.2} to obtain (\ref{004}). The advantage of considering (\ref{004}) is that the speed of convergence no longer depends on the dimension $d$. So we keep the speed of convergence even for large dimension. Finally, (\ref{004*}) is a better simulation scheme in the sense that the numbers of particles $N$  we need is smaller than the one in (\ref{004}) and $\delta$ is larger than the one in (\ref{004}). We also stress that the speed of convergence in (\ref{004}) and (\ref{004*}) is $(\vert\mathcal{P}\vert+\sqrt{\varepsilon_M})^{1-\varepsilon}$, the same as in \textbf{Theorem 2.2} (\ref{002*}) for the truncated Euler scheme.
\end{remark}

The proof of this theorem will  be given in Section 3.3
by using Malliavin integration by parts techniques.

\bigskip\bigskip

\subsection{Some examples}
We give some typical examples to illustrate our main results.

\textbf{Example 1}
We take $h=1$ so the measure $\mu$ is the Lebesgue measure. We consider two types of behaviour for $c$.

\textbf{i) Exponential decay}
We assume that $\vert\bar{c}(z)\vert^2=e^{-a_1\vert z\vert^p}$ and $\underline{c}(z)=e^{-a_2\vert z\vert^p}$ with some constants $0<a_1\leq a_2$, $p>0$. We only check \textbf{Hypothesis 2.4} here. We have
\[
\nu\{\underline{c}>\frac{1}{u}\}= \nu\{\vert z\vert<(\frac{\ln u}{a_2})^{\frac{1}{p}}\}\geq\frac{r_d}{2}(\frac{\ln (u-1)}{a_2})^{\frac{d}{p}},
\]with $r_d$ the volume of the unit ball in $\mathbb{R}^d$, so that 
\[\frac{1}{\ln u}\nu\{\underline{c}>\frac{1}{u}\}\geq \frac{r_d}{2(a_2)^{\frac{d}{p}}}\frac{(\ln (u-1))^{\frac{d}{p}}}{\ln u}.\]
We notice that $\theta=0$ when $p>d$; $\theta=\infty$ when $0<p<d$; and $\theta=\frac{r_d}{2a_2}$ when $p=d$. Therefore, when $p>d$, we can say nothing in \textbf{Theorem 2.1} and \textbf{Theorem 2.2}; when $0<p<d$, all the results in \textbf{Theorem 2.1} and \textbf{Theorem 2.2} are true for every $0<t\leq T$; and when $p=d$, (\ref{001}) holds true for $t>\frac{8d(3l+2)a_2}{r_d}$, (\ref{002}) and (\ref{002*}) hold true for $t>\frac{16da_2}{r_d}(\frac{2}{\varepsilon}+1)$.

\textbf{ii) Polynomial decay}
We assume that $\vert\bar{c}(z)\vert^2=\frac{a_1}{1+\vert z\vert^p}$ and $\underline{c}(z)=\frac{a_2}{1+\vert z\vert^p}$ for some constants $0<a_2\leq a_1$ and $p>d$. Then
\[
\nu\{\underline{c}>\frac{1}{u}\}= \nu\{\vert z\vert<(a_2u-1)^{\frac{1}{p}}\}\geq\frac{r_d}{2}(a_2(u-1)-1)^{\frac{d}{p}},
\]so that 
\[\frac{1}{\ln u}\nu\{\underline{c}>\frac{1}{u}\}\geq \frac{r_d}{2}\frac{(a_2(u-1)-1)^{\frac{d}{p}}}{\ln u}.\]We notice that in this case, $\theta=\infty$. Thus, all the results in \textbf{Theorem 2.1} and \textbf{Theorem 2.2} holds for every $0<t\leq T$.

\bigskip

\textbf{Example 2}
We consider the ($1-$dimensional) truncated $\alpha-$stable  process: 
$X_t=X_0+\int_0^t\sigma(X_{r-})dU_r.$
Here $(U_t)_{t\geq0}$ is a (pure jump) L\'{e}vy process with intensity measure \[\widetilde{\mu}(dz)=\mathbbm{1}_{\{\vert z\vert\leq1\}}\frac{1}{\vert z\vert^{1+\alpha}}dz,\quad 0\leq \alpha<1.\] We assume that $\sigma\in C_b^\infty(\mathbb{R})$, $0<\underline{\sigma}\leq\sigma(x)\leq\bar{\sigma}$ and $-1<\underline{a}\leq\sigma'(x)\leq\bar{\sigma},\ \forall x\in\mathbb{R}$, for some universal constants $\bar{\sigma},\underline{\sigma},\underline{a}$, where $\sigma'$ is the differential of $\sigma$ in $x$.  Then by a change of variable $z\mapsto\frac{1}{z}$, we come back to the setting of this paper with $c(r,v,z,x,\rho)=\sigma(x)\times\frac{1}{z}$ and ${\mu}(dz)=\mathbbm{1}_{\{\vert z\vert\geq1\}}\frac{1}{\vert z\vert^{1-\alpha}}dz$.  In this case, $\underline{c}(z)=\underline{\sigma}\times\frac{1}{\vert z\vert^4}$, then
\[\frac{1}{\ln u}\nu\{\underline{c}>\frac{1}{u}\}\geq\frac{1}{\ln u}\int_1^{(\underline{\sigma}(u-1))^{\frac{1}{4}}}\frac{1}{\vert z\vert^{1-\alpha}}dz=\frac{(\underline{\sigma}(u-1))^{\frac{\alpha}{4}}-1}{\alpha\ln u},\]so that $\theta=\infty$. Thus, all the results in \textbf{Theorem 2.1} and \textbf{Theorem 2.2} hold for every $0<t\leq T$.

\bigskip\bigskip

\subsection{Preliminaries: coupling}
Before moving on to the next section, we make some preliminary computations here.
For some technical reasons, besides the truncated Euler scheme (\ref{truncation}), we also consider the truncation of the original equation (\ref{1.1})  as follows (with $a^M_T$, $\Delta$ and $c_M$ defined in Section 2.4).
\begin{eqnarray}
X^{M}_{t}&=&X_0+a^M_{T}\Delta+\int_{0}^{t}b(r,X^{M}_{r},\rho_{r})dr  \nonumber\\
&+&\int_{0}^{t}\int_{\mathbb{R}^d\times \mathbb{R}^d}{c}_M(r,v,z,X^{M}_{r-},\rho_{r-})N_{\rho_{r-}}(dv,dz,dr).  \label{truncationM}
\end{eqnarray} We notice that we keep $\rho_{r}$ (the law of $X_r$) instead of taking $\rho_{r}^M$ (the law of $X^M_r$) to simplify the calculation below, so the equation (\ref{truncationM}) is just an intermediate equation (which is not used for simulation).

We notice that the jump's parts of $X^{\mathcal{P},M}_{t}$ and  $X^{M}_{t}$ solutions of (\ref{truncation}), (\ref{truncationM})  are defined with respect to different Poisson point measures (on different probability spaces), so it is not possible to estimate the $L^2$ distance between them directly (we need to estimate the $L^2$ distance later in the proof of \textbf{Lemma 3.9}). To overcome this difficulty, we use similar
equations  driven by the same Poisson point measure. This is done by a coupling procedure. In this section, we make a coupling argument to construct $x_{t},x_{t}^{\mathcal{P}}$ and $x_{t}^{%
\mathcal{P},M}$ which have the same law as $X_{t},X_{t}^{\mathcal{P}}$ and $%
X_{t}^{\mathcal{P},M}$ but are defined on the same probability space and
verify equations driven by the same Poisson point measure.

We remark that the basic distance which appears in our framework is $W_1$ (see (\ref{W1})). However for technical reasons, we need to estimate the distance $W_{2+\varepsilon_\ast}$ (defined immediately below) for some small $\varepsilon_\ast>0$. This is because we need $L^2$ estimate in \textbf{Lemma 3.9} and we have to use the H$\ddot{o}$lder inequality with conjugates $1+\frac{\varepsilon_\ast}{2}$ and $\frac{2+\varepsilon_\ast}{\varepsilon_\ast}$. So now we take $\varepsilon_{\ast}>0$ which is small enough. For $\rho_1,\rho_2\in\mathcal{P}_{2+\varepsilon_{\ast}}(\mathbb{R}^d)$, we denote the Wasserstein distance of order $2+\varepsilon_{\ast}$ by
\[W_{2+\varepsilon_{\ast}}(\rho_1,\rho_2)=\inf_{\pi\in\Pi(\rho_1,\rho_2)}\big\{(\int_{\mathbb{R}^d\times\mathbb{R}^d}\vert x-y\vert^{2+\varepsilon_{\ast}}\pi(dx,dy))^{\frac{1}{2+\varepsilon_{\ast}}}\big\},\]where $\Pi(\rho_1,\rho_2)$ is the set of probability measures on $\mathbb{R}^d\times\mathbb{R}^d$ with marginals $\rho_1$ and $\rho_2$. Some basic properties of $W_p, p\geq1$ can be found in $\cite{ref19}$ and $\cite{ref22}$ for example, and we mention that $W_1(\rho_1,\rho_2)\leq W_{2+\varepsilon_{\ast}}(\rho_1,\rho_2)$. 

Now we make the optimal coupling in $W_{2+\varepsilon_{\ast}}$ distance between $X^{\mathcal{P},M}_{\tau(t)-}$  and $X_{t-}$. We recall that $\rho^{\mathcal{P},M}_{\tau(t)-}$ is the law of $X^{\mathcal{P},M}_{\tau(t)-}$  and $\rho_{t-}$ is the law of  $X_{t-}$. For every partition $\mathcal{P}$, $M\in\mathbb{N}$ and time $0<t\leq T$, one can easily check that $\rho^{\mathcal{P},M}_{\tau(t)-}$  and $\rho_{t-}$ both belong to $\mathcal{P}_{2+\varepsilon_{\ast}}(\mathbb{R}^d)$. This is a consequence of \textbf{Hypothesis 2.1} and of (\ref{BurkN}) with \[\Phi(r,v,z,\omega,\rho)={c}_M(\tau(r),v,z,X^{\mathcal{P},M}_{\tau(r)-},\rho^{\mathcal{P},M}_{\tau(r)-})\] and with \[\Phi(r,v,z,\omega,\rho)={c}(r,v,z,X_{r-},\rho_{r-}).\] 
Then we take   $\Pi^{\mathcal{P},M}_t(dv_1,dv_2)$ to be the optimal $W_{2+\varepsilon_{\ast}}-$coupling of $\rho^{\mathcal{P},M}_{\tau(t)-}(dv_1)$ and $\rho_{t-}(dv_2)$. So we  have \begin{eqnarray*}
(W_{2+\varepsilon_{\ast}}(\rho^{\mathcal{P},M}_{\tau(t)-},\rho_{t-}))^{2+\varepsilon_{\ast}}&=&\int_{\mathbb{R}^d\times\mathbb{R}^d}\vert v_1-v_2\vert^{2+\varepsilon_{\ast}}\Pi^{\mathcal{P},M}_t(dv_1,dv_2).\\
\end{eqnarray*}
We will need the representation of $\Pi^{\mathcal{P},M}_t(dv_1,dv_2)$ by means of the Lebesgue measure $dw$ on $[0,1]$. This will be done by using the following lemma.
\begin{lemma}
There exists a measurable map $\Phi:[0,1)\times\mathcal{P}_1(\mathbb{R}^d)\rightarrow\mathbb{R}^d$ such that for any $\rho\in\mathcal{P}_1(\mathbb{R}^d)$, any bounded and measurable function $\phi:\mathbb{R}^d\rightarrow\mathbb{R}$, we have
\[\int_0^1\phi(\Phi(w,\rho))dw=\int_{\mathbb{R}^d}\phi(x)\rho(dx).\]
\end{lemma}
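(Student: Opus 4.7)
The plan is to reduce to the one-dimensional situation via a Borel isomorphism and then apply the standard quantile (inverse c.d.f.) transform.

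\textbf{Step 1 (Borel reduction).} Fix once and for all a Borel isomorphism $\eta : \mathbb{R}^d \to [0,1)$; such a map exists because $\mathbb{R}^d$ and $[0,1)$ are both uncountable standard Borel spaces. The induced pushforward $\rho \mapsto \eta_{\ast}\rho$ sends $\mathcal{P}_1(\mathbb{R}^d)$ into $\mathcal{P}([0,1))$ and is measurable, since for every Borel $B\subset[0,1)$ the map $\rho \mapsto (\eta_{\ast}\rho)(B) = \rho(\eta^{-1}(B))$ is measurable by the very definition of the Borel $\sigma$-algebra on $\mathcal{P}_1(\mathbb{R}^d)$.

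\textbf{Step 2 (Quantile transform).} For $\tilde{\rho}\in\mathcal{P}([0,1))$ set
\[
F_{\tilde{\rho}}(x) := \tilde{\rho}([0,x]),\qquad Q_{\tilde{\rho}}(w) := \inf\{x\in[0,1):F_{\tilde{\rho}}(x)\geq w\}.
\]
It is classical that if $W$ is uniform on $[0,1)$ then $Q_{\tilde{\rho}}(W)$ has law $\tilde{\rho}$. Define
\[
\Phi(w,\rho) := \eta^{-1}\bigl(Q_{\eta_{\ast}\rho}(w)\bigr).
\]
Then for any bounded measurable $\phi:\mathbb{R}^d\to\mathbb{R}$,
\[
\int_0^1 \phi(\Phi(w,\rho))\,dw = \int_{[0,1)}(\phi\circ \eta^{-1})\,d(\eta_{\ast}\rho) = \int_{\mathbb{R}^d}\phi(x)\,\rho(dx),
\]
which is the identity claimed in the lemma.

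\textbf{Step 3 (Joint measurability).} The only delicate point is the joint measurability of $\Phi$ in $(w,\rho)$, for which it suffices to prove that $(w,\tilde{\rho})\mapsto Q_{\tilde{\rho}}(w)$ is jointly Borel. Using right-continuity of $F_{\tilde{\rho}}$ one has, for every $x\in[0,1)$,
\[
\{(w,\tilde{\rho}):Q_{\tilde{\rho}}(w)\leq x\} \;=\; \{(w,\tilde{\rho}):w\leq F_{\tilde{\rho}}(x)\},
\]
and the right-hand side is measurable because $\tilde{\rho}\mapsto F_{\tilde{\rho}}(x)=\tilde{\rho}([0,x])$ is measurable for each fixed $x$. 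Composition with the measurable map $\rho\mapsto \eta_{\ast}\rho$ and the Borel map $\eta^{-1}$ then yields the joint measurability of $\Phi$.

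\textbf{Main obstacle.} The only non-routine step is this measurability verification; everything else is standard probability. The one-dimensional reduction through $\eta$ lets us avoid building a multi-dimensional conditional-quantile (Knothe–Rosenblatt) representation together with its disintegration-theoretic measurability check, keeping the proof elementary.
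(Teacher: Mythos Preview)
Your argument is correct. The reduction to the one-dimensional case via a Borel isomorphism followed by the quantile transform is a clean way to obtain both the representation identity and the joint measurability; the key equivalence $\{Q_{\tilde\rho}(w)\le x\}=\{w\le F_{\tilde\rho}(x)\}$ indeed follows from the right-continuity of $F_{\tilde\rho}$, and the measurability of $\rho\mapsto\eta_\ast\rho$ is immediate from the fact that $\rho\mapsto\rho(A)$ is Borel for every Borel $A\subset\mathbb{R}^d$.

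For comparison: the paper does not prove this lemma at all. It simply records the statement and refers to Carmona--Delarue (reference~\cite{ref24}) for the result. Your proof therefore supplies a self-contained justification that the paper omits. The route taken in \cite{ref24} is essentially the same idea (Borel isomorphism plus one-dimensional inverse c.d.f.), so your approach is in line with the standard argument; the advantage of writing it out, as you do, is that the joint measurability in $(w,\rho)$---which is what the paper actually uses when it builds the coupling maps $(\eta^1_t,\eta^2_t)$---is made explicit rather than left implicit in a citation.
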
 This result is stated in  $\cite{ref24}$ and is useful when we estimate the $L^p$ distance.
We construct $(\eta^1_t(w),\eta^2_t(w))$ which represents $\Pi^{\mathcal{P},M}_t$ in the sense of \textbf{Lemma 2.5}, this means 
\[\int_0^1\phi(\eta^1_t(w),\eta^2_t(w)) dw=\int_{\mathbb{R}^d\times\mathbb{R}^d}\phi(v_1,v_2)\Pi^{\mathcal{P},M}_t(dv_1,dv_2).\]
In particular, this gives for any measurable and bounded function $f:\mathbb{R}^d\rightarrow\mathbb{R}$,
\begin{eqnarray}
&\int_0^1 f(\eta^1_t(w)) dw=\int_{\mathbb{R}^d} f(v_1)\rho^{\mathcal{P},M}_{\tau(t)-}(dv_1),\quad \int_0^1 f(\eta^2_t(w)) dw=\int_{\mathbb{R}^d} f(v_2)\rho_{t-}(dv_2),\nonumber\\
&\int_0^1\vert \eta^1_t(w)-\eta^2_t(w)\vert^{2+\varepsilon_{\ast}} dw=\int_{\mathbb{R}^d\times\mathbb{R}^d}\vert v_1-v_2\vert^{2+\varepsilon_{\ast}}\Pi^{\mathcal{P},M}_t(dv_1,dv_2)=(W_{2+\varepsilon_{\ast}}(\rho^{\mathcal{P},M}_{\tau(t)-},\rho_{t)-}))^{2+\varepsilon_{\ast}}.\quad\label{coupling}
\end{eqnarray}
Now we construct a Poisson point measure $\mathcal{N}(dw,dz,dr)$ on the state space $[0,1]\times\mathbb{R}^d$ with intensity measure $dw\mu(dz)dr$. Then we consider the equations
\begin{eqnarray}
x_{t}&=&X_0+\int_{0}^{t}b(r,x_{r},\rho_{r})dr +\int_{0}^{t}\int_{[0,1]\times \mathbb{R}^d}{c}(r,\eta^2_r(w),z,x_{r-},\rho_{r-})\mathcal{N}(dw,dz,dr),  \label{x}
\end{eqnarray}
\begin{eqnarray}
x^{M}_{t}&=&X_0+a^M_{T}\Delta+\int_{0}^{t}b(r,x^{M}_{r},\rho_{r})dr +\int_{0}^{t}\int_{[0,1]\times \mathbb{R}^d}{c}_M(r,\eta^2_r(w),z,x^{M}_{r-},\rho_{r-})\mathcal{N}(dw,dz,dr),  \label{xM}
\end{eqnarray}
\begin{eqnarray}
x^{\mathcal{P},M}_{t}&=&X_0+a^M_{T}\Delta+\int_{0}^{t}b(\tau(r),x^{\mathcal{P},M}_{\tau(r)},\rho^{\mathcal{P},M}_{\tau(r)})dr  \nonumber\\
&+&\int_{0}^{t}\int_{[0,1]\times \mathbb{R}^d}{c}_M(\tau(r),\eta^1_r(w),z,x^{\mathcal{P},M}_{\tau(r)-},\rho^{\mathcal{P},M}_{\tau(r)-})\mathcal{N}(dw,dz,dr).  \label{xPM}
\end{eqnarray}
One can check by It$\hat{o}$ formula that $x^{\mathcal{P},M}_{t}$ has the same law as $X^{\mathcal{P},M}_{t}$ (solution of (\ref{truncation})), $x^{M}_{t}$ has the same law as $X^{M}_{t}$ (solution of (\ref{truncationM})) and $x_{t}$ has the same law as $X_{t}$ (solution of (\ref{1.3})). 
Then \begin{eqnarray}
(W_{2+\varepsilon_{\ast}}(\rho^{\mathcal{P},M}_{\tau(t)-},\rho_{t-}))^{2+\varepsilon_{\ast}}=(W_{2+\varepsilon_{\ast}}(\mathcal{L}(X^{\mathcal{P},M}_{\tau(t)-}),\mathcal{L}(X_{t-})))^{2+\varepsilon_{\ast}}&=&(W_{2+\varepsilon_{\ast}}(\mathcal{L}(x^{\mathcal{P},M}_{\tau(t)-}),\mathcal{L}(x_{t-})))^{2+\varepsilon_{\ast}}\nonumber\\&\leq& \mathbb{E}\vert x^{\mathcal{P},M}_{\tau(t)-}- x_{t-}\vert^{2+\varepsilon_{\ast}}.\label{Coupling}
\end{eqnarray}

\begin{remark}
We also have  the following consequence  of Burkholder inequality (as a variant of (\ref{BurkN}) and (\ref{Burk})): Let $\bar{\Phi}(r,w,z,\omega,\rho):[0,T]\times[0,1]\times\mathbb{R}^d\times\Omega\times\mathcal{P}_1(\mathbb{R}^d)\rightarrow\mathbb{R}_{+}$ and  $\bar{\varphi}(r,w,\omega,\rho):[0,T]\times[0,1]\times\Omega\times\mathcal{P}_1(\mathbb{R}^d)\rightarrow\mathbb{R}_{+}$ be two non-negative functions. 

$a)$ Then for any $p\geq2$,
\begin{eqnarray}
	&&\mathbb{E}\Big\vert\int_0^t\int_{[0,1]\times\mathbb{R}^d}\bar{\Phi}(r,w,z,\omega,\rho)\mathcal{N}(dw,dz,dr)\Big\vert^p\nonumber\\
	&&\leq C [\mathbb{E}\int_0^t\int_0^1\int_{\mathbb{R}^d}\vert\bar{\Phi}(r,w,z,\omega,\rho)\vert^p\mu(dz) dwdr+\mathbb{E}\int_0^t\int_0^1\vert\int_{\mathbb{R}^d}\vert\bar{\Phi}(r,w,z,\omega,\rho)\vert\mu(dz)\vert^p dwdr\nonumber\\
	&&+\mathbb{E}\int_0^t\int_0^1\vert\int_{\mathbb{R}^d}\vert\bar{\Phi}(r,w,z,\omega,\rho)\vert^2\mu(dz)\vert^{\frac{p}{2}} dwdr], \label{Burk*}
\end{eqnarray}
where $C$ is a constant depending on $p$, $T$.

$b)$ If we have 
\[
\vert\bar{\Phi}(r,w,z,\omega,\rho)\vert\leq\vert\bar{c}(z)\vert\vert\bar{\varphi}(r,w,\omega,\rho)\vert,
\]
then for any $p\geq2$,
\begin{eqnarray}
	\mathbb{E}\Big\vert\int_0^t\int_{[0,1]\times\mathbb{R}^d}\bar{\Phi}(r,w,z,\omega,\rho)\mathcal{N}(dw,dz,dr)\Big\vert^p
	\leq C \mathbb{E}\int_0^t\int_0^1\vert\bar{\varphi}(r,w,\omega,\rho)\vert^p dwdr. \label{BurkM}
\end{eqnarray}
\end{remark}

Then we obtain the following consequence. We recall by (\ref{epsM}) that
$\varepsilon_M=\int_{\{\vert z\vert>M\}}\vert\bar{c}(z)\vert^2\mu(dz)+\vert\int_{\{\vert z\vert>M\}}\bar{c}(z)\mu(dz)\vert^2).$
\begin{lemma}
Assume that the \textbf{Hypothesis 2.1} holds true.
Then there exists a  constant $C$ dependent on $T$ and $\varepsilon_{\ast}$, for every $M$ such that $\varepsilon_M\leq 1$ and $\vert\bar{c}(z)\vert^2\mathbbm{1}_{\{\vert z\vert>M\}}\leq 1$, we have \[i)\quad \mathbb{E}\vert {x}_{t}^{M}-x_{t}\vert^{2+\varepsilon_{\ast}}\leq C\varepsilon_M\rightarrow0.\]
And  for every partition $\mathcal{P}$ with $\vert\mathcal{P}\vert\leq1$, we have
\[ii)\quad\mathbb{E}\vert {x}_{t}^{\mathcal{P},M}-{x}_{t}^{M}\vert^{2+\varepsilon_{\ast}}\leq C(\vert\mathcal{P}\vert+\varepsilon_M),\]
\[iii)\quad \mathbb{E}\vert {x}_{t}^{\mathcal{P},M}-{x}_{t}\vert^{2+\varepsilon_{\ast}}\leq C(\vert\mathcal{P}\vert+\varepsilon_M),\]
\[iv)\quad W_{2+\varepsilon_{\ast}}({x}_{t}^{\mathcal{P},M},{x}_{t})\leq C(\vert\mathcal{P}\vert+\varepsilon_M)^{\frac{1}{2+\varepsilon_{\ast}}}.\]
\end{lemma}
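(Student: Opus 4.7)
Set $p=2+\varepsilon_{\ast}$. The plan is to prove $(i)$ and $(iii)$ directly by applying the Burkholder bounds (\ref{Burk*}) and (\ref{BurkM}) to the difference of the defining equations and closing with Gronwall's lemma. Then $(ii)$ follows from $(i)$ and $(iii)$ by the triangle inequality in $L^{p}$, and $(iv)$ is immediate from $(iii)$ because $(x_t^{\mathcal{P},M},x_t)$ is a particular coupling of its marginals, so $W_{2+\varepsilon_{\ast}}(x_t^{\mathcal{P},M},x_t)^{2+\varepsilon_{\ast}}\leq \mathbb{E}\vert x_t^{\mathcal{P},M}-x_t\vert^{2+\varepsilon_{\ast}}$.

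For $(i)$, I would subtract (\ref{x}) from (\ref{xM}). The Gaussian term contributes $\mathbb{E}\vert a^M_T\Delta\vert^{p}\leq C(a^M_T)^p$, and since $(a^M_T)^2\leq T\int_{\{\vert z\vert>M\}}\vert\bar c(z)\vert^2\mu(dz)\leq T\varepsilon_M\leq T$, this gives $(a^M_T)^p\leq C\varepsilon_M$. The jump integrand is split as
\[
[c_M(r,\eta^2_r(w),z,x^M_{r-},\rho_{r-})-c(r,\eta^2_r(w),z,x^M_{r-},\rho_{r-})]+[c(r,\eta^2_r(w),z,x^M_{r-},\rho_{r-})-c(r,\eta^2_r(w),z,x_{r-},\rho_{r-})].
\]
The first bracket equals $-c(\ldots)\mathbbm{1}_{\{\vert z\vert>M\}}$, of modulus $\leq\bar c(z)\mathbbm{1}_{\{\vert z\vert>M\}}$; applying (\ref{Burk*}) produces the three quantities $\int_{\{\vert z\vert>M\}}\vert\bar c\vert^p\mu(dz)$, $(\int_{\{\vert z\vert>M\}}\bar c\,\mu(dz))^p$ and $(\int_{\{\vert z\vert>M\}}\vert\bar c\vert^2\mu(dz))^{p/2}$, each bounded by $C\varepsilon_M$ thanks to the hypotheses $\varepsilon_M\leq 1$ and $\vert\bar c(z)\vert^2\mathbbm{1}_{\{\vert z\vert>M\}}\leq 1$ (the latter forcing $\vert\bar c\vert^p\mathbbm{1}_{\{\vert z\vert>M\}}\leq\vert\bar c\vert^2\mathbbm{1}_{\{\vert z\vert>M\}}$ for $p\geq 2$). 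The second bracket has modulus $\leq\bar c(z)\vert x^M_{r-}-x_{r-}\vert$, so (\ref{BurkM}) contributes $C\int_0^t\mathbb{E}\vert x^M_r-x_r\vert^p\,dr$; together with the Lipschitz drift term, Gronwall closes the estimate.

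For $(iii)$, I would subtract (\ref{x}) from (\ref{xPM}) and decompose the Poisson integrand by add-and-subtract as
\[
[c(\tau(r),\eta^1_r(w),z,x^{\mathcal{P},M}_{\tau(r)-},\rho^{\mathcal{P},M}_{\tau(r)-})-c(r,\eta^2_r(w),z,x_{r-},\rho_{r-})]\mathbbm{1}_{\{\vert z\vert\leq M\}}-c(r,\eta^2_r(w),z,x_{r-},\rho_{r-})\mathbbm{1}_{\{\vert z\vert>M\}}.
\]
The second piece yields $C\varepsilon_M$ exactly as above. The first is controlled, via \textbf{Hypothesis 2.1}, by $\bar c(z)[\vert\mathcal{P}\vert+\vert\eta^1_r(w)-\eta^2_r(w)\vert+\vert x^{\mathcal{P},M}_{\tau(r)-}-x_{r-}\vert+W_1(\rho^{\mathcal{P},M}_{\tau(r)-},\rho_{r-})]$. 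Applying (\ref{BurkM}) together with the crucial coupling identity (\ref{coupling}),
\[
\int_0^1\vert\eta^1_r(w)-\eta^2_r(w)\vert^{p}\,dw=W_{2+\varepsilon_{\ast}}(\rho^{\mathcal{P},M}_{\tau(r)-},\rho_{r-})^p\leq\mathbb{E}\vert x^{\mathcal{P},M}_{\tau(r)-}-x_{r-}\vert^p,
\]
and $W_1\leq W_{2+\varepsilon_{\ast}}$, reduces every contribution to $\vert\mathcal{P}\vert^p$ or $\mathbb{E}\vert x^{\mathcal{P},M}_{\tau(r)-}-x_{r-}\vert^p$. The drift term is handled analogously using the Lipschitz property of $b$. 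To conclude by Gronwall one must pass from $\mathbb{E}\vert x^{\mathcal{P},M}_{\tau(r)-}-x_{r-}\vert^p$ to $\mathbb{E}\vert x^{\mathcal{P},M}_r-x_r\vert^p$, via the short-interval estimate $\mathbb{E}\vert x^{\mathcal{P},M}_r-x^{\mathcal{P},M}_{\tau(r)}\vert^p\leq C\vert\mathcal{P}\vert$, obtained from (\ref{BurkN}) on $[\tau(r),r]$ together with the uniform $L^p$ moment bound for $x^{\mathcal{P},M}$.

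The main obstacle is to make sure the truncation error is genuinely linear in $\varepsilon_M$ and not some smaller power $\varepsilon_M^{p/2}$: this forces careful use of the two assumptions $\varepsilon_M\leq 1$ and $\vert\bar c\vert^2\mathbbm{1}_{\{\vert z\vert>M\}}\leq 1$ to absorb the high-order Burkholder terms. A second, more conceptual point is the need to rewrite the $W_1$-distance between $\rho^{\mathcal{P},M}_{\tau(r)-}$ and $\rho_{r-}$ as a pathwise $L^p$ quantity involving $(x^{\mathcal{P},M}_{\tau(r)-},x_{r-})$, which is precisely the role of the optimal coupling variables $\eta^1_r,\eta^2_r$ constructed just before the lemma.
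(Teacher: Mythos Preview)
Your proposal is correct and follows essentially the same approach as the paper: prove $(i)$ and $(iii)$ via the Burkholder bounds (\ref{Burk*})/(\ref{BurkM}), the coupling identity (\ref{coupling})--(\ref{Coupling}), the short-interval estimate $\mathbb{E}\vert x^{\mathcal{P},M}_r-x^{\mathcal{P},M}_{\tau(r)}\vert^{2+\varepsilon_\ast}\leq C\vert\mathcal{P}\vert$, and Gronwall, then deduce $(ii)$ and $(iv)$. The only cosmetic difference is that the paper places the truncation error $c_M-c$ on the $x^{\mathcal{P},M}$ argument rather than on $x$, but this is immaterial.
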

\begin{proof}We only prove $i)$ and $iii)$, since $ii)$ is a direct consequence of $i)$ and $iii)$, and $iv)$ is an immediate consequence of $iii)$.

\textbf{Proof of $i)$:}    We write $\mathbb{E}\vert x_{t}^{M}-x_{t}\vert^{2+\varepsilon_{\ast}}\leq E_0+E_1+E_2$, where $E_0=\vert a^M_{T}\vert^{2+\varepsilon_{\ast}}\mathbb{E}\vert \Delta\vert^{2+\varepsilon_{\ast}}\leq C\varepsilon_M$, and
\begin{eqnarray*}
E_1=\mathbb{E}\vert\int_{0}^{t} (b(r,x_{r}^{M
},\rho_{r})-b(r,x_{r},\rho_{r}))dr\vert^{2+\varepsilon_{\ast}},
\end{eqnarray*}
\begin{eqnarray*}
E_2&=&\mathbb{E}\vert\int_{0}^{t}\int_{[0,1]\times\mathbb{R}^d}({c}_M(r,\eta^2_r(w),z,x^{M}_{r-},\rho_{r-})-{c}(r,\eta^2_r(w),z,x_{r-},\rho_{r-}))\mathcal{N}(dw,dz,dr)\vert^{2+\varepsilon_{\ast}}
\end{eqnarray*}
Firstly, by \textbf{Hypothesis 2.1}, \begin{eqnarray}
E_1\leq C\int_0^t\mathbb{E}\vert x_{r}^{M}-x_{r}\vert^{2+\varepsilon_{\ast}} dr. \label{E1}
\end{eqnarray}
Then by \textbf{Hypothesis 2.1}, (\ref{Burk*}) with \[\bar{\Phi}(r,w,z,\omega,\rho)=\vert{c}_M(r,\eta^2_r(w),z,x_{r-},\rho_{r-})-{c}(r,\eta^2_r(w),z,x_{r-},\rho_{r-})\vert\] and by (\ref{BurkM}) with \[\bar{\Phi}(r,w,z,\omega,\rho)=\vert{c}_M(r,\eta^2_r(w),z,x^{M}_{r-},\rho_{r-})-{c}_M(r,\eta^2_r(w),z,x_{r-},\rho_{r-})\vert,\]we have \begin{eqnarray}
E_2
&\leq& \mathbb{E}\vert\int_{0}^{t}\int_{[0,1]\times\mathbb{R}^d}({c}_M(r,\eta^2_r(w),z,x_{r-},\rho_{r-})-{c}(r,\eta^2_r(w),z,x_{r-},\rho_{r-}))\mathcal{N}(dw,dz,dr)\vert^{2+\varepsilon_{\ast}}\nonumber\\
&+&\mathbb{E}\vert\int_{0}^{t}\int_{[0,1]\times\mathbb{R}^d}({c}_M(r,\eta^2_r(w),z,x^{M}_{r-},\rho_{r-})-{c}_M(r,\eta^2_r(w),z,x_{r-},\rho_{r-}))\mathcal{N}(dw,dz,dr)\vert^{2+\varepsilon_{\ast}}\nonumber\\
&\leq& C[\int_{\{\vert z\vert>M\}}\vert\bar{c}(z)\vert^{2+\varepsilon_{\ast}}\mu(dz)+\vert\int_{\{\vert z\vert>M\}}\bar{c}(z)\mu(dz)\vert^{2+\varepsilon_{\ast}}+\vert\int_{\{\vert z\vert>M\}}\vert\bar{c}(z)\vert^{2}\mu(dz)\vert^{\frac{2+\varepsilon_{\ast}}{2}}\nonumber\\
&+& \int_0^t\mathbb{E}\vert x_{r}-x_{r}^{M}\vert^{2+\varepsilon_{\ast}} dr]\nonumber\\
&\leq&  C[\varepsilon_M+ \int_0^t\mathbb{E}\vert x_{r}-x_{r}^{M}\vert^{2+\varepsilon_{\ast}} dr]. \label{E2}
\end{eqnarray}
Combining (\ref{E1}) and (\ref{E2}), we have
\begin{eqnarray*}
\mathbb{E}\vert x_{t}^{M}-x_{t}\vert^{2+\varepsilon_{\ast}}&\leq& C[\varepsilon_M+\int_0^t\mathbb{E}\vert x^M_r-x_r\vert^{2+\varepsilon_{\ast}}dr] . 
\end{eqnarray*}
So we conclude by Gronwall lemma.

\bigskip\bigskip

\textbf{Proof of $iii)$}
We write $\mathbb{E}\vert {x}_{t}^{\mathcal{P},M}-{x}_{t}\vert^{2+\varepsilon_{\ast}}\leq C[K_0+K_1+K_2]$, with $K_0=\vert a^M_T\vert^{2+\varepsilon_{\ast}}\mathbb{E}\vert\Delta\vert^{2+\varepsilon_{\ast}}\leq C\varepsilon_M$, and
\[K_1=\mathbb{E}\vert\int_{0}^{t}b(\tau(r),x^{\mathcal{P},M}_{\tau(r)},\rho^{\mathcal{P},M}_{\tau(r)})-b(r,x_{r},\rho_{r})dr\vert^{2+\varepsilon_{\ast}},\]
\[K_2=\mathbb{E}\vert \int_{0}^{t}\int_{[0,1]\times \mathbb{R}^d}{c}_M(\tau(r),\eta^1_r(w),z,x^{\mathcal{P},M}_{\tau(r)-},\rho^{\mathcal{P},M}_{\tau(r)-})-{c}(r,\eta^2_r(w),z,x_{r-},\rho_{r-})\mathcal{N}(dw,dz,dr)\vert^{2+\varepsilon_{\ast}}.\]
Using \textbf{Hypothesis 2.1},
\begin{eqnarray}
K_1&\leq& C[\vert\mathcal{P}\vert^{2+\varepsilon_{\ast}}+\int_{0}^{t}\mathbb{E}\vert x^{\mathcal{P},M}_{\tau(r)}-x_{r}\vert^{2+\varepsilon_{\ast}} dr+\int_0^t(W_1(\rho^{\mathcal{P},M}_{\tau(r)},\rho_{r}))^{2+\varepsilon_{\ast}}dr]\nonumber\\
&\leq&C[\vert\mathcal{P}\vert^{2+\varepsilon_{\ast}}+\int_{0}^{t}\mathbb{E}\vert x^{\mathcal{P},M}_{\tau(r)}-x_{r}\vert^{2+\varepsilon_{\ast}} dr].\label{K1}
\end{eqnarray}
By  \textbf{Hypothesis 2.1}, (\ref{BurkM}) with \[\bar{\Phi}(r,w,z,\omega,\rho)=\vert{c}(\tau(r),\eta^1_r(w),z,x^{\mathcal{P},M}_{\tau(r)-},\rho^{\mathcal{P},M}_{\tau(r)-})-{c}(r,\eta^2_r(w),z,x_{r-},\rho_{r-})\vert\] and by (\ref{Burk*}) with \[\bar{\Phi}(r,w,z,\omega,\rho)=\vert{c}_M(\tau(r),\eta^1_r(w),z,x^{\mathcal{P},M}_{\tau(r)-},\rho^{\mathcal{P},M}_{\tau(r)-})-{c}(\tau(r),\eta^1_r(w),z,x^{\mathcal{P},M}_{\tau(r)-},\rho^{\mathcal{P},M}_{\tau(r)-})\vert,\]we have
\begin{eqnarray}
K_2&\leq&C[\mathbb{E}\vert \int_{0}^{t}\int_{[0,1]\times \mathbb{R}^d}({c}(\tau(r),\eta^1_r(w),z,x^{\mathcal{P},M}_{\tau(r)-},\rho^{\mathcal{P},M}_{\tau(r)-})-{c}(r,\eta^2_r(w),z,x_{r-},\rho_{r-}))\mathcal{N}(dw,dz,dr)\vert^{2+\varepsilon_{\ast}}\nonumber\\
&+&\mathbb{E}\vert \int_{0}^{t}\int_{[0,1]\times \mathbb{R}^d}({c}_M(\tau(r),\eta^1_r(w),z,x^{\mathcal{P},M}_{\tau(r)-},\rho^{\mathcal{P},M}_{\tau(r)-})\nonumber\\
&-&{c}(\tau(r),\eta^1_r(w),z,x^{\mathcal{P},M}_{\tau(r)-},\rho^{\mathcal{P},M}_{\tau(r)-}))\mathcal{N}(dw,dz,dr)\vert^{2+\varepsilon_{\ast}}]\nonumber\\
&\leq&C[\vert\mathcal{P}\vert^{2+\varepsilon_{\ast}}+\int_0^t\int_0^1\vert \eta^1_r(w)-\eta^2_r(w)\vert^{2+\varepsilon_{\ast}} dwdr+\int_{0}^{t}\mathbb{E}\vert x^{\mathcal{P},M}_{\tau(r)}-x_{r}\vert^{2+\varepsilon_{\ast}} dr+\int_0^t(W_1(\rho^{\mathcal{P},M}_{\tau(r)},\rho_{r}))^{2+\varepsilon_{\ast}}dr\nonumber\\
&+&\int_{\{\vert z\vert>M\}}\vert\bar{c}(z)\vert^{2+\varepsilon_{\ast}}\mu(dz)+\vert\int_{\{\vert z\vert>M\}}\bar{c}(z)\mu(dz)\vert^{2+\varepsilon_{\ast}}+\vert\int_{\{\vert z\vert>M\}}\vert\bar{c}(z)\vert^{2}\mu(dz)\vert^{\frac{2+\varepsilon_{\ast}}{2}}]\nonumber\\
&\leq& C[\vert\mathcal{P}\vert^{2+\varepsilon_{\ast}}+\int_{0}^{t}\mathbb{E}\vert x^{\mathcal{P},M}_{\tau(r)}-x_{r}\vert^{2+\varepsilon_{\ast}} dr+\varepsilon_M],\label{K2}
\end{eqnarray}where the last inequality is obtained by (\ref{coupling}), (\ref{Coupling}), and the fact that $W_1$ distance is upper bounded by $W_{2+\varepsilon_\ast}$ distance, and so upper bounded by the $L^{2+\varepsilon_\ast}$ distance.

We notice that by (\ref{BurkM}) with \[\bar{\Phi}(r,w,z,\omega,\rho)={c}_M(\tau(r),\eta^1_r(w),z,x^{\mathcal{P},M}_{\tau(r)-},\rho^{\mathcal{P},M}_{\tau(r)-}),\]we have
\begin{eqnarray}
\mathbb{E}\vert x^{\mathcal{P},M}_{\tau(t)}-x^{\mathcal{P},M}_{t}\vert^{2+\varepsilon_{\ast}} \leq C\vert\mathcal{P}\vert.\label{Eul}
\end{eqnarray}
Combining (\ref{K1}), (\ref{K2}) and (\ref{Eul}), \[\mathbb{E}\vert {x}_{t}^{\mathcal{P},M}-{x}_{t}\vert^{2+\varepsilon_{\ast}}\leq C[K_0+K_1+K_2]\leq C[\vert\mathcal{P}\vert+\int_{0}^{t}\mathbb{E}\vert x^{\mathcal{P},M}_{r}-x_{r}\vert^{2+\varepsilon_{\ast}} dr+\varepsilon_M].\]
So finally, we  conclude by Gronwall lemma.
\end{proof}
We remark that we may represent the jump's parts of the equations (\ref{xM}) and (\ref{xPM}) by means of compound Poisson processes. With all the random variables $(({J}_t^{k})_{\substack{t\in[0,T]\\k\in\mathbb{N}}}, ({Z}_i^{k})_{k,i\in\mathbb{N}}$, $ X_0,\Delta)$ constructed in Section 2.4, we take moreover $(W_i^k)_{k,i\in\mathbb{N}}$ a sequence of independent random variables which are uniformly distributed on $[0,1]$ and independent of $(({J}_t^{k})_{\substack{t\in[0,T]\\k\in\mathbb{N}}}, ({Z}_i^{k})_{k,i\in\mathbb{N}}$, $ X_0,\Delta)$. Then we have
\begin{eqnarray}
x^{M}_{t}&=&X_0+a^M_{T}\Delta+\int_{0}^{t}b(r,x^{M}_{r},\rho_{r})dr  +\sum_{k=1}^{M}\sum_{i=1}^{J^k_t}{c}({T}_i^{k},\eta^2_{T^k_i}({W}_i^k),{Z}_i^{k},x^{M}_{{T}_i^{k}-},\rho_{{T}_i^{k}-}),   \label{xMZ}
\end{eqnarray}
\begin{eqnarray}
x^{\mathcal{P},M}_{t}&=&X_0+a^M_{T}\Delta+\int_{0}^{t}b(\tau(r),x^{\mathcal{P},M}_{\tau(r)},\rho^{\mathcal{P},M}_{\tau(r)})dr  +\sum_{k=1}^{M}\sum_{i=1}^{J^k_t}{c}(\tau({T}_i^{k}),\eta^1_{T^k_i}({W}_i^k),{Z}_i^{k},x^{\mathcal{P},M}_{\tau({T}_i^{k})-},\rho^{\mathcal{P},M}_{\tau({T}_i^{k})-}).  \nonumber\\ \label{xPMZ}
\end{eqnarray}

We recall that the laws of $x_{t}$ and $X_{t}$ coincide, $x^{\mathcal{P},M}_{t}$ has the same law as $X^{\mathcal{P},M}_{t}$, and $x^{M}_{t}$ has the same law as $X^{M}_{t}$. The advantage of considering $x_t$,  $x^M_t$ and $x^{\mathcal{P},M}_t$ is that the jump's parts of them are all defined with respect to the same Poisson point measure, which means that we are able to overcome the problems caused by the "Boltzmann term" (the Poisson point measure depends on the law of the solution). So in the following, instead of dealing with $X_t$,  $X^M_t$ and $X^{\mathcal{P},M}_t$ solutions of (\ref{1.1}),  (\ref{truncationM}) and (\ref{truncation}), we deal with $x_t$,  $x^M_t$ and $x^{\mathcal{P},M}_t$ solutions of (\ref{x}),  (\ref{xMZ}) and (\ref{xPMZ}).

\bigskip\bigskip

\section{Malliavin calculus}
\subsection{Abstract integration by parts framework} %
Here we recall the abstract integration by parts framework  in $\cite{ref2}$. 

We denote $C_p^\infty$ to be the space of smooth functions which, together with all the derivatives, have polynomial growth. We also denote $C_p^q$ to be the space of $q-$times differentiable functions which, together with all the derivatives, have polynomial growth.

We consider a probability space ($\Omega$,$\mathcal F$,$\mathbb{P}$), and a subset $\mathcal S\subset\mathop{\bigcap}\limits_{p=1}^\infty L^p(\Omega;\mathbb{R})$ such that for every $\phi\in C_p^\infty(\mathbb{R}^d)$ and every $F\in\mathcal S^d$, we have $\phi(F)\in\mathcal S$. A typical example of $\mathcal{S}$ is the space of simple functionals, as in the standard Malliavin calculus. Another example is the space of "Malliavin smooth functionals".

Given a separable Hilbert space $\mathcal{H}$, we assume that we have a derivative operator $D: \mathcal S\rightarrow\mathop{\bigcap}\limits_{p=1}^\infty L^p(\Omega;\mathcal{H})$ which is a linear application which satisfies

$a)$ 
\begin{eqnarray}
D_hF:=\langle DF,h\rangle_\mathcal{H}\in\mathcal{S},\ for\ any\ h\in\mathcal{H}, \label{0.a}
\end{eqnarray}

$b)$ $\underline {Chain\ Rule}$: For every $\phi\in C_p^1(\mathbb{R}^d)$ and $F=
(F_1,\cdots,F_d)\in \mathcal S^d$, we have

\begin{align}
  D\phi(F)=\sum_{i=1}^d\partial_i\phi(F)DF_i , \label{0.00}
\end{align}

Since $D_hF\in\mathcal{S}$, we may define by iteration the derivative operator of higher order $D^q:\mathcal S\rightarrow\mathop{\bigcap}\limits_{p=1}^\infty L^p(\Omega;\mathcal{H}^{\otimes q})$ which verifies
$\langle D^qF,\otimes_{i=1}^q h_i\rangle_{\mathcal{H}^{\otimes q}}=D_{h_q}D_{h_{q-1}}\cdots D_{h_1}F$. We also denote $D_{h_1,\cdots,h_q}^qF:=\langle D^qF,\otimes_{i=1}^q h_i\rangle_{\mathcal{H}^{\otimes q}}$,\quad for any $h_1,\cdots,h_q\in\mathcal{H}$. Then, $D_{h_1,\cdots,h_q}^qF=$
$D_{h_q}D_{h_1,\cdots,h_{q-1}}^{q-1}F$ ($q\geq2$).

We notice that since $\mathcal{H}$ is separable, there exists a countable orthonormal base $(e_i)_{i\in\mathbb{N}}$. We denote  \[D_iF=D_{e_i}F=\langle DF,e_i\rangle_{\mathcal{H}}.\] Then \[DF=\sum_{i=1}^{\infty}D_iF\times e_i\quad \text{and}\quad D^qF=\sum_{i_1,\cdots,i_q}D_{i_1,\cdots,i_q}F\times\otimes_{j=1}^qe_j.\]

For $F=(F_1,\cdots,F_d)\in\mathcal{S}^d$, we associate the Malliavin covariance matrix
\begin{eqnarray}
\sigma_F=(\sigma_F^{i,j})_{i,j=1,\cdots,d},\quad \text{with}\quad \sigma_F^{i,j}=\langle DF_i,DF_j\rangle_\mathcal{H}. \label{Mcov}
\end{eqnarray}
And we denote 
\begin{eqnarray}
\Sigma_p(F)=\mathbb{E}(1/ \det\sigma_F)^p.  \label{sigma}
\end{eqnarray}
We say that $F$ is non-degenerated if $\Sigma_p(F)<\infty$, $\forall p\geq1$.

We also assume that we have an Ornstein-Uhlenbeck (divergence) operator $L:\mathcal S\rightarrow\mathcal S$ which is a linear operator satisfying 

$a)$ $\underline {Duality}$: For every $F,G\in\mathcal S$,

\begin{align}
\mathbb{E}\langle DF,DG\rangle_\mathcal{H}=\mathbb{E}(FLG)=\mathbb{E}(GLF),
\label{0.01}
\end{align}

$b)$ $\underline {Chain\ Rule}$: For every $\phi\in C_p^2(\mathbb{R}^d)$ and $F=
(F_1,\cdots,F_d)\in \mathcal S^d$, we have

\begin{align*}
  L\phi(F)=\sum_{i=1}^d\partial_i\phi(F)LF_i-\sum_{i=1}^d\sum_{j=1}^d\partial_i\partial_j\phi(F)\langle DF_i,DF_j\rangle_\mathcal{H}. 
\end{align*}
As an immediate consequence of the duality formula, we know that $L: \mathcal{S}\subset L^2(\Omega)\rightarrow L^2(\Omega)$ is closable.

\begin{definition}
If $D^q: \mathcal{S}\subset L^2(\Omega)\rightarrow L^2(\Omega;\mathcal{H}^{\otimes q})$, $\forall q\geq1$, are closable, then the triplet $(\mathcal{S},D,L)$ is called an IbP (Integration by Parts) framework.
\end{definition}

Now, we introduce the Sobolev norms. For any $l\geq1$, $F\in\mathcal{S}$,
\begin{eqnarray}
\left\vert F\right\vert_{1,l} &=&\sum_{q=1}^{l}\left\vert D^{q}F\right\vert_{\mathcal{H}^{\otimes q}},\quad \left\vert F\right\vert_{l}=\left\vert F\right\vert+\left\vert F\right\vert_{1,l},\label{norm}
\end{eqnarray}

We put $\vert F\vert_{0}=\vert F\vert$, $\vert F\vert_{l}=0$ for $l<0$, and $\vert F\vert_{1,l}=0$ for $l\leq0$. For $F=(F_1,\cdots,F_d)\in\mathcal{S}^d$, we set 
\begin{eqnarray}
\left\vert F\right\vert_{1,l} &=&\sum_{i=1}^{d}\left\vert F_i\right\vert_{1,l},\quad \left\vert F\right\vert_{l}=\sum_{i=1}^{d}\left\vert F_i\right\vert_l,\nonumber
\end{eqnarray}

Moreover, we associate the following norms. For any $l\geq0, p\geq1$,
\begin{eqnarray}
\left\Vert F \right\Vert _{l,p}&=&(\mathbb{E}\left\vert F \right\vert_{l}^{p})^{1/p}, \quad \left\Vert F \right\Vert _{p}=(\mathbb{E}\left\vert F \right\vert^{p})^{1/p} ,\nonumber \\
\left\Vert F\right\Vert _{L,l,p}&=&\left\Vert F\right\Vert _{l,p}+\left\Vert
LF\right\Vert _{l-2,p}. \label{sobnorm}
\end{eqnarray}

With these notations, we have the following lemma from $\cite{ref3}$ (lemma\ 8 and lemma\ 10), which is a consequence of the chain rule.
\begin{lemma}
Let $F\in\mathcal{S}^d$. For every $l\in\mathbb{N},$ if $\phi: \mathbb{R}^d\rightarrow\mathbb{R}$ is a $C^{l}(\mathbb{R}^d)$ function ($l-$times differentiable function), then there is a constant $C_l$ dependent on $l$ such that 
\[
a)\quad \vert\phi(F)\vert_{1,l}\leq\vert\nabla\phi(F)\vert\vert F\vert_{1,l}+C_l\sup_{2\leq\vert\beta\vert\leq l}\vert\partial^\beta\phi(F)\vert\vert F\vert_{1,l-1}^{l}.
\]
If $\phi\in C^{l+2}(\mathbb{R}^d)$, then
\[
b)\quad \vert L\phi(F)\vert_{l}\leq\vert\nabla\phi(F)\vert\vert LF\vert_{l}+C_l\sup_{2\leq\vert\beta\vert\leq l+2}\vert\partial^\beta\phi(F)\vert(1+\vert F\vert_{l+1}^{l+2})(1+\vert LF\vert_{l-1}).
\]
For $l=0$, we have
\[
c)\quad \vert L\phi(F)\vert\leq\vert\nabla\phi(F)\vert\vert LF\vert+\sup_{\vert\beta\vert=2}\vert\partial^\beta\phi(F)\vert\vert F\vert_{1,1}^{2}.
\]
\end{lemma}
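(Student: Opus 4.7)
The plan is to reduce all three inequalities to direct iteration of the chain rules built into the abstract IbP framework: the chain rule (\ref{0.00}) for $D$, its higher-order (Faà di Bruno) consequence obtained by iterating, and the chain rule for $L$ stated in Section 3.1. Throughout, I would use that $D$ obeys the Leibniz rule on products of Hilbert-valued smooth functionals (an immediate consequence of linearity and (\ref{0.a}), applied entry-wise in an orthonormal basis $(e_i)$ of $\mathcal{H}$), together with the sub-multiplicativity $|D_{h_1,\dots,h_q}^q(FG)|\le\sum_{A\sqcup B=[q]}|D^{|A|}F||D^{|B|}G|$ which follows from the same Leibniz rule.

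For part $a)$, I would proceed by induction on $l$. The base case $l=1$ is exactly (\ref{0.00}): $D\phi(F)=\sum_i\partial_i\phi(F)\,DF_i$, giving $|\phi(F)|_{1,1}\le|\nabla\phi(F)||F|_{1,1}$, which is the inequality with an empty $\sup$ term. For the inductive step, apply $D^{l-1}$ to the identity $D\phi(F)=\sum_i\partial_i\phi(F)\,DF_i$ and use the Leibniz rule to expand. Exactly one term from the expansion keeps every derivative on $DF_i$, producing $\sum_i\partial_i\phi(F)\,D^lF_i$, whose norm is bounded by $|\nabla\phi(F)||F|_{1,l}$. Every other term distributes at least one of the $l-1$ external derivatives onto $\partial_i\phi(F)$ via (\ref{0.00}), producing a factor $\partial^\beta\phi(F)$ with $2\le|\beta|\le l$ multiplied by at most $l$ factors of the form $D^{q_j}F_{k_j}$ with $q_j\ge 1$ and $\sum q_j\le l$, hence each such factor is $\le|F|_{1,l-1}$. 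A combinatorial bound on the number of such terms gives the stated constant $C_l$.

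For part $b)$, begin with the chain rule for $L$ stated in Section 3.1,
$$
L\phi(F)=\sum_{i=1}^d\partial_i\phi(F)\,LF_i-\sum_{i,j=1}^d\partial_i\partial_j\phi(F)\,\langle DF_i,DF_j\rangle_{\mathcal{H}},
$$
and apply $D^q$ for $1\le q\le l$, using the Leibniz rule. In the first sum, isolate the term $\sum_i\partial_i\phi(F)\,D^q(LF_i)$, giving the leading $|\nabla\phi(F)||LF|_l$ contribution; the remaining distributions of derivatives put at least one $D$ on $\partial_i\phi(F)$ and leave a factor $D^{q'}(LF_i)$ with $q'\le l-1$, producing $|LF|_{l-1}$ multiplied by polynomial factors in $|F|_{l+1}$ handled exactly as in part $a)$. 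In the second sum, each term already carries a second derivative of $\phi$, and after $D^q$ distributes among the factors $\partial_i\partial_j\phi(F)$, $DF_i$, and $DF_j$, the Cauchy–Schwarz-type bound $|D^{q_1}DF_i \cdot D^{q_2}DF_j|\le|F|_{1,q_1+1}|F|_{1,q_2+1}$ and part $a)$ yield the factor $|F|_{l+1}^{l+2}$. Collecting everything gives the claimed inequality.

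Part $c)$ is the special case $l=0$ of the above and follows directly from the $L$-chain rule by the triangle inequality and the Cauchy–Schwarz bound $|\langle DF_i,DF_j\rangle_{\mathcal{H}}|\le|DF_i||DF_j|\le|F|_{1,1}^2$. The only real obstacle is the combinatorial bookkeeping in part $b)$: keeping track of exactly how the $l$ derivatives partition among the three kinds of factors (a derivative of $\phi$, a power of $DF$, an $LF$ or an inner product $\langle DF_i,DF_j\rangle$) so as to verify that the non-leading contributions are indeed bounded by $(1+|F|_{l+1}^{l+2})(1+|LF|_{l-1})$ rather than a larger polynomial. This is a finite combinatorial check for each $l$ and yields the dimensional constant $C_l$; no analytic difficulty beyond the chain/Leibniz rules is involved.
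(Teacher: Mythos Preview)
Your approach is correct and coincides with what the paper indicates: the paper does not give a proof of this lemma but simply cites \cite{ref3} (Lemmas 8 and 10) and states that it is ``a consequence of the chain rule,'' which is precisely the Faà di Bruno/Leibniz iteration you outline. Your sketch in fact supplies more detail than the paper itself.
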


We denote by $\mathcal{D}_{l,p}$ the closure of $\mathcal{S}$ with respect to the norm $\left\Vert \circ
\right\Vert _{L,l,p}:$ 
\begin{equation}
\mathcal{D}_{l,p}=\overline{\mathcal{S}}^{\left\Vert \circ \right\Vert
_{L,l,p}},  \label{3.9}
\end{equation}%
and
\begin{equation}
    \mathcal{D}_{\infty}=\mathop{\bigcap}\limits_{l=1}^{\infty}\mathop{\bigcap}\limits_{p=1}^{\infty}\mathcal{D}_{l,p},\quad \mathcal{H}_{l}=\mathcal{D}_{l,2}. \label{Dinf}
\end{equation}

For an IbP framework $(\mathcal{S},D,L)$, we now extend the operators from $\mathcal{S}$ to $\mathcal{D}_{\infty}$.
For $F\in \mathcal{D}_{\infty}$, $p\geq2$, there exists a sequence $F_{n}\in \mathcal{S}$ such that $\left\Vert F-F_{n}\right\Vert _{p}\rightarrow 0$, $\left\Vert F_{m}-F_{n}\right\Vert _{q,p}\rightarrow 0$ and $\left\Vert LF_{m}-LF_{n}\right\Vert _{q-2,p}\rightarrow 0$. Since $%
D^{q} $ and $L$ are closable, we can define
\begin{equation}
D^{q}F=\lim_{n\rightarrow\infty}D^{q}F_{n}\quad in\quad L^p(\Omega;\mathcal{H}^{\otimes q}),\quad LF=\lim_{n\rightarrow\infty}LF_{n}\quad in\quad L^p(\Omega).  \label{3.10}
\end{equation}%
We still associate the same norms and covariance matrix introduced above for $F\in\mathcal{D}_\infty$.

\begin{lemma}
The triplet $(\mathcal{D}_\infty,D,L)$ is an IbP framework.
\end{lemma}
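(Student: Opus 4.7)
The plan is to verify each clause of the IbP-framework definition for $(\mathcal{D}_\infty, D, L)$, using the extensions of $D^q$ and $L$ already constructed in (3.10) and leaning on the pointwise chain-rule bounds from Lemma 3.2. Since $\|\cdot\|_{L,0,p} \geq \|\cdot\|_p$, the inclusion $\mathcal{D}_\infty \subset \bigcap_{p\geq 1} L^p(\Omega)$ is immediate.

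First I would establish stability under smooth composition: for $F=(F_1,\dots,F_d) \in \mathcal{D}_\infty^d$ and $\phi \in C_p^\infty(\mathbb{R}^d)$, one has $\phi(F) \in \mathcal{D}_\infty$. Pick sequences $F_n^i \in \mathcal{S}$ with $\|F_n^i - F^i\|_{L,l',p'} \to 0$ for every $l',p'$; a standard diagonal extraction makes this possible because $F^i \in \mathcal{D}_{l',p'}$ for every $l',p'$. The chain rule on $\mathcal{S}$ gives $\phi(F_n) \in \mathcal{S}$, and Lemma~3.2~$(a)$-$(c)$ bounds $|\phi(F_n)-\phi(F_m)|_{1,l}$ and $|L\phi(F_n) - L\phi(F_m)|_{l-2}$ pointwise by polynomials in $|F_n|_{l+1}$, $|F_m|_{l+1}$, $|LF_n|_{l-1}$, $|LF_m|_{l-1}$, multiplied by suprema of derivatives of $\phi$ evaluated along the segment joining $F_m$ to $F_n$. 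Hölder's inequality together with the polynomial growth of derivatives of $\phi$ then produces a Cauchy estimate in $\|\cdot\|_{L,l,p}$ for every $l,p$, so $\phi(F) \in \mathcal{D}_\infty$. The same argument, applied to $D_h F_n$ and $LF_n$, shows that $D_h F, LF \in \mathcal{D}_\infty$ whenever $F \in \mathcal{D}_\infty$, which is what one needs to turn $D$ into an operator $\mathcal{D}_\infty \to \bigcap_p L^p(\Omega;\mathcal{H})$ and $L$ into a self-map of $\mathcal{D}_\infty$.

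Next I would transfer the chain rules and the duality formula from $\mathcal{S}$ to $\mathcal{D}_\infty$. Apply $D\phi(F_n) = \sum_i \partial_i\phi(F_n) DF_n^i$ and its $L$-analogue on $\mathcal{S}$, then pass to the limit term by term: each factor converges in every $L^p$ by (3.10) and the polynomial growth of $\partial\phi$, and Hölder controls the products. The duality $\mathbb{E}\langle DF,DG\rangle_\mathcal{H} = \mathbb{E}(FLG) = \mathbb{E}(GLF)$ follows analogously by writing it on $\mathcal{S}$ for approximating sequences $F_n, G_n$ and passing to the limit with Cauchy-Schwarz, which is legitimate because $DF_n \to DF$ in $L^2(\Omega;\mathcal{H})$ and $LF_n \to LF$ in $L^2(\Omega)$.

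Closability of $D^q$ and $L$ on $\mathcal{D}_\infty$ is then a routine diagonal argument: if $F_n \in \mathcal{D}_\infty$ with $F_n \to 0$ in $L^2$ and $D^q F_n \to \eta$ in $L^2(\Omega;\mathcal{H}^{\otimes q})$, approximate each $F_n$ by some $G_{n,k} \in \mathcal{S}$ with $\|F_n - G_{n,k}\|_{L,q,2} < 1/k$, then extract $H_n = G_{n,k(n)} \in \mathcal{S}$ satisfying $H_n \to 0$ in $L^2$ and $D^q H_n \to \eta$, and invoke closability of $D^q$ on $\mathcal{S}$ to conclude $\eta = 0$; the same argument handles $L$. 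I expect the main obstacle to be the composition step: because derivatives of $\phi \in C_p^\infty$ have only polynomial (not bounded) growth, the Cauchy estimate trades an extra order of differentiability and higher integrability of the $F_n$'s for convergence in a fixed $(l,p)$ norm of $\phi(F_n)$, and tracking these exchanges cleanly is what makes Lemma~3.2 indispensable; the definition $\mathcal{D}_\infty = \bigcap_{l,p}\mathcal{D}_{l,p}$ is precisely what makes the trade affordable.
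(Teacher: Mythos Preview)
Your argument is correct and follows exactly the standard route: verify stability of $\mathcal{D}_\infty$ under $C_p^\infty$ composition via the pointwise chain-rule bounds, then pass the chain rules and duality from $\mathcal{S}$ to $\mathcal{D}_\infty$ by approximation, and finally recover closability on $\mathcal{D}_\infty$ by a diagonal extraction reducing to closability on $\mathcal{S}$. The paper does not spell this out at all---its proof consists of the single sentence ``The proof is standard and we refer to the lemma 3.1 in [4] for details''---so you have supplied precisely the details the paper omits. One minor slip: the chain-rule bounds you invoke (parts $a)$--$c)$) are stated in the paper as Lemma~3.1, not Lemma~3.2; Lemma~3.2 is the statement you are proving.
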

\begin{proof}
The proof is standard and we refer to the lemma 3.1 in $\cite{ref4}$ for details.
\end{proof}

The following lemma is useful in order to control the Sobolev norms and covariance matrices when  passing to the limit.
\begin{lemma}
\textbf{(A)}\quad We fix $p\geq 2,l\geq2.$ Let $F\in  L^1(\Omega)$ and let $F_{n}\in \mathcal{S}^d,n\in\mathbb{N}$ such that 
\begin{eqnarray*}
i)\quad \mathbb{E}\left\vert F_{n}-F\right\vert &\rightarrow &0, \\
ii)\quad \sup_{n}\left\Vert F_{n}\right\Vert _{L,l,p} &\leq &K_{l,p}<\infty. 
\end{eqnarray*}%
Then for every $1\leq \bar{p}<p,$ we have $F\in \mathcal{D}_{l,\bar{p}}^d$ and $\left\Vert F\right\Vert
_{L,l,\bar{p}}\leq K_{l,\bar{p}}$ . Moreover, there exists a convex combination
\[
G_{n}=\sum_{i=n}^{m_{n}}\gamma _{i}^{n}\times F_{i}\in \mathcal{S}^d,
\]%
with $\gamma _{i}^{n}\geq 0,i=n,....,m_{n}$ and $%
\sum\limits_{i=n}^{m_{n}}\gamma _{i}^{n}=1$,
such that 
\[
\left\Vert G_{n}-F\right\Vert _{L,l,2}\rightarrow 0.
\]
\textbf{(B)}\quad For $F\in\mathcal{D}_\infty^d$, we denote \[\lambda(F)=\inf_{\vert\zeta\vert=1}\langle \sigma_F\zeta,\zeta\rangle\]  the lowest eigenvalue of the covariance matrix $\sigma_F$. We consider some $F$ and $F_n$ which verify $i), ii)$ in \textbf{(A)}. We also suppose that \[iii)\quad(DF_n)_{n\in\mathbb{N}}\text{ is a Cauchy sequence in } L^2(\Omega;\mathcal{H}),\] and for every $p\geq1$, 
\begin{eqnarray}iv)\quad \sup_{n}\mathbb{E}(\lambda(F_n))^{-p}\leq Q_p<\infty.\label{iv}\end{eqnarray} Then we have \[\mathbb{E}(\lambda(F))^{-p}\leq Q_p<\infty,\quad\forall p\geq1.\]
\textbf{(C)}\quad We suppose that we have $(F, \bar{F})$ and $(F_n, \bar{F}_n)$ which verify the hypotheses of \textbf{(A)}.  If  we also have \begin{eqnarray}v)\quad \sup_n\Vert DF_n-D\bar{F}_n\Vert_{L^2(\Omega;\mathcal{H})}\leq\bar{\varepsilon},\label{v}\end{eqnarray}then\[\Vert DF-D\bar{F}\Vert_{L^2(\Omega;\mathcal{H})}\leq\bar{\varepsilon}.\]
\end{lemma}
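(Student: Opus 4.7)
The plan is a standard combination of Banach--Alaoglu weak compactness and Mazur's theorem in reflexive $L^p$ spaces, followed by closability of $D^q$ and $L$; for (C) there is a small trick to ensure the same convex coefficients work for both sequences.

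For (A), I would bundle the quantities $(F_n, DF_n, D^2F_n, \ldots, D^l F_n, LF_n)$ as a single vector in the product $L^p(\Omega) \times L^p(\Omega; \mathcal{H}) \times \cdots \times L^p(\Omega; \mathcal{H}^{\otimes l}) \times L^p(\Omega)$, which is reflexive since $p \geq 2$. Hypothesis (ii) gives a uniform norm bound, so Banach--Alaoglu extracts a weakly convergent subsequence with limit $(F^\ast, G_1, \ldots, G_l, H)$; hypothesis (i), together with a.s.\ extraction, forces $F^\ast = F$. Weak lower semicontinuity of the norm (combined with H\"older to pass from $p$ to $\bar{p}$) yields $\|F\|_{L,l,\bar{p}} \leq K_{l,\bar{p}}$. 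Mazur's theorem then produces a convex combination $G_n = \sum_{i=n}^{m_n} \gamma_i^n F_i$ along which the \emph{whole} bundled vector converges strongly in the product of $L^p$ spaces; linearity of $D^q$, $L$ together with closability (from the IbP framework) then identifies the strong limits as $D^q F$ and $L F$, placing $F$ in $\mathcal{D}_{l,\bar{p}}^d$. Since $p \geq 2$, strong $L^p$ convergence implies strong $L^2$ convergence, i.e.\ $\|G_n - F\|_{L,l,2} \to 0$.

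For (B), hypothesis (iii) promotes $DF_n$ itself (not just a convex combination of it) to a strongly convergent sequence in $L^2(\Omega; \mathcal{H}^d)$, and its strong limit must coincide with the extended $DF$, by combining closability of $D$ with the $L^1$ convergence of $F_n$ to $F$. Cauchy--Schwarz plus the uniform $L^2$ bound then shows $\sigma_{F_n}^{i,j} = \langle DF_n^i, DF_n^j \rangle_{\mathcal{H}}$ converges to $\sigma_F^{i,j}$ in $L^1(\Omega)$, so $\sigma_{F_n} \to \sigma_F$ and hence $\lambda(F_n) \to \lambda(F)$ almost surely along a further subsequence. Fatou's lemma applied to the nonnegative variables $(\lambda(F_n))^{-p}$ concludes that $\mathbb{E}(\lambda(F))^{-p} \leq \liminf_n \mathbb{E}(\lambda(F_n))^{-p} \leq Q_p$.

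For (C), the key trick is to apply (A) not to $(F_n)$ and $(\bar{F}_n)$ separately but to the joint vector $(F_n, \bar{F}_n) \in \mathcal{S}^{2d}$, which still satisfies (i) and (ii). Mazur then produces a \emph{single} set of convex weights $\gamma_i^n$ such that $H_n = \sum_i \gamma_i^n F_i \to F$ and $\bar{H}_n = \sum_i \gamma_i^n \bar{F}_i \to \bar{F}$ in the $L,l,2$-norm simultaneously, so in particular $DH_n \to DF$ and $D\bar{H}_n \to D\bar{F}$ strongly in $L^2(\Omega; \mathcal{H})$. Linearity of $D$ and the triangle inequality for convex sums give
\begin{equation*}
\|DH_n - D\bar{H}_n\|_{L^2(\Omega; \mathcal{H})} \leq \sum_{i=n}^{m_n} \gamma_i^n \|DF_i - D\bar{F}_i\|_{L^2(\Omega; \mathcal{H})} \leq \bar{\varepsilon},
\end{equation*}
and passing to the limit closes the estimate. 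The two places where one has to be careful are identifying the strong $L^2$ limit of $DF_n$ as $DF$ in (B) (which is what lets the negative-moment bound on $\lambda$ be transferred through Fatou) and the ``common coefficients'' construction in (C); once both are handled, the rest is a routine application of closability.
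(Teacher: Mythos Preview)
Your proposal is correct and follows essentially the same strategy as the paper: Banach--Alaoglu plus Mazur for (A), identification of $\lim DF_n$ with $DF$ followed by passage of the negative moments for (B), and the ``joint pair'' trick $(F_n,\bar F_n)\in\mathcal{S}^{2d}$ to obtain common convex coefficients for (C). The only notable differences are cosmetic: the paper works in the Hilbert space $\mathcal{H}_l=\mathcal{D}_{l,2}$ rather than bundling in a reflexive $L^p$ product, proves an explicit Lipschitz bound $|\sqrt{\lambda(F)}-\sqrt{\lambda(G)}|\le |D(F-G)|_{\mathcal H}$ in (B) and then uses uniform integrability where you use Fatou (your route is in fact slightly shorter), and identifies $\lim DF_n=DF$ by comparing with $DG_n$ rather than invoking closability directly.
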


\begin{proof}
\textbf{Proof of (A)} For the sake of the simplicity of notations, we only prove for the one dimensional case.
The Hilbert space $\mathcal{H}_{l}=\mathcal{D}_{l,2}$ equipped with the scalar product
\begin{eqnarray*}
\left\langle U,V\right\rangle _{L,l,2}&:=&\sum_{q=1}^{l}\mathbb{E} \langle D^qU, D^qV\rangle_{\mathcal{H}^{\otimes q}}+\mathbb{E}( UV)\\
&+&\sum_{q=1}^{l-2}\mathbb{E} \langle D^qLU, D^qLV\rangle_{\mathcal{H}^{\otimes q}}+\mathbb{E}( LU\times LV)
\end{eqnarray*}
is the space of the functionals
which are $l-$times differentiable in $L^{2}$ sense.
By $ii)$, for $p\geq2$, $\left\Vert
F_{n}\right\Vert _{L,l,2}\leq \left\Vert F_{n}\right\Vert _{L,l,p}\leq K_{l,p}$.
Then, applying Banach Alaoglu theorem, there exists  $G\in\mathcal{H}_l$ and a subsequence (we still denote it by $n$), such that $F_n\rightarrow G$ weakly in the Hilbert space $\mathcal{H}_l$. This means that for every $Q\in\mathcal{H}_l$, $\langle F_n,Q\rangle_{L,l,2}\rightarrow\langle G,Q\rangle_{L,l,2}$. Therefore, by Mazur theorem, we can construct some convex combination
\[
G_{n}=\sum_{i=n}^{m_{n}}\gamma _{i}^{n}\times F_{i}\in \mathcal{S}
\]%
with $\gamma _{i}^{n}\geq 0,i=n,....,m_{n}$ and $%
\sum\limits_{i=n}^{m_{n}}\gamma _{i}^{n}=1$,
such that 
\[
\left\Vert G_{n}-G\right\Vert _{L,l,2}\rightarrow 0.
\]
In particular we have 
\[
\mathbb{E}\left\vert G_{n}-G\right\vert \leq \left\Vert G_{n}-G\right\Vert
_{L,l,2}\rightarrow 0.
\]%
Also, we notice that by i),
\[
\mathbb{E}\left\vert G_{n}-F\right\vert \leq \sum_{i=n}^{m_{n}}\gamma
_{i}^{n}\times \mathbb{E}\left\vert F_{i}-F\right\vert \rightarrow 0.
\]%
So we conclude that 
$F=G\in \mathcal{H}_{l}.$
Thus, we have 
\[
\mathbb{E}(\left\vert G_{n}-F\right\vert
_{l}^{2})+\mathbb{E}(\left\vert LG_{n}-LF\right\vert
_{l-2}^{2})\leq \left\Vert G_{n}-F\right\Vert _{L,l,2}^{2}\rightarrow 0.
\]%
By passing to a subsequence, we have $\left\vert G_{n}-F\right\vert _{l}+\left\vert LG_{n}-LF\right\vert _{l-2}\rightarrow 0
$ almost surely. 
Now, for every $\bar{p}\in[1,p)$, we denote $Y_{n}:=\left\vert G_{n}\right\vert
_{l}^{\bar{p}}+\left\vert LG_{n}\right\vert
_{l-2}^{\bar{p}}$ and $Y:=\left\vert F\right\vert _{l}^{\bar{p}}+\left\vert LF\right\vert _{l-2}^{\bar{p}}$. Then, $Y_n\rightarrow Y$ almost surely, and for any $\Tilde{q}\in[\bar{p},p]$,
\begin{eqnarray}
\mathbb{E}\vert G_n\vert_{l}^{\Tilde{q}}+\mathbb{E}\vert LG_n\vert_{l-2}^{\Tilde{q}}&\leq&\left\Vert G_{n}\right\Vert_{L,l,\Tilde{q}}^{\Tilde{q}}=
\left\Vert \sum_{i=n}^{m_{n}}\lambda _{i}^{n}\times F_{i}\right\Vert_{L,l,\Tilde{q}}^{\Tilde{q}}
\leq (\sum_{i=n}^{m_{n}}\lambda _{i}^{n}\times \left\Vert
F_{i}\right\Vert_{L,l,\Tilde{q}})^{\Tilde{q}}  \nonumber    \\
&\leq &(\sup_{i}\left\Vert F_{i}\right\Vert_{L,l,\Tilde{q}}\times\sum_{i=n}^{m_{n}}\lambda _{i}^{n})^{\Tilde{q}} =\sup_{i}\left\Vert F_{i}\right\Vert_{L,l,\Tilde{q}}^{\Tilde{q}}\leq K_{l,\Tilde{q}}^{\Tilde{q}}. \nonumber
\end{eqnarray}
So $(Y_n)_{n\in\mathbb{N}}$ is uniformly integrable, and we have
\[
\left\Vert F\right\Vert _{L,l,\bar{p}}^{\bar{p}}=\mathbb{E}(\left\vert F\right\vert
_{l}^{\bar{p}})+\mathbb{E}(\left\vert LF\right\vert
_{l-2}^{\bar{p}})=\mathbb{E}(Y)=\lim_{n\rightarrow\infty}\mathbb{E}(Y_{n})\leq K_{l,\bar{p}}^{\bar{p}}.
\]%

\bigskip\bigskip

\textbf{Proof of (B)} We consider for a moment some general $F,G\in\mathcal{D}_{\infty}^d$. Notice that \[\langle \sigma(F)\zeta,\zeta\rangle=\vert\langle DF,\zeta\rangle\vert_{\mathcal{H}}^2
,\]so \[\lambda(F)=\inf_{\vert\zeta\vert=1}\vert\langle DF,\zeta\rangle\vert_{\mathcal{H}}^2.\]Now we check that \begin{eqnarray}
\vert\sqrt{\lambda(F)}-\sqrt{\lambda(G)}\vert\leq\vert D(F-G)\vert_{\mathcal{H}}. \label{beautiful}
\end{eqnarray}Indeed, $\vert\langle DF,\zeta\rangle\vert_{\mathcal{H}}\leq \vert\langle DG,\zeta\rangle\vert_{\mathcal{H}}+\vert D(F-G)\vert_{\mathcal{H}}\vert\zeta\vert$, so that by taking the infimum, we get $\sqrt{\lambda(F)}\leq\sqrt{\lambda(G)}+\vert D(F-G)\vert_{\mathcal{H}}$. And in a similar way, we have the inverse inequality, so (\ref{beautiful}) is proved. We now come back to our framework. Recalling that $G_{n}=\sum\limits_{i=n}^{m_{n}}\gamma _{i}^{n}\times F_{i}$, we observe that \[\Vert DG_n-DF_n\Vert_{L^2(\Omega;\mathcal{H})}\leq \sum_{i=n}^{m_{n}}\gamma _{i}^{n}\Vert DF_{i}-DF_n\Vert_{L^2(\Omega;\mathcal{H})}\rightarrow0.\]Here we use the fact that $(DF_n)_{n\in\mathbb{N}}$ is a Cauchy sequence in  $L^2(\Omega;\mathcal{H})$.
Meanwhile, we know from \textbf{(A)} that \[\Vert DG_n-DF\Vert_{L^2(\Omega;\mathcal{H})}\rightarrow0.\]So we conclude that $\Vert DF-DF_n\Vert_{L^2(\Omega;\mathcal{H})}\rightarrow0$. Thus, by (\ref{beautiful}), $\mathbb{E}\vert\sqrt{\lambda(F)}-\sqrt{\lambda(F_n)}\vert\rightarrow0.$ This gives that there exists a subsequence (also denote by $n$) such that $\sqrt{\lambda(F_n)}$ converges to $\sqrt{\lambda(F)}$ almost surely, and consequently $\vert\lambda(F_n)\vert^{-p}$ converges to $\vert\lambda(F)\vert^{-p}$ almost surely.
Since we have (\ref{iv}), $(\vert\lambda(F_n)\vert^{-p})_{n\in\mathbb{N}}$ is uniformly integrable. It follows that \begin{eqnarray*}
\mathbb{E}(\vert \lambda(F)\vert^{-p})=\lim_{n\rightarrow\infty}\mathbb{E}(\vert \lambda(F_n)\vert^{-p})\leq Q_p. 
\end{eqnarray*}

\bigskip\bigskip

\textbf{Proof of (C)} Since the couples $(F, \bar{F})$ and $(F_n, \bar{F}_n)$ verify the hypotheses of \textbf{(A)},  we know by the results of \textbf{(A)} that we may find a convex combination such that  \[\overline{\lim}_{n\rightarrow\infty}\Vert\sum\limits_{i=n}^{m_{n}}\gamma _{i}^{n}(D F_{i},D \bar{F}_{i})-(DF,D\bar{F})\Vert_{L^2(\Omega;\mathcal{H})}=0.\]Then it follows by (\ref{v}) that \begin{eqnarray*}\Vert DF-D\bar{F}\Vert_{L^2(\Omega;\mathcal{H})}&\leq& \overline{\lim}_{n\rightarrow\infty}\Vert\sum\limits_{i=n}^{m_{n}}\gamma _{i}^{n}(D F_{i}-D\bar{F}_i)\Vert_{L^2(\Omega;\mathcal{H})}\\
&\leq& \overline{\lim}_{n\rightarrow\infty}\sum\limits_{i=n}^{m_{n}}\gamma _{i}^{n}\Vert D F_{i}-D\bar{F}_i\Vert_{L^2(\Omega;\mathcal{H})}\\
&\leq&\bar{\varepsilon}.\end{eqnarray*}
\end{proof}

\subsubsection{Main consequences}
We will use the abstract framework presented above for the IbP framework $(\mathcal{D}_\infty^d,D,L)$, with $D$ and $L$ defined in (\ref{3.10}). 
We recall the notations $\Vert F\Vert_{L,l,p}$ in (\ref{sobnorm}),  $\Sigma_p(F)$ in (\ref{sigma}) and $\sigma_F$ in (\ref{Mcov}). For any $\eta>0$, we take $\Upsilon_\eta(x):(0,\infty)\rightarrow\mathbb{R}$ to be a smooth function such that \[\mathbbm{1}_{[\frac{\eta}{2},\infty)}\leq\Upsilon_\eta\leq\mathbbm{1}_{[\eta,\infty)}.\]We remark that $\sigma_F$ is invertible on the set $\{\Upsilon_\eta(\det \sigma_F)>0\}.$ We first establish an integration by parts formula. 
\begin{lemma}
\textbf{(A)} Let $F=(F_{1},\cdots,F_{d})\in \mathcal{D}_\infty^{d}$. We suppose that the Malliavin covariance matrix $\sigma_F$ is invertible. We denote \[\Gamma_F=(\Gamma_F^{j,i})_{j,i=1,\cdots,d}=\sigma_F^{-1}.\] We also assume that $\det \sigma_F$ is almost surely invertible and $(\det \sigma_F)^{-1}\in\mathcal{D}_\infty$. Then for every $f\in C_p^1(\mathbb{R}^d)$ and $G\in\mathcal{D}_\infty$,
\[\mathbb{E}(\partial_if(F)G)=\mathbb{E}(f(F)H_i(F,G)),\]with\[H_i(F,G)=\sum_{j=1}^dG(\Gamma_F^{j,i}LF_j-\langle D\Gamma_F^{j,i},DF_j\rangle_{\mathcal{H}})-\sum_{j=1}^d\Gamma_F^{j,i}\langle DG,DF_j\rangle_{\mathcal{H}}.\]Moreover, iterating this relation, for every multi-index $\beta$ and every $f\in C_p^{\vert\beta\vert}(\mathbb{R}^d)$, we get\begin{eqnarray}
\mathbb{E}(\partial_\beta f(F)G)=\mathbb{E}(f(F)H_\beta(F,G)),\label{IbP}
\end{eqnarray}where $H_\beta(F,G)$ is obtained by iterations: for $\beta=(\beta_1,\cdots,\beta_m)\in\{1,\cdots,d\}^m$ and $\bar{\beta}=(\beta_1,\cdots\,\beta_{m-1})$, we define $H_{\beta}(F,G)=H_{\beta_m}(F,H_{\bar{\beta}}(f,G))$.

\textbf{(B)} Let $F=(F_{1},\cdots,F_{d})\in \mathcal{D}_\infty^{d}$. For any ${j,i=1,\cdots,d}$ we define \[\Gamma_{F,\eta}^{j,i}=(\sigma_F^{-1})^{j,i}\Upsilon_\eta(\det \sigma_F).\]  Then for every $f\in C_p^1(\mathbb{R}^d)$ and $G\in\mathcal{D}_\infty$,
\[\mathbb{E}(\partial_if(F)G\Upsilon_\eta(\det \sigma_F))=\mathbb{E}(f(F)H_{\eta,i}(F,G)),\]with\[H_{\eta,i}(F,G)=\sum_{j=1}^dG(\Gamma_F^{j,i}LF_j-\langle D\Gamma_{F,\eta}^{j,i},DF_j\rangle_{\mathcal{H}})-\sum_{j=1}^d\Gamma_{F,\eta}^{j,i}\langle DG,DF_j\rangle_{\mathcal{H}}.\]Moreover, iterating this relation, for every multi-index $\beta$ and every $f\in C_p^{\vert\beta\vert}(\mathbb{R}^d)$, we get\begin{eqnarray}\mathbb{E}(\partial_\beta f(F)G\Upsilon_\eta(\det \sigma_F))=\mathbb{E}(f(F)H_{\eta,\beta}(F,G)),\label{IbP2}
\end{eqnarray}where $H_{\eta,\beta}(F,G)$ is obtained by iterations: for $\beta=(\beta_1,\cdots,\beta_m)\in\{1,\cdots,d\}^m$ and $\bar{\beta}=(\beta_1,\cdots\,\beta_{m-1})$, we define $H_{\eta,\beta}(F,G)=H_{\eta,\beta_m}(F,H_{\eta,\bar{\beta}}(f,G))$.
\end{lemma}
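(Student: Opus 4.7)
The plan is to derive part (A) from the chain rule for $D$ together with the duality formula, then obtain part (B) by the same argument with $\Gamma_{F,\eta}^{j,i}=\Gamma_F^{j,i}\Upsilon_\eta(\det\sigma_F)$ in place of $\Gamma_F^{j,i}$, and finally obtain the multi-index version by induction on $|\beta|$. The single-derivative case is the heart of the matter: from the chain rule $Df(F)=\sum_j \partial_j f(F)\,DF_j$, taking the $\mathcal{H}$-inner product with $DF_k$ yields $\langle Df(F),DF_k\rangle_\mathcal{H}=\sum_j \partial_j f(F)\,\sigma_F^{j,k}$. Contracting against $\Gamma_F^{k,i}$ and summing over $k$ isolates
$$\partial_i f(F)=\sum_{j=1}^d \Gamma_F^{j,i}\,\langle Df(F),DF_j\rangle_\mathcal{H}.$$
Multiplying by $G$, taking expectation, and applying the duality relation to $U=f(F)G\Gamma_F^{j,i}$ and $V=F_j$, namely $\mathbb{E}\langle D(f(F)G\Gamma_F^{j,i}),DF_j\rangle_\mathcal{H}=\mathbb{E}(f(F)G\Gamma_F^{j,i}LF_j)$, one expands the left-hand side by the Leibniz rule for $D$ and isolates the term $\mathbb{E}(G\Gamma_F^{j,i}\langle Df(F),DF_j\rangle_\mathcal{H})$. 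Summing over $j$ gives exactly $\mathbb{E}(\partial_i f(F)G)=\mathbb{E}(f(F)H_i(F,G))$ with the weight $H_i(F,G)$ stated in the lemma.

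For part (B) the computation is identical, the only change being that $\sum_k \sigma_F^{j,k}\Gamma_{F,\eta}^{k,i}=\delta_{j,i}\Upsilon_\eta(\det\sigma_F)$, so the localizing factor $\Upsilon_\eta(\det\sigma_F)$ appears on the left-hand side and the same Leibniz-plus-duality manipulation produces $H_{\eta,i}(F,G)$ with $\Gamma_{F,\eta}^{j,i}$ in place of $\Gamma_F^{j,i}$ inside the $D\Gamma$ and $\langle DG,\cdot\rangle$ terms. For the multi-index formulas (\ref{IbP}) and (\ref{IbP2}), I would induct on $|\beta|$: writing $\partial_\beta f=\partial_{\beta_m}\partial_{\bar\beta}f$ with $\bar\beta=(\beta_1,\dots,\beta_{m-1})$ and applying the induction hypothesis with the auxiliary function $\partial_{\bar\beta}f$ gives $\mathbb{E}(\partial_\beta f(F)G)=\mathbb{E}(\partial_{\beta_m}(\partial_{\bar\beta}f)(F)\,G)$, and then one more application of the base case with the weight $H_{\bar\beta}(F,G)\in\mathcal{D}_\infty$ in place of $G$ produces exactly $H_\beta(F,G)=H_{\beta_m}(F,H_{\bar\beta}(F,G))$.

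The main obstacle is not the algebra but verifying that every object on which we differentiate or to which we apply the duality formula lies in $\mathcal{D}_\infty$. For (A) this requires $\Gamma_F^{j,i}\in\mathcal{D}_\infty$, which follows from the hypothesis $(\det\sigma_F)^{-1}\in\mathcal{D}_\infty$ combined with the cofactor representation $\Gamma_F^{j,i}=(\det\sigma_F)^{-1}\operatorname{cof}(\sigma_F)^{i,j}$ and the fact that $\mathcal{D}_\infty$ is stable under products and under composition with $C_p^\infty$ functions of elements of $\mathcal{D}_\infty^d$; in particular $\sigma_F^{j,k}=\langle DF_j,DF_k\rangle_\mathcal{H}\in\mathcal{D}_\infty$. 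For (B) this is automatic because $\Upsilon_\eta(\det\sigma_F)\in\mathcal{D}_\infty$ vanishes on a neighbourhood of $\{\det\sigma_F=0\}$, so $\Gamma_{F,\eta}^{j,i}$ is a bona fide element of $\mathcal{D}_\infty$ with no invertibility hypothesis on $\sigma_F$. Once these regularity checks are in place, the induction step is routine because $H_{\bar\beta}(F,G)$ is built from $G$, $DG$, $LF_j$, $\Gamma_F^{j,i}$ and their $D$-derivatives, all of which sit in $\mathcal{D}_\infty$.
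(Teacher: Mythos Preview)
Your argument is correct and is exactly the standard proof the paper has in mind; the paper itself gives no proof and simply writes ``The proof of this lemma is standard, and we refer to \cite{ref2}.'' The chain-rule/duality/Leibniz derivation you give for the single-index case, the localization via $\Gamma_{F,\eta}^{j,i}=\Gamma_F^{j,i}\Upsilon_\eta(\det\sigma_F)$ for (B), and the regularity checks (cofactor representation of $\Gamma_F^{j,i}$, stability of $\mathcal{D}_\infty$ under products) are all on target.

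One minor wording issue in your induction paragraph: the line ``applying the induction hypothesis with the auxiliary function $\partial_{\bar\beta}f$ gives $\mathbb{E}(\partial_\beta f(F)G)=\mathbb{E}(\partial_{\beta_m}(\partial_{\bar\beta}f)(F)\,G)$'' is just a rewriting of $\partial_\beta f$, not a use of the hypothesis. What you actually want is to write $\partial_\beta f=\partial_{\bar\beta}(\partial_{\beta_m}f)$, apply the induction hypothesis for $\bar\beta$ to the function $g=\partial_{\beta_m}f$ to get $\mathbb{E}(\partial_{\bar\beta}g(F)\,G)=\mathbb{E}(g(F)\,H_{\bar\beta}(F,G))$, and then apply the base case once more with $H_{\bar\beta}(F,G)$ in the role of $G$ to obtain $\mathbb{E}(f(F)\,H_{\beta_m}(F,H_{\bar\beta}(F,G)))$, matching the recursive definition $H_\beta(F,G)=H_{\beta_m}(F,H_{\bar\beta}(F,G))$. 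This is clearly what you intend; only the exposition is slightly tangled.
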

\begin{remark}
In \textbf{(A)}, we assume the non-degeneracy condition for $F$, so we have the standard integration by parts formula. However in \textbf{(B)}, we do not assume any non-degeneracy condition of $F$, and we obtain a localized form of integration by parts formula.
\end{remark}
\begin{proof}
The proof of this lemma is standard, and we refer to $\cite{ref2}$.
\end{proof}
As a consequence of the integration by parts formula, we obtain the following proposition based on some estimations of the weights $\mathbb{E}\vert H_\beta(F,1)\vert$ and $\mathbb{E}\vert H_{\eta,\beta}(F,1)\vert$.
\begin{proposition}
Let $F=(F_{1},\cdots,F_{d})\in \mathcal{D}_\infty^{d}$. We fix $q\in\mathbb{N}$. 

\textbf{(A)} Suppose that  there exists a constant $C_q$ (dependent on $q$) such that $\Vert F\Vert_{L,q+2,8dq}+\Sigma_{4q}(F)\leq C_q$. Then for any multi-index  $\beta$ with $\vert\beta\vert= q$ and any function $f\in C_b^q(\mathbb{R}^d)$, \[(\textbf{B}_q)\quad\vert\mathbb{E}(\partial^\beta f(F))\vert\leq C_q\Vert f\Vert_\infty,\quad\forall \vert\beta\vert= q.\]

\textbf{(B)} Suppose that  there exists a constant $C_q^\prime$ (dependent on $q$) such that $\Vert F\Vert_{L,q+2,(4d+1)q}\leq C_q^\prime$. Then for any $\eta>0$, any multi-index  $\beta$ with $\vert\beta\vert= q$ and any function $f\in C_b^q(\mathbb{R}^d)$, \[(\textbf{B}_q^\prime)\quad\vert\mathbb{E}(\partial^\beta f(F)\Upsilon_\eta(\det \sigma_F))\vert\leq C_q^\prime\Vert f\Vert_\infty\times\frac{1}{\eta^{2q}},\quad\forall \vert\beta\vert= q.\]
\end{proposition}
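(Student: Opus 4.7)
My plan is to deduce both $(\mathbf{B}_q)$ and $(\mathbf{B}_q^\prime)$ directly from the integration by parts formulas (\ref{IbP}) and (\ref{IbP2}) of Lemma 3.6 applied with $G=1$, and then reduce everything to an $L^p$ estimate of the weights $H_\beta(F,1)$ and $H_{\eta,\beta}(F,1)$. Concretely, for (A) I would write
\[
\bigl|\mathbb{E}(\partial^\beta f(F))\bigr|=\bigl|\mathbb{E}(f(F)H_\beta(F,1))\bigr|\leq \Vert f\Vert_\infty\,\mathbb{E}\vert H_\beta(F,1)\vert,
\]
and for (B), since $\Upsilon_\eta(\det\sigma_F)$ is bounded,
\[
\bigl|\mathbb{E}(\partial^\beta f(F)\Upsilon_\eta(\det\sigma_F))\bigr|\leq\Vert f\Vert_\infty\,\mathbb{E}\vert H_{\eta,\beta}(F,1)\vert.
\]
So everything reduces to controlling these Malliavin weights.

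For (A), I would prove by induction on $\vert\beta\vert=q$ the structural estimate
\[
\vert H_\beta(F,1)\vert\leq C_q\,(1+\vert F\vert_{1,q+1})^{A_q}(1+\vert LF\vert_{q-1})(\det\sigma_F)^{-B_q},
\]
for some explicit integers $A_q,B_q$. This follows by iterating the definition $H_\beta=H_{\beta_m}(F,H_{\bar\beta}(F,1))$, using the chain rule for $D$ and $L$ (Lemma 3.3 part a) and c)) to differentiate $\Gamma_F^{j,i}$; each differentiation produces one extra derivative of $F$ together with one extra factor of $\det\sigma_F^{-1}$ via the cofactor formula $\Gamma_F=(\det\sigma_F)^{-1}\mathrm{Com}(\sigma_F)^T$. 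A careful bookkeeping gives $A_q\le 2q$ and $B_q\le 2q$ (the constants quoted in the statement correspond to this count), so H\"older's inequality with conjugate exponents yields
\[
\mathbb{E}\vert H_\beta(F,1)\vert\leq C_q\bigl(\mathbb{E}\vert F\vert_{1,q+1}^{8dq}\bigr)^{\alpha}\bigl(\mathbb{E}\vert LF\vert_{q-1}^{8dq}\bigr)^{\beta}\bigl(\Sigma_{4q}(F)\bigr)^{\gamma}\leq C_q,
\]
for suitable $\alpha,\beta,\gamma$, provided $\Vert F\Vert_{L,q+2,8dq}+\Sigma_{4q}(F)\le C_q$.

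For (B), the same iterative computation is carried out with $\Gamma_{F,\eta}^{j,i}=\Gamma_F^{j,i}\Upsilon_\eta(\det\sigma_F)$ in place of $\Gamma_F^{j,i}$. The point is that on the set $\{\Upsilon_\eta(\det\sigma_F)>0\}$ one has $\det\sigma_F\geq\eta/2$, so every occurrence of $(\det\sigma_F)^{-1}$ is deterministically bounded by $2/\eta$; each differentiation of $\Upsilon_\eta(\det\sigma_F)$ via the chain rule produces $\Upsilon_\eta^\prime(\det\sigma_F)$ times a derivative of $\det\sigma_F$, and $\Upsilon_\eta^\prime$ is supported in $[\eta/2,\eta]$ with $\Vert\Upsilon_\eta^{(k)}\Vert_\infty\le C_k\eta^{-k}$. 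Tracking the same $q$ iterations, one checks that $H_{\eta,\beta}(F,1)$ is bounded pointwise by $C_q(1+\vert F\vert_{1,q+1})^{A_q^\prime}(1+\vert LF\vert_{q-1})\eta^{-2q}$, with $A_q^\prime$ of order $q$; H\"older again gives the bound $C_q^\prime\eta^{-2q}$ under the hypothesis $\Vert F\Vert_{L,q+2,(4d+1)q}\le C_q^\prime$, with no need for a moment bound on $(\det\sigma_F)^{-1}$.

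The main obstacle, and the only place where genuine care is required, is the combinatorial bookkeeping of the exponents $A_q,B_q$ (resp. $A_q^\prime$) at each iteration: one has to verify that after $q$ iterations the number of derivatives of $F$ that appears fits within $\vert F\vert_{1,q+1}$ and $\vert LF\vert_{q-1}$ (hence the index $L,q+2,\cdot$ in the Sobolev norm), and that the negative power of $\det\sigma_F$ (resp. of $\eta$) is at most $2q$. Once this accounting is done, the proof reduces to applying Lemma 3.3 and H\"older, and the constants in the statement — namely $8dq$ and $4q$ in (A), $(4d+1)q$ in (B), and $\eta^{-2q}$ — come out of this counting. For the full technical details of this standard Malliavin bookkeeping I would refer to \cite{ref2}, where exactly this proposition is proved.
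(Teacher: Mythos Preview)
Your proposal is correct and matches the paper's treatment: the paper does not give a self-contained proof either, stating simply that ``Proposition 3.4.1 is proved in \cite{ref2}'', and your sketch of bounding $\mathbb{E}\vert H_\beta(F,1)\vert$ and $\mathbb{E}\vert H_{\eta,\beta}(F,1)\vert$ via iterated chain-rule estimates and H\"older is precisely the standard argument found there. Two small labeling slips: the integration by parts formulas (\ref{IbP}) and (\ref{IbP2}) are in Lemma~3.4 (not~3.6), and the chain rules you invoke are in Lemma~3.1 (not~3.3).
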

\begin{remark}
In \textbf{(A)}, we assume the non-degeneracy condition for $F$, so we can control the weight $H_\beta$ in the standard integration by parts formula (\ref{IbP}). In \textbf{(B)}, we no longer suppose non-degeneracy condition for $F$, so we apply (\ref{IbP2}) and obtain a localized form of estimate.
\end{remark}
As an immediate application of \textbf{Proposition 3.4.1}, we have the regularity of the density.
\begin{corollary}
We fix $p\in\mathbb{N}$. Let $F=(F_{1},\cdots,F_{d})\in \mathcal{D}_\infty^{d}$. We assume that $\Vert F\Vert_{L,p+d+2,8d(p+d)}+\Sigma_{4(p+d)}(F)\leq \infty$. 
Then, the law of random variable $F$ is absolutely continuous with respect to the Lebesgue measure and has a density $p_F(x)$ which is $p-$times differentiable. And one has \begin{eqnarray}
p_F(x)=\mathbb{E}(\partial^\beta\prod_{j=1}^d\mathbbm{1}_{[0,\infty)}(F_j-x_j)),\quad  \beta=(1,\cdots,d),\label{density}
\end{eqnarray}with $x=(x_1,\cdots,x_d)\in\mathbb{R}^d$.
\end{corollary}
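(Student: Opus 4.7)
My plan is to deduce both assertions---absolute continuity of the law of $F$ with density $p_F$, and the explicit formula (\ref{density})---directly from \textbf{Proposition 3.4.1}(A). Taking $q=p+d$ in that proposition, the hypothesis $\Vert F\Vert_{L,p+d+2,8d(p+d)}+\Sigma_{4(p+d)}(F)<\infty$ is exactly what is required, and yields for every multi-index $\beta$ with $|\beta|\leq p+d$ and every $f\in C_b^{p+d}(\mathbb{R}^d)$ the bound
$$\bigl|\mathbb{E}(\partial^\beta f(F))\bigr|\leq C\Vert f\Vert_\infty.$$
Equivalently, the IbP identity (\ref{IbP}) provides an $L^1$ weight $H_\beta(F,1)$ such that $\mathbb{E}(\partial^\beta f(F))=\mathbb{E}(f(F)H_\beta(F,1))$ for smooth $f$.

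For the existence of the density and the explicit formula I would use a standard smoothing-and-Fubini argument. Fix $\psi\in C_c^\infty(\mathbb{R}^d)$ and set $\Psi(y):=\int_{-\infty}^{y_1}\!\cdots\!\int_{-\infty}^{y_d}\psi(u)\,du$, so that $\Psi\in C_b^\infty(\mathbb{R}^d)$ with $\partial^\beta\Psi=\psi$ for $\beta=(1,\ldots,d)$, and $\Psi(y)=\int_{\mathbb{R}^d}\psi(x)\prod_{j=1}^d\mathbbm{1}_{[0,\infty)}(y_j-x_j)\,dx$. Applying (\ref{IbP}) with this $\beta$ and then Fubini (justified by $H_\beta(F,1)\in L^1$), we obtain
$$\mathbb{E}(\psi(F))=\mathbb{E}\bigl(\Psi(F)H_\beta(F,1)\bigr)=\int_{\mathbb{R}^d}\psi(x)\,\mathbb{E}\Bigl(\prod_{j=1}^d\mathbbm{1}_{[0,\infty)}(F_j-x_j)\,H_\beta(F,1)\Bigr)dx.$$
This shows that the law of $F$ has density $p_F(x)=\mathbb{E}(\prod_{j=1}^d\mathbbm{1}_{[0,\infty)}(F_j-x_j)H_\beta(F,1))$, which is exactly (\ref{density}) with the symbol $\partial^\beta$ inside the expectation interpreted in the IbP sense: the $d$ derivatives nominally acting on the indicators are transferred to $F$ and produce the weight $H_\beta(F,1)$.

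The $p$-fold differentiability is obtained by iterating the same idea: for any $|\alpha|\leq p$, the primitive construction gives $\partial^\alpha\psi=\partial^{\alpha+\beta}\Psi$, and since $|\alpha+\beta|\leq p+d$, \textbf{Proposition 3.4.1}(A) again produces an integrable weight $H_{\alpha+\beta}(F,1)$; this allows one to differentiate the representation of $p_F$ under the expectation $p$ times in $x$ and conclude $p_F\in C^p(\mathbb{R}^d)$ with bounded derivatives. The main technical point to watch is that $\mathbbm{1}_{[0,\infty)}\notin C_b^1(\mathbb{R}^d)$, so (\ref{IbP}) cannot be applied to the indicator directly; the smoothing-Fubini detour above performs IbP only on the genuinely smooth primitive $\Psi$, and the only ingredient really needed is $H_\beta(F,1),\,H_{\alpha+\beta}(F,1)\in L^1$, which follows from the Malliavin--Sobolev bounds and from $\Sigma_{4(p+d)}(F)<\infty$ through the standard weight estimates that underlie \textbf{Proposition 3.4.1}.
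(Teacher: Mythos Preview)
Your argument is correct and is precisely one of the ``standard regularization arguments'' to which the paper defers (the paper's own proof of this corollary is the single sentence ``Corollary 3.4.1 follows by standard regularization arguments'', with Proposition~3.4.1 quoted from \cite{ref2}). Your primitive-and-Fubini route---applying (\ref{IbP}) only to the smooth antiderivative $\Psi$ and then unravelling the integral representation of $\Psi$---is one classical way to turn the bound $(\textbf{B}_{p+d})$ into the density formula; the other, equally standard, is to approximate $\mathbbm{1}_{[0,\infty)}$ by smooth monotone functions and pass to the limit in (\ref{IbP}) using $H_\beta(F,1)\in L^1$. Both yield the same weight representation $p_F(x)=\mathbb{E}\bigl(\prod_j\mathbbm{1}_{[0,\infty)}(F_j-x_j)\,H_\beta(F,1)\bigr)$ and the $p$-fold differentiability in $x$ by the same iteration, so there is no substantive difference.
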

\begin{proof}
\textbf{Proposition 3.4.1} is proved in $\cite{ref2}$ and \textbf{Corollary 3.4.1} follows by standard regularization arguments.
\end{proof}

We consider the $d-$dimensional regularization kernels 
\[
\varphi (x)=\frac{1}{(2\pi )^{d/2}}e^{-\frac{\left\vert x\right\vert ^{2}}{2}%
},\quad \varphi _{\delta }(x)=\frac{1}{\delta ^{d}}\varphi (\frac{x}{\delta }),\quad0<\delta\leq1,
\]%
and we denote%
\[
f_{\delta }(x)=f\ast \varphi _{\delta }(x)=\int_{\mathbb{R}^{d}}f(y)\varphi _{\delta
}(x-y)dy.
\]%
Then we have the following regularization lemma.
\begin{lemma}\textbf{(A)}
$i)$ For a multi index $\beta $, we suppose that $F$ satisfies $(\textbf{B}_{2+\left\vert \beta \right\vert
}).$ Then for any function $f\in C_b^{2+\left\vert \beta \right\vert}(\mathbb{R}^d)$, 
\begin{equation}
\left\vert \mathbb{E}(\partial ^{\beta }f(F))-\mathbb{E}(\partial ^{\beta }f_{\delta
}(F))\right\vert \leq dC_{2+\left\vert \beta \right\vert }\left\Vert
f\right\Vert _{\infty }\times \delta ^{2}.  \label{1}
\end{equation}%
$ii)$ \textbf{(Romberg)} For a multi-index $\beta $, we suppose that $F$ satisfies $(\textbf{B}_{4+\left\vert \beta
\right\vert }).$ Then for any function $f\in C_b^{4+\left\vert \beta \right\vert}(\mathbb{R}^d)$, 
\begin{equation}
\left\vert \mathbb{E}(\partial ^{\beta }f(F))+\mathbb{E}(\partial ^{\beta }f_{\delta
}(F))-2\mathbb{E}(\partial ^{\beta }f_{\delta /\sqrt{2}}(F))\right\vert \leq
6d^{2}C_{4+\left\vert \beta
\right\vert}\left\Vert f\right\Vert _{\infty }\times \delta ^{4}.  \label{2}
\end{equation}
\textbf{(B)}
$iii)$ We suppose that $F$ satisfies $(\textbf{B}^\prime_{2}).$ We fix $\rho>0$ and we take some $G\in\mathcal{D}_\infty^d$ such that for any $p\in\mathbb{N}$, $\Vert F\Vert_{1,p}+\Vert G\Vert_{1,p}+\Sigma_{\rho}(G)<\infty$. For any $\varepsilon_0>0$, we denote $q=2/(1+\varepsilon_0)$. Then  there exists a constant $C$ depending on $p,q,\rho$ and $d$ such that for any $\eta>0$ and $\delta>0$, for any function $f\in C_b^{2}(\mathbb{R}^d)$,  
\begin{equation}
\left\vert \mathbb{E}(f(F))-\mathbb{E}(f_{\delta
}(F))\right\vert \leq C\left\Vert
f\right\Vert _{\infty }\times (\frac{\delta ^{2}}{\eta^4}+(\eta^{-1}{\Vert DF-DG\Vert_{L^2(\Omega;\mathcal{H})}})^q+\eta^\rho).  \label{1*}
\end{equation}%
$iv)$ \textbf{(Romberg)} We suppose that $F$ satisfies $(\textbf{B}^\prime_{4}).$ Under the same hypotheses as $iii)$, for any function $f\in C_b^{4}(\mathbb{R}^d)$, we have 
\begin{equation}
\left\vert \mathbb{E}(f(F))+\mathbb{E}(f_{\delta
}(F))-2\mathbb{E}(f_{\delta /\sqrt{2}}(F))\right\vert \leq C\left\Vert
f\right\Vert _{\infty }\times (\frac{\delta ^{4}}{\eta^8}+(\eta^{-1}{\Vert DF-DG\Vert_{L^2(\Omega;\mathcal{H})}})^q+\eta^\rho).  \label{2*}
\end{equation}%
\end{lemma}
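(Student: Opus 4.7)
The plan is to handle parts (A) and (B) by a common Taylor-expansion scheme, differing only in whether we invoke the global integration by parts formula (\ref{IbP}) or its localized version (\ref{IbP2}).

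For (A) $i)$, start from $f_\delta(x) = \int f(x-\delta u)\varphi(u)\,du$. Using the vanishing of odd Gaussian moments together with a second-order Taylor expansion with integral remainder, one obtains
\[
f(x) - f_\delta(x) = -\delta^2 \sum_{i,j}\int\int_0^1 (1-s)\, u_iu_j\, \partial_i\partial_j f(x - s\delta u)\,ds\,\varphi(u)\,du.
\]
Applying $\partial^\beta$ under the integral, evaluating at $F$, taking expectation, and invoking the integration by parts formula (\ref{IbP}) of order $|\beta|+2$ to push all $|\beta|+2$ derivatives off $f$ onto the Malliavin weight, property $(\textbf{B}_{2+|\beta|})$ bounds each resulting term by $C_{2+|\beta|}\|f\|_\infty$; integrating out $(s,u)$ via $\int|u_iu_j|\varphi(u)\,du \leq 1$ yields (\ref{1}). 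Part (A) $ii)$ is the classical Romberg cancellation: extend the Taylor expansion to order four; odd-order terms vanish by symmetry, and the second-order term produces $\tfrac{\delta^2}{2}\Delta f(x)$ in $f_\delta - f$ versus $\tfrac{\delta^2}{4}\Delta f(x)$ in $f_{\delta/\sqrt 2}-f$, hence cancels in $f + f_\delta - 2f_{\delta/\sqrt 2}$. The remaining fourth-order term is handled as before, but now $(\textbf{B}_{4+|\beta|})$ is needed to remove four additional derivatives, giving (\ref{2}).

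For (B) $iii)$, decompose
\[
\mathbb{E}[(f - f_\delta)(F)] = \mathbb{E}[(f - f_\delta)(F)\,\Upsilon_\eta(\det\sigma_F)] + \mathbb{E}[(f - f_\delta)(F)\,(1 - \Upsilon_\eta(\det\sigma_F))].
\]
For the first piece, repeat the Taylor step of (A) $i)$ but use the \emph{localized} IbP (\ref{IbP2}) combined with $(\textbf{B}_2^\prime)$; this produces a contribution of order $\|f\|_\infty\,\delta^2/\eta^4$. For the second piece, bound pointwise $|1 - \Upsilon_\eta(\det\sigma_F)| \leq \mathbbm{1}_{\{\det\sigma_F \leq \eta\}}$ and $|f - f_\delta| \leq 2\|f\|_\infty$, reducing the task to estimating $\mathbb{P}(\det\sigma_F \leq \eta)$. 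Split
\[
\mathbb{P}(\det\sigma_F \leq \eta) \leq \mathbb{P}(\det\sigma_G \leq 2\eta) + \mathbb{P}(|\det\sigma_F - \det\sigma_G| > \eta).
\]
The first summand is bounded by $C\eta^\rho$ via Markov and $\Sigma_\rho(G) < \infty$. For the second, an algebraic expansion of the determinant writes each entry $\sigma^{ij}_F - \sigma^{ij}_G$ as a sum of two terms bilinear in $(DF,DG)$ with one factor $D(F-G)$, giving pointwise
\[
|\det\sigma_F - \det\sigma_G| \leq C\,P(|DF|_{\mathcal H},|DG|_{\mathcal H})\,|DF - DG|_{\mathcal H}
\]
for an explicit polynomial $P$. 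Applying H\"older with conjugate exponents $1+\varepsilon_0$ and $(1+\varepsilon_0)/\varepsilon_0$ (chosen so that $q(1+\varepsilon_0) = 2$) yields $\mathbb{E}|\det\sigma_F-\det\sigma_G|^q \leq C\,\|DF-DG\|_{L^2(\Omega;\mathcal H)}^q$, and Markov at level $\eta^q$ delivers the $(\eta^{-1}\|DF-DG\|_{L^2})^q$ contribution. Summing all pieces gives (\ref{1*}). Part (B) $iv)$ is identical in structure: replace the second-order expansion in the localized piece by the Romberg cancellation of (A) $ii)$ and use $(\textbf{B}_4^\prime)$, producing the $\delta^4/\eta^8$ term; the localization error estimate is unchanged.

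I expect the main obstacle to be the sub-quadratic exponent $q = 2/(1+\varepsilon_0)$ in the determinant comparison. Since $DF - DG$ is controlled only in $L^2$ while the pointwise bound on $|\det\sigma_F - \det\sigma_G|$ carries a polynomial of degree $2d-1$ in $|DF|_{\mathcal H}$ and $|DG|_{\mathcal H}$, one must pay for integrating that polynomial to an arbitrarily high power via H\"older, which forces $q$ strictly below $2$ and introduces the $\varepsilon_0$ loss. The necessary high-moment bounds on $F$ and $G$ are available by hypothesis, but their exponents must be tracked carefully so that the constant $C$ in (\ref{1*})--(\ref{2*}) depends only on the stated quantities.
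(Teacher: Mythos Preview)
Your proposal is correct and follows essentially the same route as the paper: Taylor expansion with Gaussian-moment vanishing plus the integration-by-parts property $(\textbf{B}_q)$ for part (A), and for part (B) the localization via $\Upsilon_\eta$ combined with a probability estimate on $\{\det\sigma_F\leq\eta\}$ obtained from $\Sigma_\rho(G)<\infty$ and a H\"older/Markov bound on $|\det\sigma_F-\det\sigma_G|$. Your direct split $\mathbb{P}(\det\sigma_F\leq\eta)\leq\mathbb{P}(\det\sigma_G\leq 2\eta)+\mathbb{P}(|\det\sigma_F-\det\sigma_G|>\eta)$ is in fact slightly cleaner than the paper's version, which passes through the ratio $R=(\det\sigma_F-\det\sigma_G)/\det\sigma_G$ before arriving at the same two terms.
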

\begin{remark}
We remark that in \textbf{(A)}, we assume the non-degeneracy condition for $F$, and we have the standard regularization lemma (\ref{1}). While in \textbf{(B)}, we do not assume the non-degeneracy condition for $F$, but we need to assume that we have another random variable $G$ which is non-degenerated (such that $DG$ is close to $DF$). Then we obtain a variant form of regularization lemma (\ref{1*}). Moreover, applying Romberg method, we have (\ref{2}) and (\ref{2*}). We also remark that the regularization lemma here is slightly different from the one in $\cite{ref2}$. The kernel  considered in $\cite{ref2}$ is the super kernel, but we are not able to simulate the super kernel. So in our paper, we consider the Gaussian kernel $\varphi_\delta$ which allows us to do the simulation.
\end{remark}
\begin{proof}
Through all this proof we use the notation $g=\partial
^{\beta }f.$ 

\textbf{Proof of (A) $i)$ :}
We denote%
\[
R_{q}(\delta ,x)=\frac{1}{q!}\sum_{\left\vert \alpha \right\vert
=q}\int_{0}^{1}d\lambda (1-\lambda )^{q}\int_{\mathbb{R}^{d}}dy\varphi _{\delta
}(y)y^{\alpha }\partial ^{\alpha }g(x+\lambda y) 
\]%
with $y^{\alpha }=\prod_{i=1}^{q}y_{\alpha _{i}}$ for $\alpha =(\alpha
_{1},...,\alpha _{q}).$ Notice that if $F$ satisfies $(\textbf{B}_{q})$ then (recall
that $\partial ^{\alpha }g=\partial ^{\alpha }\partial ^{\beta }f)$ 
\begin{equation}
\left\vert \mathbb{E}(R_{q}(\delta ,F))\right\vert \leq C_{q+\left\vert \beta
\right\vert }\left\Vert f\right\Vert _{\infty }\int_{\mathbb{R}^{d}}dy\varphi
_{\delta }(y)\left\vert y\right\vert ^{q}=C_{q+\left\vert \beta \right\vert
}\int_{\mathbb{R}^{d}}\varphi (y)\left\vert y\right\vert ^{q}dy\left\Vert
f\right\Vert _{\infty }\delta ^{q}.  \label{5}
\end{equation}

We use a development in Taylor series of order two in order to get%
\begin{eqnarray*}
\partial ^{\beta }f(x)-\partial ^{\beta }f_{\delta }(x)
&=&\int_{\mathbb{R}^{d}}dy\varphi _{\delta }(x-y)(\partial ^{\beta }f(x)-\partial
^{\beta }f(y)) \\
&=&\int_{\mathbb{R}^{d}}dy\varphi _{\delta }(x-y)(g(x)-g(y)) \\
&=&R_{2}(\delta ,x).
\end{eqnarray*}%
Here we use the fact that $\int_{\mathbb{R}^{d}}y_{j}\varphi _{\delta }(y)dy=0.$
This, together with (\ref{5}) yields (\ref{1}).

\bigskip

\textbf{Proof of (A) $ii)$ :}
Using a development in Taylor series of order $4$ 
\[
\partial ^{\beta }f(x)-\partial ^{\beta }f_{\delta }(x)=\frac{\delta ^{2}}{%
2}\nabla^2 g(x)+R_{4}(\delta ,x). 
\]%
Here we have used the fact that the third moments of the normal distribution
are null and $\int_{\mathbb{R}^{d}}y_{j}^{2}\varphi _{\delta }(y)dy=\delta ^{2}.$ We
fix $a\in (0,1)$ and we use the above equality  for $%
a\delta :$ 
\[
\frac{1}{a^{2}}\partial ^{\beta }f(x)-\frac{1}{a^{2}}\partial ^{\beta
}f_{a\delta }(x)=\frac{\delta ^{2}}{2}\nabla^2 g(x)+\frac{1}{a^{2}}%
R_{4}(a\delta ,x).
\]%
Subtracting the equality for $\delta $ and  for $%
a\delta$, we obtain 
\[
(\frac{1}{a^{2}}-1)\partial ^{\beta }f(x)-(\frac{1}{a^{2}}\partial ^{\beta
}f_{a\delta }(x)-\partial ^{\beta }f_{\delta }(x))=\frac{1}{a^{2}}%
R_{4}(a\delta ,x)-R_{4}(\delta ,x). 
\]%
Taking $a=1/\sqrt{2}$ we get 
\[
\partial ^{\beta }f(x)=2\partial ^{\beta }f_{\delta /\sqrt{2}%
}(x)-\partial ^{\beta }f_{\delta }(x)+2R_{4}(\delta /\sqrt{2}%
,x)-R_{4}(\delta ,x). 
\]%
And using (\ref{5}) we get (\ref{2}) (we have also used $\int_{\mathbb{R}^{d}}\varphi
(y)\left\vert y\right\vert ^{4}dy\leq 3d^{2})$.

\bigskip

\textbf{Proof of (B) $iii)$ :} We take $\vert\beta\vert=0$.
Notice that if $F$ satisfies $(\textbf{B}^\prime_{q})$, then 
\begin{equation}
\left\vert \mathbb{E}(R_{q}(\delta ,F)\Upsilon_\eta(\det \sigma_F))\right\vert \leq C^\prime_{q }\frac{\left\Vert f\right\Vert _{\infty }}{\eta^{2q}}\int_{\mathbb{R}^{d}}dy\varphi
_{\delta }(y)\left\vert y\right\vert ^{q}=C^\prime_{q }\int_{\mathbb{R}^{d}}\varphi (y)\left\vert y\right\vert ^{q}dy\left\Vert
f\right\Vert _{\infty }\frac{\delta ^{q}}{\eta^{2q}}.  \label{5*}
\end{equation}

We use a development in Taylor series of order two in order to get%
\begin{eqnarray*}
\mathbb{E}(f(F)\Upsilon_\eta(\det \sigma_F))-\mathbb{E}(f_{\delta }(F)\Upsilon_\eta(\det \sigma_F))
&=&\mathbb{E}(\int_{\mathbb{R}^{d}}dy\varphi _{\delta }(F-y)(f(F)-f(y))\Upsilon_\eta(\det \sigma_F))\\
&=&\mathbb{E}(R_{2}(\delta ,F)\Upsilon_\eta(\det \sigma_F)).
\end{eqnarray*}Here we use the fact that $\int_{\mathbb{R}^{d}}y_{j}\varphi _{\delta }(y)dy=0.$
Using (\ref{5*}) for $q=2$, we have \begin{equation*}
\left\vert \mathbb{E}(R_{2}(\delta ,F)\Upsilon_\eta(\det \sigma_F))\right\vert \leq C^\prime_{2 }\int_{\mathbb{R}^{d}}\varphi (y)\left\vert y\right\vert ^{2}dy\left\Vert
f\right\Vert _{\infty }\frac{\delta ^{2}}{\eta^{4}}. 
\end{equation*}
So \begin{eqnarray}
\vert\mathbb{E}(f(F)\Upsilon_\eta(\det \sigma_F))-\mathbb{E}(f_{\delta }(F)\Upsilon_\eta(\det \sigma_F))\vert
\leq C\left\Vert
f\right\Vert _{\infty }\frac{\delta ^{2}}{\eta^{4}}.\label{5**}
\end{eqnarray}
On the other hand, we make a small computational trick as follows which is originally from $\cite{ref52}$ p14. This trick allows us to obtain a better result. We denote 
\[R=\frac{\det \sigma
_F-\det \sigma_G}{\det \sigma_G}.\]This is well-defined since $G$ is non-degenerated. For an arbitrary $\eta$, we write
\begin{eqnarray}\mathbb{P}(\det \sigma_F<\eta)\leq \mathbb{P}(\det \sigma_F<\eta, \vert R\vert<\frac{1}{4})+\mathbb{P}(\vert R\vert\geq\frac{1}{4}).\label{BT}\end{eqnarray} When $\vert R\vert<\frac{1}{4}$, $\vert\det\sigma_F-\det\sigma_G\vert<\frac{1}{4}\det\sigma_G$. This implies that $\det\sigma_F>\frac{1}{2}\det\sigma_G$. Recalling that $G$ is non-degenerated and using Markov inequality, for every $\rho\in\mathbb{N}$, it follows that 
\begin{eqnarray}\mathbb{P}(\det \sigma_F<\eta, \vert R\vert<\frac{1}{4})\leq\mathbb{P}(\det\sigma_G<2\eta)\leq 2^\rho\eta^\rho\mathbb{E}\vert\det\sigma_G\vert^{-\rho}\leq C\eta^\rho.\label{BT1}\end{eqnarray}For any $\eta>0$,  $\rho\in\mathbb{N}$, with $q=2/(1+\varepsilon_0)$, we write \begin{eqnarray}\mathbb{P}(\vert R\vert\geq\frac{1}{4})&=&\mathbb{P}(\vert\det\sigma_F-\det\sigma_G\vert\geq\frac{1}{4}\det\sigma_G)\nonumber\\
&\leq&\mathbb{P}(\det\sigma_G\leq\eta)+\mathbb{P}(\vert\det\sigma_F-\det\sigma_G\vert>\frac{1}{4}\eta)\nonumber\\
&\leq&C(\eta^\rho+\eta^{-q}\mathbb{E}\vert \det\sigma_F-\det\sigma_G\vert^q)\nonumber\\
&\leq&C(\eta^\rho+(\eta^{-1}\Vert DF-DG\Vert_{L^2(\Omega;\mathcal{H})})^q),\label{BT2}\end{eqnarray}where in the last two steps, we have used the fact that $G$ is non-degenerated, and $\Vert F\Vert_{1,p}+\Vert G\Vert_{1,p}<\infty,\ \forall p\geq1$, and H$\ddot{o}$lder inequality with conjugates ${1+\varepsilon_0}$ and $\frac{1+\varepsilon_0}{\varepsilon_0}$. 
Putting together (\ref{BT}), (\ref{BT1}) and (\ref{BT2}),  we obtain
\begin{eqnarray}
\mathbb{P}(\det \sigma_F<\eta)&\leq&C(\eta^\rho+(\eta^{-1}\Vert DF-DG\Vert_{L^2(\Omega;\mathcal{H})})^q).\label{BT0}
\end{eqnarray}
Then we have \begin{eqnarray}
\vert\mathbb{E}((1-\Upsilon_\eta(\det \sigma_F))f(F))\vert\leq\Vert f\Vert_\infty\mathbb{P}(\det \sigma_F<\eta)\leq C\Vert f\Vert_\infty(\eta^\rho+(\eta^{-1}\Vert DF-DG\Vert_{L^2(\Omega;\mathcal{H})})^q).\label{5***}
\end{eqnarray}Similarly, we also have \begin{eqnarray}
\vert\mathbb{E}((1-\Upsilon_\eta(\det \sigma_F))f_\delta(F))\vert\leq C\Vert f\Vert_\infty(\eta^\rho+(\eta^{-1}\Vert DF-DG\Vert_{L^2(\Omega;\mathcal{H})})^q).\label{5****}
\end{eqnarray}We conclude by combining (\ref{5**}), (\ref{5***}) and (\ref{5****}).

\bigskip

\textbf{Proof of (B) $iv)$ :} The proof is analogous to the proof of $ii)$.
Using a development in Taylor series of order $4$ 
\[
f(x)-f_{\delta }(x)=\frac{\delta ^{2}}{%
2}\nabla^2 f(x)+R_{4}(\delta ,x). 
\]%
We use the above equality for $\delta $ and  $\frac{\delta}{\sqrt{2}}$, 
then by subtracting them, we get 
\[
f(x)=2f_{\delta /\sqrt{2}%
}(x)-f_{\delta }(x)+2R_{4}(\delta /\sqrt{2}%
,x)-R_{4}(\delta ,x). 
\]%
So by (\ref{5*}), \begin{eqnarray}
&&\left\vert \mathbb{E}(f(F)\Upsilon_\eta(\det \sigma_F))+\mathbb{E}(f_{\delta
}(F)\Upsilon_\eta(\det \sigma_F))-2\mathbb{E}(f_{\delta /\sqrt{2}}(F)\Upsilon_\eta(\det \sigma_F))\right\vert\nonumber\\
&&\leq 2\mathbb{E}(R_{4}(\delta /\sqrt{2},x)\Upsilon_\eta(\det \sigma_F))-\mathbb{E}(R_{4}(\delta ,x)\Upsilon_\eta(\det \sigma_F))\nonumber\\
&&\leq C\Vert f\Vert_\infty\frac{\delta^4}{\eta^8}.
\end{eqnarray}We conclude together with (\ref{5***}) and (\ref{5****}).

\end{proof}The regularization lemma (\textbf{Lemma 3.5}) implies the following result concerning the approximation of the density function. 
\begin{corollary}
$i)$ Suppose that $F$ satisfies $(\textbf{B}_{2+d}).$ Then, for every $x,$ 
\begin{equation}
\left\vert p_{F}(x)-\mathbb{E}(\varphi _{\delta }(F-x))\right\vert \leq
dC_{2+d}\times \delta ^{2}.  \label{3}
\end{equation}%
$ii)$ \textbf{(Romberg)} Suppose that $F$ satisfies $(\textbf{B}_{4+d}).$ Then 
\begin{equation}
\left\vert p_{F}(x)+\mathbb{E}(\varphi _{\delta }(F-x))-2\mathbb{E}(\varphi _{\delta /\sqrt{2}%
}(F-x))\right\vert \leq 6d^{2}C_{4+d}\times \delta ^{4}.  \label{4}
\end{equation}
\end{corollary}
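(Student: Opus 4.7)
The plan is to derive both approximation formulas from the density representation \eqref{density} in Corollary 3.4.1 combined with the regularization Lemma 3.5. The key observation is the following computation: if one sets $f(y) = \prod_{j=1}^d \mathbbm{1}_{[0,\infty)}(y_j - x_j)$ and $\beta = (1,\dots,d)$, then by integration by parts in the $z$-variable (using that the derivative of a Heaviside is a Dirac mass),
\[
\partial^\beta f_\delta(y) \;=\; \partial_1\cdots\partial_d \int_{\mathbb{R}^d} \varphi_\delta(y-z)\prod_{j=1}^d \mathbbm{1}_{[0,\infty)}(z_j - x_j)\,dz \;=\; \varphi_\delta(y-x).
\]
Together with \eqref{density} this identifies
\[
p_F(x) \;=\; \mathbb{E}(\partial^\beta f(F)), \qquad \mathbb{E}(\varphi_\delta(F-x)) \;=\; \mathbb{E}(\partial^\beta f_\delta(F)),
\]
so that the left-hand sides of \eqref{3} and \eqref{4} become exactly the quantities estimated in Lemma 3.5 \textbf{(A)} $i)$ and $ii)$, applied with $|\beta|=d$ and $\|f\|_\infty = 1$. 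Plugging $|\beta|=d$ into \eqref{1} yields \eqref{3}, and plugging $|\beta|=d$ into \eqref{2} yields \eqref{4}.

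The main subtlety is that Lemma 3.5 is stated for $f \in C_b^{2+|\beta|}(\mathbb{R}^d)$, whereas our $f$ is only a product of indicators. To handle this I would approximate $f$ by a sequence $f_n \in C_b^\infty$ with $\|f_n\|_\infty \leq 1$ and $f_n \to f$ pointwise, apply Lemma 3.5 to each $f_n$ (the constant $dC_{2+d}$ is independent of $n$ because it depends only on $F$), and pass to the limit. The limit on the regularized side, $\mathbb{E}(\partial^\beta (f_n)_\delta(F)) = \mathbb{E}(\varphi_\delta^{(n)}(F-x)) \to \mathbb{E}(\varphi_\delta(F-x))$, follows by dominated convergence since $\varphi_\delta$ is smooth and bounded. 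The limit on the non-regularized side, $\mathbb{E}(\partial^\beta f_n(F)) \to \mathbb{E}(\partial^\beta f(F)) = p_F(x)$, is exactly the content of Corollary 3.4.1, whose proof uses precisely the same integration by parts formula \eqref{IbP} with weights $H_\beta(F,1)$ that are in every $L^p$ under the hypothesis $(\textbf{B}_{2+d})$ (respectively $(\textbf{B}_{4+d})$).

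For the Romberg estimate \eqref{4}, the argument is identical: one applies the higher-order regularization estimate \eqref{2} to the same smooth approximants $f_n$, uses the identity $\partial^\beta (f_n)_{\delta/\sqrt{2}}(y) = \varphi_{\delta/\sqrt{2}}^{(n)}(y-x)$ in addition to $\partial^\beta (f_n)_\delta(y) = \varphi_\delta^{(n)}(y-x)$, and passes to the limit. The hypothesis $(\textbf{B}_{4+d})$ is exactly what is needed in \eqref{2} with $|\beta|=d$. I do not expect any genuinely hard step here: the bulk of the work is already encoded in Lemma 3.5 and Corollary 3.4.1, and the only point requiring care is the routine regularization of the Heaviside product, which is standard and does not interact in a delicate way with the Malliavin machinery.
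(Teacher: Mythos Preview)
Your proposal is correct and follows essentially the same approach as the paper: choose $f$ to be the product of Heavisides, take $\beta=(1,\dots,d)$, identify $p_F(x)=\mathbb{E}(\partial^\beta f(F-x))$ via \eqref{density} and $\partial^\beta f_\delta = \varphi_\delta(\cdot - x)$, then apply Lemma~3.5~\textbf{(A)}~$i)$ and $ii)$ with $|\beta|=d$. You are in fact more explicit than the paper about the smoothing of the Heaviside product, which the paper silently absorbs into the ``standard regularization arguments'' already invoked for Corollary~3.4.1.
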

\begin{proof} We take a multi-index $\beta=(1,\cdots,d)$ and 
\begin{equation}
f(y)=\prod_{j=1}^{d}H(y_{j}),  \label{6}
\end{equation}where $H(y)=\mathbbm{1}_{[0,\infty)}(y)$ is the Heaviside function.
So by (\ref{density}),
\[
p_{F}(x)=\mathbb{E}(\partial ^{\beta }f(F-x)). 
\]%
Notice that 
\[
\partial ^{\beta }f_{\delta }(F-x)=\prod_{j=1}^{d}H_{\delta }^{\prime
}(F_{j}-x_{j}))=\varphi _{\delta }(F-x),
\]%
so that (\ref{1}) gives 
\begin{eqnarray*}
\left\vert p_{F}(x)-\mathbb{E}(\varphi _{\delta }(F-x)\right\vert &=&\left\vert
\mathbb{E}(\partial ^{\beta }f(F-x))-\mathbb{E}(\partial ^{\beta }f_{\delta
}(F-x))\right\vert \\
&\leq &dC_{2+d}\times \delta ^{2}.
\end{eqnarray*}%
In a similar way (\ref{2}) gives (\ref{4}).
\end{proof}

\bigskip\bigskip

In the following, we define the distances between random variables $F,G:\Omega
\rightarrow \mathbb{R}^d$:%
\[
d_{r}(F,G)=\sup \{\left\vert \mathbb{E}(f(F))-\mathbb{E}(f(G))\right\vert :\sum_{ \vert\beta\vert= r}\left\Vert  \partial^\beta f\right\Vert _{\infty }\leq 1\} 
\]%
For $r=1$, this is the  Wasserstein distance $W_1$, while for $r=0$, this is
the total variation distance $d_{TV}$.

Using the Malliavin integration by parts formula (\textbf{Lemma 3.4}), one proves in $\cite{ref2}$ (lemma 3.9)  the following results. 

\begin{lemma}
We fix some index $l$, some $r\in \mathbb{N}$ and some $\varepsilon >0.$ We define $p_1=2( r(\frac{1}{\varepsilon}-1)+2)$, $p_2=\max\{4(l+d),2(\frac{r+l}{\varepsilon}-r+2)\}$, $q_1\geq r(\frac{1}{\varepsilon}-1)+4$, $q_2\geq\max\{l+d+2, \frac{r+l}{\varepsilon}-r+4\}$. Let $F,G\in \mathcal{D}_\infty^d$.  One may find  $p\in \mathbb{N}$, $C\in \mathbb{R}_{+}$
 (depending on $r,l$ and $\varepsilon )$ such that 
\begin{equation}
i)\quad d_{TV}(F,G)\leq C(1+\Sigma _{p_1}(F)+\Sigma_{p_1}(G)+\left\Vert F\right\Vert
_{L,q_1,p}+\left\Vert G\right\Vert _{L,q_1,p})\times
d_{r}(F,G)^{1-\varepsilon },
\label{3.12}
\end{equation}%
and
\begin{equation}
ii)\quad\Vert p_F- p_G\Vert_{l,\infty}\leq C(1+\Sigma _{p_2}(F)+\Sigma_{p_2}(G)+\left\Vert F\right\Vert
_{L,q_2,p}+\left\Vert G\right\Vert _{L,q_2,p})\times
d_{r}(F,G)^{1-\varepsilon },  \label{3.12*}
\end{equation}%
where $p_F(x)$ and $p_G(x)$ denote the density functions of $F$ and $G$ respectively.
\end{lemma}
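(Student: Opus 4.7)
The strategy is a classical smoothing-interpolation argument: regularize the test function by convolution, use Malliavin integration by parts to control the regularization error in terms of Sobolev norms and non-degeneracy of $F$ and $G$, and use the hypothesis $d_r(F,G)$ to control the remaining smooth part. Let $\Phi_n$ be a ``super kernel'' of order $n$ on $\mathbb{R}^d$, meaning $\int\Phi_n = 1$ and $\int y^\alpha\Phi_n(y)\,dy = 0$ for $1\leq|\alpha|\leq n$. Set $(\Phi_n)_\delta(y)=\delta^{-d}\Phi_n(y/\delta)$ and $f_{\delta,n}=f*(\Phi_n)_\delta$. I will estimate $|\mathbb{E}f(F)-\mathbb{E}f(G)|$ (for (i)) by splitting into a regularization error and an error coming from $d_r$ applied to the smoothed function, and then optimize $\delta$.

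The first key input is a higher-order analogue of Lemma 3.5: for any bounded measurable $f$,
\[
|\mathbb{E}f(F)-\mathbb{E}f_{\delta,n}(F)| \leq C_n(F)\,\delta^n\|f\|_\infty.
\]
This is obtained by writing $f_{\delta,n}(F)-f(F) = \int(f(F-\delta y)-f(F))\Phi_n(y)dy$ and Taylor-expanding inside the expectation; at each order $1\leq k\leq n$ the formula $\mathbb{E}(\partial^\beta f(F)G) = \mathbb{E}(f(F)H_\beta(F,G))$ of Lemma 3.4 is applied $k$ times to transfer every derivative of $f$ onto $F$. The vanishing moments of $\Phi_n$ kill the first $n$ orders, and the $(n{+}1)$-st remainder is bounded via Proposition 3.4.1(A), giving $C_n(F)$ in terms of $\|F\|_{L,n+2,8dn}+\Sigma_{4n}(F)$. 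The second ingredient is almost immediate: since $\sum_{|\beta|=r}\|\partial^\beta f_{\delta,n}\|_\infty \leq C\delta^{-r}\|f\|_\infty$, the definition of $d_r$ gives
\[
|\mathbb{E}f_{\delta,n}(F)-\mathbb{E}f_{\delta,n}(G)| \leq C\delta^{-r}\|f\|_\infty\, d_r(F,G).
\]

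Combining both bounds applied to $F$ and $G$ and taking the supremum over $\|f\|_\infty\leq 1$, one gets $d_{TV}(F,G)\leq C_n(F,G)(\delta^n+\delta^{-r}d_r(F,G))$. Optimizing with $\delta = d_r(F,G)^{1/(n+r)}$ yields a bound $C_n\, d_r(F,G)^{n/(n+r)}$. Choosing $n=\lceil r(\tfrac{1}{\varepsilon}-1)\rceil+2$ forces the exponent to be $\geq 1-\varepsilon$, and the Sobolev/non-degeneracy indices required by Proposition 3.4.1(A) become exactly $p_1 = 2(r(\tfrac{1}{\varepsilon}-1)+2)$ and $q_1 \geq r(\tfrac{1}{\varepsilon}-1)+4$ as claimed. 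For (ii), the same scheme is applied to $\partial^\beta f$ with $|\beta|\leq l$, using the density representation $p_F(x) = \mathbb{E}(\partial^{(1,\ldots,d)}\prod_{j=1}^d H(F_j-x_j))$ from Corollary 3.4.1; the $l+d$ extra orders of differentiation shift the required indices to $p_2$ and $q_2$.

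The main obstacle is the smoothing estimate: since $f$ is only bounded, none of the Taylor derivatives can act on $f$ itself and each must be transferred onto $F$ by a Malliavin integration by parts, each step picking up a weight involving $\Gamma_F^{j,i}=(\sigma_F^{-1})^{j,i}$, $LF_j$, and $DF$. The delicate point is to track the exact powers of the Sobolev norm $\|F\|_{L,\cdot,\cdot}$ and of $\Sigma(F)$ that accumulate in the weight, so that the resulting indices $p_1,p_2,q_1,q_2$ come out precisely as stated; this careful bookkeeping is exactly what is carried out in \cite{ref2}, Lemma 3.9, which is invoked as a black box here.
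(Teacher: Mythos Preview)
Your sketch is correct and matches the approach the paper relies on: the paper itself does not prove this lemma but simply cites \cite{ref2} (Lemma~3.9), and your smoothing--interpolation argument with a super kernel, Malliavin integration by parts to control the regularization error, and optimization over $\delta$ is precisely the scheme carried out there. Since you ultimately also invoke \cite{ref2} for the bookkeeping of the indices $p_1,p_2,q_1,q_2$, your proposal and the paper's treatment coincide.
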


\begin{remark}
We explain  about the significance of this lemma. If we have already obtained an estimate of a "smooth" distance $d_{r}$ between two random vectors $F$ and $G$ but we would like to control the total variation distance between them, then we employ some integration by parts techniques which are
developed in [BCP] and conclude the following. If both $F$ and $G$
are "smooth" in the sense that $\left\Vert F\right\Vert _{L,q,p}+\left\Vert
G\right\Vert _{L,q,p}<\infty $ for sufficiently large $q,p;$ and both $F$ and $G$ are non-degenerated in the sense that $%
\Sigma _{p}(F)+\Sigma _{p}(G)<\infty $, with $p$ large enough, then (\ref{3.12}%
) asserts that one may control $d_{TV}$ by $d_{r},$ and the control is quasi
optimal: we loose just a power $\varepsilon >0$ which we may  take as small
as we want. And (\ref{3.12*}) says that we may also control the distance between the derivatives of density functions by $d_r$.
\end{remark} 

If we only assume the non-degeneracy condition on $F$ but no non-degeneracy condition for $G$, then we have a variant of the previous lemma (see $\cite{ref2}$ proposition 3.11 and remark 3.14).
\begin{proposition}
We fix some $r\in \mathbb{N}$ and some $\varepsilon >0.$ We define $p_1=2( \frac{8r}{\varepsilon}+2)$,  $q_1\geq \frac{8r}{\varepsilon}+4$. Let $F,G\in \mathcal{D}_\infty^d.$  One may find  $p\in \mathbb{N}$, $C\in \mathbb{R}_{+}$
 (depending on $r$ and $\varepsilon )$ such that 
\begin{equation}
 d_{TV}(F,G)\leq C(1+\Sigma _{p_1}(F)+\left\Vert F\right\Vert
_{L,q_1,p}+\left\Vert G\right\Vert _{L,q_1,p})\times
(d_{r}(F,G)+\Vert DF-DG\Vert_{L^2(\Omega;\mathcal{H})}^2)^{1-\varepsilon }.
\label{3.12.1}
\end{equation}%
\end{proposition}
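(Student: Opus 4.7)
The strategy follows the proof of \textbf{Lemma 3.6}, adapted to the weaker hypothesis that only $F$ (not $G$) is assumed non-degenerate. Fix a bounded measurable $f$ with $\Vert f\Vert_\infty\leq 1$ and regularize by convolution against a kernel $\varphi_\delta$; for the abstract estimate one may work with a super-kernel as in $\cite{ref2}$, or equivalently iterate the Romberg cancellation of \textbf{Lemma 3.5} to reach any prescribed regularization order $k$. Set $f_\delta:=f\ast\varphi_\delta$ and split
\begin{eqnarray*}
\mathbb{E}f(F)-\mathbb{E}f(G)=\underbrace{(\mathbb{E}f(F)-\mathbb{E}f_\delta(F))}_{T_1}+\underbrace{(\mathbb{E}f_\delta(F)-\mathbb{E}f_\delta(G))}_{T_2}+\underbrace{(\mathbb{E}f_\delta(G)-\mathbb{E}f(G))}_{T_3},
\end{eqnarray*}
treating each piece separately before optimizing the auxiliary parameters $\delta>0$ and a localization cutoff $\eta>0$.

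For $T_1$, the non-degeneracy of $F$ allows the standard integration by parts of \textbf{Lemma 3.4(A)}: a Taylor expansion of $f_\delta-f$ to order $k$, combined with $k$ iterations of IbP to absorb the derivatives into weights $H_\beta(F,1)$, yields $|T_1|\leq C_k\delta^k$, with $C_k$ polynomial in $\Sigma_{p_1}(F)$ and $\Vert F\Vert_{L,k+2,p}$. For $T_3$, we cannot invoke \textbf{Proposition 3.4.1(A)} on $G$, so following the trick of \textbf{Lemma 3.5(B)} I would apply the localized IbP of \textbf{Lemma 3.4(B)} with the cutoff $\Upsilon_\eta(\det\sigma_G)$, which after $k$ iterations contributes $O(\delta^k/\eta^{2k})$, and control the unlocalized remainder $|\mathbb{E}((f_\delta-f)(G)(1-\Upsilon_\eta(\det\sigma_G)))|\leq 2\Vert f\Vert_\infty\mathbb{P}(\det\sigma_G<\eta)$ through the probability estimate
\begin{eqnarray*}
\mathbb{P}(\det\sigma_G<\eta)\leq\mathbb{P}(\det\sigma_F\leq 2\eta)+\mathbb{P}(|\det\sigma_F-\det\sigma_G|>\tfrac{1}{4}\eta)\leq C\bigl(\eta^\rho+(\eta^{-1}\Vert DF-DG\Vert_{L^2(\Omega;\mathcal{H})})^q\bigr),
\end{eqnarray*}
valid for any $\rho\in\mathbb{N}$ and $q=2/(1+\varepsilon_0)<2$. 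The first probability uses $\Sigma_{p_1}(F)$; the second uses Chebyshev, H\"older with conjugates $1+\varepsilon_0$ and $(1+\varepsilon_0)/\varepsilon_0$, and the Lipschitz-type inequality $|\sqrt{\lambda(F)}-\sqrt{\lambda(G)}|\leq|D(F-G)|_{\mathcal{H}}$ from \textbf{Lemma 3.3(B)} combined with a chain-rule bound on $|\det\sigma_F-\det\sigma_G|$. Altogether $|T_3|\leq C(\delta^k/\eta^{2k}+(\eta^{-1}\Vert DF-DG\Vert_{L^2})^q+\eta^\rho)$. The middle term $T_2$ is immediate: $f_\delta\in C_b^\infty$ with $\Vert\partial^\beta f_\delta\Vert_\infty\leq C\delta^{-r}$ for $|\beta|=r$, so the definition of $d_r$ yields $|T_2|\leq C\delta^{-r}d_r(F,G)$.

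Write $A:=d_r(F,G)+\Vert DF-DG\Vert_{L^2(\Omega;\mathcal{H})}^2$ and look for $\delta=A^\alpha$, $\eta=A^\beta$ forcing each of the five contributions to be at most $A^{1-\varepsilon}$. This yields the simultaneous constraints $k\alpha\geq 1-\varepsilon$, $\alpha\leq\varepsilon/r$, $k\alpha-2k\beta\geq 1-\varepsilon$, $q(\tfrac{1}{2}-\beta)\geq 1-\varepsilon$, and $\rho\beta\geq 1-\varepsilon$, which are solvable by taking $\varepsilon_0$ small (so that $q$ is close to $2$), $\rho$ large, and $k$ large enough that $k\gtrsim r(1-\varepsilon)/\varepsilon$. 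Tracking the growth of the Malliavin weights $H_\beta(F,1), H_{\eta,\beta}(G,1)$ with the iteration order $k$ pins down the Sobolev exponent $q_1\geq 8r/\varepsilon+4$ and the integrability exponent $p_1=2(8r/\varepsilon+2)$ of the hypothesis. Taking the supremum over $\Vert f\Vert_\infty\leq 1$ closes the proof.

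The main obstacle is precisely this five-way balancing. The second-order gain $\delta^2$ from a single Taylor expansion cannot compensate the $\delta^{-r}$ blow-up of the $r$-th derivatives of $f_\delta$ inside $T_2$ once $r\geq 1$ and $\varepsilon$ is small; this forces the regularization order up to $k\sim r/\varepsilon$, which is the source of the $1/\varepsilon$-dependence of $q_1,p_1$. A second, more subtle point, responsible for $\Vert DF-DG\Vert_{L^2}$ appearing \emph{squared} in $A$, is that the probability estimate recycled from the proof of \textbf{Lemma 3.5(B)} produces $\Vert DF-DG\Vert_{L^2}$ only to the power $q<2$; sending $q\uparrow 2$ at the cost of an extra $\varepsilon$-loss converts this into $\Vert DF-DG\Vert_{L^2}^{2(1-\varepsilon)}=(\Vert DF-DG\Vert_{L^2}^2)^{1-\varepsilon}$, which is exactly the form appearing on the right-hand side of the statement.
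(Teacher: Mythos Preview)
Your proposal is correct and takes essentially the same approach as the paper. The paper does not give a self-contained proof but refers to Proposition~3.11 and Remark~3.14 of \cite{ref2}, replacing their estimate (3.29) (with $p'=1$) by the sharper bound (\ref{BT0}) with $q=2/(1+\varepsilon_0)$ and $\varepsilon_0=\varepsilon/(2-2\varepsilon)$; your three-term splitting, the localized IbP on $G$ via $\Upsilon_\eta(\det\sigma_G)$, the probability transfer $\mathbb{P}(\det\sigma_G<\eta)\leq C(\eta^\rho+(\eta^{-1}\Vert DF-DG\Vert_{L^2})^q)$ using the non-degeneracy of $F$, and the squaring mechanism $q\uparrow 2$ are exactly the ingredients the paper's remark singles out.
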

\begin{remark}
The result in \textbf{Proposition 3.6.1} is better than proposition 3.11 and remark 3.14 in $\cite{ref2}$. We get $\Vert DF-DG\Vert_{L^2(\Omega;\mathcal{H})}^2$ instead of $\Vert DF-DG\Vert_{L^2(\Omega;\mathcal{H})}$. This is because rather than the estimate $(3.29)$ with $p^\prime=1$ in $\cite{ref2}$, we use a sharper estimate ($\ref{BT0}$) with $q=\frac{2}{1+\varepsilon_0}$ and $\varepsilon_0=\frac{\varepsilon}{2-2\varepsilon}$. The idea of ($\ref{BT0}$) comes from the paper $\cite{ref52}$ p14. We benefit a lot from this improvement in the paper. It guarantees that we are able to keep the speed of convergence $1-\varepsilon$ (instead of $\frac{1}{2}-\varepsilon$) in the final results \textbf{Theorem 2.1$\sim$2.4}. 
\end{remark}

\bigskip

\subsection{Malliavin calculus for the jump equations}
In this section, we present the integration by parts framework which will be used when we deal with the jump equations  (\ref{xMZ}), (\ref{xPMZ}) and (\ref{x}). There are several approaches given in $\cite{ref5}$, $\cite{ref9}$, $\cite{ref10}$, $\cite{ref11}$, $\cite{ref15}$, $\cite{ref17}$ and $\cite{ref18}$ for example. Here we give a framework analogous to $\cite{ref3}$.

To begin we define a regularization function.
\begin{eqnarray}
	a(y) &=&1-\frac{1}{1-(4y-1)^{2}}\quad for\quad y\in \lbrack \tfrac{1}{4}, \tfrac{1}{2}),  \label{3.14} \\
\psi (y) &=&\mathbbm{1}_{\{\left\vert y\right\vert \leq \frac{1}{4}\}}+\mathbbm{1}_{\{\frac{1}{4}%
< \left\vert y\right\vert \leq \frac{1}{2}\}}e^{a(\left\vert y\right\vert
)}.  \label{3.14*}
\end{eqnarray}%
We notice that $\psi \in C_{b}^{\infty }(\mathbb{R})$ and that its support is included in $[-\frac{1}{2},\frac{1}{2}]$. 
We denote
\begin{equation}
\Psi_k(y)=\psi(\vert y\vert-(k-\tfrac{1}{2})),\ \forall k\in\mathbb{N}.  \label{3.19*}
\end{equation}%
Then for any $l\in\mathbb{N}$, there exists a constant $C_l$ such that \begin{eqnarray}
\sup_{k\in\mathbb{N}}\Vert\Psi_k\Vert_{l,\infty}\leq C_l<\infty.\label{psiborn}
\end{eqnarray}

We focus on ${x}^{\mathcal{P},M}_{t}$ and ${x}^{M}_{t}$ (solutions of  the equations (\ref{xPMZ}) and (\ref{xMZ})) which are  functions of random variables $T^k_i,W^k_i,$ $Z^k_i$, $\Delta$ and $X_0$ (see Section 2.7). 
Now we introduce the space of simple functionals $\mathcal{S}.$ 
We take $\mathcal{G}=\sigma(T^k_i,W_i^k,X_0: k,i\in \mathbb{N})$\ to be the $\sigma-$algebra associated to the noises which will not be involved in our calculus. In the following, we will do the calculus based on $Z^k_i=(Z^k_{i,1},\cdots,Z^k_{i,d}),k,i\in\mathbb{N}$ and $\Delta=(\Delta_1,\cdots,\Delta_d)$. We denote by $C_{\mathcal{G},p}$ the space of the functions $f:\Omega \times \mathbb{R}^{m\times m^{\prime}\times d+d}\rightarrow\mathbb{R}$ such that for each $\omega ,$ the function $(z_{1,1}^1,...,z_{m,d}^{m^{\prime}},\delta_1,\cdots,\delta_d)\mapsto f(\omega ,z_{1,1}^1,...,z_{m,d}^{m^{\prime}},\delta_1,\cdots,\delta_d)$ belongs to $C_{p}^{\infty }(\mathbb{R}^{m\times m^{\prime}\times d+d})$ (the space of smooth functions which, together with all the derivatives, have polynomial growth), and for each $(z_{1,1}^1,...,z_{m,d}^{m^{\prime}},\delta_1,\cdots,\delta_d)$, the function $\omega \mapsto f(\omega ,z_{1,1}^1,...,z_{m,d}^{m^{\prime}},\delta_1,\cdots,\delta_d)$ is $%
\mathcal{G}$-measurable. And we consider the weights  \[\xi_{i}^k=\Psi_k({Z}^k_i).\] Then we define the space of simple functionals 
\begin{equation*}
\mathcal{S}=\{F=f (\omega ,({Z}_{i}^{k})_{\substack{1\leq k\leq m^{\prime}\\1\leq i\leq m}},\Delta) :f\in C_{%
\mathcal{G},p}, m,m^{\prime}\in \mathbb{N}\}.
\end{equation*}%
\begin{remark}
The simple functional $F$ is actually a function of $(T^k_i)_{k,i\in\mathbb{N}},(W^k_i)_{k,i\in\mathbb{N}},$ $(Z^k_i)_{k,i\in\mathbb{N}}$, $\Delta$ and $X_0$. By taking $m=J^k_t$ and $m^{\prime}=M$, we notice that for any $0<t\leq T$,  ${x}^{\mathcal{P},M}_{t}$ and ${x}^{M}_{t}$ (solutions of  the equations (\ref{xPMZ}) and (\ref{xMZ})) both belong to $\mathcal{S}^d$.
\end{remark}
On the space $\mathcal{S}$ we define the derivative operator $DF=(D^ZF,D^{\Delta}F)$, where
\begin{eqnarray}
D^Z_{(\bar{k},\bar{i},\bar{j})}F&=&\xi _{\bar{i}}^{\bar{k}}\frac{\partial f}{\partial z^{\bar{k}}_{\bar{i},\bar{j}}}(\omega ,({Z}_{i}^{k})_{\substack{1\leq k\leq m^{\prime}\\1\leq i\leq m}},\Delta),\quad \bar{k},\bar{i}\in\mathbb{N},\bar{j}\in\{1,\cdots,d\}, \label{D}\\
D^{\Delta}_{\Tilde{j}}F&=&\frac{\partial f}{\partial \delta_{\Tilde{j}}}(\omega ,({Z}_{i}^{k})_{\substack{1\leq k\leq m^{\prime}\\1\leq i\leq m}},\Delta),\quad \Tilde{j}\in\{1,\cdots,d\}.\nonumber
\end{eqnarray}%
We regard $D^ZF$ as an element of the Hilbert space $l_{2}$\ (the space of
the sequences $u=(u_{k,i,j})_{k,i\in \mathbb{N},{j}\in\{1,\cdots,d\}}$ with $\left\vert u\right\vert
_{l_{2}}^{2}:=\sum_{k=1}^{\infty }\sum_{i=1}^{\infty }\sum_{j=1}^{d}\vert u_{k,i,j}\vert^{2}<\infty$) and $DF$ as an element of $l_{2}\times\mathbb{R}^d$, so we have%
\begin{eqnarray}
\left\langle DF,DG\right\rangle_{l_2\times\mathbb{R}^d} =\sum_{j=1}^dD_j^{\Delta}F\times D_j^{\Delta}G+\sum_{k=1}^{\infty }\sum_{i=1}^{\infty }\sum_{j=1}^{d }D^Z_{(k,i,j)}F\times D^Z_{(k,i,j)}G. \label{Cov}
\end{eqnarray}%
We also denote $D^1F=DF$, and we define the derivatives of order $q\in \mathbb{N}$ recursively: 
$D^{q}F:=DD^{q-1}F.$ And we denote $D^{Z,q}$ (respectively $D^{\Delta,q}$) as the derivative $D^Z$ (respectively $D^\Delta$) of order $q$.

We recall the function $h$ given in \textbf{Hypothesis 2.4 $b)$}.
We also define the Ornstein-Uhlenbeck operator $LF=L^ZF+L^{\Delta}F$ with
\begin{eqnarray}
L^ZF&=&-\sum_{k=1}^{m^{\prime}}\sum_{i=1}^{m }\sum_{j=1}^{d }(\partial_{z^k_{i,j}}(\xi^k_{i}D^Z_{(k,i,j)}F)+ D^Z_{(k,i,j)}F\times D^Z_{(k,i,j)}\ln [{h(Z^k_i)}]), \label{L}\\
L^{\Delta}F&=&\sum_{j=1}^dD^{\Delta}_jF\times\Delta_j-\sum_{j=1}^dD^{\Delta}_jD^{\Delta}_jF.\nonumber
\end{eqnarray}

One can check that the triplet $(\mathcal{S},D,L)$ is consistent with the IbP framework given in Section 3.1. In particular the duality formula (\ref{0.01}) holds true. We refer to $\cite{ref4}$(Appendix 5.3). We say that $F$ is a "Malliavin smooth functional" if $F\in\mathcal{D}_\infty$ (with the definition given in (\ref{Dinf})).

We will use the IbP framework defined here for $x_t$, ${x}_{t}^{M}$ and ${x}_{t}^{\mathcal{P},M}$ (solutions of equations (\ref{x}),(\ref{xM}) and (\ref{xPM})). We recall that they are obtained in Section 2.7 by optimal coupling in $W_{2+\varepsilon_{\ast}}$ distance between $X^{\mathcal{P},M}_{\tau(t)-}$  and $X_{t-}$. 
Here we give two lemmas, concerning the Sobolev norms and the covariance matrices of $x_t$, ${x}_{t}^{M}$ and ${x}_{t}^{\mathcal{P},M}$. 

\begin{lemma}
Assuming \textbf{Hypothesis 2.1} and \textbf{Hypothesis 2.4} $b)$,  for all $p\geq1, l\geq 0$, there exists a constant $C_{l,p}$ depending on $l,p,d$ and $T$, such that for any $0<t\leq T$,  \[i)\quad\sup\limits_{{\mathcal{P}}}\sup\limits_{{M}}(\Vert {x}_{t}^{\mathcal{P},M}\Vert_{L,l,p}+\Vert {x}_{t}^{M}\Vert_{L,l,p})\leq C_{l,p}.\] Moreover, ${x}_{t}$ belongs to $\mathcal{D}_{\infty}^d$ and \[ii)\quad\Vert {x}_{t}\Vert_{L,l,p}\leq C_{l,p}.\]
\end{lemma}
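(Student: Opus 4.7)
The strategy is to treat $x^M_t$ and $x^{\mathcal{P},M}_t$ as genuinely simple functionals (belonging to $\mathcal{S}^d$) since the truncation removes jumps of size larger than $M$, leaving only the finitely many jump amplitudes $Z^k_i$ with $k\le M$, $T^k_i\le t$, together with the Gaussian block $\Delta$, as "differentiable" variables; all other quantities ($X_0$, the jump times $T^k_i$, the coupling auxiliaries $W^k_i$, the deterministic marginal laws $\rho_r$, $\rho^{\mathcal{P},M}_{\tau(r)}$, and the deterministic maps $\eta^1_r(\cdot), \eta^2_r(\cdot)$ from Lemma 2.5) are $\mathcal{G}$-measurable, so they are frozen in the calculus defined in (\ref{D})–(\ref{L}). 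Then part $i)$ reduces to bounding the Sobolev norms of the simple functionals $x^M_t$ and $x^{\mathcal{P},M}_t$ uniformly in $M$ and $\mathcal{P}$, while part $ii)$ follows by the closure argument of Lemma 3.3(A) combined with the $L^{2+\varepsilon_\ast}$ convergence $x^M_t \to x_t$ from Lemma 2.7(i).

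For the uniform bounds on $\Vert x^{\mathcal{P},M}_t\Vert_{L,l,p}$ (the case $x^M_t$ is analogous and easier), the plan is to proceed by induction on $l$. For $l=0$, one simply uses (\ref{BurkM}) with $\bar{\Phi}(r,w,z,\cdot)=c_M(\tau(r),\eta^1_r(w),z,x^{\mathcal{P},M}_{\tau(r)-},\rho^{\mathcal{P},M}_{\tau(r)-})$, the linear growth-free bound $\vert c\vert\le\bar c(z)$ from Hypothesis 2.1, and a Gronwall argument on $\mathbb{E}\vert x^{\mathcal{P},M}_t\vert^p$. For the first derivative, differentiating (\ref{xPMZ}) (equivalently (\ref{xPM})) with respect to $Z^k_{i,j}$ produces, at times $r>T^k_i$, a linear equation
\begin{eqnarray*}
D^Z_{(k,i,j)}x^{\mathcal{P},M}_t &=& \xi^k_i\,\partial_{z_j}c(T^k_i,\eta^1_{T^k_i}(W^k_i),Z^k_i,x^{\mathcal{P},M}_{\tau(T^k_i)-},\rho^{\mathcal{P},M}_{\tau(T^k_i)-})\\
& & +\int_{T^k_i}^t \nabla_x b(\tau(r),x^{\mathcal{P},M}_{\tau(r)},\rho^{\mathcal{P},M}_{\tau(r)}) D^Z_{(k,i,j)}x^{\mathcal{P},M}_{\tau(r)}\,dr\\
& & +\int_{T^k_i}^t\!\!\int_{[0,1]\times\mathbb{R}^d}\!\nabla_x c_M(\tau(r),\eta^1_r(w),z,x^{\mathcal{P},M}_{\tau(r)-},\rho^{\mathcal{P},M}_{\tau(r)-})D^Z_{(k,i,j)}x^{\mathcal{P},M}_{\tau(r)-}\,\mathcal{N}(dw,dz,dr),
\end{eqnarray*}
and a symmetric equation with an initial kick $a^M_T\mathbf{1}_{\{\Tilde j\}}$ for $D^\Delta_{\Tilde j}x^{\mathcal{P},M}_t$. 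Applying (\ref{BurkM}) with $\bar\varphi=\bar c(z)\vert D^Zx^{\mathcal{P},M}_{\tau(r)-}\vert$, using Hypothesis 2.1, $\vert\xi^k_i\vert\le 1$, $\vert a^M_T\vert\le C$, and summing over $(k,i,j)$ to form $\vert D^Zx^{\mathcal{P},M}_t\vert_{l_2}^p$, Gronwall's lemma yields the $L^p$ bound uniformly in $M,\mathcal{P}$; the key point is that the summation over $k,i$ of $\vert\partial_{z_j}c(T^k_i,\cdot,Z^k_i,\cdot)\vert^2$ compensates against $\bar c_2 = \int\vert\bar c(z)\vert^2\mu(dz) < \infty$ by a compensation argument that reduces to bounds already used in (\ref{Burk*}). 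Higher-order derivatives $D^q$ and $L$ are then handled by iterating: differentiating the linear equation above $q-1$ more times produces a similar linear equation for $D^q x^{\mathcal{P},M}_t$ driven by products of lower-order derivatives (via Faà-di-Bruno and the chain rule (\ref{0.00})), and Lemma 3.2 plus Hypothesis 2.1 (all derivatives of $c$ are dominated by $\bar c(z)$) plus Hypothesis 2.4$b)$ (so that $D^Z_{(k,i,j)}\ln h(Z^k_i)$ in (\ref{L}) is bounded in every $L^p$) give the required $L^p$ bounds after a final Gronwall argument.

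For part $ii)$, once $\sup_M\Vert x^M_t\Vert_{L,l,p}\le C_{l,p}$ is established and Lemma 2.7(i) delivers $\mathbb{E}\vert x^M_t-x_t\vert\to 0$, Lemma 3.3(A) applied to the sequence $(x^M_t)_{M\in\mathbb{N}}\subset\mathcal{S}^d$ yields $x_t\in\mathcal{D}^d_{l,\bar p}$ for any $\bar p<p$ with $\Vert x_t\Vert_{L,l,\bar p}\le C_{l,p}$, and since $l,p$ are arbitrary we conclude $x_t\in\mathcal{D}_\infty^d$ with the claimed uniform norm bound. The main obstacle I anticipate is the bookkeeping in the higher-order induction: the linear SDE for $D^q x^{\mathcal{P},M}_t$ involves products of up to $q$ lower-order derivatives together with mixed $D^Z,D^\Delta$ terms, and the compensation against the Poisson intensity $\mu(dz)$ must be done carefully so that each multiplicative factor of the form $\partial^\alpha c(\cdot,Z^k_i,\cdot)$ contributes an integrable power of $\bar c(z)$; this is where Hypothesis 2.1's uniform-in-$(\beta_1,\beta_2)$ bound by the same $\bar c$, together with $\bar c_p<\infty$ for every $p\ge 1$, is crucial.
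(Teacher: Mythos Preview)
Your proposal is correct and follows essentially the same route as the paper: induction on $l$ using the chain rule for $|\cdot|_{1,l}$ (the paper packages this as Lemma 3.1 rather than explicit Fa\`a-di-Bruno), the Burkholder-type bound (\ref{BurkM}), and Gronwall, followed by the closure argument via Lemma 3.3(A) and the $L^1$ convergence $x^M_t\to x_t$. Two minor slips in your references: the convergence $\mathbb{E}|x^M_t-x_t|\to 0$ is Lemma 2.6(i), not 2.7, and the chain-rule estimate you invoke is Lemma 3.1, not 3.2.
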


\begin{lemma}
Assume that \textbf{Hypothesis 2.1, 2.2, 2.3} and \textbf{2.4} hold true. 
Then for every $p\geq1$, $0<t\leq T$ such that $t>\frac{2dp}{\theta}$ (with $\theta$ defined in (\ref{undergamma})), we have (recalling by (\ref{Mcov}) that $\sigma_F$ denotes the covariance matrix of $F$)
\begin{eqnarray}
i)\quad
\sup\limits_{M}\mathbb{E}(1/ \det\sigma_{{x}_{t}^{M}})^p\leq C_p, \label{cM8}\\
ii)\quad\mathbb{E}(1/ \det\sigma_{{x}_{t}})^p\leq C_p, \label{cov}
\end{eqnarray}
with $C_p$ a constant depending on $p,d,T$.
\end{lemma}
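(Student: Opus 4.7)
The plan is to prove (i) by a hands-on analysis of the Malliavin covariance matrix of $x_t^M$, and then to derive (ii) from (i) by a closability/limit argument based on Lemma 3.3(B).

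\textbf{Setting up the covariance matrix.} Differentiating (\ref{xMZ}) with $D=(D^Z,D^\Delta)$ defined in (\ref{D}) and writing $Y_{t,s}^M$ for the tangent flow of (\ref{xM}) between $s$ and $t$, one first finds
\begin{eqnarray*}
D^Z_{(k,i,j)} x_t^M = \xi_i^k\, Y_{t,T_i^k}^M\, \partial_{z_j}c(T_i^k,\eta^2_{T_i^k}(W_i^k),Z_i^k,x^M_{T_i^k-},\rho_{T_i^k-}),\qquad D^\Delta_j x_t^M = a^M_T\,(Y_{t,0}^M)_{\cdot j}.
\end{eqnarray*}
Plugging these into (\ref{Cov}) and applying the ellipticity Hypothesis 2.3 to the vector $(Y_{t,s}^M)^T\zeta$ gives, for every $\zeta\in\mathbb{R}^d$,
\begin{eqnarray*}
\langle\sigma_{x_t^M}\zeta,\zeta\rangle\geq |a^M_T|^2|(Y_{t,0}^M)^T\zeta|^2+\sum_{k=1}^M\sum_{i:T_i^k\leq t}(\xi_i^k)^2\underline{c}(Z_i^k)|(Y_{t,T_i^k}^M)^T\zeta|^2.
\end{eqnarray*}
Under Hypothesis 2.2, the inverse tangent flow $\widehat Y_{t,s}^M=(Y_{t,s}^M)^{-1}$ solves a linear jump SDE whose jump coefficient is $\nabla_x c(I_d+\nabla_x c)^{-1}$, which is dominated by $\bar c(z)$; Burkholder's inequality (\ref{BurkN}) then yields $\sup_M\mathbb{E}\sup_{s\leq t}\|\widehat Y_{t,s}^M\|^q<\infty$ for every $q\geq1$. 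Using $|(Y_{t,s}^M)^T\zeta|^2\geq|\zeta|^2/\|\widehat Y_{t,s}^M\|^2$, this reduces matters to a scalar quantity:
\begin{eqnarray*}
\lambda(x_t^M)\geq\frac{N_t^M}{\sup_{s\leq t}\|\widehat Y_{t,s}^M\|^2},\qquad N_t^M:=|a^M_T|^2+\sum_{k=1}^M\sum_{i:T_i^k\leq t}(\xi_i^k)^2\underline{c}(Z_i^k).
\end{eqnarray*}

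\textbf{Negative moments of $N_t^M$ via Hypothesis 2.4(a).} Since $\xi_i^k=\Psi_k(Z_i^k)\geq\mathbbm{1}_{\{|Z_i^k|\in[k-\frac{3}{4},k-\frac{1}{4}]\}}$, a single Poisson point $(T_i^k,Z_i^k)$ falling in the band $|Z_i^k|\in[k-\frac{3}{4},k-\frac{1}{4}]$ with $\underline c(Z_i^k)\geq 1/u$ already forces $N_t^M\geq 1/u$. The number of such "good" jumps in $[0,t]$ is Poisson with parameter $t\,\nu^M\{\underline c\geq 1/u\}$, where $\nu^M=\mathbbm{1}_{B_M}\nu$. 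Fix $\kappa<\theta$. By Hypothesis 2.4(a) there is $u_0$ such that $\nu\{\underline c\geq 1/u\}\geq\kappa\ln u$ for $u\geq u_0$, and one chooses $M_0$ large enough that $\nu^M\{\underline c\geq 1/u\}\geq\frac{\kappa}{2}\ln u$ for every $M\geq M_0$ and every $u\geq u_0$ (the elementary fact that $\nu^M\uparrow\nu$). For $M<M_0$ the deterministic lower bound $N_t^M\geq|a^{M_0}_T|^2>0$ takes over. Hence, uniformly in $M$,
\begin{eqnarray*}
\mathbb{P}(N_t^M<1/u)\leq u^{-\kappa t/2}\qquad\text{for all }u\geq u_0.
\end{eqnarray*}
Using $\det\sigma_{x_t^M}\geq\lambda(x_t^M)^d$, H\"older's inequality, and the layer-cake formula,
\begin{eqnarray*}
\mathbb{E}(1/\det\sigma_{x_t^M})^p\leq\bigl(\mathbb{E}(N_t^M)^{-2dp}\bigr)^{1/2}\bigl(\mathbb{E}\sup_{s\leq t}\|\widehat Y_{t,s}^M\|^{4dp}\bigr)^{1/2},
\end{eqnarray*}
and the right-hand side is finite as soon as $\kappa t>2dp$, which can be achieved with $\kappa<\theta$ under the assumption $t>2dp/\theta$. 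This proves (i).

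\textbf{Passage to $x_t$.} For (ii), invoke Lemma 3.3(B) with $F_n=x_t^{M_n}$, $M_n\uparrow\infty$, and $F=x_t$: hypothesis (i) of that lemma follows from Lemma 2.6 (i), hypothesis (ii) from Lemma 3.7, and (iv) is exactly the uniform bound on $\mathbb{E}\lambda(x_t^{M_n})^{-p}$ just derived. Condition (iii), that $(Dx_t^{M_n})_n$ is a Cauchy sequence in $L^2(\Omega;\mathcal{H})$, follows by writing the linear jump SDE satisfied by $Dx_t^M-Dx_t^{M_n}$ (obtained by differentiating the equations (\ref{xMZ}) for two truncation levels driven by the same Poisson point measure) and running the same $L^2$-Burkholder/Gronwall argument as in Lemma 2.6, with residual terms of order $\sqrt{\varepsilon_{M_n}}$. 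Lemma 3.3(B) then yields $\mathbb{E}\lambda(x_t)^{-p}<\infty$, and combining with $\det\sigma_{x_t}\geq\lambda(x_t)^d$ (with $p$ replaced by $pd$) gives (ii).

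\textbf{The main obstacle} is the uniform-in-$M$ Poisson tail estimate for $N_t^M$: the Gaussian mass $|a^M_T|^2$ vanishes as $M\to\infty$, so non-degeneracy must come entirely from the jump part, and the quantitative lower bound in Hypothesis 2.4(a) must be large enough to absorb both the dimension $d$ (through $\det\sigma\geq\lambda^d$) and the H\"older splitting needed to isolate the tangent-flow factor, which is exactly what produces the sharp threshold $t>2dp/\theta$.
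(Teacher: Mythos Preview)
Your overall architecture matches the paper's: reduce the covariance matrix to the scalar $N_t^M=|a_T^M|^2+\sum(\xi_i^k)^2\underline{c}(Z_i^k)$ via the tangent flow, then control negative moments of $N_t^M$ uniformly in $M$, and pass to the limit with Lemma~3.3(B). The tangent-flow reduction and part (ii) are fine.

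The gap is in your ``uniform Poisson tail'' step. You claim one can pick a \emph{fixed} $M_0$ with $\nu^M\{\underline c\geq 1/u\}\geq\tfrac{\kappa}{2}\ln u$ for all $M\geq M_0$ and \emph{all} $u\geq u_0$. This is false: for any fixed $M$, $\nu^M(\mathbb{R}^d)\leq\nu(B_M)<\infty$, so $\nu^M\{\underline c\geq 1/u\}$ is bounded in $u$ and cannot dominate $\tfrac{\kappa}{2}\ln u$ as $u\to\infty$. The monotone convergence $\nu^M\uparrow\nu$ is only setwise, not uniform in $u$. Consequently your tail bound $\mathbb{P}(N_t^M<1/u)\leq u^{-\kappa t/2}$ is not justified uniformly in $M$ (and the stray factor $1/2$ also makes your stated threshold $\kappa t>2dp$ inconsistent with the bound you wrote).

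Your approach \emph{can} be repaired, but it requires an $M$-dependent dichotomy at each level $u$: either $\nu^M\{\underline c\geq 1/u\}\geq(1-\epsilon)\kappa\ln u$, in which case the Poisson void probability gives $u^{-(1-\epsilon)\kappa t}$; or the complementary mass $\nu(\{|z|>M,\,\underline c\geq 1/u\})\geq\epsilon\kappa\ln u$, which forces $|a_T^M|^2\geq T\epsilon\kappa\tfrac{\ln u}{u}\geq 1/u$ for $u$ large, so the event $\{N_t^M<1/u\}$ is empty. Letting $\epsilon\downarrow0$ recovers the sharp threshold $t>2dp/\theta$. The paper sidesteps this altogether with a Laplace-transform argument: it computes $\mathbb{E}e^{-s\chi_t^M}$ exactly, then uses Jensen's inequality $e^{-s|a_T^M|^2}\leq\mathbb{E}e^{-s\bar\chi_t^M}$ (with $\bar\chi_t^M$ the Poisson integral over $\{|z|>M\}$) to recombine the truncated and tail parts into $\exp\bigl(-t\int_{\mathbb{R}^d}(1-e^{-s\underline c})\,d\nu\bigr)$, which is manifestly independent of $M$. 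That is the idea you are missing.
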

\begin{remark}
In the case $\theta=\infty$, the results in \textbf{Lemma 3.8}  hold for every $0<t\leq T$.
\end{remark}
\begin{remark}
We are not able to prove that $\sup\limits_{\mathcal{P}}\sup\limits_{M}\mathbb{E}(1/ \det\sigma_{{x}_{t}^{\mathcal{P},M}})^p\leq C_p$, since the tangent flow of the Euler scheme is not invertible (see (\ref{inverse}), the inverse tangent flow plays an important role in our proof). This is why we need \textbf{Proposition 3.6.1} instead of \textbf{Lemma 3.6}. Fortunately, thanks to (\ref{BT0}) (inspired from $\cite{ref52}$ p14), we are able to keep the same speed of convergence $1-\varepsilon$ as if the tangent flow of the Euler scheme were invertible. 
\end{remark}

The proofs of these two lemmas are rather technical and are postponed to Section 4.1 and 4.2.

Before we end this section, we establish an auxiliary result. We recall by (\ref{epsM}) that
$\varepsilon_M=\int_{\{\vert z\vert>M\}}\vert\bar{c}(z)\vert^2\mu(dz)+\vert\int_{\{\vert z\vert>M\}}\bar{c}(z)\mu(dz)\vert^2).$
\begin{lemma}We assume that \textbf{Hypothesis 2.1}  and \textbf{Hypothesis 2.4} $b)$ hold true. Then for any $\varepsilon_{\ast}>0$, there exists a constant $C$ dependent on $T,d,\varepsilon_{\ast}$ such that for every $\vert\mathcal{P}\vert\leq1$,  every $M$ with $\varepsilon_M\leq 1$ and $\vert\bar{c}(z)\vert^2\mathbbm{1}_{\{\vert z\vert>M\}}\leq 1$,
\[i)\quad\Vert D{x}_{t}^{\mathcal{P},M}-D{x}_{t}^{M}\Vert_{L^2(\Omega;l_2\times\mathbb{R}^d)}\leq C(\varepsilon_M+\vert\mathcal{P}\vert)^{\frac{1}{2+\varepsilon_{\ast}}},\]
\[ii)\quad\Vert D{x}_{t}^{M}-D{x}_{t}\Vert_{L^2(\Omega;l_2\times\mathbb{R}^d)}\leq C(\varepsilon_M)^{\frac{1}{2+\varepsilon_{\ast}}},\]
\[iii)\quad\Vert D{x}_{t}^{\mathcal{P},M}-D{x}_{t}\Vert_{L^2(\Omega;l_2\times\mathbb{R}^d)}\leq C(\varepsilon_M+\vert\mathcal{P}\vert)^{\frac{1}{2+\varepsilon_{\ast}}}.\]
\end{lemma}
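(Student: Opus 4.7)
The plan is to derive explicit SDE-type representations of the Malliavin derivatives $D^Z$ and $D^\Delta$ of $x_t, x_t^M, x_t^{\mathcal{P},M}$ from the defining equations (\ref{x}), (\ref{xMZ}), (\ref{xPMZ}), subtract them pairwise, and then close the resulting estimates by Burkholder (\ref{Burk*})/(\ref{BurkM}), H\"older and Gronwall. Since $iii)$ follows from $i)$ and $ii)$ by the triangle inequality, only $i)$ and $ii)$ really have to be proved. Formally differentiating (\ref{xPMZ}) with respect to $Z^k_{i,j}$ one gets, on the event $\{T_i^k\leq t\}$, an equation of the form
\begin{eqnarray*}
D^Z_{(k,i,j)}x^{\mathcal{P},M}_t
&=&\xi_i^k\,\partial_{z_j}c\bigl(\tau(T_i^k),\eta^1_{T_i^k}(W_i^k),Z_i^k,x^{\mathcal{P},M}_{\tau(T_i^k)-},\rho^{\mathcal{P},M}_{\tau(T_i^k)-}\bigr)\\
&&+\int_{T_i^k}^t \nabla_x b(\tau(r),x^{\mathcal{P},M}_{\tau(r)},\rho^{\mathcal{P},M}_{\tau(r)})\,D^Z_{(k,i,j)}x^{\mathcal{P},M}_{\tau(r)}\,dr\\
&&+\int_{T_i^k}^t\!\!\int_{[0,1]\times\mathbb{R}^d}\!\!\nabla_x c_M(\tau(r),\eta^1_r(w),z,x^{\mathcal{P},M}_{\tau(r)-},\rho^{\mathcal{P},M}_{\tau(r)-})\,D^Z_{(k,i,j)}x^{\mathcal{P},M}_{\tau(r)-}\,\mathcal{N}(dw,dz,dr),
\end{eqnarray*}
and analogous equations with $\tau(r)$ replaced by $r$, $c_M$ replaced by $c_M$ or $c$, and $\eta^1$ replaced by $\eta^2$ for $x^M_t$ and $x_t$; the $\Delta$-derivative is obtained similarly, with initial datum $a^M_T e_j$ for $x^M,x^{\mathcal{P},M}$ and $0$ for $x_t$.

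For $ii)$ I would then write $R_{k,i,j}(t)=D^Z_{(k,i,j)}x^M_t-D^Z_{(k,i,j)}x_t$ on $\{T_i^k\leq t\}$, split its equation into (a) a ``truncation term'' $\xi_i^k(\partial_{z_j}c_M-\partial_{z_j}c)(T_i^k,\ldots)$ (which is zero for $|Z^k_i|\le M$ and bounded by $\bar c(Z^k_i)\mathbbm{1}_{\{|z|>M\}}$ otherwise), plus (b) terms controlled by $|R_{k,i,j}(\tau(r)-)|\bar c(z)$ and by $|x^M_{r-}-x_{r-}|\bar c(z)$ coming from the Lipschitz bound in \textbf{Hypothesis 2.1}. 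Summing $\sum_{k,i,j}\mathbb{E}|R_{k,i,j}(t)|^2$ and using (\ref{BurkM}) together with the compensation formula gives a quantity of the shape $C[\varepsilon_M+\int_0^t(\|D x^M_r-Dx_r\|^2_{L^2}+\mathbb{E}|x^M_r-x_r|^2)dr]$; the $\Delta$-derivative contributes a boundary term $|a^M_T|^2\le C\varepsilon_M$. Invoking \textbf{Lemma 2.7}$i)$ and H\"older with conjugates $\frac{2+\varepsilon_\ast}{2}$ and $\frac{2+\varepsilon_\ast}{\varepsilon_\ast}$ converts $\mathbb{E}|x^M_r-x_r|^2$ into $C\varepsilon_M^{2/(2+\varepsilon_\ast)}$, and Gronwall closes the estimate with the exponent $2/(2+\varepsilon_\ast)$, i.e. with $(\varepsilon_M)^{1/(2+\varepsilon_\ast)}$ after square root.

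Part $i)$ is the same scheme but bookkeeping is heavier. The difference $D x^{\mathcal{P},M}-Dx^M$ has three kinds of error sources that need to be treated separately: (1) time-grid mismatches $\tau(r)$ versus $r$, bounded via (\ref{Eul}) and \textbf{Hypothesis 2.1} by $|\mathcal{P}|$; (2) coupling mismatches $\eta^1_r(w)$ versus $\eta^2_r(w)$, bounded by $\int_0^1|\eta^1_r-\eta^2_r|^{2+\varepsilon_\ast}dw=W_{2+\varepsilon_\ast}(\rho^{\mathcal{P},M}_{\tau(r)-},\rho_{r-})^{2+\varepsilon_\ast}\leq C(|\mathcal{P}|+\varepsilon_M)$ by (\ref{Coupling}) and \textbf{Lemma 2.7}$iii)$; and (3) state mismatches $x^{\mathcal{P},M}_{\tau(r)}$ versus $x^M_r$, handled by \textbf{Lemma 2.7}$ii)$. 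Each of these is raised to the power $\frac{2}{2+\varepsilon_\ast}$ via the H\"older trick above, producing the rate $(|\mathcal{P}|+\varepsilon_M)^{2/(2+\varepsilon_\ast)}$ before square root.

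The hard part, in my view, is organising the jump derivatives so that the indices $(k,i,j)$ that come from the Malliavin direction and the Poisson atoms $(T_l^m,Z_l^m,W_l^m)$ that appear in the SDE for $DX$ are not confused: one has to use the compensation formula inside the Burkholder bound (\ref{BurkM}) while recognising that $Z_i^k$ is the specific atom being perturbed. A secondary technical point is that the ``tangent flow'' for $x^{\mathcal{P},M}$ need not be invertible (cf.\ the remark following \textbf{Lemma 3.8}), so one cannot represent $D^Z_{(k,i,j)}x^{\mathcal{P},M}_t$ as \emph{product} of an initial jump term and an inverse tangent flow; the estimate has to be obtained purely by Gronwall on $\sum_{k,i,j}\mathbb{E}|R_{k,i,j}|^2$. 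Once these two points are handled, the $L^2$ bound follows by summing over $k$ (using $\int|\bar c(z)|^p\mu(dz)<\infty$ and $\sup_k\|\Psi_k\|_{l,\infty}\leq C_l$ from (\ref{psiborn})) and applying Gronwall to the integral inequality.
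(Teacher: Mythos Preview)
Your treatment of $i)$ and $iii)$ is essentially the paper's: the same decomposition into drift, initial-jump and jump-integral pieces, the same H\"older trick with conjugates $1+\varepsilon_\ast/2$ and $(2+\varepsilon_\ast)/\varepsilon_\ast$ to convert $L^2$ into $L^{2+\varepsilon_\ast}$ state estimates (Lemma~2.6), and Gronwall to close.

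There is, however, a genuine gap in your plan for $ii)$. You propose to write an explicit SDE for $D^Z_{(k,i,j)}x_t$ ``with $c_M$ replaced by $c$'' and then subtract. But $x_t$ is \emph{not} a simple functional: the equation (\ref{x}) is driven by the full Poisson measure of infinite intensity, so $x_t$ depends on infinitely many $Z^k_i$ and belongs to $\mathcal{D}_\infty^d$ only through the closure procedure of Lemma~3.3. Its derivative $Dx_t$ is therefore defined as an $L^2$-limit, and you have no a priori right to a pathwise SDE representation for it; in particular the ``truncation term'' $\xi_i^k(\partial_{z_j}c_M-\partial_{z_j}c)$ you write down is not meaningful on the $x_t$ side.

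The paper sidesteps this by never writing $Dx_t$ explicitly. Instead it proves, for any two truncation levels $M_1,M_2$ (both giving simple functionals, so the derivative SDEs are legitimate),
\[
\|Dx^{M_1}_t-Dx^{M_2}_t\|_{L^2(\Omega;l_2\times\mathbb{R}^d)}\leq C(\varepsilon_{M_1}+\varepsilon_{M_2})^{1/(2+\varepsilon_\ast)},
\]
showing that $(Dx^M_t)_M$ is Cauchy. Then Lemma~3.3(A) supplies convex combinations converging to $Dx_t$ in $L^2$, which forces the Cauchy limit to coincide with $Dx_t$; letting $M_2\to\infty$ in the displayed bound yields $ii)$. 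Your estimates are essentially correct once applied to $x^{M_1}$ versus $x^{M_2}$ rather than to $x^M$ versus $x$; the truncation contribution then appears as the range $M_1\wedge M_2<k\le M_1\vee M_2$ in the jump sums, which is the paper's term $O_{3,1}$ and part of $O_2$.
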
The proof is also technical and we put it in the Appendix.

\subsection{Proofs of  \textbf{Theorem 2.1$\sim$2.4}}
Before the proofs of \textbf{Theorem 2.1$\sim$2.4}, we first give the following lemma. We recall $X_{t}^{\mathcal{P},M}$ in (\ref{truncation}) and $X_{t}$ in (\ref{1.1}).
\begin{lemma}
Assume that the \textbf{Hypothesis 2.1} holds true.
Then there exists a  constant $C$ dependent on $T$ such that for every partition $\mathcal{P}$ and $M\in\mathbb{N}$ we have \[W_1(X_{t}^{\mathcal{P},M}, X_{t})\leq C(\vert\mathcal{P}\vert+\sqrt{\varepsilon_M}).\]
\end{lemma}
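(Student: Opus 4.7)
The plan is to split the Wasserstein distance via the triangle inequality
\[
W_1(X_t^{\mathcal{P},M}, X_t) \;\leq\; W_1(X_t^{\mathcal{P},M}, X_t^{\mathcal{P}}) + W_1(X_t^{\mathcal{P}}, X_t),
\]
and to bound each piece separately. The second term is immediately $\leq C|\mathcal{P}|$ by the Alfonsi--Bally estimate (\ref{AB}), so the work reduces to showing $W_1(X_t^{\mathcal{P},M}, X_t^{\mathcal{P}}) \leq C\sqrt{\varepsilon_M}$.

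For this, I would construct a coupling $(y_t, \widetilde{y}_t)$ on one probability space, in the spirit of Section 2.7, with $\mathcal{L}(y_t) = \mathcal{L}(X_t^{\mathcal{P},M})$ and $\mathcal{L}(\widetilde{y}_t) = \mathcal{L}(X_t^{\mathcal{P}})$: same initial condition $X_0$, a single Poisson point measure $\mathcal{N}(dw,dz,dr)$ on $[0,1]\times \mathbb{R}^d$ of intensity $dw\,\mu(dz)\,dr$ driving both equations, the Gaussian term $a_T^M \Delta$ present only in $y$, and the $v$-variable supplied through measurable maps $(\eta^{\mathcal{P},M}_r(w), \widetilde{\eta}^{\mathcal{P}}_r(w))$ provided by \textbf{Lemma 2.5} that realize the \emph{optimal $W_1$-coupling} of $\rho^{\mathcal{P},M}_{\tau(r)-}$ and $\rho^{\mathcal{P}}_{\tau(r)-}$, so that $\int_0^1 |\eta^{\mathcal{P},M}_r(w) - \widetilde{\eta}^{\mathcal{P}}_r(w)|\,dw = W_1(\rho^{\mathcal{P},M}_{\tau(r)-}, \rho^{\mathcal{P}}_{\tau(r)-})$. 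Then $W_1(X_t^{\mathcal{P},M}, X_t^{\mathcal{P}}) \leq \mathbb{E}|y_t - \widetilde{y}_t|$.

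Next I would subtract the two equations and perform an $L^1$ estimate on $y_t - \widetilde{y}_t$. Three contributions appear. First, the Gaussian piece gives $\mathbb{E}|a_T^M \Delta| \leq C|a_T^M| \leq C\sqrt{T\varepsilon_M}$, using $\underline{c}(z) \leq |\bar c(z)|^2$ together with (\ref{aMt}) and (\ref{epsM}). Second, the jumps removed by truncation, using $|c| \leq \bar c$ and taking the expectation of the positive integrand against the intensity, contribute
\[
\mathbb{E}\Bigl|\int_0^t\!\!\int_{[0,1]\times\{|z|>M\}}\!\! c\, \mathcal{N}(dw,dz,dr)\Bigr| \;\leq\; T\!\!\int_{\{|z|>M\}}\!\! \bar c(z)\, \mu(dz) \;\leq\; T\sqrt{\varepsilon_M},
\]
where the last inequality is the defining bound (\ref{epsM}) of $\varepsilon_M$. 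Third, all the remaining differences are Lipschitz in $(v,x,\rho)$ by \textbf{Hypothesis 2.1}, producing a term of the form $C\int_0^t\bigl(\mathbb{E}|y_{\tau(s)-}-\widetilde{y}_{\tau(s)-}| + W_1(\rho^{\mathcal{P},M}_{\tau(s)-}, \rho^{\mathcal{P}}_{\tau(s)-})\bigr)ds$. Since any coupling bounds $W_1$ from above, $W_1(\rho^{\mathcal{P},M}_{\tau(s)-}, \rho^{\mathcal{P}}_{\tau(s)-}) \leq \mathbb{E}|y_{\tau(s)-}-\widetilde{y}_{\tau(s)-}|$, and a routine Gronwall argument then yields $\mathbb{E}|y_t-\widetilde{y}_t| \leq C\sqrt{\varepsilon_M}$, which gives the desired bound after combining with Alfonsi--Bally.

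The main delicate point is the $L^1$ treatment of the ``Boltzmann term'' (the dependence of the Poisson intensity on the law). It forces the $v$-coupling to be done in $W_1$ rather than through the $W_{2+\varepsilon_*}$-coupling used in Section 2.7; working directly in $L^1$ is essential in order to retain the sharp rate $\sqrt{\varepsilon_M}$, which would otherwise degrade to $(\varepsilon_M)^{1/(2+\varepsilon_*)}$ if one tried to borrow the $L^{2+\varepsilon_*}$ bound of \textbf{Lemma 2.6} and apply the power-mean inequality.
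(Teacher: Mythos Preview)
Your proof is correct, and it follows the same core idea as the paper's---a $W_1$-optimal coupling in the $v$-variable so that one can run an $L^1$ estimate and close with Gronwall---but the decomposition is organized differently. The paper couples $X_t^{\mathcal{P},M}$ directly with $X_t$: it takes the optimal $W_1$-coupling of $\rho^{\mathcal{P},M}_{\tau(t)-}$ and $\rho_{t-}$, builds $\widetilde{x}_t^{\mathcal{P},M}$ and $\widetilde{x}_t$ on the same Poisson point measure, and obtains $\mathbb{E}|\widetilde{x}_t^{\mathcal{P},M}-\widetilde{x}_t|\leq C(|\mathcal{P}|+\sqrt{\varepsilon_M})$ in a single pass (both the time-discretization error and the truncation/Gaussian error handled simultaneously, in the style of the proof of \textbf{Lemma 2.6}). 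You instead split via $X_t^{\mathcal{P}}$, invoke the ready-made Alfonsi--Bally bound (\ref{AB}) for $W_1(X_t^{\mathcal{P}},X_t)$, and then couple $X_t^{\mathcal{P},M}$ with $X_t^{\mathcal{P}}$, which share the same Euler grid so that only the truncation and Gaussian pieces need to be estimated.

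What each buys: the paper's one-step coupling is self-contained and avoids a second coupling construction, but it must rederive the $|\mathcal{P}|$ contribution. Your two-step route recycles (\ref{AB}) and isolates the $\sqrt{\varepsilon_M}$ contribution cleanly; it also makes explicit the point (which the paper leaves to ``standard way'') that the $L^1$ setting is what produces $\sqrt{\varepsilon_M}$ rather than the weaker $(\varepsilon_M)^{1/(2+\varepsilon_\ast)}$ one would inherit from \textbf{Lemma 2.6}. Both arguments are equivalent in strength and length.
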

\begin{proof}
We make a coupling argument similar to Section 2.7. We will do optimal coupling between $X^{\mathcal{P},M}_{\tau(t)-}$ and $X_{t-}$ in $W_{1}$ distance. This is the same strategy as the optimal coupling between $X^{\mathcal{P},M}_{\tau(t)-}$ and $X_{t-}$ in $W_{2+\varepsilon_{\ast}}$ distance in Section 2.7.
We  take $\widetilde{\Pi}^{\mathcal{P},M}_t(dv_5,dv_6)$ to be the optimal $W_{1}-$coupling of $\rho^{\mathcal{P},M}_{\tau(t)-}(dv_5)$ and $\rho_{t-}(dv_6)$, that is
\[W_{1}(\rho^{\mathcal{P},M}_{\tau(t)-},\rho_{t-})=\int_{\mathbb{R}^d\times\mathbb{R}^d}\vert v_5-v_6\vert\widetilde{\Pi}^{\mathcal{P},M}_t(dv_5,dv_6).\]Then we  construct $(\eta^5_t(w),\eta^6_t(w))$ which represents  $\widetilde{\Pi}^{\mathcal{P},M}_t$ in the sense of \textbf{Lemma 2.5}. So we have \[\int_0^1\phi(\eta^5_t(w),\eta^6_t(w)) dw=\int_{\mathbb{R}^d\times\mathbb{R}^d}\phi(v_5,v_6)\widetilde{\Pi}^{\mathcal{P},M}_t(dv_5,dv_6).\]We consider the equations (with  $\mathcal{N}(dw,dz,dr)$ the Poisson point measure on the state space $[0,1]\times\mathbb{R}^d$ with intensity measure $dw\mu(dz)dr$ defined in Section 2.7):
\begin{eqnarray}
\widetilde{x}_{t}&=&X_0+\int_{0}^{t}b(r,\widetilde{x}_{r},\rho_{r})dr  +\int_{0}^{t}\int_{[0,1]\times \mathbb{R}^d}{c}(r,\eta^6_r(w),z,\widetilde{x}_{r-},\rho_{r-})\mathcal{N}(dw,dz,dr),  \label{xtilde}
\end{eqnarray}
\begin{eqnarray}
\widetilde{x}^{\mathcal{P},M}_{t}&=&X_0+a^M_T\Delta+\int_{0}^{t}b(\tau(r),\widetilde{x}^{\mathcal{P},M}_{\tau(r)},\rho^{\mathcal{P},M}_{\tau(r)})dr  \nonumber\\
&+&\int_{0}^{t}\int_{[0,1]\times \mathbb{R}^d}{c}_M(\tau(r),\eta^5_r(w),z,\widetilde{x}^{\mathcal{P},M}_{\tau(r)-},\rho^{\mathcal{P},M}_{\tau(r)-})\mathcal{N}(dw,dz,dr),  \label{xPMtilde}
\end{eqnarray}with $\rho^{\mathcal{P},M}_{t}$ the law of $X^{\mathcal{P},M}_{t}$ (see (\ref{truncation})) and $\rho_{t}$ the law of $X_{t}$ (see (\ref{1.1})).
One can check that $\widetilde{x}_{t}$ and  $\widetilde{x}^{\mathcal{P},M}_{t}$ have the same law as ${x}_{t}$  and  ${X}^{\mathcal{P},M}_{t}$ respectively. We remark that  $\widetilde{x}_{t}$, and  $\widetilde{x}^{\mathcal{P},M}_{t}$ are different from  ${x}_{t}$,  and  ${x}^{\mathcal{P},M}_{t}$ (see (\ref{x}) and (\ref{xPM}))  since we take different couplings and $\eta^1_r(w)\neq\eta^5_r(w)$, $\eta^2_r(w)\neq\eta^6_r(w)$. Then we have \[W_1(X_{t}^{\mathcal{P},M}, X_{t})=W_1(\widetilde{x}_{t}^{\mathcal{P},M}, \widetilde{x}_{t})\leq\mathbb{E}\vert \widetilde{x}_{t}^{\mathcal{P},M}-\widetilde{x}_{t}\vert\leq C(\vert\mathcal{P}\vert+\sqrt{\varepsilon_M}),\]
where the last inequality is obtained in a standard way (see the proof of \textbf{Lemma 2.6}).
\end{proof}

\bigskip

\textbf{Proofs of Theorem 2.1 and Theorem 2.2:}

\begin{proof}
We first prove (\ref{002*}).  We recall that by the discussion in Section 2.7, $x^{\mathcal{P},M}_{t}$ has the same law as $X^{\mathcal{P},M}_{t}$ and $x_{t}$ has the same law as $X_{t}$. 
Thanks to \textbf{Lemma 3.7} and \textbf{Lemma 3.8},  using \textbf{Proposition 3.6.1},  for  any partition $\mathcal{P}$ of the interval $[0,T]$ with $\vert\mathcal{P}\vert\leq1$,  every $M\in\mathbb{N}$ with $\varepsilon_M\leq 1$ and $\vert\bar{c}(z)\vert^2\mathbbm{1}_{\{\vert z\vert>M\}}\leq 1$, for $\varepsilon>0$, when $t>\frac{8d}{\theta}(\frac{4}{\varepsilon}+1)$ (with $\theta$ defined in (\ref{undergamma})), \begin{eqnarray*}
d_{TV}(X^{\mathcal{P},M}_{t},X_{t})&=&d_{TV}(x^{\mathcal{P},M}_{t},x_{t})\\
&\leq& C[W_1(x^{\mathcal{P},M}_{t},x_{t})+\Vert D{x}_{t}^{\mathcal{P},M}-D{x}_{t}\Vert_{L^2(\Omega;l_2\times\mathbb{R}^d)}^2]^{1-\varepsilon}.
\end{eqnarray*}For any $\bar{\varepsilon}>0$, we take $\varepsilon,\varepsilon_\ast>0$ such that $\varepsilon_\ast=\frac{\bar{\varepsilon}}{1-\bar{\varepsilon}}$ and $\varepsilon=\frac{\bar{\varepsilon}}{2}$. So $\frac{2}{2+\varepsilon_\ast}(1-\varepsilon)=1-\bar{\varepsilon}.$  Then by \textbf{Lemma 3.9} and \textbf{Lemma 3.10}, when $t>\frac{8d}{\theta}(\frac{8}{\bar{\varepsilon}}+1)$, we have \begin{eqnarray*}
d_{TV}(X^{\mathcal{P},M}_{t},X_{t})
&=& C[W_{1}(X^{\mathcal{P},M}_{t},X_{t})+\Vert D{x}_{t}^{\mathcal{P},M}-D{x}_{t}\Vert_{L^2(\Omega;l_2\times\mathbb{R}^d)}^2]^{1-\varepsilon}\\
&\leq&C[\vert\mathcal{P}\vert+\sqrt{\varepsilon_M}+(\varepsilon_M+\vert\mathcal{P}\vert)^{\frac{2}{2+\varepsilon_\ast}}]^{1-\varepsilon}\\
&\leq&C[\sqrt{\varepsilon_M}+\vert\mathcal{P}\vert]^{1-\bar{\varepsilon}}\rightarrow0,
\end{eqnarray*}with $C$  a constant depending on $\bar{\varepsilon},d$ and $T$. So (\ref{002*}) is proved.

On the other hand, by \textbf{Lemma 3.8} and \textbf{Corollary 3.4.1}, when $t>\frac{8d(l+d)}{\theta}$, the law of $X_{t}$ has a $l-$times differentiable density $p_{t}$ and the density $p_{t}$ is a function solution of the equation (\ref{1.4}). So (\ref{001}) is proved. We notice that $(\mathcal{S}, D^\Delta, L^\Delta)$ is also an IbP framework. If we only make Malliavin integration by parts on the Gaussian random variable $\Delta$, then standard arguments give that the law of $X_{t}^{\mathcal{P},M}$ has a smooth density $p_{t}^{\mathcal{P},M}$.

\bigskip\bigskip

Now only (\ref{002}) is left to be proved. The proof is analogous to the proof of (\ref{002*}).  The main strategy  is as follows (this is similar to Section 2.7 and Section 3.2). We  define an intermediate equation $\bar{X}^{\mathcal{P},M}_t$  (see (\ref{truncationbar}) in the following). There is a difficulty appears here: the equations (\ref{1.1}) and (\ref{1.3}); (\ref{truncationM}) and (\ref{truncationbar}) are defined with respect to different Poisson point measures (on different probability spaces). To overcome this difficulty, it is convenient to use similar
equations  driven by the same Poisson point measure.  We make a coupling argument to construct $\bar{x}^M_t$, $\bar{x}^{\mathcal{P}}_{t}$, $\bar{x}^{\mathcal{P},M}_{t}$ and $\bar{x}_t$ (see (\ref{xMbar}), (\ref{xPbar}), (\ref{xPMbar}) and (\ref{xbar}) below) which have the same law as $X^M_t$, $X^{\mathcal{P}}_{t}$, $\bar{X}^{\mathcal{P},M}_{t}$ and $X_t$ (see (\ref{truncationM}), (\ref{1.3}), (\ref{truncationbar}) and (\ref{1.1})) respectively but are defined on the same probability space and
verify equations driven by the same Poisson point measure.  So to estimate the total variation distance between $X^{\mathcal{P}}_{t}$ and $X_{t}$, it is equivalent to estimate the total variation distance between $\bar{x}^{\mathcal{P}}_{t}$ and $\bar{x}_{t}$. We will see that $\bar{x}^M_t$ and $\bar{x}^{\mathcal{P},M}_{t}$ are simple functionals (belong to $\mathcal{S}^d$) in the sense of Section 3.2.  We prove below in \textbf{Lemma 3.12}  that the Malliavin-Sobolev norms of $\bar{x}^M_t$ and $\bar{x}^{\mathcal{P},M}_{t}$ are bounded (uniformly in $M,\mathcal{P}$) and that the Malliavin covariance matrix of $\bar{x}^M_t$ is non-degenerate (uniformly in $M$). Passing to the limit $M\rightarrow\infty$, we give below in \textbf{Lemma 3.11}  that $\bar{x}^{\mathcal{P},M}_{t}\rightarrow\bar{x}^{\mathcal{P}}_{t}$ and $\bar{x}^{M}_{t}\rightarrow\bar{x}_{t}$ in $L^1$ distance. Then by using \textbf{Lemma 3.3}, $\bar{x}_t$ and $\bar{x}^{\mathcal{P}}_{t}$ are "Malliavin smooth functionals" (belong to $\mathcal{D}_\infty^d$), and  we prove below in \textbf{Lemma 3.13} that the Malliavin-Sobolev norms of $\bar{x}_t$ and $\bar{x}^{\mathcal{P}}_{t}$ are bounded (uniformly in $\mathcal{P}$) and  that the Malliavin covariance matrix of $\bar{x}_t$ is non-degenerate. So applying \textbf{Proposition 3.6.1},  the Euler scheme $X^{\mathcal{P}}_{t}$  converges to $X_t$ in total variation distance. 

Now we  give the proof of (\ref{002}).
We first introduce an intermediate equation.
\begin{eqnarray}
\bar{X}^{\mathcal{P},M}_{t}&=&X_0+a^M_{T}\Delta+\int_{0}^{t}b(\tau(r),\bar{X}^{\mathcal{P},M}_{\tau(r)},\rho^{\mathcal{P}}_{\tau(r)})dr  \nonumber\\
&+&\int_{0}^{t}\int_{\mathbb{R}^d\times \mathbb{R}^d}{c}_M(\tau(r),v,z,\bar{X}^{\mathcal{P},M}_{\tau(r)-},\rho^{\mathcal{P}}_{\tau(r)-})N_{\rho^{\mathcal{P}}_{\tau(r)-}}(dv,dz,dr).  \label{truncationbar}
\end{eqnarray}We notice that we take $\rho^{\mathcal{P}}_{\tau(r)}$ (the law of $X^{\mathcal{P}}_{\tau(r)}$) instead of $\rho^{\mathcal{P},M}_{\tau(r)}$ (the law of $X^{\mathcal{P},M}_{\tau(r)}$) in the above equation, so (\ref{truncationbar}) a variant of (\ref{truncation}).

Now we make a coupling argument similar to Section 2.7. We will do optimal coupling between $X^{\mathcal{P}}_{\tau(t)-}$ and $X_{t-}$ in $W_{2+\varepsilon_{\ast}}$ distance. This is the same strategy as the optimal coupling between $X^{\mathcal{P},M}_{\tau(t)-}$ and $X_{t-}$ in $W_{2+\varepsilon_{\ast}}$ distance in Section 2.7.
For a small $\varepsilon_\ast>0$,  we  take $\Pi^{\mathcal{P}}_t(dv_3,dv_4)$ to be the optimal $W_{2+\varepsilon_{\ast}}-$coupling of $\rho^{\mathcal{P}}_{\tau(t)-}(dv_3)$ and $\rho_{t-}(dv_4)$, that is
\[(W_{2+\varepsilon_{\ast}}(\rho^{\mathcal{P}}_{\tau(t)-},\rho_{t-}))^{2+\varepsilon_{\ast}}=\int_{\mathbb{R}^d\times\mathbb{R}^d}\vert v_3-v_4\vert^{2+\varepsilon_{\ast}}\Pi^{\mathcal{P}}_t(dv_3,dv_4).\]Then we  construct $(\eta^3_t(w),\eta^4_t(w))$ which represents  $\Pi^{\mathcal{P}}_t$ in the sense of \textbf{Lemma 2.5}. So we have \[\int_0^1\phi(\eta^3_t(w),\eta^4_t(w)) dw=\int_{\mathbb{R}^d\times\mathbb{R}^d}\phi(v_3,v_4)\Pi^{\mathcal{P}}_t(dv_3,dv_4).\]We consider some auxiliary equations (with  $\mathcal{N}(dw,dz,dr)$ the Poisson point measure on the state space $[0,1]\times\mathbb{R}^d$ with intensity measure $dw\mu(dz)dr$ defined in Section 2.7):
\begin{eqnarray}
\bar{x}_{t}&=&X_0+\int_{0}^{t}b(r,\bar{x}_{r},\rho_{r})dr  +\int_{0}^{t}\int_{[0,1]\times \mathbb{R}^d}{c}(r,\eta^4_r(w),z,\bar{x}_{r-},\rho_{r-})\mathcal{N}(dw,dz,dr),  \label{xbar}
\end{eqnarray}
\begin{eqnarray}
\bar{x}^{\mathcal{P}}_{t}&=&X_0+\int_{0}^{t}b(\tau(r),\bar{x}^{\mathcal{P}}_{\tau(r)},\rho^{\mathcal{P}}_{\tau(r)})dr  \nonumber\\
&+&\int_{0}^{t}\int_{[0,1]\times \mathbb{R}^d}{c}(\tau(r),\eta^3_r(w),z,\bar{x}^{\mathcal{P}}_{\tau(r)-},\rho^{\mathcal{P}}_{\tau(r)-})\mathcal{N}(dw,dz,dr).  \label{xPbar}
\end{eqnarray}
\begin{eqnarray}
\bar{x}^M_{t}&=&X_0+a^M_T\Delta+\int_{0}^{t}b(r,\bar{x}^M_{r},\rho_{r})dr  +\int_{0}^{t}\int_{[0,1]\times \mathbb{R}^d}{c}_M(r,\eta^4_r(w),z,\bar{x}^M_{r-},\rho_{r-})\mathcal{N}(dw,dz,dr),  \label{xMbar}
\end{eqnarray}
\begin{eqnarray}
\bar{x}^{\mathcal{P},M}_{t}&=&X_0+a^M_T\Delta+\int_{0}^{t}b(\tau(r),\bar{x}^{\mathcal{P},M}_{\tau(r)},\rho^{\mathcal{P}}_{\tau(r)})dr  \nonumber\\
&+&\int_{0}^{t}\int_{[0,1]\times \mathbb{R}^d}{c}_M(\tau(r),\eta^3_r(w),z,\bar{x}^{\mathcal{P},M}_{\tau(r)-},\rho^{\mathcal{P}}_{\tau(r)-})\mathcal{N}(dw,dz,dr).  \label{xPMbar}
\end{eqnarray}
One can check that $\bar{x}_{t}$, $\bar{x}^{\mathcal{P}}_{t}$, $\bar{x}^{M}_{t}$, and  $\bar{x}^{\mathcal{P},M}_{t}$ have the same law as ${X}_{t}$, ${X}^{\mathcal{P}}_{t}$, $X^{M}_{t}$, and  $\bar{X}^{\mathcal{P},M}_{t}$ (solutions of the equations (\ref{1.1}), (\ref{1.3}), (\ref{truncationM}) and (\ref{truncationbar})) respectively.
We stress that  $\bar{x}_{t}$,  $\bar{x}^{M}_{t}$, and  $\bar{x}^{\mathcal{P},M}_{t}$ are different from  ${x}_{t}$,  ${x}^{M}_{t}$, and  ${x}^{\mathcal{P},M}_{t}$ (see (\ref{x}), (\ref{xM}) and (\ref{xPM})). This is because we take different couplings so $\eta^1_r(w)\neq\eta^3_r(w)$  and $\eta^2_r(w)\neq\eta^4_r(w)$. We also remark that we take $\rho_r$ instead of $\rho^M_r$ in (\ref{xMbar}) and take $\rho^{\mathcal{P}}_{t}$ instead of $\rho^{\mathcal{P},M}_{t}$ in (\ref{xPMbar}), so that 
 we can obtain the following lemma.
\begin{lemma}
Assume that the \textbf{Hypothesis 2.1} holds true.
Then   \[i)\quad \sup_{\mathcal{P}}\mathbb{E}\vert \bar{x}_{t}^{\mathcal{P},M}-\bar{x}_{t}^{\mathcal{P}}\vert\rightarrow0,\]\[ii)\quad \mathbb{E}\vert \bar{x}_{t}^{M}-\bar{x}_{t}\vert\rightarrow0,\]as $M\rightarrow\infty$. 

\end{lemma}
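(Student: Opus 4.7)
The plan is to adapt the strategy of \textbf{Lemma 2.6}: estimate the $L^1$ distance directly, exploiting that $\bar{x}_t,\bar{x}^{\mathcal{P}}_t,\bar{x}^M_t,\bar{x}^{\mathcal{P},M}_t$ are all driven by the \emph{same} Poisson measure $\mathcal{N}$ and the same Gaussian vector $\Delta$ by construction. The only discrepancies between $\bar{x}^M_t$ and $\bar{x}_t$ (respectively $\bar{x}^{\mathcal{P},M}_t$ and $\bar{x}^{\mathcal{P}}_t$) are the Gaussian perturbation $a^M_T\Delta$ (which tends to zero since $a^M_T\to 0$) and the cancellation of jumps of size $\vert z\vert>M$ (since $c_M-c=-c\mathbbm{1}_{\{\vert z\vert>M\}}$). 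Because the measures appearing in the coefficients on both sides coincide (in $ii)$ both are $\rho_r$; in $i)$ both are $\rho^{\mathcal{P}}_r$), there is no ``Boltzmann term'' to control: this is precisely why the auxiliary processes $\bar{x}^M_t,\bar{x}^{\mathcal{P},M}_t$ have been introduced in place of the earlier $x^M_t,x^{\mathcal{P},M}_t$.

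For $ii)$ the first step is to decompose $\bar{x}^M_t-\bar{x}_t=a^M_T\Delta+A_t+B_t^{(1)}+B_t^{(2)}$, where $A_t$ is the drift difference, $B_t^{(1)}$ isolates the jumps with $\vert z\vert>M$ (integrand $-c(r,\eta^4_r(w),z,\bar{x}^M_{r-},\rho_{r-})\mathbbm{1}_{\{\vert z\vert>M\}}$), and $B_t^{(2)}$ is the Lipschitz-in-$x$ remainder comparing $c$ evaluated at $\bar{x}^M_{r-}$ and at $\bar{x}_{r-}$. Since we work in $L^1$ (so Burkholder is not directly available), I would apply the elementary positivity bound $\mathbb{E}\bigl\vert\int f\,\mathcal{N}\bigr\vert\leq\mathbb{E}\int\vert f\vert\,\mathcal{N}=\int_0^t\!\int_0^1\!\int\vert f\vert\,\mu(dz)dwdr$, which gives $\mathbb{E}\vert B_t^{(1)}\vert\leq T\int_{\{\vert z\vert>M\}}\bar{c}(z)\mu(dz)\to 0$ thanks to \textbf{Hypothesis 2.1}. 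The Lipschitz assumptions on $b$ and $c$ then yield $\mathbb{E}\vert A_t\vert+\mathbb{E}\vert B_t^{(2)}\vert\leq C\int_0^t\mathbb{E}\vert\bar{x}^M_r-\bar{x}_r\vert\,dr$. Combining these with $\vert a^M_T\vert\,\mathbb{E}\vert\Delta\vert\to 0$ and applying Gronwall's lemma yields $\mathbb{E}\vert\bar{x}^M_t-\bar{x}_t\vert\to 0$.

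Part $i)$ is handled in exactly the same way, with $\eta^3$ in place of $\eta^4$ and with $\rho^{\mathcal{P}}_r$ appearing on both sides instead of $\rho_r$. The key observation is that every constant entering the final Gronwall estimate (namely $\Vert\nabla_x b\Vert_\infty$, $L_b$, $\bar{c}_1$ and $T$) is independent of $\mathcal{P}$, so the resulting convergence is automatically \emph{uniform} in the partition, as the statement of $i)$ demands.

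I do not expect any conceptual obstacle: the very purpose of the coupling construction, here extended to the barred processes via the optimal $W_{2+\varepsilon_\ast}$-coupling $\Pi^{\mathcal{P}}_t$, is to reduce the Boltzmann-type difficulty to an ordinary $L^1$ comparison of two equations driven by the same noise. The one point deserving a moment of care is that the estimate is in $L^1$ rather than in $L^p$ with $p\geq 2$, so the Burkholder step used in \textbf{Lemma 2.6} must be replaced by the uncompensated bound above; this costs a factor $\int_{\{\vert z\vert>M\}}\bar{c}(z)\mu(dz)$ rather than $\sqrt{\varepsilon_M}$, which still vanishes as $M\to\infty$ and is sufficient for the qualitative convergence required here.
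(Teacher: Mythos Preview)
Your proposal is correct and matches the paper's own approach: the paper's proof of this lemma simply reads ``These results are obtained in a standard way (see the proof of \textbf{Lemma 2.6})'', and what you have written is precisely that standard argument spelled out. Your choice to work directly in $L^1$ via the elementary bound $\mathbb{E}\bigl\vert\int f\,\mathcal{N}\bigr\vert\leq\int\vert f\vert\,dw\,\mu(dz)\,dr$ rather than in $L^{2+\varepsilon_\ast}$ via Burkholder is a harmless simplification well suited to the qualitative statement, and your observation that the Gronwall constants are partition-independent is exactly what makes the $\sup_{\mathcal{P}}$ in $i)$ go through.
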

\begin{proof}
These results are obtained in a standard way (see the proof of \textbf{Lemma 2.6}).
\end{proof}
We notice that $\bar{x}_{t}^{\mathcal{P},M}$ and $\bar{x}_{t}^{M}$ are simple functionals (belong to $\mathcal{S}^d$) in the sense of Section 3.2. Then we have
\begin{lemma}
Assume that \textbf{Hypothesis 2.1, 2.2, 2.3} and \textbf{2.4} hold true. 

$a)$ For any $p\geq1,l\geq0$, there exists a constant $C_{l,p}$ depending on $l,p,d$ and $T$ such that for every $0<t\leq T$, \[\sup_{\mathcal{P}}\sup_{M}(\Vert\bar{x}_{t}^{\mathcal{P},M}\Vert_{L,l,p}+\Vert\bar{x}_{t}^{M}\Vert_{L,l,p})\leq C_{l,p}.\]

$b)$ For any $p\geq1,0<t\leq T$ such that $t>\frac{2dp}{\theta}$, there exists a constant $C_p$ depending on $p,d,T$ such that  \[\sup_{M}(\mathbb{E}(1/\det \sigma_{\bar{x}_{t}^{M}})^p)\leq C_p.\]

$c)$ For any $\varepsilon_\ast>0$, there exists a constant $C_p$ depending on $\varepsilon_\ast,d,T$ such that for every $\vert\mathcal{P}\vert\leq1$, we have \[i)\quad\Vert D\bar{x}_{t}^{\mathcal{P},M}-D\bar{x}_{t}^{M}\Vert_{L^2(\Omega;l_2\times\mathbb{R}^d)}\leq C\vert\mathcal{P}\vert^{\frac{1}{2+\varepsilon_{\ast}}},\]\[ii)\quad\Vert D\bar{x}_{t}^{M}-D\bar{x}_{t}\Vert_{L^2(\Omega;l_2\times\mathbb{R}^d)}\rightarrow0,\quad \text{as }M\rightarrow\infty.\]
\end{lemma}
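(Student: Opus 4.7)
The plan is to mirror the proofs of Lemma 3.7, Lemma 3.8 and Lemma 3.9, observing that the barred processes $\bar x_t^M$ and $\bar x_t^{\mathcal{P},M}$ are driven by the same Poisson point measure $\mathcal N(dw,dz,dr)$ as $\bar x_t$, and that their input measures are $\rho_r$ and $\rho_r^{\mathcal{P}}$ respectively (laws of the \emph{non-truncated} equations), so these laws do not depend on $M$. From the point of view of the Malliavin calculus on $(Z_i^k)$ and $\Delta$ of Section~3.2, these input measures play the role of deterministic parameters, and standard moment bounds on them (uniform in $\mathcal{P}$) follow from the results of \cite{ref1} together with \textbf{Hypothesis 2.1}. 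In particular, $\bar x_t^M$ and $\bar x_t^{\mathcal{P},M}$ are genuine simple functionals in $\mathcal S^d$.

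\textbf{Part (a).} Differentiating the equations (\ref{xMbar}) and (\ref{xPMbar}) with respect to $Z_i^k$ (through the weights $\xi_i^k$) and with respect to $\Delta_j$ produces linear SDEs for $D\bar x_t^M$ and $D\bar x_t^{\mathcal{P},M}$ whose coefficients are dominated by $\bar c(z)$ via \textbf{Hypothesis 2.1}. Applying the Burkholder-type estimate (\ref{BurkM}) together with Gronwall's lemma yields $L^p$-bounds uniform in $M$ and $\mathcal{P}$; iterating the same scheme on $D^q\bar x$ gives the bound for every $l$. For $L\bar x_t^{\mathcal{P},M}$ and $L\bar x_t^M$ one uses the chain rule in Lemma 3.2(b)--(c) combined with the previous bounds, exactly as in Lemma 3.7.

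\textbf{Part (b).} The non-degeneracy of $\sigma_{\bar x_t^M}$ is proved word-for-word as the $x_t^M$ case in Lemma 3.8(i). Since $\bar x_t^M$ uses the truncated jump coefficient $c_M$ and the Gaussian noise $a_T^M\Delta$, the inverse tangent flow is well-defined (thanks to \textbf{Hypothesis 2.2}), and the Malliavin covariance matrix decomposes as a sum, over the jump times in $[0,t]$, of terms of the form $\partial_z c\,\partial_z c^\top$ weighted by $\xi_i^k$ conjugated by the inverse flow. \textbf{Hypothesis 2.3} (ellipticity) and the small-jump richness provided by \textbf{Hypothesis 2.4}(a) then yield an exponential lower bound on $\det\sigma_{\bar x_t^M}$ of the required form $\mathbb E(\det\sigma_{\bar x_t^M})^{-p}\le C_p$ for $t>2dp/\theta$. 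Crucially, the argument does not use that the input law is $\rho_r^M$ rather than $\rho_r$, so it transfers directly.

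\textbf{Part (c).} For (i), subtract the SDEs for $D\bar x_t^{\mathcal{P},M}$ and $D\bar x_t^M$: the difference splits into three pieces controlled via \textbf{Hypothesis 2.1}, namely the discretization error $|\tau(r)-r|$, the coupling error $|\eta_r^3(w)-\eta_r^4(w)|$ (estimated by $W_{2+\varepsilon_\ast}(\rho_{\tau(r)}^{\mathcal{P}},\rho_r)$ via (\ref{coupling})), and the state difference $|\bar x_{\tau(r)}^{\mathcal{P},M}-\bar x_r^M|$. Burkholder via (\ref{Burk*})--(\ref{BurkM}), H\"older with conjugates $1+\varepsilon_\ast/2$ and $(2+\varepsilon_\ast)/\varepsilon_\ast$, and Gronwall yield the claimed $|\mathcal P|^{1/(2+\varepsilon_\ast)}$ rate, after invoking the $L^{2+\varepsilon_\ast}$ estimate $\mathbb E|\bar x_t^{\mathcal{P},M}-\bar x_t^M|^{2+\varepsilon_\ast}\le C|\mathcal P|$ (proved as in Lemma 2.6). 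For (ii), the equations for $D\bar x_t^M$ and $D\bar x_t$ differ only through the cut-off $\mathbbm 1_{\{|z|\le M\}}$ in the jump integrand and through the Gaussian derivative $D^\Delta\bar x_t^M$, whose size is controlled by $a_T^M\to 0$. Both contributions vanish as $M\to\infty$ by dominated convergence combined with (\ref{BurkM}) and Gronwall. The main obstacle is Part (c)(i): one must track the Wasserstein-coupling error through the \emph{derivative} equations without losing more than the power $1/(2+\varepsilon_\ast)$, which forces the use of the sharper $L^{2+\varepsilon_\ast}$-optimal coupling of Section 2.7 rather than the plain $W_1$ coupling.
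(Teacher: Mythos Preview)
Your proposal is correct and mirrors the paper's own proof, which simply states that parts (a), (b), and (c) are obtained by analogous arguments to Lemmas~3.7, 3.8~$i)$, and 3.9~$i)$--$ii)$ respectively; you have correctly identified that the only change is the replacement of the input laws $(\rho^{\mathcal P,M},\rho)$ by $(\rho^{\mathcal P},\rho)$ and the coupling $(\eta^1,\eta^2)$ by $(\eta^3,\eta^4)$, which plays no role in the Sobolev, covariance, or derivative-difference estimates. One small refinement for (c)(ii): since $\bar x_t\notin\mathcal S^d$, the cleanest route (as in the paper's proof of Lemma~3.9~$ii)$) is to first show $(D\bar x_t^{M})_M$ is Cauchy in $L^2(\Omega;l_2\times\mathbb R^d)$ and then identify the limit with $D\bar x_t$ via Lemma~3.3(A), rather than writing an equation for $D\bar x_t$ directly.
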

\begin{proof}
We get $a)$ by an analogous argument to the proof of \textbf{Lemma 3.7} $i)$. We have $b)$ in a similar way to the proof of \textbf{Lemma 3.8} $i)$. we obtain $c)$ $i)$ and  $ii)$ by some analogous arguments to the proofs of \textbf{Lemma 3.9} $i)$ and $ii)$ respectively. 
\end{proof}
Then applying \textbf{Lemma 3.3}, by passing to the limit $M\rightarrow\infty$, we obtain the following consequence.
\begin{lemma}
Assume that \textbf{Hypothesis 2.1, 2.2, 2.3} and \textbf{2.4} hold true. 

$a)$ $\bar{x}_{t}^{\mathcal{P}}$ and $\bar{x}_{t}$ both belong to $\mathcal{D}_{\infty}^d$. For any $p\geq1,l\geq0$, there exists a constant $C_{l,p}$ depending on $l,p,d$ and $T$ such that for every $0<t\leq T$, \[\sup_{\mathcal{P}}(\Vert\bar{x}_{t}^{\mathcal{P}}\Vert_{L,l,p}+\Vert\bar{x}_{t}\Vert_{L,l,p})\leq C_{l,p}.\]

$b)$ For any $p\geq1,0<t\leq T$ such that $t>\frac{2dp}{\theta}$, there exists a constant $C_p$ depending on $p,d,T$ such that  \[\mathbb{E}(1/\det \sigma_{\bar{x}_{t}})^p\leq C_p.\]

$c)$ For any $\varepsilon_\ast>0$, there exists a constant $C_p$ depending on $\varepsilon_\ast,d,T$ such that for every $\vert\mathcal{P}\vert\leq1$, we have \[\Vert D\bar{x}_{t}^{\mathcal{P}}-D\bar{x}_{t}\Vert_{L^2(\Omega;l_2\times\mathbb{R}^d)}\leq C\vert\mathcal{P}\vert^{\frac{1}{2+\varepsilon_{\ast}}}.\]
\end{lemma}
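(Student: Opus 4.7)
All three statements are obtained by passing to the limit $M\to\infty$ in the corresponding statements of \textbf{Lemma 3.12}, using the abstract transfer results of \textbf{Lemma 3.3}. The sequences to be passed to the limit are $(\bar{x}_t^M)_{M\in\mathbb{N}}\subset \mathcal{S}^d$ (converging in $L^1$ to $\bar{x}_t$) and, for each fixed partition $\mathcal{P}$, the sequence $(\bar{x}_t^{\mathcal{P},M})_{M\in\mathbb{N}}\subset \mathcal{S}^d$ (converging in $L^1$ to $\bar{x}_t^{\mathcal{P}}$). The $L^1$-convergence of both sequences is provided by \textbf{Lemma 3.11} (ii) and (i) respectively.

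For $a)$, I apply \textbf{Lemma 3.3 (A)} separately to each of the two sequences. Hypothesis $i)$ is Lemma 3.11, and hypothesis $ii)$, namely the uniform bound of $\|\cdot\|_{L,l,p}$ in $M$ (and in $\mathcal{P}$), is \textbf{Lemma 3.12} $a)$. Since the conclusion of Lemma 3.3 (A) holds for every $\bar{p}<p$ while $l$ and $p$ are arbitrary, this yields $\bar{x}_t,\bar{x}_t^{\mathcal{P}}\in\mathcal{D}_\infty^d$ together with the claimed uniform Sobolev bound.

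For $b)$, I apply \textbf{Lemma 3.3 (B)} to the sequence $(\bar{x}_t^M)_M$. Hypotheses $i), ii)$ are as above. The non-degeneracy hypothesis $iv)$, namely $\sup_M\mathbb{E}(\lambda(\bar{x}_t^M))^{-p}<\infty$, is not literally the determinant bound of \textbf{Lemma 3.12} $b)$, but follows from the elementary estimate $\lambda_{\min}(\sigma)\geq \det\sigma/\mathrm{tr}(\sigma)^{d-1}$ combined with H\"{o}lder's inequality, Lemma 3.12 $a)$ (which controls $\mathrm{tr}(\sigma_{\bar{x}_t^M})=|D\bar{x}_t^M|_{l_2\times\mathbb{R}^d}^2$ in every $L^p$), and Lemma 3.12 $b)$. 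The Cauchy hypothesis $iii)$, that $(D\bar{x}_t^M)_M$ is Cauchy in $L^2(\Omega;l_2\times\mathbb{R}^d)$, is not stated explicitly in Lemma 3.12 but is obtained by rerunning the proof of \textbf{Lemma 3.12} $c)\, ii)$ between two truncation levels $M_1<M_2$ rather than between $M$ and the (not yet defined) limit; this yields a bound of the form $\|D\bar{x}_t^{M_1}-D\bar{x}_t^{M_2}\|_{L^2}\leq C(\varepsilon_{M_1}+\varepsilon_{M_2})^{1/(2+\varepsilon_\ast)}\to 0$ as $M_1,M_2\to\infty$. Lemma 3.3 (B) then gives $b)$.

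For $c)$, I apply \textbf{Lemma 3.3 (C)} to the paired sequences $(F_M,\bar{F}_M)=(\bar{x}_t^{\mathcal{P},M},\bar{x}_t^M)$ converging to $(F,\bar{F})=(\bar{x}_t^{\mathcal{P}},\bar{x}_t)$. The hypotheses of Lemma 3.3 (A) have been verified in $a)$ above for both components, and the extra hypothesis $v)$ is precisely \textbf{Lemma 3.12} $c)\, i)$ with $\bar{\varepsilon}=C|\mathcal{P}|^{1/(2+\varepsilon_\ast)}$. The conclusion of Lemma 3.3 (C) is exactly the claim $c)$. The only step that is not a direct quotation of an earlier lemma is the Cauchy property used in $b)$; I expect that to be the main point of the proof, although it is a minor variation on the proof of Lemma 3.12 $c)\, ii)$ and essentially reduces to the same Gronwall argument driven by the coupling (\ref{xMbar}) and the estimate $\varepsilon_{M_1}+\varepsilon_{M_2}$ on the difference of the truncated jump coefficients.
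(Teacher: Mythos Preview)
Your proposal is correct and follows essentially the same route as the paper: apply \textbf{Lemma 3.3} (A), (B), (C) with the approximating sequences $\bar{x}_t^M$ and $\bar{x}_t^{\mathcal{P},M}$, feeding in \textbf{Lemma 3.11} and \textbf{Lemma 3.12}.

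Two small remarks. First, you slightly over-complicate the Cauchy verification in $b)$: once part $a)$ is established, $\bar{x}_t\in\mathcal{D}_\infty^d$ and $D\bar{x}_t$ is well defined, so \textbf{Lemma 3.12} $c)\,ii)$ already states $\|D\bar{x}_t^M-D\bar{x}_t\|_{L^2}\to 0$, which trivially implies that $(D\bar{x}_t^M)_M$ is Cauchy; there is no need to rerun the argument between two levels $M_1,M_2$ (though that is indeed how the analogous \textbf{Lemma 3.9} $ii)$ is proved in the Appendix). Second, your observation that hypothesis $iv)$ of \textbf{Lemma 3.3 (B)} concerns the smallest eigenvalue while \textbf{Lemma 3.12} $b)$ is stated for the determinant is a genuine point the paper glosses over; your bridge via $\lambda_{\min}(\sigma)\geq\det\sigma/(\mathrm{tr}\,\sigma)^{d-1}$ together with the Sobolev bounds is the clean way to close it, and conversely $(\det\sigma)^{-p}\leq\lambda_{\min}^{-dp}$ recovers the determinant statement for the limit.
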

\begin{proof}
\textbf{Proof of }$a)$: We apply \textbf{Lemma 3.3 (A)} with $F_M=(\bar{x}_{t}^{\mathcal{P},M},\bar{x}_{t}^{M})$ and $F=(\bar{x}_{t}^{\mathcal{P}},\bar{x}_{t})$. By \textbf{Lemma 3.11} $i)$, $ii)$ and \textbf{Lemma 3.12} $a)$, we obtain our results.

\textbf{Proof of }$b)$: We apply \textbf{Lemma 3.3 (B)} with $F_M=\bar{x}_{t}^{M}$ and $F=\bar{x}_{t}$. By \textbf{Lemma 3.12} $b)$ and \textbf{Lemma 3.12} $c$ $ii)$, it follows that $\mathbb{E}(1/\det \sigma_{\bar{x}_{t}})^p\leq C_p.$

\textbf{Proof of }$c)$: We apply \textbf{Lemma 3.3 (C)} with $(\bar{F}_M,F_M)=(\bar{x}_{t}^{\mathcal{P},M},\bar{x}_{t}^{M})$ and $(\bar{F},F)=(\bar{x}_{t}^{\mathcal{P}},\bar{x}_{t})$. By \textbf{Lemma 3.12} $c)$ $i)$, we have $\Vert D\bar{x}_{t}^{\mathcal{P}}-D\bar{x}_{t}\Vert_{L^2(\Omega;l_2\times\mathbb{R}^d)}\leq C\vert\mathcal{P}\vert^{\frac{1}{2+\varepsilon_{\ast}}}.$
\end{proof}
Finally, we can give the proof of (\ref{002}). We recall that $\bar{x}_{t}$ and $\bar{x}^{\mathcal{P}}_{t}$ have the same law as ${X}_{t}$ and ${X}^{\mathcal{P}}_{t}$ respectively. For any $\bar{\varepsilon}>0$, we take $\varepsilon,\varepsilon_\ast>0$ such that $\varepsilon_\ast=\frac{\bar{\varepsilon}}{1-\bar{\varepsilon}}$ and $\varepsilon=\frac{\bar{\varepsilon}}{2}$. So  $\frac{2}{2+\varepsilon_\ast}(1-\varepsilon)=1-\bar{\varepsilon}.$
Thanks to \textbf{Lemma 3.13} $a),b)$, using \textbf{Proposition 3.6.1},  there exists a constant $C$ dependent on $\bar{\varepsilon},d,T$ such that for any partition $\mathcal{P}$ of the interval $[0,T]$ with $\vert\mathcal{P}\vert\leq1$, when $t>\frac{8d}{\theta}(\frac{8}{\bar{\varepsilon}}+1)$, we have \begin{eqnarray*}
d_{TV}(X^{\mathcal{P}}_{t},X_{t})&=&d_{TV}(\bar{x}^{\mathcal{P}}_{t},\bar{x}_{t})\\
&\leq& C[W_1(\bar{x}^{\mathcal{P}}_{t},\bar{x}_{t})+\Vert D{\bar{x}}_{t}^{\mathcal{P}}-D{\bar{x}}_{t}\Vert_{L^2(\Omega;l_2\times\mathbb{R}^d)}^2]^{1-\varepsilon}\\
&=& C[W_{1}({X}^{\mathcal{P}}_{t},{X}_{t})+\Vert D{\bar{x}}_{t}^{\mathcal{P}}-D{\bar{x}}_{t}\Vert_{L^2(\Omega;l_2\times\mathbb{R}^d)}^2]^{1-\varepsilon}\\
&\leq& C[\vert\mathcal{P}\vert+\vert\mathcal{P}\vert^\frac{2}{2+\varepsilon_{\ast}}]^{1-\varepsilon}\\
&\leq&C\vert\mathcal{P}\vert^{1-\bar{\varepsilon}}\rightarrow0,
\end{eqnarray*}where the second last inequality is obtained by \textbf{Lemma 3.13} $c)$ and (\ref{AB}). So (\ref{002}) is proved.
\end{proof}

\bigskip\bigskip

\textbf{Proofs of Theorem 2.3 and Theorem 2.4:}
\begin{proof}
\textbf{Proof of Theorem 2.3 $i)$:} 
We recall in Section 2.7 that  $x_t$ (solution of (\ref{x})) has the same law as $X_t$ (solution of (\ref{1.1})) and by \textbf{Theorem 2.1} $a)$, $\mathcal{L}(x_t)(dx)=\mathcal{L}(X_t)(dx)=p_t(x)dx$.  When $t>\frac{8d}{\theta}(2+d)$, \textbf{Lemma 3.7} $ii)$ and \textbf{Lemma 3.8} $ii)$ give that $\Vert x_t\Vert_{L,d+4,8d(2+d)}+\Sigma_{4(2+d)}(x_t)<\infty$ (with the notation $\Sigma_p(F)$ given in (\ref{sigma})). Then we  apply \textbf{Corollary 3.5.1} $i)$ and obtain that \begin{eqnarray}
\vert p_t(x)-\mathbb{E}(\varphi_\delta(X_t-x))\vert=\vert p_t(x)-\mathbb{E}(\varphi_\delta(x_t-x))\vert\leq C\delta^2,\label{31}
\end{eqnarray}where $C$ is a constant  dependent on $d$.

We recall by (\ref{W1}) the definition of the Wasserstein distance of order 1. Noticing $\Vert\nabla\varphi_\delta\Vert_{\infty}\leq \frac{1}{\delta^{d+1}}$, we get  \[\vert\mathbb{E}(\varphi_\delta(X_t-x))-\mathbb{E}(\varphi_\delta(X_t^{\mathcal{P},M}-x))\vert\leq W_1(X_t^{\mathcal{P},M},X_t)\frac{1}{\delta^{d+1}}.\]
So together with \textbf{Lemma 3.10}, there exists a constant $C$ dependent on $d$ and $T$ such that \begin{eqnarray}
\vert\mathbb{E}(\varphi_\delta(X_t-x))-\mathbb{E}(\varphi_\delta(X_t^{\mathcal{P},M}-x))\vert\leq C(\vert\mathcal{P}\vert+\sqrt{\varepsilon_M})\frac{1}{\delta^{d+1}}.\label{32}
\end{eqnarray}

Finally, applying the estimate $(4.6)$ in Theorem $4.1$ of $\cite{ref1}$ with $X^i_n=X^{\mathcal{P},M,i}_t$,  $\Theta^n_{s,s_n}(\rho)(dx)=\rho^{\mathcal{P},M}_t(dx)$ and $f(x)=\varphi_\delta(x)$, we get
\begin{eqnarray}
\vert\mathbb{E}(\varphi_\delta(X^{\mathcal{P},M}_t-x))-\frac{1}{N}\sum_{i=1}^N\mathbb{E}(\varphi_\delta(X_t^{\mathcal{P},M,i}-x))\vert\leq CV_N\frac{1}{\delta^{d+1}}.\label{33}
\end{eqnarray}Combining (\ref{31}), (\ref{32}) and (\ref{33}), \[\vert p_t(x)-\frac{1}{N}\sum_{i=1}^N\mathbb{E}(\varphi_\delta(X_t^{\mathcal{P},M,i}-x))\vert\leq C[(\vert\mathcal{P}\vert+\sqrt{\varepsilon_M})\frac{1}{\delta^{d+1}}+V_N\frac{1}{\delta^{d+1}}+\delta^2].\]

Then we optimize over $\delta$ and $N$. We choose \[\delta=(\vert\mathcal{P}\vert+\sqrt{\varepsilon_M})^{\frac{1}{d+3}}\] such that \[(\vert\mathcal{P}\vert+\sqrt{\varepsilon_M})\frac{1}{\delta^{d+1}}=\delta^2.\]So \[\vert p_t(x)-\frac{1}{N}\sum_{i=1}^N\mathbb{E}(\varphi_\delta(X_t^{\mathcal{P},M,i}-x))\vert\leq C[(\vert\mathcal{P}\vert+\sqrt{\varepsilon_M})^{\frac{2}{d+3}}+V_N(\vert\mathcal{P}\vert+\sqrt{\varepsilon_M})^{-\frac{d+1}{d+3}}].\] And we choose $N$  such that\[V_N\leq\vert\mathcal{P}\vert+\sqrt{\varepsilon_M},\] so \[(\vert\mathcal{P}\vert+\sqrt{\varepsilon_M})^{\frac{2}{d+3}}\geq V_N(\vert\mathcal{P}\vert+\sqrt{\varepsilon_M})^{-\frac{d+1}{d+3}}.\]Hence, eventually we have (\ref{003}).

\bigskip

\textbf{Proof of Theorem 2.3 $ii)$:} 
(\ref{003*}) is obtained in a similar way by using \textbf{Corollary 3.5.1} $ii)$.

\bigskip\bigskip

\textbf{Proof of Theorem 2.4 $i)$:}  We take $f\in C_b^\infty(\mathbb{R}^d)$.

\textit{Step 1:} We recall in Section 2.7 that $x^{\mathcal{P},M}_t$ (solution of (\ref{xPM})) has the same law as $X^{\mathcal{P},M}_t$ (solution of (\ref{truncation})) and $x_t$ (solution of (\ref{x})) has the same law as $X_t$ (solution of (\ref{1.1})).
We  notice by \textbf{Theorem 2.2} that for any $\varepsilon>0$, there exists a constant $C$ dependent on $\varepsilon,d,T$ such that when $t>\frac{8d}{\theta}(\frac{8}{\varepsilon}+1)$,
\begin{eqnarray}
\Big\vert\int_{\mathbb{R}^d}f(x)\rho_t(dx)-\int_{\mathbb{R}^d}f(x)\rho^{\mathcal{P},M}_t(dx)\Big\vert\leq C\left\Vert
f\right\Vert _{\infty }\times(\vert\mathcal{P}\vert+\sqrt{\varepsilon_M})^{1-\varepsilon},\label{2den1}
\end{eqnarray}with $\rho_t$ the law of $x_t$ (also of $X_t$) and $\rho^{\mathcal{P},M}_t$ the law of $x^{\mathcal{P},M}_t$ (also of $X^{\mathcal{P},M}_t$).

\textit{Step 2:}
We apply the regularization lemma \textbf{Lemma 3.5 (B)} $iii)$ with $F=x^{\mathcal{P},M}_t$ and $G=x_t$. For any $\bar{\varepsilon}>0,$ we take $\varepsilon,\varepsilon_\ast,\varepsilon_0>0$ such that \[\varepsilon_0=\frac{{\varepsilon}}{2-2{\varepsilon}},\quad\text{and}\quad\varepsilon_\ast=\frac{\bar{\varepsilon}}{1-\bar{\varepsilon}},\quad {\varepsilon}=\frac{\bar{\varepsilon}}{2}.\] So  \[\frac{2}{2+\varepsilon_\ast}(1-{\varepsilon})=1-\bar{\varepsilon}.\] Recalling in \textbf{Lemma 3.5 (B)} $iii)$ that $q=\frac{2}{1+\varepsilon_0}=\frac{4(1-\varepsilon)}{2-\varepsilon}$, we have
\begin{eqnarray}
\Big\vert\int_{\mathbb{R}^d}f(x)\rho^{\mathcal{P},M}_t(dx)-\int_{\mathbb{R}^d}f_\delta(x)\rho^{\mathcal{P},M}_t(dx)\Big\vert\leq C\left\Vert
f\right\Vert _{\infty }\times (\frac{\delta ^{2}}{\eta^4}+\eta^{-q}(\vert\mathcal{P}\vert+\varepsilon_M)^{\frac{q}{2+\varepsilon_{\ast}}}+\eta^\rho).\label{2den2}
\end{eqnarray} Here we have used the non-degenerated condition of $x_t$ and the fact that the Sobolev norms of $x^{\mathcal{P},M}_t$ and $x_t$ are bounded (uniformly in $\mathcal{P},M$). We have also taken advantage of \textbf{Lemma 3.9} $iii)$.

Then we optimize over $\delta,\eta$ and $\rho$. In order to keep the notations clear, we denote temporary that \[\mathcal{E}=(\vert\mathcal{P}\vert+\varepsilon_M)^{\frac{2}{2+\varepsilon_{\ast}}}.\]We take \[\eta=\mathcal{E}^\frac{\varepsilon}{4}\quad\text{and}\quad\delta=\sqrt{\mathcal{E}}\] such that \[\frac{\delta ^{2}}{\eta^4}=\eta^{-q}\mathcal{E}^\frac{q}{2}=\mathcal{E}^{1-\varepsilon}.\]We take moreover \[\rho=\frac{4(1-\varepsilon)}{\varepsilon}\] such that \[\eta^\rho=\mathcal{E}^{1-\varepsilon}.\]So (\ref{2den2}) becomes \begin{eqnarray}
\Big\vert\int_{\mathbb{R}^d}f(x)\rho^{\mathcal{P},M}_t(dx)-\int_{\mathbb{R}^d}f_\delta(x)\rho^{\mathcal{P},M}_t(dx)\Big\vert\leq C\left\Vert
f\right\Vert _{\infty }\times \mathcal{E}^{1-\varepsilon}=C\left\Vert
f\right\Vert _{\infty }\times (\vert\mathcal{P}\vert+\varepsilon_M)^{1-\bar{\varepsilon}},\label{2den2*}
\end{eqnarray}with  $C$ a constant depending on $\bar{\varepsilon},d,T$ and \begin{eqnarray}\delta=(\vert\mathcal{P}\vert+\varepsilon_M)^{\frac{1-\bar{\varepsilon}}{2-\bar{\varepsilon}}}=(\vert\mathcal{P}\vert+\varepsilon_M)^{\frac{1}{2}(1-\frac{\bar{\varepsilon}}{2-\bar{\varepsilon}})}.\label{2dendel}\end{eqnarray}

\textit{Step 3:} We apply the estimate $(4.6)$ in Theorem $4.1$ of $\cite{ref1}$. We notice that $\Vert\nabla f_\delta\Vert_\infty\leq C\Vert f\Vert_\infty\times\frac{1}{\delta^{d+1}}$. Then we obtain \begin{eqnarray}
\Big\vert\int_{\mathbb{R}^d}f_\delta(x)\rho^{\mathcal{P},M}_t(dx)-\frac{1}{N}\sum_{i=1}^N\mathbb{E}(f_\delta(X_t^{\mathcal{P},M,i}))\Big\vert\leq C\Vert f\Vert_\infty V_N\times\frac{1}{\delta^{d+1}}.\label{2den3}
\end{eqnarray}Now we optimize over $N$. We take $N$ such that \begin{eqnarray}V_N\leq(\vert\mathcal{P}\vert+\varepsilon_M)^{\frac{(1-\bar{\varepsilon})(d+3-\bar{\varepsilon})}{2-\bar{\varepsilon}}}=(\vert\mathcal{P}\vert+\varepsilon_M)^{\frac{d+3}{2}(1-\frac{(d+5)\bar{\varepsilon}-2\bar{\varepsilon}^2}{(2-\bar{\varepsilon})(d+3)})},\label{2denVN}\end{eqnarray}so\[V_N\times\frac{1}{\delta^{d+1}}\leq(\vert\mathcal{P}\vert+\varepsilon_M)^{1-\bar{\varepsilon}}.\]Then we have \begin{eqnarray}
\Big\vert\int_{\mathbb{R}^d}f_\delta(x)\rho^{\mathcal{P},M}_t(dx)-\frac{1}{N}\sum_{i=1}^N\mathbb{E}(f_\delta(X_t^{\mathcal{P},M,i}))\Big\vert\leq C\Vert f\Vert_\infty\times (\vert\mathcal{P}\vert+\varepsilon_M)^{1-\bar{\varepsilon}}.\label{2den3*}
\end{eqnarray}Combining (\ref{2den1}), (\ref{2den2*}) and (\ref{2den3*}), for all $f\in C_b^\infty(\mathbb{R}^d)$, with $\delta$ and $N$ given in (\ref{2dendel}) and (\ref{2denVN}), when $t>\frac{8d}{\theta}(\frac{16}{\bar{\varepsilon}}+1)$, we have \begin{eqnarray}
\int_{\mathbb{R}^d}f(x)\rho_t(dx)&=&\frac{1}{N}\sum_{i=1}^N\mathbb{E}(f_\delta(X_t^{\mathcal{P},M,i}))+\Vert f\Vert_\infty\times \mathcal{O}((\vert\mathcal{P}\vert+\sqrt{\varepsilon_M})^{1-\bar{\varepsilon}})\nonumber\\
&=&\frac{1}{N}\sum_{i=1}^N\mathbb{E}\int_{\mathbb{R}^d}f(X_t^{\mathcal{P},M,i}+y)\varphi_\delta(y)dy+\Vert f\Vert_\infty\times \mathcal{O}((\vert\mathcal{P}\vert+\sqrt{\varepsilon_M})^{1-\bar{\varepsilon}})\nonumber\\
&=&\frac{1}{N}\sum_{i=1}^N\mathbb{E}f(X_t^{\mathcal{P},M,i}+\delta\widetilde{\Delta})+\Vert f\Vert_\infty\times \mathcal{O}((\vert\mathcal{P}\vert+\sqrt{\varepsilon_M})^{1-\bar{\varepsilon}}),\label{2den}
\end{eqnarray}where $\widetilde{\Delta}$ is a $d-$dimensional standard Gaussian random variable independent of $X_t^{\mathcal{P},M,i}, i=1,\cdots,N$, and $\mathcal{O}(\bullet)$ is the Big $\mathcal{O}$ notation.

Since $C_b^\infty(\mathbb{R}^d)$ is dense in $C_b(\mathbb{R}^d)$, (\ref{2den})  holds for $f\in C_b(\mathbb{R}^d)$. Finally, by Lusin theorem, (\ref{2den}) also holds for any measurable and bounded function $f$.

\bigskip

\textbf{Proof of Theorem 2.4 $ii)$:} 
(\ref{004*}) is obtained in the same way as \textbf{Theorem 2.4} $i)$ by using \textbf{Lemma 3.5 (B)} $iv)$ in \textit{Step 2}. 

\end{proof}

\bigskip\bigskip

\section{Proofs}
\subsection{Sobolev norms}
In this section, we give the proof of \textbf{Lemma 3.7}. We explain   our strategy of the proof. 
We will first prove that $\sup\limits_{{\mathcal{P}}}\sup\limits_{{M}}\Vert {x}_{t}^{\mathcal{P},M}\Vert_{L,l,p}\leq C_{l,p}$, then by an analogous argument, we also have $\sup\limits_{{M}}\Vert {x}_{t}^{M}\Vert_{L,l,p}\leq C_{l,p}$. Afterwards, recalling $\mathbb{E}\vert x^M_t-x_t\vert\rightarrow0$ in \textbf{Lemma 2.6} $i)$, and applying \textbf{Lemma 3.3} with $F_M=x^{M}_t$ and $F=x_t$, we get that ${x}_{t}$ belongs to $\mathcal{D}_{\infty}^d$ and $\Vert {x}_{t}\Vert_{L,l,p}\leq C_{l,p}$.

So now we only need to prove the following lemma.
\begin{lemma}
Under the \textbf{Hypothesis 2.1} and \textbf{Hypothesis 2.4} $b)$, for all $p\geq2, l\geq0$, there exists a constant $C_{l,p}$ depending on $l,p,d$ and $T$, such that 
\begin{eqnarray}
a)\quad \sup\limits_{\mathcal{P}}\sup\limits_{M}\mathbb{E}\sup\limits_{0<t\leq T}\vert {x}_{t}^{\mathcal{P},M}\vert_{l}^p\leq C_{l,p}, \label{EuD}
\end{eqnarray}and 
\begin{eqnarray}
b)\quad \sup\limits_{\mathcal{P}}\sup\limits_{M}\mathbb{E}\sup\limits_{0<t\leq T}\vert L{x}_{t}^{\mathcal{P},M}\vert_{l}^p\leq C_{l,p}. \label{EuL}
\end{eqnarray}
\end{lemma}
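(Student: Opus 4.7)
The plan is to prove (\ref{EuD}) by induction on the differentiation order $l$, and then to deduce (\ref{EuL}) by observing that $Lx_t^{\mathcal{P},M}$ itself solves an equation obtained by applying $L$ to (\ref{xPM}).

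For the base case $l=0$ of part a), I would apply the Burkholder estimate (\ref{BurkM}) to (\ref{xPM}) with $\bar\Phi = c_M(\tau(r),\eta^1_r(w),z,x_{\tau(r)-}^{\mathcal{P},M},\rho_{\tau(r)-}^{\mathcal{P},M})$, bounded by $\bar c(z)$, together with the Lipschitz property of $b$ and $a_T^M\le C$. Gronwall then gives $\sup_{\mathcal{P},M}\mathbb{E}\sup_{0<t\le T}|x_t^{\mathcal{P},M}|^p\le C_p$. For the inductive step, one differentiates (\ref{xPM}) in the Malliavin sense. Since $\rho^{\mathcal{P},M}_{\tau(r)}$ is deterministic it drops out, so the Gaussian derivative $D^{\Delta}_{\tilde j}x_t^{\mathcal{P},M}$ satisfies a linear SDE with initial condition $a_T^M e_{\tilde j}$ and coefficients $\nabla_x b,\nabla_x c_M$, while the jump derivative $D^Z_{(\bar k,\bar i,\bar j)}x_t^{\mathcal{P},M}$ receives an initial kick of size $\xi^{\bar k}_{\bar i}\,\partial_{z_{\bar j}}c_M(\cdots)$ at time $T^{\bar k}_{\bar i}$, and is then linearly propagated through the subsequent jumps. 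Summing over $(\bar k,\bar i,\bar j)$ rewrites the contribution of these initial kicks as a single stochastic integral against $\mathcal{N}$ with integrand dominated by $\bar c(z)$, so (\ref{BurkM}) applies and Gronwall closes the first order estimate. For $l\ge 2$, iterating and applying \textbf{Lemma 3.2} a) to the derivatives of $b$ and $c_M$ composed with $x$ produces a linear SDE for $D^{q+1}x$ whose forcing is a polynomial in lower order derivatives, times bounded functions of $x$, times at most $\bar c(z)$ in $z$; the inductive hypothesis gives finite moments of all orders for the forcing, so a further application of (\ref{BurkM}) and Gronwall completes the induction.

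For part b), the chain rule for $L$ (\textbf{Lemma 3.2} c)), together with the explicit formula for $L^\Delta$ and $L^Z$ in (\ref{L}), shows that $Lx_t^{\mathcal{P},M}$ satisfies a linear SDE of the same structure as (\ref{xPM}), driven by $Lx$ itself together with source terms involving $\Delta$, $|Dx|^2_{l_2\times\mathbb{R}^d}$, and derivatives of $\ln h$ (the latter bounded by Hypothesis 2.4 b)). All these sources have moments of every order thanks to Hypothesis 2.4 b) and part a). Applying (\ref{BurkM}) and Gronwall as before yields the bound on $\mathbb{E}\sup_{t\le T}|Lx_t^{\mathcal{P},M}|^p$. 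Differentiating the SDE for $Lx$ once more, using that $D$ and $L$ intertwine properly on $\mathcal{S}$, and iterating with \textbf{Lemma 3.2} b), gives the bound on $|Lx_t^{\mathcal{P},M}|_l$ for every $l$.

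The main obstacle is the uniform control of the infinite sums $\sum_{k,i,j}|D^Z_{(k,i,j)}x_t^{\mathcal{P},M}|^2$ and their higher order analogues; these are handled by systematically rewriting them as stochastic integrals against $\mathcal{N}$ with integrands dominated by $\bar c(z)$ times polynomial expressions in lower order derivatives, and then invoking (\ref{BurkM}) together with the integrability $\int|\bar c|^p\mu(dz)<\infty$ from (\ref{cbar}). A secondary technical point is that every $Z$-derivative carries the smooth cut-off weight $\xi^k_i=\Psi_k(Z^k_i)$; its support in the annulus $|z|\in[k-3/4,k-1/4]$ and the uniform bound (\ref{psiborn}) ensure that derivatives of $\xi$ contribute only factors that remain polynomially integrable against $\mu$. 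Finally, uniformity in $\mathcal{P}$ and $M$ is automatic: none of the estimates involve $|\mathcal{P}|$ other than through $\int_0^t\cdots dr$, and the $M$-dependence enters only through $|c_M|\le\bar c$ (independent of $M$) and $a_T^M\le a_T^0\le C$.
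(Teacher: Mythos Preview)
Your proposal is correct and follows essentially the same approach as the paper: induction on $l$, the chain rule for $|\cdot|_{1,l}$ and $|L\cdot|_l$ (this is the paper's Lemma~3.1, which you cite as 3.2), the Burkholder estimate (\ref{BurkM}), and Gronwall. The only organizational difference is that the paper first isolates the uniform bounds $\sup_{k,i}|Z^k_i|_{1,l}\le C_l$ and $\sup_{k,i}|LZ^k_i|_l\le C_l$ into a separate preliminary lemma (Lemma~4.2), and then applies Lemma~3.1 directly to $|b(\cdots)|_{1,l}$, $|c(\cdots)|_{1,l}$, $|Lb(\cdots)|_l$, $|Lc(\cdots)|_l$ rather than writing out explicit SDEs for each derivative level; the substance is the same.
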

Before we prove this lemma, we give some pre-estimations concerning the Sobolev norms of $Z^k_i$.
\begin{lemma}
Under the \textbf{Hypothesis 2.4} $b)$,  for every $l\geq0$, there exists a constant $C_{l}$ dependent on $l,d$ such that
\begin{eqnarray}
i)\quad\sup\limits_{k,i\in\mathbb{N}}\vert{Z}^k_i\vert_{1,l}\leq C_{l}, \label{LLL0}
\end{eqnarray}
\begin{eqnarray}
ii)\quad\sup\limits_{k,i\in\mathbb{N}}\vert L{Z}_i^{k}\vert_l\leq C_{l}. \label{LZTrue}
\end{eqnarray}
\end{lemma}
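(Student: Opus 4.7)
The key observation making this a direct computation is a locality property: for fixed $(k,i,j')$, the functional $Z^k_{i,j'}$ depends only on the $d$-dimensional vector $Z^k_i$, not on $\Delta$, not on $\mathcal{G}$, and not on any $Z^{\bar k}_{\bar i}$ with $(\bar k,\bar i)\neq (k,i)$. Hence every infinite sum in the definitions (\ref{D}) and (\ref{L}) of $D$ and $L$ collapses to terms living in the single $(k,i)$-slot, and both quantities reduce to smooth functions of $Z^k_i$ whose smoothness is controlled by (\ref{psiborn}) and Hypothesis 2.4 $b)$.

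For $i)$, I would first compute directly from (\ref{D}) that
\[
D^Z_{(\bar k,\bar i,\bar j)} Z^k_{i,j'} \;=\; \xi_i^k \, \mathbbm{1}_{\{(\bar k,\bar i,\bar j)=(k,i,j')\}}, \qquad D^{\Delta} Z^k_{i,j'} \;=\; 0,
\]
giving $|DZ^k_{i,j'}|_{l_2\times\mathbb{R}^d} \leq \|\psi\|_\infty$. An induction on $q$ then shows that $D^q Z^k_{i,j'}$ is supported on $q$-tuples of indices whose $(\bar k_\alpha,\bar i_\alpha)$ components are all equal to $(k,i)$, producing at most $d^{q-1}$ non-zero entries, each of which is, by repeated Leibniz expansion of the prefactor $\xi_i^k = \Psi_k(Z^k_i)$ at every derivation, a polynomial in partial derivatives (of orders at most $q$) of $\Psi_k$ evaluated at $Z^k_i$. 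The uniform bound (\ref{psiborn}) then yields $|D^q Z^k_{i,j'}|_{\mathcal{H}^{\otimes q}} \leq C_{q,d}$ uniformly in $(k,i)$, and summing over $j'\in\{1,\ldots,d\}$ and $q\leq l$ completes $i)$.

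For $ii)$, the same locality argument applied to (\ref{L}) leaves only the single-index contribution $(\bar k,\bar i,\bar j)=(k,i,j')$ in the $L^Z$ sum, and $L^{\Delta}Z^k_{i,j'}=0$, which produces the closed-form expression
\[
L Z^k_{i,j'} \;=\; -\partial_{z_{j'}}\bigl((\xi_i^k)^2\bigr)\;-\;(\xi_i^k)^2\,\partial_{j'}\ln h(Z^k_i) \;=\; -2\,\xi_i^k\,\partial_{j'}\Psi_k(Z^k_i)\;-\;(\xi_i^k)^2\,\partial_{j'}\ln h(Z^k_i).
\]
By (\ref{psiborn}) and by Hypothesis 2.4 $b)$ (which asserts that $\ln h$ has bounded derivatives of every order), this is a bounded smooth function of $Z^k_i$. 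Taking further Malliavin derivatives, the same locality arguments apply: each successive application of $D$ produces only an additional factor $\xi_i^k$ together with one more partial derivative of $\Psi_k$ or $\ln h$ evaluated at $Z^k_i$, and the whole expression remains uniformly controlled in $(k,i)$. Summation over $j'$ and over the orders $q \leq l$ yields $ii)$.

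No substantive obstacle is expected here: the lemma is essentially a bookkeeping exercise, and the analytic content has already been packaged into (\ref{psiborn}) and Hypothesis 2.4 $b)$. The only point requiring brief verification is the induction asserting that $D^q Z^k_{i,j'}$ stays supported in the $(k,i)$-slot, which follows immediately from the fact that each operator $D^Z_{(\bar k,\bar i,\bar j)}$ annihilates any functional that does not depend on $Z^{\bar k}_{\bar i}$, so the proof does not require any new estimate beyond the standing hypotheses.
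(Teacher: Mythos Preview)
Your proposal is correct and follows the same underlying idea as the paper: exploit the locality in the $(k,i)$-slot, observe that $D^Z_{(k,i,j)}Z^k_{i,j}=\xi^k_i$ and $D^\Delta Z^k_i=0$, compute $LZ^k_{i,j}$ explicitly, and then control higher derivatives inductively using (\ref{psiborn}) and Hypothesis~2.4~$b)$.

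The only noteworthy organizational difference is in part~$i)$. You carry out the induction by hand, tracking the support of $D^q Z^k_{i,j'}$ and counting the at most $d^{q-1}$ nonzero entries as polynomials in derivatives of $\Psi_k$. The paper instead short-circuits this bookkeeping by invoking its abstract chain-rule estimate (Lemma~3.1~$a)$): from $DZ^k_{i,j}=\xi^k_i\,e_{(k,i,j)}$ one gets $|Z^k_i|_{1,l}\leq |\xi^k_i|_{l-1}=|\Psi_k(Z^k_i)|_{l-1}$, and Lemma~3.1~$a)$ then gives the self-referential recursion
\[
|Z^k_i|_{1,l}\leq 1+C_l\bigl(|Z^k_i|_{1,l-1}+|Z^k_i|_{1,l-2}^{\,l-1}\bigr),
\]
which closes from the base case $|Z^k_i|_{1,1}=|\xi^k_i|\leq 1$. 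For part~$ii)$ the paper likewise applies Lemma~3.1~$a)$ to $\ln h(Z^k_i)$ rather than differentiating by hand. Your direct approach is more elementary and self-contained; the paper's is shorter once Lemma~3.1 is in place and scales more transparently to the later applications (the Sobolev-norm estimates for $x^{\mathcal{P},M}_t$ in Lemma~4.1). Substantively the two arguments are equivalent.
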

\begin{proof}$i)$ We notice by the definition (\ref{D}) that $D^Z_{(k,i,j)}{Z}^k_{i,j}=\xi_{i}^k$,\quad $D^Z_{(k^\prime,i^\prime,j^\prime)}{Z}^k_{i,j}=0$, for $k^\prime\neq k$, $i^\prime\neq i$ or $j^\prime\neq j$,\quad $D^{\Delta} {Z}^k_i=0$. We recall that $\xi^k_i=\Psi_k({Z}^k_i)$ in Section 3.2. We observe that  using \textbf{Lemma 3.1} $a)$, for any $k,i\in\mathbb{N}$ we have
\[
\vert{Z}^k_i\vert_{1,l}\leq \vert\xi^k_i\vert_{l-1}=\vert\Psi_k({Z}^k_i)\vert_{l-1}\leq 1+C_l(\vert{Z}^k_i\vert_{1,l-1}+\vert{Z}^k_i\vert_{1,l-2}^{l-1}).
\]Since $\vert{Z}^k_i\vert_{1,1}=\vert\xi_i^k\vert\leq1$, there is a constant $C_l$ such that
$\vert{Z}^k_i\vert_{1,l}\leq C_{l}$. 

$ii)$
We notice by the definition (\ref{L}) that
\[L{{Z}}_{i,j}^{k}=-\partial_{z^k_{i,j}}(\xi_{i}^k)^2-\xi^k_{i}D^Z_{(k,i,j)}\ln [h({Z}^k_{i})].\]We observe that  $\vert\xi_{i}^k\vert\leq1$, and we have $\vert\partial_{z^k_{i,j}}(\xi_{i}^k)^2\vert=2\Psi_k(Z^k_i)\partial_{z^k_{i,j}}\Psi_k(Z^k_i)$ is bounded by a universal constant (see (\ref{psiborn})). These lead to \begin{eqnarray*}
\vert L{Z}_{i,j}^{k}\vert_l\leq C_l(1+\vert D^Z_{(k,i,j)}\ln [h({Z}^k_{i})]\vert_l).
\end{eqnarray*}

We recall by \textbf{Hypothesis 2.4} $b)$ that $h$ is infinitely differentiable and $\ln h$ has bounded derivatives of any order.  Applying \textbf{Lemma 3.1} $a)$ and using (\ref{LLL0}), 
\begin{eqnarray*}
\vert D^Z_{(k,i,j)}\ln [h({Z}^k_{i})]\vert_l&\leq& \vert \ln [h({Z}^k_{i})]\vert_{l+1}\\
&\leq& C_l+\vert\nabla\ln h(Z^k_i)\vert\vert Z^k_i\vert_{1,l+1} +C_l\sup_{2\leq\vert\beta\vert\leq l+1}\vert\partial^\beta\ln h(Z^k_i)\vert\vert Z^k_i\vert_{1,l}^{l+1}\\
&\leq&C_l.
\end{eqnarray*}
Then for any $k,i\in\mathbb{N}$, we obtain that
$\vert L{Z}^k_{i}\vert_l\leq C_l.$
\end{proof}
\bigskip
Now we give the proof of \textbf{Lemma 4.1}.
\begin{proof}

\textbf{Proof of $a)$:} We first prove (\ref{EuD}). We will prove by recurrence on $l$. One can easily check by  (\ref{BurkM}) with \[\bar{\Phi}(r,w,z,\omega,\rho)={c}_M(\tau(r),\eta^1_r(w),z,X^{\mathcal{P},M}_{\tau(r)-},\rho^{\mathcal{P},M}_{\tau(r)-})\] and by \textbf{Hypothesis 2.1} that for $l=0$, $\mathbb{E}\sup\limits_{0<t\leq T}\vert {x}_{t}^{\mathcal{P},M}\vert^p\leq C_{0,p}$. Then we assume that (\ref{EuD}) holds for $l-1$ with $l\geq1$ and for every $p\geq2$. We will show that (\ref{EuD}) also holds for $l$ and for every $p\geq2$. 
We notice by the definitions (\ref{D}) that  $D^\Delta_j\Delta=\bm{e_j}$, where $\bm{e_j}=(0,\cdots,0,1,0,\cdots,0)$ with value $1$ at the $j-$th component, $D^{\Delta,q}\Delta=0$ with $q\geq2$ and $D^Z\Delta=0$. Recalling the equation (\ref{xPMZ}), we write $\mathbb{E}\sup\limits_{0<t\leq T}\vert {x}_{t}^{\mathcal{P},M}\vert_{1,l}^p\leq C_{l,p}(1+A_1+A_2)$, with
\begin{eqnarray*}
&&A_1=\mathbb{E}\int_{0}^{T}\vert b(\tau(r),{x}^{\mathcal{P},M}_{\tau(r)},\rho^{\mathcal{P},M}_{\tau(r)})\vert_{1,l}^pdr,\\
&&A_2=\mathbb{E}(\sum_{k=1}^M\sum_{i=1}^{J^k_T}\vert{c}(\tau({T}_i^{k}),\eta^1_{T^k_i}({W}_i^k),{Z}_i^{k},{x}^{\mathcal{P},M}_{\tau({T}_i^{k})-},\rho^{\mathcal{P},M}_{\tau({T}_i^{k})-})\vert_{1,l})^p.
\end{eqnarray*}
Using \textbf{Lemma 3.1} $a)$, \textbf{Hypothesis 2.1} and the recurrence hypothesis, 
\begin{eqnarray}
A_1&\leq&C_{l,p}[\mathbb{E}\int_0^T\vert\nabla_xb(\tau(r),{x}^{\mathcal{P},M}_{\tau(r)},\rho^{\mathcal{P},M}_{\tau(r)})\vert^p\vert {x}^{\mathcal{P},M}_{\tau(r)}\vert_{1,l}^pdr\nonumber\\
&+&\mathbb{E}\int_0^T\sup_{2\leq\vert\beta\vert\leq l}\vert\partial^\beta_xb(\tau(r),{x}^{\mathcal{P},M}_{\tau(r)},\rho^{\mathcal{P},M}_{\tau(r)})\vert^p\vert {x}^{\mathcal{P},M}_{\tau(r)}\vert_{1,l-1}^{lp}dr] \nonumber\\
&\leq&C_{l,p}[\int_0^T\mathbb{E}\vert {x}^{\mathcal{P},M}_{\tau(r)}\vert_{1,l}^pdr+\int_0^T\mathbb{E}\vert {x}^{\mathcal{P},M}_{\tau(r)}\vert_{1,l-1}^{lp}dr]\nonumber\\
&\leq& C_{l,p}[1+\int_0^T\mathbb{E}\vert {x}^{\mathcal{P},M}_{\tau(r)}\vert_{1,l}^pdr]. \label{A1}
\end{eqnarray}
Next, we estimate $A_2$.  By \textbf{Lemma 3.1} $a)$, \textbf{Hypothesis 2.1} and \textbf{Lemma 4.2}, for any $k,i\in\mathbb{N}$,
\begin{eqnarray*}
&&\vert{c}(\tau({T}_i^{k}),\eta^1_{T^k_i}({W}_i^k),{Z}_i^{k},{x}^{\mathcal{P},M}_{\tau({T}_i^{k})-},\rho^{\mathcal{P},M}_{\tau({T}_i^{k})-})\vert_{1,l}\\
&&\leq (\vert\nabla_{z}{c}(\tau({T}_i^{k}),\eta^1_{T^k_i}({W}_i^k),{Z}_i^{k},{x}^{\mathcal{P},M}_{\tau({T}_i^{k})-},\rho^{\mathcal{P},M}_{\tau({T}_i^{k})-})\vert+\vert\nabla_{x}{c}(\tau({T}_i^{k}),\eta^1_{T^k_i}({W}_i^k),{Z}_i^{k},{x}^{\mathcal{P},M}_{\tau({T}_i^{k})-},\rho^{\mathcal{P},M}_{\tau({T}_i^{k})-})\vert)\\
&&\times(\vert {Z}_i^{k}\vert_{1,l}+\vert {x}^{\mathcal{P},M}_{\tau({T}_i^{k})-}\vert_{1,l})\\
&&+{C}_{l}\sup\limits_{2\leq\vert\beta_1+\beta_2\vert\leq l}(\vert\partial_{z}^{\beta_1}\partial_{x}^{\beta_2}{c}(\tau({T}_i^{k}),\eta^1_{T^k_i}({W}_i^k),{Z}_i^{k},{x}^{\mathcal{P},M}_{\tau({T}_i^{k})-},\rho^{\mathcal{P},M}_{\tau({T}_i^{k})-})\vert)\times(\vert {Z}_i^{k}\vert_{1,l-1}^{l}+\vert {x}^{\mathcal{P},M}_{\tau({T}_i^{k})-}\vert_{1,l-1}^{l})\\
&&\leq{C}_{l}\bar{c}({Z}_i^{k})(1+\vert {x}^{\mathcal{P},M}_{\tau({T}_i^{k})-}\vert_{1,l}+\vert {x}^{\mathcal{P},M}_{\tau({T}_i^{k})-}\vert_{1,l-1}^{l}).
\end{eqnarray*}
Hence, using (\ref{BurkM}) with \[\bar{\Phi}(r,w,z,\omega,\rho)=\bar{c}(z)(1+\vert {x}^{\mathcal{P},M}_{\tau(r)-}\vert_{1,l}+\vert {x}^{\mathcal{P},M}_{\tau(r)-}\vert_{1,l-1}^{l}),\] \textbf{Hypothesis 2.1} and the recurrence hypothesis, it follows that 
\begin{eqnarray}
A_2&\leq& {C}_{l,p}\mathbb{E}\vert\sum_{k=1}^M\sum_{i=1}^{J^k_T}\bar{c}({Z}_i^{k})(1+\vert {x}^{\mathcal{P},M}_{\tau({T}_i^{k})-}\vert_{1,l}+\vert {x}^{\mathcal{P},M}_{\tau({T}_i^{k})-}\vert_{1,l-1}^{l})\vert^p \nonumber\\
&\leq& {C}_{l,p}\mathbb{E}\vert\int_0^T\int_{[0,1]\times\mathbb{R}^d}\bar{c}(z)(1+\vert {x}^{\mathcal{P},M}_{\tau(r)-}\vert_{1,l}+\vert {x}^{\mathcal{P},M}_{\tau(r)-}\vert_{1,l-1}^{l})\mathcal{N}(dw,dz,dr)\vert^p\nonumber\\
&\leq& C_{l,p}[1+\int_0^T\mathbb{E}\vert {x}^{\mathcal{P},M}_{\tau(r)-}\vert_{1,l}^pdr+\int_0^T\mathbb{E}\vert {x}^{\mathcal{P},M}_{\tau(r)-}\vert_{1,l-1}^{lp}dr]\nonumber\\
&\leq& C_{l,p}[1+\int_0^T\mathbb{E}\vert {x}^{\mathcal{P},M}_{\tau(r)}\vert_{1,l}^pdr].  \label{A2}
\end{eqnarray}
Combining (\ref{A1}) and (\ref{A2}),  one has
\begin{eqnarray*}
\mathbb{E}\sup\limits_{0<t\leq T}\vert {x}_{t}^{\mathcal{P},M}\vert_{1,l}^p\leq {C}_{l,p}[1+\int_0^T\mathbb{E}\vert {x}_{\tau(r)}^{\mathcal{P},M}\vert_{1,l}^pdr]. 
\end{eqnarray*}
So  we conclude by Gronwall lemma that 
\begin{eqnarray}
\sup_{\mathcal{P},M}\mathbb{E}\sup\limits_{0<t\leq T}\vert {x}_{t}^{\mathcal{P},M}\vert_{1,l}^p\leq C_{l,p}. \label{D*}
\end{eqnarray}

\bigskip\bigskip

\textbf{Proof of $b)$:} 
Now we pass to the proof of (\ref{EuL}). We also prove it by recurrence on $l$.

\textbf{Step 1:} We take first $l=0$.
We notice by the definition (\ref{L})  that $L\Delta=\Delta$. So having in mind that $\Delta$ has finite moments of any order, and recalling the equation (\ref{xPMZ}), we write $\mathbb{E}\sup\limits_{0<t\leq T}\vert L{x}_{t}^{\mathcal{P},M}\vert^p\leq C_{0,p}(1+S_1+S_2)$, with
\begin{eqnarray*}
&&S_1=\mathbb{E}\int_{0}^{T}\vert Lb(\tau(r),{x}^{\mathcal{P},M}_{\tau(r)},\rho^{\mathcal{P}}_{\tau(r)})\vert^pdr,\\
&&S_2=\mathbb{E}(\sum_{k=1}^M\sum_{i=1}^{J_T^k}\vert L{c}(\tau({T}_i^{k}),\eta^1_{T^k_i}({W}_i^k),{Z}_i^{k},{x}^{\mathcal{P},M}_{\tau({T}_i^{k})-},\rho^{\mathcal{P},M}_{\tau({T}_i^{k})-})\vert)^p.
\end{eqnarray*}
Using \textbf{Lemma 3.1} $c)$, \textbf{Hypothesis 2.1} and (\ref{EuD}), 
\begin{eqnarray}
S_1&\leq&\mathbb{E}\int_0^T\vert\nabla_xb(\tau(r),{x}^{\mathcal{P},M}_{\tau(r)},\rho^{\mathcal{P},M}_{\tau(r)})\vert^p\vert  L{x}^{\mathcal{P},M}_{\tau(r)}\vert^pdr \nonumber\\
&+&\mathbb{E}\int_0^T\sup_{\vert\beta\vert=2}\vert\partial^\beta_xb(\tau(r),{x}^{\mathcal{P},M}_{\tau(r)},\rho^{\mathcal{P},M}_{\tau(r)})\vert^p\vert {x}^{\mathcal{P},M}_{\tau(r)}\vert_{1,1}^{2p}dr \nonumber\\
&\leq&C_{0,p}[1+\int_0^T\mathbb{E}\vert  L{x}^{\mathcal{P},M}_{\tau(r)}\vert^pdr]. \label{S1}
\end{eqnarray}
For $S_2$, we observe that using \textbf{Lemma 3.1} $c)$, \textbf{Lemma 4.2},  and \textbf{Hypothesis 2.1}, for any $k,i\in\mathbb{N}$,
\begin{eqnarray*}
&&\vert L{c}(\tau({T}_i^{k}),\eta^1_{T^k_i}({W}_i^k),{Z}_i^{k},{x}^{\mathcal{P},M}_{\tau({T}_i^{k})-},\rho^{\mathcal{P},M}_{\tau({T}_i^{k})-})\vert\\
&&\leq (\vert\nabla_{z}{c}(\tau({T}_i^{k}),\eta^1_{T^k_i}({W}_i^k),{Z}_i^{k},{x}^{\mathcal{P},M}_{\tau({T}_i^{k})-},\rho^{\mathcal{P},M}_{\tau({T}_i^{k})-})\vert+\vert\nabla_{x}{c}(\tau({T}_i^{k}),\eta^1_{T^k_i}({W}_i^k),{Z}_i^{k},{x}^{\mathcal{P},M}_{\tau({T}_i^{k})-},\rho^{\mathcal{P},M}_{\tau({T}_i^{k})-})\vert)\\
&&\times(\vert L{Z}_i^{k}\vert+\vert  L{x}^{\mathcal{P},M}_{\tau({T}_i^{k})-}\vert)\\
&&+\sup\limits_{\vert\beta_1+\beta_2\vert=2}(\vert\partial_{z}^{\beta_1}\partial_{x}^{\beta_2}{c}(\tau({T}_i^{k}),\eta^1_{T^k_i}({W}_i^k),{Z}_i^{k},{x}^{\mathcal{P},M}_{\tau({T}_i^{k})-},\rho^{\mathcal{P},M}_{\tau({T}_i^{k})-})\vert)\times(\vert {Z}_i^{k}\vert_{1,1}^{2}+\vert {x}^{\mathcal{P},M}_{\tau({T}_i^{k})-}\vert_{1,1}^{2})\\
&&\leq{C}\bar{c}({Z}_i^{k})(1+\vert L{x}^{\mathcal{P},M}_{\tau({T}_i^{k})-}\vert+\vert {x}^{\mathcal{P},M}_{\tau({T}_i^{k})-}\vert_{1,1}^{2}).
\end{eqnarray*}
Therefore, using (\ref{BurkM}) with \[\bar{\Phi}(r,w,z,\omega,\rho)=\bar{c}(z)(1+\vert L{x}^{\mathcal{P},M}_{\tau(r)-}\vert+\vert {x}^{\mathcal{P},M}_{\tau(r)-}\vert_{1,1}^{2}),\] using \textbf{Hypothesis 2.1} and (\ref{EuD}), it follows that 
\begin{eqnarray}
S_2&\leq& C_{0,p}\mathbb{E}(\sum_{k=1}^M\sum_{i=1}^{J^k_T}\bar{c}({Z}_i^{k})(1+\vert L{x}^{\mathcal{P},M}_{\tau({T}_i^{k})-}\vert+\vert {x}^{\mathcal{P},M}_{\tau({T}_i^{k})-}\vert_{1,1}^{2}))^p \nonumber\\
&=& {C}_{0,p}\mathbb{E}\vert\int_0^T\int_{[0,1]\times\mathbb{R}^d}\bar{c}(z)(1+\vert L{x}^{\mathcal{P},M}_{\tau(r)-}\vert+\vert {x}^{\mathcal{P},M}_{\tau(r)-}\vert_{1,1}^{2})\mathcal{N}(dw,dz,dr)\vert^p\nonumber\\
&\leq& C_{0,p}[1+\int_0^T\mathbb{E}\vert L{x}^{\mathcal{P},M}_{\tau(r)-}\vert^pdr].  \label{S2}
\end{eqnarray}
Combining (\ref{S1}) and (\ref{S2}),  one has
\begin{eqnarray*}
\mathbb{E}\sup\limits_{0<t\leq T}\vert L{x}_{t}^{\mathcal{P},M}\vert^p\leq {C}_{0,p}[1+\int_0^T\mathbb{E}\vert L{x}_{\tau(r)}^{\mathcal{P},M}\vert^pdr].
\end{eqnarray*}
Applying Gronwall lemma, we obtain 
\begin{eqnarray*}
\sup_{\mathcal{P},M}\mathbb{E}\sup\limits_{0<t\leq T}\vert L{x}_{t}^{\mathcal{P},M}\vert^p\leq C_{0,p}.
\end{eqnarray*} 
\textbf{Step 2:}
Now we assume that (\ref{EuL}) holds for $l-1$ with $l\geq1$ and for every $p\geq2$. We will show that (\ref{EuL}) also holds for $l$ and for every $p\geq2$. We recall the equation (\ref{xPMZ}) and that $L\Delta=\Delta$,  $D^\Delta_j\Delta=\bm{e_j}$, where $\bm{e_j}=(0,\cdots,0,1,0,\cdots,0)$ with value $1$ at the $j-$th component, $D^{\Delta,q}\Delta=0$ with $q\geq2$ and $D^Z\Delta=0$. 
Having in mind that $\Delta$ has finite moments of any order, we write $\mathbb{E}\sup\limits_{0<t\leq T}\vert L{x}_{t}^{\mathcal{P},M}\vert_{l}^p\leq C_{l,p}(1+B_1+B_2)$, with
\begin{eqnarray*}
&&B_1=\mathbb{E}\int_{0}^{T}\vert Lb(\tau(r),{x}^{\mathcal{P},M}_{\tau(r)},\rho^{\mathcal{P}}_{\tau(r)})\vert_{l}^pdr,\\
&&B_2=\mathbb{E}(\sum_{k=1}^M\sum_{i=1}^{J_T^k}\vert L{c}(\tau({T}_i^{k}),\eta^1_{T^k_i}({W}_i^k),{Z}_i^{k},{x}^{\mathcal{P},M}_{\tau({T}_i^{k})-},\rho^{\mathcal{P},M}_{\tau({T}_i^{k})-})\vert_{l})^p.
\end{eqnarray*}
Using \textbf{Lemma 3.1} $b)$, \textbf{Hypothesis 2.1}, (\ref{EuD}) and the recurrence hypothesis, 
\begin{eqnarray}
B_1&\leq&C_{l,p}[\mathbb{E}\int_0^T\vert\nabla_xb(\tau(r),{x}^{\mathcal{P},M}_{\tau(r)},\rho^{\mathcal{P},M}_{\tau(r)})\vert^p\vert  L{x}^{\mathcal{P},M}_{\tau(r)}\vert_{l}^pdr \nonumber\\
&+&\mathbb{E}\int_0^T\sup_{2\leq\vert\beta\vert\leq l+2}\vert\partial^\beta_xb(\tau(r),{x}^{\mathcal{P},M}_{\tau(r)},\rho^{\mathcal{P},M}_{\tau(r)})\vert^p(1+\vert {x}^{\mathcal{P},M}_{\tau(r)}\vert_{l+1}^{(l+2)p})(1+\vert L{x}^{\mathcal{P},M}_{\tau(r)}\vert_{l-1}^p)dr] \nonumber\\
&\leq&C_{l,p}[1+\int_0^T\mathbb{E}\vert  L{x}^{\mathcal{P},M}_{\tau(r)}\vert_{l}^pdr]. \label{B1}
\end{eqnarray}
Next, we estimate $B_2$. We observe that using \textbf{Lemma 3.1} $b)$, \textbf{Lemma 4.2},  and \textbf{Hypothesis 2.1}, for any $k,i\in\mathbb{N}$,
\begin{eqnarray*}
&&\vert L{c}(\tau({T}_i^{k}),\eta^1_{T^k_i}({W}_i^k),{Z}_i^{k},{x}^{\mathcal{P},M}_{\tau({T}_i^{k})-},\rho^{\mathcal{P},M}_{\tau({T}_i^{k})-})\vert_{l}\\
&&\leq (\vert\nabla_{z}{c}(\tau({T}_i^{k}),\eta^1_{T^k_i}({W}_i^k),{Z}_i^{k},{x}^{\mathcal{P},M}_{\tau({T}_i^{k})-},\rho^{\mathcal{P},M}_{\tau({T}_i^{k})-})\vert+\vert\nabla_{x}{c}(\tau({T}_i^{k}),\eta^1_{T^k_i}({W}_i^k),{Z}_i^{k},{x}^{\mathcal{P},M}_{\tau({T}_i^{k})-},\rho^{\mathcal{P},M}_{\tau({T}_i^{k})-})\vert)\\
&&\times(\vert L{Z}_i^{k}\vert_{l}+\vert  L{x}^{\mathcal{P},M}_{\tau({T}_i^{k})-}\vert_{l})\\
&&+{C}_{l}\sup\limits_{2\leq\vert\beta_1+\beta_2\vert\leq l+2}(\vert\partial_{z}^{\beta_1}\partial_{x}^{\beta_2}{c}(\tau({T}_i^{k}),\eta^1_{T^k_i}({W}_i^k),{Z}_i^{k},{x}^{\mathcal{P},M}_{\tau({T}_i^{k})-},\rho^{\mathcal{P},M}_{\tau({T}_i^{k})-})\vert)\\
&&\times(1+\vert {Z}_i^{k}\vert_{l+1}^{l+2}+\vert {x}^{\mathcal{P},M}_{\tau({T}_i^{k})-}\vert_{l+1}^{l+2})(1+\vert L{Z}_i^{k}\vert_{l-1}+\vert  L{x}^{\mathcal{P},M}_{\tau({T}_i^{k})-}\vert_{l-1})\\
&&\leq{C}_{l}\bar{c}({Z}_i^{k})(1+\vert L{x}^{\mathcal{P},M}_{\tau({T}_i^{k})-}\vert_{l}+(1+\vert {x}^{\mathcal{P},M}_{\tau({T}_i^{k})-}\vert_{l+1}^{l+2})(1+\vert  L{x}^{\mathcal{P},M}_{\tau({T}_i^{k})-}\vert_{l-1})).
\end{eqnarray*}
Hence, using (\ref{BurkM}) with \[\bar{\Phi}(r,w,z,\omega,\rho)=\bar{c}(z)(1+\vert L{x}^{\mathcal{P},M}_{\tau(r)-}\vert_{l}+(1+\vert {x}^{\mathcal{P},M}_{\tau(r)-}\vert_{l+1}^{l+2})(1+\vert  L{x}^{\mathcal{P},M}_{\tau(r)-}\vert_{l-1})),\] using \textbf{Hypothesis 2.1}, (\ref{EuD}), and the recurrence hypothesis, it follows that 
\begin{eqnarray}
B_2&\leq& C_{l,p}\mathbb{E}(\sum_{k=1}^M\sum_{i=1}^{J^k_T}\bar{c}({Z}_i^{k})(1+\vert L{x}^{\mathcal{P},M}_{\tau({T}_i^{k})-}\vert_{l}+(1+\vert {x}^{\mathcal{P},M}_{\tau({T}_i^{k})-}\vert_{l+1}^{l+2})(1+\vert  L{x}^{\mathcal{P},M}_{\tau({T}_i^{k})-}\vert_{l-1})))^p \nonumber\\
&\leq& {C}_{l,p}\mathbb{E}\vert\int_0^T\int_{[0,1]\times\mathbb{R}^d}\bar{c}(z)(1+\vert L{x}^{\mathcal{P},M}_{\tau(r)-}\vert_{l}+(1+\vert {x}^{\mathcal{P},M}_{\tau(r)-}\vert_{l+1}^{l+2})(1+\vert  L{x}^{\mathcal{P},M}_{\tau(r)-}\vert_{l-1}))\mathcal{N}(dw,dz,dr)\vert^p\nonumber\\
&\leq& C_{l,p}[1+\int_0^T\mathbb{E}\vert L{x}^{\mathcal{P},M}_{\tau(r)-}\vert_{l}^pdr].  \label{B2}
\end{eqnarray}
Combining (\ref{B1}) and (\ref{B2}),  one has
\begin{eqnarray*}
\mathbb{E}\sup\limits_{0<t\leq T}\vert L{x}_{t}^{\mathcal{P},M}\vert_{l}^p\leq {C}_{l,p}[1+\int_0^T\mathbb{E}\vert L{x}_{\tau(r)}^{\mathcal{P},M}\vert_{l}^pdr].
\end{eqnarray*}
Then we conclude by Gronwall lemma that 
\begin{eqnarray}
\sup_{\mathcal{P},M}\mathbb{E}\sup\limits_{0<t\leq T}\vert L{x}_{t}^{\mathcal{P},M}\vert_{l}^p\leq C_{l,p}. \label{L*}
\end{eqnarray}
\end{proof}
So now \textbf{Lemma 4.1} is proved. Then by an analogous argument, we also have $\sup\limits_{{M}}\Vert {x}_{t}^{M}\Vert_{L,l,p}\leq C_{l,p}$. Finally, recalling that $\mathbb{E}\vert x^M_t-x_t\vert\rightarrow0$ in \textbf{Lemma 2.6} $i)$, and applying \textbf{Lemma 3.3} with $F_M=x^{M}_t$ and $F=x_t$, we get that ${x}_{t}$ belongs to $\mathcal{D}_{\infty}^d$ and $\Vert {x}_{t}\Vert_{L,l,p}\leq C_{l,p}$.

\bigskip\bigskip

\subsection{Covariance matrices}
In this section, we give the proof of \textbf{Lemma 3.8}. 

\textbf{Proof of $i)$:} We proceed in 4 steps.

\textbf{Step 1} 
We notice by the definitions (\ref{D}) and the equation (\ref{xMZ}) that  for any  $M\in\mathbb{N}$, any  $k_0,{i_0}\in\mathbb{N},{j}\in\{1,\cdots,d\}$,
\begin{eqnarray}
&&D^Z_{(k_0,{i_0},{j})}{x}^{M}_{t}=\int_{{T}_{{i_0}}^{k_0}}^{t}\nabla_xb(r,{x}^{M}_{r},\rho_{r})D^Z_{(k_0,{i_0},{j})}{x}^{M}_{r}dr \nonumber\\
&&+\mathbbm{1}_{\{0<T_{{i_0}}^{k_0}\leq t\}}\mathbbm{1}_{\{1\leq k_0\leq M\}}\xi_{{i_0}}^{k_0}\partial_{z_{{j}}}{c}({T}_{{i_0}}^{k_0},\eta^2_{T_{i_0}^{k_0}}(W_{i_0}^{k_0}),{Z}_{{i_0}}^{k_0},{x}^{M}_{{T}_{{i_0}}^{k_0}-},\rho_{{T}_{{i_0}}^{k_0}-}) \nonumber\\
&&+\sum_{k=1}^M\sum_{{T}_{{i_0}}^{k_0}<{T}_{i}^{k}\leq t}\nabla_x{c}({T}_{i}^{k},\eta^2_{T_{i}^{k}}(W_{i}^{k}),{Z}_{i}^{k},{x}^{M}_{{T}_{i}^{k}-},\rho_{{T}_{i}^{k}-})D^Z_{(k_0,{i_0},{j})}{x}^{M}_{{T}_{i}^{k}-}, \quad\quad \label{cM1}
\end{eqnarray} 
\begin{eqnarray}
&&D^\Delta_j{x}_{t}^{M}=a^M_{T}\bm{e_j}+\int_{0}^{t}\nabla_xb(r,{x}^{M}_{r},\rho_{r})D^\Delta_{j}{x}^{M}_{r}dr +\sum_{k=1}^M\sum_{i=1}^{J_t^k}\nabla_x{c}({T}_{i}^{k},\eta^2_{T_{i}^{k}}(W_{i}^k),{Z}_{i}^{k},{x}^{M}_{{T}_{i}^{k}-},\rho_{{T}_{i}^{k}-})D^\Delta_j{x}^{M}_{{T}_{i}^{k}-}, \quad\quad\quad \label{cM1*}
\end{eqnarray}
where $\bm{e_j}=(0,\cdots,0,1,0,\cdots,0)$ with value $1$ at the $j-$th component.

Now we introduce $(Y^{M}_{t})_{t\in[0,T]}$ (this is a variant of the tangent flow and for simplicity of the expression, we still call it the tangent flow) which is the matrix solution of the linear equation
\begin{eqnarray*}
Y^{M}_{t}=I_d&+&\int_{0}^{t}\nabla_xb(r,x^{M}_{r},\rho_{r})Y^{M}_{r}dr+\sum_{k=1}^M\sum_{i=1}^{J^k_t}\nabla_x{c}({T}_{{i}}^{k},\eta^2_{T^k_i}({W}_{{i}}^k),{Z}_{{i}}^{k},{x}^{M}_{{T}_{{i}}^{k}-},\rho_{{T}_{{i}}^{k}-})Y^{M}_{{T}_{{i}}^{k}-}.
\end{eqnarray*}
And using It$\hat{o}$'s formula, the inverse matrix $\widetilde{Y}^{M}_{t}=(Y^{M}_{t})^{-1}$ verifies the equation
\begin{eqnarray}
\widetilde{Y}^{M}_{t}=I_d&-&\int_{o}^{t}\widetilde{Y}^{M}_{r}\nabla_xb(r,{x}^{M}_{r},\rho_{r})dr-\sum_{k=1}^M\sum_{i=1}^{J_t^k}\widetilde{Y}^{M}_{{T}_{{i}}^{k}-}\nabla _{x}{c}(I_d+\nabla
_{x}{c})^{-1}({T}_{{i}}^{k},\eta^2_{T^k_i}({W}_{{i}}^k),{Z}_{{i}}^{k},{x}^{M}_{{T}_{{i}}^{k}-},\rho_{{T}_{{i}}^{k}-}).\quad\quad\quad \label{inverse}
\end{eqnarray}

\bigskip

Applying \textbf{Hypothesis 2.1} and \textbf{Hypothesis 2.2}, with $C_p$ a constant not dependent on $M$, one also has 
\begin{equation}
\mathbb{E}(\sup_{0<t\leq T}(\left\Vert Y^{M}_{t}\right\Vert ^{p}+\left\Vert \widetilde{Y}^{M}_{t}\right\Vert ^{p}))\leq C_p<\infty.   \label{cM2}
\end{equation}%

Then using the uniqueness of solution to the equation (\ref{cM1}) and (\ref{cM1*}), one
obtains 
\begin{equation}
D^Z_{(k,{i},{j})}{x}^{M}_{t}=\mathbbm{1}_{\{0<T_{{i}}^{k}\leq t\}}\mathbbm{1}_{\{1\leq k\leq M\}}\xi _{{i}}^kY^{M}_{t}\widetilde{Y}^{M}_{T_{{i}}^{k}}\partial_{z_{{j}}}{c}({T}_{{i}}^{k},\eta^2_{T_{i}^{k}}(W_{i}^k),{Z}_{{i}}^{k},{x}^{M}_{{T}_{{i}}^{k}-},\rho_{{T}_{{i}}^{k}-}),  \label{cM3}
\end{equation}and $D^\Delta_j{x}^{M}_{t}=a^M_{T}Y^{M}_{t}\bm{e_j}$.

In the following, we denote the lowest eigenvalue of the Malliavin covariance matrix $\sigma_{{x}_{t}^{M}}$ by $\lambda_{t}^{M}$. Then we have (recalling the definitions (\ref{Mcov}) and (\ref{Cov})) \[\lambda_{t}^{M}=\inf\limits_{\vert\zeta\vert=1}\langle \sigma_{{x}_{t}^{M}}\zeta,\zeta\rangle\geq\inf\limits_{\vert\zeta\vert=1}\sum_{k=1}^M\sum_{i=1}^{J_t^k}\sum_{j=1}^{d }\langle D^Z_{(k,i,j)}{{x}_{t}^{M}},\zeta\rangle^2+\inf\limits_{\vert\zeta\vert=1}\sum_{j=1}^{d }\langle D^\Delta_{j}{{x}_{t}^{M}},\zeta\rangle^2.\]
By (\ref{cM3}),
\begin{eqnarray*}
\lambda _{t}^{M} &\geq&\inf\limits_{\vert\zeta\vert=1}\sum_{k=1}^M\sum_{i=1}^{J_t^k}\sum_{j=1}^{d }\vert\xi_i^k\vert^2\langle \partial_{z_{{j}}}{c}({T}_{{i}}^{k},\eta^2_{T_{i}^{k}}(W_{i}^k),{Z}_{{i}}^{k},{x}^{M}_{{T}_{{i}}^{k}-},\rho_{{T}_{{i}}^{k}-}),(Y^{M}_{t}\widetilde{Y}^{M}_{T_{{i}}^{k}})^{\ast}\zeta\rangle^2+\inf\limits_{\vert\zeta\vert=1}\sum_{j=1}^{d }\vert a^M_{T}\vert^2\langle \bm{e_j},(Y^{M}_{t})^{\ast}\zeta\rangle^2,
\end{eqnarray*}%
where $Y^\ast$ denotes the transposition of a matrix $Y$.

We recall the ellipticity hypothesis (\textbf{Hypothesis 2.3}): there exists a non-negative function $\underline{c}(z)$ such that 
\[
\sum_{j=1}^d\langle\partial_{z_j}{c}(r,v,z,x,\rho),\zeta\rangle^{2}\geq \underline{c}%
(z)\vert\zeta\vert^2.
\]
So we deduce that%
\begin{eqnarray*}
\lambda_{t}^{M}&\geq& \inf\limits_{\vert\zeta\vert=1}\sum_{k=1}^M\sum_{i=1}^{J_t^k}\vert\xi_i^k\vert^2
\underline{c}({Z}_{i}^{k})\vert(Y^{M}_{t}\widetilde{Y}^{M}_{T_{{i}}^{k}})^{\ast}\zeta\vert^2+\vert a^M_{T}\vert^2\inf\limits_{\vert\zeta\vert=1}\vert(Y^{M}_{t})^{\ast}\zeta\vert^2\\
&\geq& \sum_{k=1}^M\sum_{i=1}^{J_t^k}
\vert\xi_i^k\vert^2\underline{c}({Z}^k_i)\Vert\widetilde{Y}^{M}_{t}\Vert^{-2}\Vert{Y}^{M}_{T_{{i}}^{k}}\Vert^{-2}+ \vert a^M_{T}\vert^2\Vert\widetilde{Y}^{M}_{t}\Vert^{-2}\\
&\geq& (\inf\limits_{0<t\leq T}\Vert\widetilde{Y}^{M}_{t}\Vert^{-2}\Vert{Y}^{M}_{t}\Vert^{-2})(\sum_{k=1}^M\sum_{i=1}^{J_t^k}
\vert\xi_i^k\vert^2\underline{c}({Z}^k_i)+ \vert a^M_{T}\vert^2).
\end{eqnarray*}

We denote 
\begin{eqnarray}
\chi _{t}^{M}=\sum_{k=1}^M\sum_{i=1}^{J_t^k}
\vert\xi_i^k\vert^2\underline{c}({Z}^k_i). \label{chi}
\end{eqnarray}
By (\ref{cM2}), $(\mathbb{E}\sup\limits_{0\leq
t\leq T}\Vert\widetilde{Y}^M_{t}\Vert^{4dp}\Vert{Y}^M_{t}\Vert^{4dp})^{1/2}\leq C_{d,p}<\infty,$ so that using Schwartz inequality, we have
\begin{eqnarray}
\mathbb{E}\vert\frac{1}{\det \sigma_{{x}^{M}_{t}}}\vert^p\leq\mathbb{E}(\vert \lambda_{t}^{M}\vert^{-dp})\leq C(\mathbb{E}(\vert \chi _{t}^{M}+\vert a^M_{T}\vert^2 \vert^{-2dp}))^{\frac{1}{2}}. \label{Step1}
\end{eqnarray}

\textbf{Step 2 }Since it is not easy to compute $\mathbb{E}(\vert \chi _{t}^{M}+\vert a^M_{T}\vert^2 \vert^{-2dp}))$ directly, we make the following argument where the idea comes originally  from $\cite{ref5}$. Let $\Gamma(p)=\int_0^\infty {s}^{p-1}e^{-{s}}d{s}$ be the Gamma function. By a change of variables, we have the numerical
equality 
\[
\frac{1}{\vert \chi _{t}^{M}+\vert a^M_{T}\vert^2\vert^{2dp}}=\frac{1}{\Gamma (2dp)}\int_{0}^{\infty
}{s}^{2dp-1}e^{-{s}(\chi _{t}^{M}+\vert a^M_{T}\vert^2)}d{s}, 
\]%
which, by taking expectation, gives 
\begin{eqnarray}
\mathbb{E}(\frac{1}{\vert \chi _{t}^{M}+\vert a^M_{T}\vert^2\vert^{2dp}})=\frac{1}{\Gamma (2dp)}\int_{0}^{\infty
}{s}^{2dp-1}\mathbb{E}(e^{-{s}(\chi _{t}^{M}+\vert a^M_{T}\vert^2)})d{s}.  \label{Gamma}
\end{eqnarray}

\textbf{Step 3} Now we compute $\mathbb{E}(e^{-{s}(\chi _{t}^{M}+\vert a^M_{T}\vert^2)})$ for any ${s}>0$. We recall that $I_1=B_1$, $I_k=B_{k}-B_{k-1}, k\geq2$ (given in Section 2.4), and $\xi_i^k=\Psi_k(Z^k_i)$ (given in Section 3.2). 
We take $\Lambda_k(dz,dr)$ to be a Poisson point measure  with intensity
\[\widehat{\Lambda}_k(dz,dr):=\mathbbm{1}_{I_k}(z)\mu(dz)dr.\]Since for different $k\in\mathbb{N}$, $I_k$ are disjoint, the Poisson point measures $\Lambda_{k},k\in\mathbb{N}$ are independent. And we put $\Theta_M(dz,dr)=\sum\limits_{k=1}^M\Lambda_k(dz,dr)$. Then
\begin{eqnarray*}
\chi _{t}^M &=&\sum_{k=1}^{M}\int_{0}^{t}\int_{I_k} \vert\Psi_k(z)\vert^2 {\underline{c}}(z)\Lambda_k(dz,dr)=\int_{0}^{t}\int_{B_M} \Psi(z) {\underline{c}}(z)\Theta_M(dz,dr),
\end{eqnarray*}with $\Psi(z)=\sum\limits_{k=1}^{\infty}\vert\Psi_k(z)\vert^2\mathbbm{1}_{I_k}(z)$.
Using It\^{o} formula,
\begin{eqnarray*}
\mathbb{E}(e^{-{s}\chi _{t}^{M}}) &=&1+\mathbb{E}\int_0^t\int_{B_M} (e^{-{s}({\chi _{r-}^{M}}+\Psi(z)\underline{c}(z))}-e^{-{s}{\chi _{r-}^{M}}})\widehat{\Theta}_M(dz,dr) \\
&=&1-\int_{0}^{t}\mathbb{E}(e^{-{s}{\chi _{r}^{M}}})dr\int_{B_M} (1-e^{-{s}\Psi(z)\underline{c}(z)})\sum_{k=1}^M\mathbbm{1}_{I_k}(z)\mu(dz).
\end{eqnarray*}%
Solving the above equation we obtain 
\begin{eqnarray*}
\mathbb{E}(e^{-{s}{\chi _{t}^{M}}})&=&\exp(-t\int_{B_M} (1-e^{-{s}\Psi(z)\underline{c}(z)})\sum_{k=1}^M\mathbbm{1}_{I_k}(z)\mu(dz))\\
&=&\exp (-t\sum_{k=1}^M\int_{I_k} (1-e^{-{s}\vert\Psi_k(z)\vert^2\underline{c}(z)})\mu(dz))\\
&\leq&\exp (-t\sum_{k=1}^M\int_{I_k} (1-e^{-{s}\mathbbm{1}_{[k-\frac{3}{4},k-\frac{1}{4}]}(\vert z\vert)\underline{c}(z)})\mu(dz))\\
&=&\exp (-t\sum_{k=1}^M\int_{I_k} (1-e^{-{s}\underline{c}(z)})\mathbbm{1}_{[k-\frac{3}{4},k-\frac{1}{4}]}(\vert z\vert)\mu(dz))\\
&=&\exp (-t\int_{B_{M}} (1-e^{-{s}\underline{c}(z)})\nu(dz)),
\end{eqnarray*}with
\[\nu(dz)=\sum_{k=1}^{\infty}\mathbbm{1}_{[k-\frac{3}{4},k-\frac{1}{4}]}(\vert z\vert)\mu(dz).\]

On the other hand, we denote 
\begin{eqnarray*}
\bar{\chi} _{t}^{M}=\int_{0}^{t}\int_{B_M^c} \Psi(z) {\underline{c}}(z)\Theta(dz,dr),
\end{eqnarray*}where $B_{M}^c$ denote the complementary set of $B_{M}$ and $\Theta$ is a Poisson point measure with intensity $\mu(dz)dr$. Then in the same way, 
\begin{eqnarray*}
\mathbb{E}(e^{-{s}\bar{\chi} _{t}^{M}}) \leq \exp (-t\int_{B_{M}^c} (1-e^{-{s}\underline{c}(z)})\nu(dz)).
\end{eqnarray*}
We recall by (\ref{aMt}) that $a^M_{T}=\sqrt{T\int_{\{\vert z\vert>M\}}\underline{c}(z)  \mu(dz)}\geq \sqrt{\mathbb{E}\bar{\chi} _{t}^{M}}$.
Notice that using Jensen inequality for the convex function $f(x)=e^{-{s}x}$,  ${s},x>0$, we have
\[
e^{-{s}\vert a^M_{T}\vert^2}\leq e^{-{s}\mathbb{E}\bar{\chi} _{t}^{M}}\leq \mathbb{E}(e^{-{s}\bar{\chi} _{t}^{M}})\leq \exp (-t\int_{B_{M}^c} (1-e^{-{s}\underline{c}(z)})\nu(dz)).
\]
So for every  $M\in\mathbb{N},$ we deduce that
\begin{eqnarray}
&&\mathbb{E}(e^{-{s}(\chi _{t}^{M}+\vert a^M_{T}\vert^2)})=\mathbb{E}(e^{-{s}\chi _{t}^{M}})\times e^{-{s}\vert a^M_{T}\vert^2} \nonumber\\
&&\leq \exp (-t\int_{B_{M}} (1-e^{-{s}\underline{c}(z)})\nu(dz))\times \exp (-t\int_{B_{M}^c} (1-e^{-{s}\underline{c}(z)})\nu(dz)) \nonumber\\
&&=\exp (-t\int_{\mathbb{R}^d} (1-e^{-{s}\underline{c}(z)})\nu(dz)), \label{Step3}
\end{eqnarray}%
and the last term does not depend on $M.$

\bigskip

\textbf{Step 4} 
Now we use the Lemma 14 from $\cite{ref3}$, which states the following.
\begin{lemma}
We consider an abstract measurable space $B$, a $\sigma$-finite measure $\mathcal{M}$ on this space and a non-negative measurable function $f:B\rightarrow\mathbb{R}_+$ such that $\int_Bfd\mathcal{M}<\infty.$ For $t>0$ and $p\geq1$, we note
\[
\beta_f({s})=\int_B(1-e^{-{s}f(x)})\mathcal{M}(dx)\quad and\quad I_{t}^p(f)=\int_0^\infty {s}^{p-1}e^{-t\beta_f({s})}d{s}.
\]
We suppose that for some $t>0$ and $p\geq1$, 
\begin{equation}
\underline{\lim }_{u\rightarrow \infty }\frac{1}{\ln u}\mathcal{M}(f\geq 
\frac{1}{u})>\frac{p}{t},  \label{cm}
\end{equation}%
then $I_{t}^p(f)<\infty.$
\end{lemma}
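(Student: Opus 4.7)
The plan is to show $I_t^p(f)<\infty$ by splitting the integral at some threshold $s_0>0$: on $[0,s_0]$ one uses that $s^{p-1}$ is locally integrable (since $p\geq 1$) and $e^{-t\beta_f(s)}\leq 1$, so this piece contributes a finite amount trivially; on $[s_0,\infty)$ one must show that $e^{-t\beta_f(s)}$ decays polynomially in $s$ with power strictly greater than $p$, which will force convergence. All the work is in the tail.

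The key lower bound is obtained by restricting the integral defining $\beta_f(s)$ to a level set of $f$. For any $K>0$, on $\{f\geq K/s\}$ we have $1-e^{-sf(x)}\geq 1-e^{-K}$, so
\[
\beta_f(s)\;\geq\;(1-e^{-K})\,\mathcal{M}\!\left(f\geq \tfrac{K}{s}\right).
\]
Now fix any $\theta'$ with $p/t<\theta'<\underline{\lim}_{u\to\infty}\frac{1}{\ln u}\mathcal{M}(f\geq 1/u)$. Applying the hypothesis (\ref{cm}) with $u=s/K$ yields, for all $s$ larger than some $s_0=s_0(K)$,
\[
\mathcal{M}\!\left(f\geq\tfrac{K}{s}\right)\;\geq\;\theta'\ln(s/K),
\]
so that $t\beta_f(s)\geq t(1-e^{-K})\theta'(\ln s-\ln K)$ on $[s_0,\infty)$.

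The crucial step is to tune $K$. Since $\theta'>p/t$, the ratio $p/(t\theta')$ lies strictly below $1$, so we may choose $K$ large enough that $1-e^{-K}>p/(t\theta')$. Setting $\alpha:=t(1-e^{-K})\theta'>p$ and $C:=\alpha\ln K$, we obtain $t\beta_f(s)\geq\alpha\ln s-C$, hence $e^{-t\beta_f(s)}\leq e^{C}s^{-\alpha}$, and therefore
\[
\int_{s_0}^{\infty}s^{p-1}e^{-t\beta_f(s)}\,ds\;\leq\;e^{C}\int_{s_0}^{\infty}s^{p-1-\alpha}\,ds\;<\;\infty
\]
because $\alpha-p>0$. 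Combined with the elementary bound on $[0,s_0]$, this yields $I_t^p(f)<\infty$.

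The only subtle point — and the main reason the sharp assumption $\underline{\lim}>p/t$ (rather than $\geq p/t$) is needed — is the freedom to pick the scale parameter $K$ as large as one wishes: the naive choice $K=1$ gives only the factor $(1-e^{-1})\theta' t$ in front of $\ln s$, which need not exceed $p$. Inserting $K$ recovers any coefficient arbitrarily close to $t\theta'$, and the strict inequality $\theta'>p/t$ then leaves room for $\alpha>p$, securing integrability. No further estimates are required; the argument is essentially a Laplace-type tail analysis combined with a Chebyshev-style lower bound on $\beta_f$.
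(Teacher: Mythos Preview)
Your proof is correct. The paper does not actually prove this lemma; it simply quotes it as Lemma 14 of \cite{ref3} (Bally--Cl\'ement) and uses it as a black box. Your argument supplies a clean self-contained proof: the Chebyshev-type lower bound $\beta_f(s)\geq(1-e^{-K})\,\mathcal{M}(f\geq K/s)$ combined with the freedom to choose $K$ large is exactly the right mechanism, and your discussion of why the \emph{strict} inequality in the hypothesis is needed (to absorb the factor $1-e^{-K}<1$) is a nice touch that clarifies the sharpness of the condition.
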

\bigskip

We will use the above lemma for $\mathcal{M}(dz)=\nu(dz)$, $f(z)=\underline{c}(z)$ and $B=\mathbb{R}^d$. Thanks to (\ref{undergamma}) in \textbf{Hypothesis 2.4},  
\begin{equation}
\underline{\lim }_{u\rightarrow \infty }\frac{1}{\ln u}\nu(\underline{c}\geq \frac{1}{u})=\theta>0. \label{cm6}
\end{equation}%
Then for every $p\geq 1, 0<t\leq T$, when $\theta>\frac{2dp}{t}$ (i.e. $t>\frac{2dp}{\theta}$),  we deduce from (\ref{Step1}),(\ref{Gamma}),(\ref{Step3}) and \textbf{Lemma 4.3} that 
\begin{eqnarray}
\sup\limits_{M}\mathbb{E}\vert\frac{1}{\det \sigma_{{x}^{M}_{t}}}\vert^p&\leq&\sup\limits_{M}\mathbb{E}(\vert \lambda_{t}^{M}\vert^{-dp})\leq C\sup\limits_{M}(\mathbb{E}(\vert \chi _{t}^{M}+\vert a^M_{T}\vert^2 \vert^{-2dp}))^{\frac{1}{2}}\nonumber\\
&\leq& C(\frac{1}{\Gamma (2dp)}\int_{0}^{\infty
}{s}^{2dp-1}\mathbb{E}(e^{-{s}(\chi _{t}^{M}+\vert a^M_{T}\vert^2)})d{s})^\frac{1}{2}\nonumber \\
&\leq &C(\frac{1}{\Gamma (2dp)}\int_{0}^{\infty
}{s}^{2dp-1}\exp (-t\int_{\mathbb{R}^d }(1-e^{-{s}\underline{c}(z)})\nu(dz)d{s})^\frac{1}{2}<\infty.\quad \quad\quad \label{detXt}
\end{eqnarray}

\bigskip\bigskip

\textbf{Proof of $ii)$:} We recall in \textbf{Lemma 2.6} $i)$ that $\mathbb{E}\vert x^M_t-x_t\vert\rightarrow0$, and in \textbf{Lemma 3.7} that $\Vert x^M_t\Vert_{L,l,p}\leq C_{l,p}$.
Moreover, by \textbf{Lemma 3.9} $ii)$, we know that $(Dx^M_t)_{M\in\mathbb{N}}$ is a Cauchy sequence in $L^2(\Omega;l^2\times\mathbb{R}^d)$. Then applying \textbf{Lemma 3.3 (B)} with $F_M=x^{M}_t$ and $F=x_t$, by (\ref{cM8}), we obtain (\ref{cov}).

 \qed

\bigskip\bigskip

\section{Appendix}
In the Appendix, we give the proof of \textbf{Lemma 3.9}.
\begin{proof}
\textbf{Proof of $i)$:} We notice by the definitions (\ref{D}) and the equations (\ref{xPMZ}), (\ref{xMZ}) that  for any partition $\mathcal{P}=\{0=r_0<r_1<\cdots<r_{n-1}<r_n=T\}$, $M\in\mathbb{N}$, any  $k_0,{i_0}\in\mathbb{N},{j}\in\{1,\cdots,d\}$,
\begin{eqnarray}
&&D^Z_{(k_0,{i_0},{j})}{x}^{\mathcal{P},M}_{t}=\int_{0}^{t}\nabla_xb(\tau(r),{x}^{\mathcal{P},M}_{\tau(r)},\rho^{\mathcal{P},M}_{\tau(r)})D^Z_{(k_0,{i_0},{j})}{x}^{\mathcal{P},M}_{\tau(r)}dr \nonumber\\
&&+\mathbbm{1}_{\{0<T_{{i_0}}^{k_0}\leq t\}}\mathbbm{1}_{\{1\leq k_0\leq M\}}\xi_{{i_0}}^{k_0}\partial_{z_{{j}}}{c}(\tau({T}_{{i_0}}^{k_0}),\eta^1_{T_{i_0}^{k_0}}(W_{i_0}^{k_0}),{Z}_{{i_0}}^{k_0},{x}^{{\mathcal{P}},M}_{\tau^{{\mathcal{P}}}({T}_{{i_0}}^{k_0})-},\rho^{\mathcal{P},M}_{\tau^{{\mathcal{P}}}({T}_{{i_0}}^{k_0})-}) \nonumber\\
&&+\sum_{k=1}^M\sum_{i=1}^{J^k_t}\nabla_x{c}(\tau({T}_{i}^{k}),\eta^1_{T_{i}^{k}}(W_{i}^{k}),{Z}_{i}^{k},{x}^{{\mathcal{P}},M}_{\tau^{{\mathcal{P}}}({T}_{i}^{k})-},\rho^{\mathcal{P},M}_{\tau^{{\mathcal{P}}}({T}_{i}^{k})-})D^Z_{(k_0,{i_0},{j})}{x}^{\mathcal{P},M}_{\tau({T}_{i}^{k})-}, \quad\quad \label{DZxPM}
\end{eqnarray}
\begin{eqnarray}
&&D^\Delta_j{x}_{t}^{\mathcal{P},M}=a^M_{T}\bm{e_j}+\int_{0}^{t}\nabla_xb(\tau(r),{x}^{\mathcal{P},M}_{\tau(r)},\rho^{\mathcal{P},M}_{\tau(r)})D^\Delta_{j}{x}^{\mathcal{P},M}_{\tau(r)}dr \nonumber\\
&&+\sum_{k=1}^M\sum_{i=1}^{J^k_t}\nabla_x{c}(\tau({T}_{i}^{k}),\eta^1_{T_{i}^{k}}(W_{i}^k),{Z}_{i}^{k},{x}^{{\mathcal{P}},M}_{\tau^{{\mathcal{P}}}({T}_{i}^{k})-},\rho^{\mathcal{P},M}_{\tau^{{\mathcal{P}}}({T}_{i}^{k})-})D^\Delta_j{x}^{\mathcal{P},M}_{\tau({T}_{i}^{k})-}, \quad\quad \label{DdxPM}
\end{eqnarray}where $\bm{e_j}=(0,\cdots,0,1,0,\cdots,0)$ with value $1$ at the $j-$th component. And 
\begin{eqnarray}
&&D^Z_{(k_0,{i_0},{j})}{x}^{M}_{t}=\int_{0}^{t}\nabla_xb(r,{x}^{M}_{r},\rho_{r})D^Z_{(k_0,{i_0},{j})}{x}^{M}_{r}dr \nonumber\\
&&+\mathbbm{1}_{\{0<T_{{i_0}}^{k_0}\leq t\}}\mathbbm{1}_{\{1\leq k_0\leq M\}}\xi_{{i_0}}^{k_0}\partial_{z_{{j}}}{c}({T}_{{i_0}}^{k_0},\eta^2_{T_{i_0}^{k_0}}(W_{i_0}^{k_0}),{Z}_{{i_0}}^{k_0},{x}^{M}_{{T}_{{i_0}}^{k_0}-},\rho_{{T}_{{i_0}}^{k_0}-}) \nonumber\\
&&+\sum_{k=1}^M\sum_{i=1}^{J^k_t}\nabla_x{c}({T}_{i}^{k},\eta^2_{T_{i}^{k}}(W_{i}^k),{Z}_{i}^{k},{x}^{M}_{{T}_{i}^{k}-},\rho_{{T}_{i}^{k}-})D^Z_{(k_0,{i_0},{j})}{x}^{M}_{{T}_{i}^{k}-}, \quad\quad \label{DZxM}
\end{eqnarray} 
\begin{eqnarray}
&&D^\Delta_j{x}_{t}^{M}=a^M_{T}\bm{e_j}+\int_{0}^{t}\nabla_xb(r,{x}^{M}_{r},\rho_{r})D^\Delta_{j}{x}^{M}_{r}dr +\sum_{k=1}^M\sum_{i=1}^{J_t^k}\nabla_x{c}({T}_{i}^{k},\eta^2_{T_{i}^{k}}(W_{i}^k),{Z}_{i}^{k},{x}^{M}_{{T}_{i}^{k}-},\rho_{{T}_{i}^{k}-})D^\Delta_j{x}^{M}_{{T}_{i}^{k}-}. \quad\quad\quad \label{DdxM}
\end{eqnarray}

For $u\in l_2$, we will use the notation $\vert u\vert_{l_2}^2=\vert u_{({\bullet},{\circ},{\diamond})}\vert_{l_2}^2=\sum\limits_{k=1}^\infty\sum\limits_{i=1}^\infty\sum\limits_{j=1}^d\vert u_{(k,i,j)}\vert^2$.
We write $\mathbb{E}\vert D^Z_{({\bullet},{\circ},{\diamond})}{x}^{\mathcal{P},M}_{t}-D^Z_{({\bullet},{\circ},{\diamond})}{x}^{M}_{t}\vert_{l_2}^2\leq C[H_1+H_2+H_3]$, with
\begin{eqnarray*}
H_1=\mathbb{E}\vert \int_{0}^{t}\nabla_xb(\tau(r),{x}^{\mathcal{P},M}_{\tau(r)},\rho^{\mathcal{P},M}_{\tau(r)})D^Z_{({\bullet},{\circ},{\diamond})}{x}^{\mathcal{P},M}_{\tau(r)}dr-\int_{0}^{t}\nabla_xb(r,{x}^{M}_{r},\rho_{r})D^Z_{({\bullet},{\circ},{\diamond})}{x}^{M}_{r}dr \vert_{l_2}^2,
\end{eqnarray*}
\begin{eqnarray*}
H_2&=&\mathbb{E}\vert\mathbbm{1}_{\{0<T_{{\circ}}^{\bullet}\leq t\}}\mathbbm{1}_{\{1\leq {\bullet}\leq M\}}(\partial_{z_{\diamond}}{c}(\tau({T}_{\circ}^{\bullet}),\eta^1_{T_{\circ}^{\bullet}}(W_{\circ}^{\bullet}),{Z}_{\circ}^{\bullet},{x}^{{\mathcal{P}},M}_{\tau^{{\mathcal{P}}}({T}_{\circ}^{\bullet})-},\rho^{\mathcal{P},M}_{\tau^{{\mathcal{P}}}({T}_{\circ}^{\bullet})-})\\
&-&\partial_{z_{\diamond}}{c}({T}_{\circ}^{\bullet},\eta^2_{T_{\circ}^{\bullet}}(W_{\circ}^{\bullet}),{Z}_{\circ}^{\bullet},{x}^{M}_{{T}_{\circ}^{\bullet}-},\rho_{{T}_{\circ}^{\bullet}-}))\vert_{l_2}^2,
\end{eqnarray*}
\begin{eqnarray*}
H_3&=&\mathbb{E}\vert\sum_{k=1}^M \sum_{i=1}^{J^k_t}\nabla_x{c}(\tau({T}_{i}^{k}),\eta^1_{T_{i}^{k}}(W_{i}^k),{Z}_{i}^{k},{x}^{{\mathcal{P}},M}_{\tau^{{\mathcal{P}}}({T}_{i}^{k})-},\rho^{\mathcal{P},M}_{\tau^{{\mathcal{P}}}({T}_{i}^{k})-})D^Z_{({\bullet},{\circ},{\diamond})}{x}^{\mathcal{P},M}_{\tau({T}_{i}^{k})-}\\
&-&\sum_{k=1}^M\sum_{i=1}^{J^k_t}\nabla_x{c}({T}_{i}^{k},\eta^2_{T_{i}^{k}}(W_{i}^k),{Z}_{i}^{k},{x}^{M}_{{T}_{i}^{k}-},\rho_{{T}_{i}^{k}-})D^Z_{({\bullet},{\circ},{\diamond})}{x}^{M}_{{T}_{i}^{k}-}\vert_{l_2}^2.
\end{eqnarray*}

We take a small $\varepsilon_\ast>0$. We recall $\varepsilon_M$ in (\ref{epsM}). Firstly, using \textbf{Hypothesis 2.1},   we get
\begin{eqnarray*}
H_1&\leq& C[\mathbb{E} \int_{0}^{t}\vert\nabla_xb(\tau(r),{x}^{\mathcal{P},M}_{\tau(r)},\rho^{\mathcal{P},M}_{\tau(r)})-\nabla_xb(r,{x}^{M}_{r},\rho_{r})\vert^2\vert D^Z_{({\bullet},{\circ},{\diamond})}{x}^{M}_{r}\vert_{l_2}^2dr\nonumber\\
&+&\mathbb{E} \int_{0}^{t}\vert\nabla_xb(\tau(r),{x}^{\mathcal{P},M}_{\tau(r)},\rho^{\mathcal{P},M}_{\tau(r)})\vert^2\vert D^Z_{({\bullet},{\circ},{\diamond})}{x}^{\mathcal{P},M}_{\tau(r)}-D^Z_{({\bullet},{\circ},{\diamond})}{x}^{M}_{r}\vert_{l_2}^2dr]\nonumber\\
&\leq& C[\mathbb{E}\int_0^t[
\vert\mathcal{P}\vert^2+\vert{x}^{\mathcal{P},M}_{\tau(r)}-{x}^{M}_{r}\vert^{2}+(W_1({\rho}^{\mathcal{P},M}_{\tau(r)},{\rho}_{r}))^2]\vert D^Z_{({\bullet},{\circ},{\diamond})}{x}^{M}_{r}\vert_{l_2}^2dr\nonumber\\
&+& \int_{0}^{t}\mathbb{E}\vert D^Z_{({\bullet},{\circ},{\diamond})}{x}^{\mathcal{P},M}_{\tau(r)}-D^Z_{({\bullet},{\circ},{\diamond})}{x}^{M}_{r}\vert_{l_2}^2dr].
\end{eqnarray*}Then by \textbf{Lemma 3.7}, using H$\ddot{o}$lder inequality with conjugates $1+\frac{\varepsilon_\ast}{2}$ and $\frac{2+\varepsilon_\ast}{\varepsilon_\ast}$, by \textbf{Lemma 2.6} and (\ref{Eul}), we have
\begin{eqnarray}
H_1&\leq& C[
\vert\mathcal{P}\vert^2+\int_0^t(\mathbb{E}\vert{x}^{\mathcal{P},M}_{\tau(r)}-{x}^{M}_{r}\vert^{2+\varepsilon_\ast})^{\frac{2}{2+\varepsilon_\ast}}dr+\int_0^t(W_{2+\varepsilon_\ast}({\rho}^{\mathcal{P},M}_{\tau(r)},{\rho}_{r}))^2dr\nonumber\\
&+& \int_{0}^{t}\mathbb{E}\vert D^Z_{({\bullet},{\circ},{\diamond})}{x}^{\mathcal{P},M}_{\tau(r)}-D^Z_{({\bullet},{\circ},{\diamond})}{x}^{M}_{r}\vert_{l_2}^2dr]\nonumber\\
&\leq& C[(\vert\mathcal{P}\vert+\varepsilon_M)^{\frac{2}{2+\varepsilon_\ast}}+\int_{0}^{t}\mathbb{E}\vert D^Z_{({\bullet},{\circ},{\diamond})}{x}^{\mathcal{P},M}_{\tau(r)}-D^Z_{({\bullet},{\circ},{\diamond})}{x}^{M}_{r}\vert_{l_2}^2dr].\label{H1}
\end{eqnarray}

Secondly, using \textbf{Hypothesis 2.1} and the isometry of the Poisson point measure $\mathcal{N}$,   we get
\begin{eqnarray*}
H_2&=& \mathbb{E}\sum_{k=1}^M\sum_{i=1}^{J^k_t}\sum_{j=1}^d\vert\partial_{z_{j}}{c}(\tau({T}_{i}^{k}),\eta^1_{T_{i}^{k}}(W_{i}^k),{Z}_{i}^{k},{x}^{{\mathcal{P}},M}_{\tau^{{\mathcal{P}}}({T}_{i}^{k})-},\rho^{\mathcal{P},M}_{\tau^{{\mathcal{P}}}({T}_{i}^{k})-})-\partial_{z_{j}}{c}({T}_{i}^{k},\eta^2_{T_{i}^{k}}(W_{i}^k),{Z}_{i}^{k},{x}^{M}_{{T}_{i}^{k}-},\rho_{{T}_{i}^{k}-})\vert^2\nonumber\\
&\leq& C\mathbb{E}\sum_{k=1}^M\sum_{i=1}^{J^k_t}\vert\bar{c}(Z^k_i)\vert^2[\vert \tau^{{\mathcal{P}}}({T}_{i}^{k})-T^k_i\vert+\vert \eta^1_{T_{i}^{k}}(W_{i}^k)-\eta^2_{T_{i}^{k}}(W_{i}^k)\vert+\vert{x}^{{\mathcal{P}},M}_{\tau^{{\mathcal{P}}}({T}_{i}^{k})-}-{x}^{M}_{{T}_{i}^{k}-}\vert+W_1(\rho^{\mathcal{P},M}_{\tau^{{\mathcal{P}}}({T}_{i}^{k})-},\rho_{{T}_{i}^{k}-})]^2\nonumber\\
&\leq& C\mathbb{E}\int_0^t\int_{[0,1]\times\mathbb{R}^d}\vert\bar{c}(z)\vert^2[\vert \tau^{{\mathcal{P}}}(r)-r\vert^2+\vert \eta^1_{r}(w)-\eta^2_{r}(w)\vert^2\nonumber\\
&+&\vert{x}^{{\mathcal{P}},M}_{\tau^{{\mathcal{P}}}(r)-}-{x}^{M}_{r-}\vert^2+(W_1(\rho^{\mathcal{P},M}_{\tau^{{\mathcal{P}}}(r)-},\rho_{r-}))^2]\mathcal{N}(dw,dz,dr)\nonumber\\
&=& C\mathbb{E}\int_0^t\int_{[0,1]\times\mathbb{R}^d}\vert\bar{c}(z)\vert^2[\vert \tau^{{\mathcal{P}}}(r)-r\vert^2+\vert \eta^1_{r}(w)-\eta^2_{r}(w)\vert^2\nonumber\\
&+&\vert{x}^{{\mathcal{P}},M}_{\tau^{{\mathcal{P}}}(r)-}-{x}^{M}_{r-}\vert^2+(W_1(\rho^{\mathcal{P},M}_{\tau^{{\mathcal{P}}}(r)-},\rho_{r-}))^2]dw\mu(dz)dr.
\end{eqnarray*}Then by (\ref{coupling}), (\ref{Coupling}), \textbf{Lemma 2.6}, (\ref{Eul}), and H$\ddot{o}$lder inequality with conjugates $1+\frac{\varepsilon_\ast}{2}$ and $\frac{2+\varepsilon_\ast}{\varepsilon_\ast}$,  we have
\begin{eqnarray}
H_2&\leq& C[\vert\mathcal{P}\vert^2+\int_0^t(\mathbb{E}\vert{x}^{\mathcal{P},M}_{\tau(r)}-{x}_{r}\vert^{2+\varepsilon_\ast})^{\frac{2}{2+\varepsilon_\ast}}dr+\int_0^t(\mathbb{E}\vert{x}^{\mathcal{P},M}_{\tau(r)}-{x}^{M}_{r}\vert^{2+\varepsilon_\ast})^{\frac{2}{2+\varepsilon_\ast}}dr+\int_0^t(W_{2+\varepsilon_\ast}(\rho^{\mathcal{P},M}_{\tau^{{\mathcal{P}}}(r)},\rho_{r}))^2dr]\nonumber\\
&\leq& C(\vert\mathcal{P}\vert+\varepsilon_M)^{\frac{2}{2+\varepsilon_\ast}}.\label{H2}
\end{eqnarray}

Thirdly, we write $H_3\leq C[H_{3,1}+H_{3,2}]$, where
\begin{eqnarray*}
H_{3,1}&=& \mathbb{E} (\sum_{k=1}^M\sum_{i=1}^{J_t^k}\vert\nabla_x{c}(\tau({T}_{i}^{k}),\eta^1_{T_{i}^{k}}(W_{i}^k),{Z}_{i}^{k},{x}^{{\mathcal{P}},M}_{\tau^{{\mathcal{P}}}({T}_{i}^{k})-},\rho^{\mathcal{P},M}_{\tau^{{\mathcal{P}}}({T}_{i}^{k})-})\\
&-&\nabla_x{c}({T}_{i}^{k},\eta^2_{T_{i}^{k}}(W_{i}^k),{Z}_{i}^{k},{x}^{M}_{{T}_{i}^{k}-},\rho_{{T}_{i}^{k}-})\vert\vert D^Z_{({\bullet},{\circ},{\diamond})}{x}^{M}_{{T}_{i}^{k}-}\vert_{l_2})^2,
\end{eqnarray*}
\[H_{3,2}=\mathbb{E} (\sum_{k=1}^M\sum_{i=1}^{J^k_t}\vert\nabla_x{c}(\tau({T}_{i}^{k}),\eta^1_{T_{i}^{k}}(W_{i}^k),{Z}_{i}^{k},{x}^{{\mathcal{P}},M}_{\tau^{{\mathcal{P}}}({T}_{i}^{k})-},\rho^{\mathcal{P},M}_{\tau^{{\mathcal{P}}}({T}_{i}^{k})-})\vert\vert D^Z_{({\bullet},{\circ},{\diamond})}{x}^{\mathcal{P},M}_{\tau({T}_{i}^{k})-}-D^Z_{({\bullet},{\circ},{\diamond})}{x}^{M}_{{T}_{i}^{k}-}\vert_{l_2})^2.\]
Using \textbf{Hypothesis 2.1} and (\ref{BurkM}) with \[\bar{\Phi}(r,w,z,\omega,\rho)=\vert\nabla_x{c}(\tau(r),\eta^1_{r}(w),z,{x}^{{\mathcal{P}},M}_{\tau^{{\mathcal{P}}}(r)-},\rho^{\mathcal{P},M}_{\tau^{{\mathcal{P}}}(r)-})-\nabla_x{c}(r,\eta^2_{r}(w),z,{x}^{M}_{r-},\rho_{r-})\vert\vert D^Z_{({\bullet},{\circ},{\diamond})}{x}^{M}_{r-}\vert_{l_2},\]    we get
\begin{eqnarray*}
H_{3,1}&\leq& \mathbb{E}(\int_0^t\int_{[0,1]\times\mathbb{R}^d}\vert\nabla_x{c}(\tau(r),\eta^1_{r}(w),z,{x}^{{\mathcal{P}},M}_{\tau^{{\mathcal{P}}}(r)-},\rho^{\mathcal{P},M}_{\tau^{{\mathcal{P}}}(r)-})\\
&-&\nabla_x{c}(r,\eta^2_{r}(w),z,{x}^{M}_{r-},\rho_{r-})\vert\vert D^Z_{({\bullet},{\circ},{\diamond})}{x}^{M}_{r-}\vert_{l_2}\mathcal{N}(dw,dz,dr))^2\\
&\leq& C\mathbb{E}\int_0^t\int_{[0,1]}[\vert \tau^{{\mathcal{P}}}(r)-r\vert^2+\vert \eta^1_{r}(w)-\eta^2_{r}(w)\vert^2\\
&+&\vert{x}^{{\mathcal{P}},M}_{\tau^{{\mathcal{P}}}(r)-}-{x}^{M}_{r-}\vert^2+(W_1(\rho^{\mathcal{P},M}_{\tau^{{\mathcal{P}}}(r)-},\rho_{r-}))^2]\vert D^Z_{({\bullet},{\circ},{\diamond})}{x}^{M}_{r-}\vert_{l_2}^2dwdr.
\end{eqnarray*} Then using (\ref{coupling}), (\ref{Coupling}), \textbf{Lemma 3.7},  and H$\ddot{o}$lder inequality with conjugates $1+\frac{\varepsilon_\ast}{2}$ and $\frac{2+\varepsilon_\ast}{\varepsilon_\ast}$,    we have 
\begin{eqnarray*}
H_{3,1}&\leq& C[\vert\mathcal{P}\vert^2+\int_0^t(\mathbb{E}\vert{x}^{\mathcal{P},M}_{\tau(r)-}-{x}_{r-}\vert^{2+\varepsilon_\ast})^{\frac{2}{2+\varepsilon_\ast}}dr\\
&+&\int_0^t(\mathbb{E}\vert{x}^{\mathcal{P},M}_{\tau(r)-}-{x}^{M}_{r-}\vert^{2+\varepsilon_\ast})^{\frac{2}{2+\varepsilon_\ast}}dr+\int_0^t(W_{2+\varepsilon_\ast}(\rho^{\mathcal{P},M}_{\tau^{{\mathcal{P}}}(r)-},\rho_{r-}))^2dr]\\
&\leq&C(\vert\mathcal{P}\vert+\varepsilon_M)^{\frac{2}{2+\varepsilon_\ast}},
\end{eqnarray*}where the last inequality is a consequence of \textbf{Lemma 2.6} and (\ref{Eul}).\\
Using \textbf{Hypothesis 2.1},   (\ref{BurkM}) with  \[\bar{\Phi}(r,w,z,\omega,\rho)=\vert\nabla_x{c}(\tau(r),\eta^1_{r}(w),z,{x}^{{\mathcal{P}},M}_{\tau^{{\mathcal{P}}}(r)-},\rho^{\mathcal{P},M}_{\tau^{{\mathcal{P}}}(r)-})\vert\vert D^Z_{({\bullet},{\circ},{\diamond})}{x}^{\mathcal{P},M}_{\tau(r)-}-D^Z_{({\bullet},{\circ},{\diamond})}{x}^{M}_{r-}\vert_{l_2},\]  we have 
\begin{eqnarray*}
H_{3,2}&\leq& \mathbb{E} (\int_0^t\int_{[0,1]\times\mathbb{R}^d}\vert\nabla_x{c}(\tau(r),\eta^1_{r}(w),z,{x}^{{\mathcal{P}},M}_{\tau^{{\mathcal{P}}}(r)-},\rho^{\mathcal{P},M}_{\tau^{{\mathcal{P}}}(r)-})\vert\vert D^Z_{({\bullet},{\circ},{\diamond})}{x}^{\mathcal{P},M}_{\tau(r)-}-D^Z_{({\bullet},{\circ},{\diamond})}{x}^{M}_{r-}\vert_{l_2}\mathcal{N}(dw,dz,dr))^2\\
&\leq& C\int_{0}^{t}\mathbb{E}\vert D^Z_{({\bullet},{\circ},{\diamond})}{x}^{\mathcal{P},M}_{\tau(r)-}-D^Z_{({\bullet},{\circ},{\diamond})}{x}^{M}_{r-}\vert_{l_2}^2dr.\end{eqnarray*}
Therefore,
\begin{eqnarray}
H_3&\leq& C[H_{3,1}+H_{3,2}]\leq C[(\vert\mathcal{P}\vert+\varepsilon_M)^{\frac{2}{2+\varepsilon_\ast}}+\int_{0}^{t}\mathbb{E}\vert D^Z_{({\bullet},{\circ},{\diamond})}{x}^{\mathcal{P},M}_{\tau(r)}-D^Z_{({\bullet},{\circ},{\diamond})}{x}^{M}_{r}\vert_{l_2}^2dr].\label{H3}
\end{eqnarray}

Combining (\ref{H1}), (\ref{H2}) and (\ref{H3}), 
\[\mathbb{E}\vert D^Z_{({\bullet},{\circ},{\diamond})}{x}^{\mathcal{P},M}_{t}-D^Z_{({\bullet},{\circ},{\diamond})}{x}^{M}_{t}\vert_{l_2}^2\leq C[(\vert\mathcal{P}\vert+\varepsilon_M)^{\frac{2}{2+\varepsilon_\ast}}+\int_{0}^{t}\mathbb{E}\vert D^Z_{({\bullet},{\circ},{\diamond})}{x}^{\mathcal{P},M}_{\tau(r)}-D^Z_{({\bullet},{\circ},{\diamond})}{x}^{M}_{r}\vert_{l_2}^2dr].\]

In a similar way, we notice by (\ref{EuD}), the isometry of the Poisson point measure $N$, and (\ref{BurkM}) with \[\bar{\Phi}(r,w,z,\omega,\rho)=\nabla_x{c}(\tau(r),\eta^1_{r}(w),z,{x}^{{\mathcal{P}},M}_{\tau^{{\mathcal{P}}}(r)-},\rho^{\mathcal{P},M}_{\tau^{{\mathcal{P}}}(r)-})D^Z_{(k_0,{i_0},{j})}{x}^{\mathcal{P},M}_{\tau(r)-}\] that \begin{eqnarray}\mathbb{E}\vert D^Z_{({\bullet},{\circ},{\diamond})}{x}^{\mathcal{P},M}_{\tau(t)}-D^Z_{({\bullet},{\circ},{\diamond})}{x}^{\mathcal{P},M}_{t}\vert_{l_2}^2\leq C\vert\mathcal{P}\vert,\label{EulD}\end{eqnarray} so \[\mathbb{E}\vert D^Z_{({\bullet},{\circ},{\diamond})}{x}^{\mathcal{P},M}_{t}-D^Z_{({\bullet},{\circ},{\diamond})}{x}^{M}_{t}\vert_{l_2}^2\leq C[(\vert\mathcal{P}\vert+\varepsilon_M)^{\frac{2}{2+\varepsilon_\ast}}+\int_{0}^{t}\mathbb{E}\vert D^Z_{({\bullet},{\circ},{\diamond})}{x}^{\mathcal{P},M}_{r}-D^Z_{({\bullet},{\circ},{\diamond})}{x}^{M}_{r}\vert_{l_2}^2dr].\]We conclude by Gronwall lemma that $\mathbb{E}\vert D^Z_{({\bullet},{\circ},{\diamond})}{x}^{\mathcal{P},M}_{t}-D^Z_{({\bullet},{\circ},{\diamond})}{x}^{M}_{t}\vert_{l_2}^2\leq C(\vert\mathcal{P}\vert+\varepsilon_M)^{\frac{2}{2+\varepsilon_\ast}}$. Finally, by a similar argument, $\mathbb{E}\vert D^{\Delta}_{({\diamond})}{x}^{\mathcal{P},M}_{t}-D^{\Delta}_{({\diamond})}{x}^{M}_{t}\vert_{\mathbb{R}^d}^2\leq C(\vert\mathcal{P}\vert+\varepsilon_M)^{\frac{2}{2+\varepsilon_\ast}}$, and we obtain what we need.

\bigskip\bigskip

\textbf{Proof of $ii)$:} We only need to prove that for any $M_1,M_2\in\mathbb{N}$ with $\varepsilon_{M_1\wedge M_2}\leq 1$ and $\vert\bar{c}(z)\vert^2\mathbbm{1}_{\{\vert z\vert>M_1\wedge M_2\}}\leq 1$, we have \begin{eqnarray}
\Vert Dx^{M_1}_t-Dx^{M_2}_t\Vert_{L^2(\Omega;l^2\times\mathbb{R}^d)}\leq (\varepsilon_{M_1}+\varepsilon_{M_2})^{\frac{1}{2+\varepsilon_\ast}}.\label{Cauchy}
\end{eqnarray} In fact, if $(Dx^M_t)_{M\in\mathbb{N}}$ is a Cauchy sequence in $L^2(\Omega;l^2\times\mathbb{R}^d)$, then it has a limit $Y$ in $L^2(\Omega;l^2\times\mathbb{R}^d)$. But when we apply \textbf{Lemma 3.3 (A)} with $F_M=X^M_t$ and $F=X_t$, we know that there exists a convex combination
$\sum\limits_{M^\prime=M}^{m_{M}}\gamma _{M^\prime}^{M}\times F^{M^\prime}_t,$
with $\gamma _{M^\prime}^{M}\geq 0,M^\prime=M,....,m_{M}$ and $%
\sum\limits_{M^\prime=M}^{m_{M}}\gamma _{M^\prime}^{M}=1$,
such that 
\[
\left\Vert \sum_{M^\prime=M}^{m_{M}}\gamma _{M^\prime}^{M}\times Dx^{M^\prime}_t-Dx_t\right\Vert _{L^2(\Omega;l^2\times\mathbb{R}^d)}\rightarrow 0,
\]as $M\rightarrow\infty$. Meanwhile, we have \[\left\Vert \sum_{M^\prime=M}^{m_{M}}\gamma _{M^\prime}^{M}\times DF^{M^\prime}_t-Y\right\Vert _{L^2(\Omega;l^2\times\mathbb{R}^d)}\leq  \sum_{M^\prime=M}^{m_{M}}\gamma _{M^\prime}^{M}\left\Vert Dx^{M^\prime}_t-Y\right\Vert _{L^2(\Omega;l^2\times\mathbb{R}^d)}\rightarrow0.\]So $Y=Dx_t$ and we conclude by passing to the limit $M_2\rightarrow\infty$ in (\ref{Cauchy}). 

Now we prove (\ref{Cauchy}). We recall the equation (\ref{DZxM}) and we write $\mathbb{E}\vert D^Z_{({\bullet},{\circ},{\diamond})}{x}^{M_1}_{t}-D^Z_{({\bullet},{\circ},{\diamond})}{x}^{M_2}_{t}\vert_{l_2}^2\leq C[O_1+O_2+O_3]$, with
\begin{eqnarray*}
O_1=\mathbb{E}\vert \int_{0}^{t}\nabla_xb(r,{x}^{M_1}_{r},\rho_{r})D^Z_{({\bullet},{\circ},{\diamond})}{x}^{M_1}_{r}dr-\int_{0}^{t}\nabla_xb(r,{x}^{M_2}_{r},\rho_{r})D^Z_{({\bullet},{\circ},{\diamond})}{x}^{M_2}_{r}dr \vert_{l_2}^2,
\end{eqnarray*}
\begin{eqnarray*}
O_2&=&\mathbb{E}\vert\mathbbm{1}_{\{0<T_{{\circ}}^{\bullet}\leq t\}}(\mathbbm{1}_{\{1\leq {\bullet}\leq M_1\}}\partial_{z_{\diamond}}{c}({T}_{\circ}^{\bullet},\eta^2_{T_{\circ}^{\bullet}}(W_{\circ}^{\bullet}),{Z}_{\circ}^{\bullet},{x}^{M_1}_{{T}_{\circ}^{\bullet}-},\rho_{{T}_{\circ}^{\bullet}-})\\
&-&\mathbbm{1}_{\{1\leq {\bullet}\leq M_2\}}\partial_{z_{\diamond}}{c}({T}_{\circ}^{\bullet},\eta^2_{T_{\circ}^{\bullet}}(W_{\circ}^{\bullet}),{Z}_{\circ}^{\bullet},{x}^{M_2}_{{T}_{\circ}^{\bullet}-},\rho_{{T}_{\circ}^{\bullet}-}))\vert_{l_2}^2,
\end{eqnarray*}
\begin{eqnarray*}
O_3&=&\mathbb{E}\vert\sum_{k=1}^{M_1}\sum_{i=1}^{J^k_t}\nabla_x{c}({T}_{i}^{k},\eta^2_{T_{i}^{k}}(W_{i}^k),{Z}_{i}^{k},{x}^{M_1}_{{T}_{i}^{k}-},\rho_{{T}_{i}^{k}-})D^Z_{({\bullet},{\circ},{\diamond})}{x}^{M_1}_{{T}_{i}^{k}-}\\
&-&\sum_{k=1}^{M_2}\sum_{i=1}^{J^k_t}\nabla_x{c}({T}_{i}^{k},\eta^2_{T_{i}^{k}}(W_{i}^k),{Z}_{i}^{k},{x}^{M_2}_{{T}_{i}^{k}-},\rho_{{T}_{i}^{k}-})D^Z_{({\bullet},{\circ},{\diamond})}{x}^{M_2}_{{T}_{i}^{k}-}\vert_{l_2}^2.
\end{eqnarray*}

Firstly, using \textbf{Hypothesis 2.1},   we have
\begin{eqnarray*}
O_1&\leq& C[\mathbb{E} \int_{0}^{t}\vert\nabla_xb(r,{x}^{M_1}_{r},\rho_{r})-\nabla_xb(r,{x}^{M_2}_{r},\rho_{r})\vert^2\vert D^Z_{({\bullet},{\circ},{\diamond})}{x}^{M_2}_{r}\vert_{l_2}^2dr\nonumber\\
&+&\mathbb{E} \int_{0}^{t}\vert\nabla_xb(r,{x}^{M_1}_{r},\rho_{r})\vert^2\vert D^Z_{({\bullet},{\circ},{\diamond})}{x}^{M_1}_{r}-D^Z_{({\bullet},{\circ},{\diamond})}{x}^{M_2}_{r}\vert_{l_2}^2dr]\nonumber\\
&\leq& C[\mathbb{E}\int_0^t
\vert{x}^{M_1}_{r}-{x}^{M_2}_{r}\vert^{2}\vert D^Z_{({\bullet},{\circ},{\diamond})}{x}^{M_2}_{r}\vert_{l_2}^2dr+ \int_{0}^{t}\mathbb{E}\vert D^Z_{({\bullet},{\circ},{\diamond})}{x}^{M_1}_{r}-D^Z_{({\bullet},{\circ},{\diamond})}{x}^{M_2}_{r}\vert_{l_2}^2dr].
\end{eqnarray*}Then by \textbf{Lemma 3.7}, H$\ddot{o}$lder inequality with conjugates $1+\frac{\varepsilon_\ast}{2}$ and $\frac{2+\varepsilon_\ast}{\varepsilon_\ast}$, and \textbf{Lemma 2.6},   we obtain
\begin{eqnarray}
O_1&\leq& C[
\int_0^t(\mathbb{E}\vert{x}^{M_1}_{r}-{x}^{M_2}_{r}\vert^{2+\varepsilon_\ast})^{\frac{2}{2+\varepsilon_\ast}}dr+ \int_{0}^{t}\mathbb{E}\vert D^Z_{({\bullet},{\circ},{\diamond})}{x}^{M_1}_{r}-D^Z_{({\bullet},{\circ},{\diamond})}{x}^{M_2}_{r}\vert_{l_2}^2dr]\nonumber\\
&\leq& C[(\varepsilon_{M_1}+\varepsilon_{M_2})^{\frac{2}{2+\varepsilon_\ast}}+\int_{0}^{t}\mathbb{E}\vert D^Z_{({\bullet},{\circ},{\diamond})}{x}^{M_1}_{r}-D^Z_{({\bullet},{\circ},{\diamond})}{x}^{M_2}_{r}\vert_{l_2}^2dr].\label{O1}
\end{eqnarray}

Secondly, using \textbf{Hypothesis 2.1}, the isometry of the Poisson point measure $\mathcal{N}$,   we have
\begin{eqnarray*}
O_2&\leq&C[ \mathbb{E}\sum_{k=1}^{M_1}\sum_{i=1}^{J^k_t}\sum_{j=1}^d\vert\partial_{z_{j}}{c}({T}_{i}^{k},\eta^2_{T_{i}^{k}}(W_{i}^k),{Z}_{i}^{k},{x}^{M_1}_{{T}_{i}^{k}-},\rho_{{T}_{i}^{k}-})-\partial_{z_{j}}{c}({T}_{i}^{k},\eta^2_{T_{i}^{k}}(W_{i}^k),{Z}_{i}^{k},{x}^{M_2}_{{T}_{i}^{k}-},\rho_{{T}_{i}^{k}-})\vert^2\nonumber\\
&+&\mathbb{E}\sum_{k=M_1\wedge M_2}^{M_1\vee M_2}\sum_{i=1}^{J^k_t}\sum_{j=1}^d\vert\partial_{z_{j}}{c}({T}_{i}^{k},\eta^2_{T_{i}^{k}}(W_{i}^k),{Z}_{i}^{k},{x}^{M_2}_{{T}_{i}^{k}-},\rho_{{T}_{i}^{k}-})\vert^2]\nonumber\\
&\leq& C[\mathbb{E}\sum_{k=1}^M\sum_{i=1}^{J^k_t}\vert\bar{c}(Z^k_i)\vert^2\vert{x}^{M_1}_{{T}_{i}^{k}-}-{x}^{M_2}_{{T}_{i}^{k}-}\vert^2+\mathbb{E}\sum_{k=M_1\wedge M_2}^{M_1\vee M_2}\sum_{i=1}^{J^k_t}\vert\bar{c}(Z^k_i)\vert^2]\nonumber\\
&\leq& C[\mathbb{E}\int_0^t\int_{[0,1]\times\mathbb{R}^d}\vert\bar{c}(z)\vert^2\vert{x}^{M_1}_{r-}-{x}^{M_2}_{r-}\vert^2\mathcal{N}(dw,dz,dr)+\mathbb{E}\int_0^t\int_{[0,1]}\int_{\{\vert z\vert>M_1\wedge M_2\}}\vert\bar{c}(z)\vert^2\mathcal{N}(dw,dz,dr)]\nonumber\\
&=& C[\mathbb{E}\int_0^t\int_{[0,1]\times\mathbb{R}^d}\vert\bar{c}(z)\vert^2\vert{x}^{M_1}_{r-}-{x}^{M_2}_{r-}\vert^2dw\mu(dz)dr+\mathbb{E}\int_0^t\int_{[0,1]}\int_{\{\vert z\vert>M_1\wedge M_2\}}\vert\bar{c}(z)\vert^2dw\mu(dz)dr].
\end{eqnarray*}
Then by  H$\ddot{o}$lder inequality with conjugates $1+\frac{\varepsilon_\ast}{2}$ and $\frac{2+\varepsilon_\ast}{\varepsilon_\ast}$, \textbf{Hypothesis 2.1} and \textbf{Lemma 2.6}, \begin{eqnarray}
O_2&\leq& C[\int_0^t(\mathbb{E}\vert{x}^{M_1}_{r}-{x}^{M_2}_{r}\vert^{2+\varepsilon_\ast})^{\frac{2}{2+\varepsilon_\ast}}dr+\varepsilon_{M_1\wedge M_2}]\nonumber\\
&\leq& C(\varepsilon_{M_1}+\varepsilon_{M_2})^{\frac{2}{2+\varepsilon_\ast}}.\label{O2}
\end{eqnarray}

Thirdly, we write $O_3\leq C[O_{3,1}+O_{3,2}+O_{3,3}]$, where
\[O_{3,1}=\mathbb{E}(\sum_{k=M_1\wedge M_2}^{M_1\vee M_2}\sum_{i=1}^{J^k_t}\vert\nabla_x{c}({T}_{i}^{k},\eta^2_{T_{i}^{k}}(W_{i}^k),{Z}_{i}^{k},{x}^{M_1\vee M_2}_{{T}_{i}^{k}-},\rho_{{T}_{i}^{k}-})\vert\vert D^Z_{({\bullet},{\circ},{\diamond})}{x}^{M_1\vee M_2}_{{T}_{i}^{k}-}\vert_{l_2})^2,\]
\begin{eqnarray*}
O_{3,2}&=& \mathbb{E} (\sum_{k=1}^{M_1\wedge M_2}\sum_{i=1}^{J_t^k}\vert\nabla_x{c}({T}_{i}^{k},\eta^2_{T_{i}^{k}}(W_{i}^k),{Z}_{i}^{k},{x}^{M_1}_{{T}_{i}^{k}-},\rho_{{T}_{i}^{k}-})\\
&-&\nabla_x{c}({T}_{i}^{k},\eta^2_{T_{i}^{k}}(W_{i}^k),{Z}_{i}^{k},{x}^{M_2}_{{T}_{i}^{k}-},\rho_{{T}_{i}^{k}-})\vert\vert D^Z_{({\bullet},{\circ},{\diamond})}{x}^{M_2}_{{T}_{i}^{k}-}\vert_{l_2})^2,
\end{eqnarray*}
\[O_{3,3}=\mathbb{E} (\sum_{k=1}^{M_1\wedge M_2}\sum_{i=1}^{J^k_t}\vert\nabla_x{c}({T}_{i}^{k},\eta^2_{T_{i}^{k}}(W_{i}^k),{Z}_{i}^{k},{x}^{M_1}_{{T}_{i}^{k}-},\rho_{{T}_{i}^{k}-})\vert\vert D^Z_{({\bullet},{\circ},{\diamond})}{x}^{M_1}_{{T}_{i}^{k}-}-D^Z_{({\bullet},{\circ},{\diamond})}{x}^{M_2}_{{T}_{i}^{k}-}\vert_{l_2})^2.\]
Using \textbf{Hypothesis 2.1}, \textbf{Lemma 3.7}, (\ref{Burk*}) with \[\bar{\Phi}(r,w,z,\omega,\rho)=\vert\nabla_x{c}(r,\eta^2_{r}(w),z,{x}^{M_1\vee M_2}_{r-},\rho_{r-})\vert\vert D^Z_{({\bullet},{\circ},{\diamond})}{x}^{M_1\vee M_2}_{r-}\vert_{l_2},\]   we get
\begin{eqnarray*}
O_{3,1}&\leq& \mathbb{E}(\int_0^t\int_{[0,1]}\int_{\{\vert z\vert>M_1\wedge M_2\}}\vert\nabla_x{c}(r,\eta^2_{r}(w),z,{x}^{M_1\vee M_2}_{r-},\rho_{r-})\vert\vert D^Z_{({\bullet},{\circ},{\diamond})}{x}^{M_1\vee M_2}_{r-}\vert_{l_2}\mathcal{N}(dw,dz,dr))^2\\
&\leq& C[(\int_{\{\vert z\vert>M_1\wedge M_2\}}\bar{c}(z)\mu(dz))^2+\int_{\{\vert z\vert>M_1\wedge M_2\}}\vert\bar{c}(z)\vert^2\mu(dz)]\\
&=& C\varepsilon_{M_1\wedge M_2}.
\end{eqnarray*}Using \textbf{Hypothesis 2.1}, \textbf{Lemma 3.7},  (\ref{BurkM}) with \[\bar{\Phi}(r,w,z,\omega,\rho)=\vert\nabla_x{c}(r,\eta^2_{r}(w),z,{x}^{M_1}_{r-},\rho_{r-})-\nabla_x{c}(r,\eta^2_{r}(w),z,{x}^{M_2}_{r-},\rho_{r-})\vert\vert D^Z_{({\bullet},{\circ},{\diamond})}{x}^{M_2}_{r-}\vert_{l_2},\]   by \textbf{Lemma 2.6},   and H$\ddot{o}$lder inequality with conjugates $1+\frac{\varepsilon_\ast}{2}$ and $\frac{2+\varepsilon_\ast}{\varepsilon_\ast}$,  we have 
\begin{eqnarray*}
O_{3,2}&\leq& \mathbb{E}(\int_0^t\int_{[0,1]\times\mathbb{R}^d}\vert\nabla_x{c}(r,\eta^2_{r}(w),z,{x}^{M_1}_{r-},\rho_{r-})-\nabla_x{c}(r,\eta^2_{r}(w),z,{x}^{M_2}_{r-},\rho_{r-})\vert\vert D^Z_{({\bullet},{\circ},{\diamond})}{x}^{M_2}_{r-}\vert_{l_2}\mathcal{N}(dw,dz,dr))^2\\
&\leq& C\mathbb{E}\int_0^t\int_{[0,1]}\vert{x}^{M_1}_{r-}-{x}^{M_2}_{r-}\vert^2\vert D^Z_{({\bullet},{\circ},{\diamond})}{x}^{M_2}_{r-}\vert_{l_2}^2dwdr\\
&\leq& C\int_0^t(\mathbb{E}\vert{x}^{M_1}_{r-}-{x}^{M_2}_{r-}\vert^{2+\varepsilon_\ast})^{\frac{2}{2+\varepsilon_\ast}}dr\\
&\leq&C(\varepsilon_{M_1}+\varepsilon_{M_2})^{\frac{2}{2+\varepsilon_\ast}}.
\end{eqnarray*}Using \textbf{Hypothesis 2.1},  (\ref{BurkM}) with  \[\bar{\Phi}(r,w,z,\omega,\rho)=\vert\nabla_x{c}(r,\eta^2_{r}(w),z,{x}^{M_1}_{r-},\rho_{r-})\vert\vert D^Z_{({\bullet},{\circ},{\diamond})}{x}^{M_1}_{r-}-D^Z_{({\bullet},{\circ},{\diamond})}{x}^{M_2}_{r-}\vert_{l_2},\]  we have 
\begin{eqnarray*}
O_{3,3}&\leq& \mathbb{E} (\int_0^t\int_{[0,1]\times\mathbb{R}^d}\vert\nabla_x{c}(r,\eta^2_{r}(w),z,{x}^{M_1}_{r-},\rho_{r-})\vert\vert D^Z_{({\bullet},{\circ},{\diamond})}{x}^{M_1}_{r-}-D^Z_{({\bullet},{\circ},{\diamond})}{x}^{M_2}_{r-}\vert_{l_2}\mathcal{N}(dw,dz,dr))^2\\
&\leq& C\int_{0}^{t}\mathbb{E}\vert D^Z_{({\bullet},{\circ},{\diamond})}{x}^{M_1}_{r-}-D^Z_{({\bullet},{\circ},{\diamond})}{x}^{M_2}_{r-}\vert_{l_2}^2dr.\end{eqnarray*}
Therefore,
\begin{eqnarray}
O_3&\leq& C[O_{3,1}+O_{3,2}+O_{3,3}]\leq C[(\varepsilon_{M_1}+\varepsilon_{M_2})^{\frac{2}{2+\varepsilon_\ast}}+\int_{0}^{t}\mathbb{E}\vert D^Z_{({\bullet},{\circ},{\diamond})}{x}^{M_1}_{r}dr-D^Z_{({\bullet},{\circ},{\diamond})}{x}^{M_2}_{r}\vert_{l_2}^2dr].\quad\label{O3}
\end{eqnarray}

Combining (\ref{O1}), (\ref{O2}) and (\ref{O3}), 
\[\mathbb{E}\vert D^Z_{({\bullet},{\circ},{\diamond})}{x}^{M_1}_{t}-D^Z_{({\bullet},{\circ},{\diamond})}{x}^{M_2}_{t}\vert_{l_2}^2\leq C[(\varepsilon_{M_1}+\varepsilon_{M_2})^{\frac{2}{2+\varepsilon_\ast}}+\int_{0}^{t}\mathbb{E}\vert D^Z_{({\bullet},{\circ},{\diamond})}{x}^{M_1}_{r}-D^Z_{({\bullet},{\circ},{\diamond})}{x}^{M_2}_{r}\vert_{l_2}^2dr].\]
So we conclude by Gronwall lemma that $\mathbb{E}\vert D^Z_{({\bullet},{\circ},{\diamond})}{x}^{M_1}_{t}-D^Z_{({\bullet},{\circ},{\diamond})}{x}^{M_2}_{t}\vert_{l_2}^2\leq C(\varepsilon_{M_1}+\varepsilon_{M_2})^{\frac{2}{2+\varepsilon_\ast}}$.

Finally, we recall by (\ref{aMt}) that $a^M_{T}=\sqrt{T\int_{\{\vert z\vert>M\}}\underline{c}(z)  \mu(dz)}$ and by \textbf{Hypothesis 2.3} that $\underline{c}(z)\leq \vert\bar{c}(z)\vert^2$. We notice that \[\mathbb{E}\vert a^{M_1}_T\bm{e_{\diamond}}-a^{M_2}_T\bm{e_{\diamond}}\vert_{\mathbb{R}^d}^2\leq C\mathbb{E}\vert a^{M_1}_T-a^{M_2}_T\vert^2\leq C\int_{\{\vert z\vert>{M_1\wedge M_2}\}}\underline{c}(z)  \mu(dz)\leq \varepsilon_{M_1\wedge M_2}.\] Then by a similar argument as above, $\mathbb{E}\vert D^{\Delta}_{({\diamond})}{x}^{M_1}_{t}-D^{\Delta}_{({\diamond})}{x}^{M_2}_{t}\vert_{\mathbb{R}^d}^2\leq C(\varepsilon_{M_1}+\varepsilon_{M_2})^{\frac{2}{2+\varepsilon_\ast}}$, and we obtain (\ref{Cauchy}).

\bigskip\bigskip

\textbf{Proof of $iii)$:} $iii)$ is an immediate consequence of $i)$ and $ii)$.

\end{proof}

\bigskip

\textbf{Acknowledgment} The author thanks Prof. Vlad Bally for his kind and patient guidance and many useful suggestions.

\bigskip\bigskip


\begin{thebibliography}{99}  
\bibitem[1]{ref33}S. Albeverio, B. R$\ddot{u}$diger, and P. Sundar: The Enskog process. \textit{J. Stat. Phys.} \textbf{167}(1):90–122 (2017).
\bibitem[2]{ref27}R. Alexandre: A review of Boltzmann equation with singular kernels. \textit{Kinet. Relat. Models}, \textbf{2}(4), 551-646 (2009).
\bibitem[3]{ref1}A. Alfonsi, V. Bally
: Construction of Boltzmann and McKean Vlasov type flows (the sewing lemma approach). arXiv:2105.12677 [math.PR](2021).
\bibitem[4]{ref50}A. Alfonsi, B. Jourdain, A. Kohatsu-Higa: Optimal transport bounds between the time-marginals of a multidimensional diffusion and its Euler scheme. \textit{Electronic Journal of Probability} \textbf{20}, 1-31 06 (2015).
\bibitem[5]{ref44}F. Antonelli, A. Kohatsu-Higa: Rate of convergence of a particle method to the solution of the McKean-Vlasov equation. \textit{The Annals of Applied Probability}, \textbf{12}(2): 423-476 (2002).
\bibitem[6]{ref51}D. Applebaum: \textit{L\'{e}vy Processes and Stochastic Calculus} (2nd ed., Cambridge Studies in Advanced Mathematics). Cambridge: Cambridge University Press. (2009) doi:10.1017/CBO9780511809781.
\bibitem[7]{ref2}V. Bally, L. Caramellino, G. Poly: Regularization lemmas and convergence in total variation. \textit{Electron. J. Probab.} \textbf{25} 1-20. (2020). 
\bibitem[8]{ref3}V. Bally, E. Cl\'{e}ment: Integration by parts formula and applications to equations with jumps. \textit{Probab. Th. Rel. Fields}, \textbf{151}, 613-657 (2011).
\bibitem[9]{ref23}V. Bally, N. Fournier: Regularization properties of the 2D homogeneous Boltzmann equation without cutoff. \textit{Probab. Theory Related Fields}, \textbf{151}(3-4), 659-704 (2011).
\bibitem[10]{ref4}V. Bally, Y. Qin: Total variation distance between a jump-equation and its Gaussian approximation. \textit{Stoch PDE: Anal Comp}. (2022).
\bibitem[11]{ref52}V. Bally, D. Talay: The Law of the Euler scheme for stochastic differential equations: I. convergence rate of the distribution function. [Research Report] RR-2244, INRIA. (1994). ⟨inria-00074427⟩
\bibitem[12]{ref43}J. Bao, C. Reisinger, P. Ren, et al: First-order convergence of Milstein schemes for McKean–Vlasov equations and interacting particle systems. \textit{Proceedings of the Royal Society A}, \textbf{477}(2245): 20200258 (2021).
\bibitem[13]{ref5}K. Bichteler, J. B. Gravereaux, J. Jacod: \textit{Malliavin calculus for processes with jumps}. Gordon and Breach, (1987).
\bibitem[14]{ref24}R. Carmona, F. Delarue:  \textit{Probability theory of mean field games with applications}. Springer \textit{Probability Theory and Stochastic Modelling} Vol.\textbf{83}  (2018).
\bibitem[15]{ref25}C. Cercignani: \textit{The Boltzmann equation and its applications}. Springer-Verlag \textit{Applied Mathematical Sciences}, Vol.\textbf{67} (1988).
\bibitem[16]{ref6}R. Cont, P. Tankov: \textit{Finacial modelling with jump processes}. Chapman \& Hall/CRC (2004).
\bibitem[17]{ref26}L. Desvillettes, C. Graham, S. M\'{e}l\'{e}ard: Probabilistic interpretation and numerical approximation of a Kac equation without cutoff. \textit{Stochastic Process. Appl.}, \textbf{84}(1), 115-135 (1999).
\bibitem[18]{ref29}N. Fournier, A. Guillin: From a Kac-like particle system to the Landau equation for hard potentials and Maxwell molecules. \textit{Ann. Sci. \'{E}c. Norm. Sup\'{e}r.} (4), \textbf{50}(1), 157-199 (2017).
\bibitem[19]{ref30}N. Fournier, S. Mischler: Rate of convergence of the Nanbu particle system for hard potentials and Maxwell molecules. \textit{Ann. Proba.}, \textbf{44}(1), 589-627 (2016).
\bibitem[20]{ref34}M. Friesen, B. R$\ddot{u}$diger, and P. Sundar: The Enskog process for hard and soft potentials. \textit{NoDEA Nonlinear
Differential Equations Appl.} \textbf{26}(3):Paper No. 20, 42 (2019).
\bibitem[21]{ref35}M. Friesen, B. R$\ddot{u}$diger, and P. Sundar: On uniqueness and stability for the Enskog equation (2020).
\bibitem[22]{ref38}E. Gobet, G. Pag$\grave{e}$s, H. Pham and J. Printems: Discretization and simulation for a class of SPDEs with applications to Zakai and McKean-Vlasov equations. (2005). ⟨hal-00003917v1⟩
\bibitem[23]{ref7}C. Graham: Mckean-Vlasov Ito-Skorohod equations, and nonlinear diffusions with discrete jump sets. \textit{Stochastic Processes and their Applications} \textbf{40} 69-82 (1992).
\bibitem[24]{ref42}A. L. Haji-Ali, R. Tempone: Multilevel and Multi-index Monte Carlo methods for the McKean–Vlasov equation. \textit{Statistics and Computing}, \textbf{28}(4): 923-935 (2018).
\bibitem[25]{ref8}N. Ikeda, S. Watanabe: \textit{Stochastic differential equations and diffusion processes}. 2nd ed. Amsterdam, Netherlands, North Holland, (1989).
\bibitem[26]{ref9}Y. Ishikawa: \textit{Stochastic Calculus of Variations for Jump Processes}, Berlin, Boston: De Gruyter. (2013).
\bibitem[27]{ref48}B. Jourdain and A. Kohatsu-Higa: A review of recent results on approximation of solutions of stochastic differential equations. Proceedings of the Workshop on Stochastic Analysis with Financial Applications: Hong Kong (2009). Birkhauser (2011).
\bibitem[28]{ref45}A. Kohatsu-Higa: The Euler approximation for stochastic differential equations with boundary conditions. Proceedings of the Workshop on Turbulent Diffusion and Related Problems in Stochastic Numerics. The Institute of Statistical Mathematics, Tokyo (1996).
\bibitem[29]{ref47}A. Kohatsu-Higa, S. Ogawa: Monte Carlo methods weak rate of convergence for an Euler scheme of nonlinear SDE's. \textit{Monte Carlo Methods and Its Applications}, vol \textbf{3}, 327-345 (1997).
\bibitem[30]{ref46}A. Kohatsu-Higa, P. Protter: The Euler scheme for SDE's driven by semimartingales. In Stochastic analysis on infinite dimensional spaces. H. Kunita and H.Kuo (Eds.), 141-151, Pitman Research Notes in Mathematics Series ,vol. \textbf{310} (1994).
\bibitem[31]{ref49}A. Kohatsu-Higa and P. Tankov: Jump-adapted discretization schemes for L\'{e}vy-driven SDEs. \textit{Stochastic Processes and their Applications}, Vol. \textbf{120}, 2258-2285 (2010).
\bibitem[32]{ref10}A. M. Kulik: Malliavin calculus for L\'{e}vy processes with arbitrary L\'{e}vy measures. \textit{Theor. Probability ad Math. Statist.} No.\textbf{72}, 75-92 (2006).
\bibitem[33]{ref11}A. M. Kulik: Stochastic calculus of variations for general L\'{e}vy processes and its applications to jump-type SDE's with non-degenerated drift. arXiv:math/0606427 (2007).
\bibitem[34]{ref12}H. Kunita: Stochastic differential equations based on Lévy processes and stochastic flows of diffeomorphisms. In: Rao, MM,ed. \textit{Real and stochastic
analysis}. Boston, USA, Birkha\"{a}user, 305-373 (2004).
\bibitem[35]{ref13}H. Kunita: \textit{Stochastic flows and jump-diffusions}. Springer,(2019).
\bibitem[36]{ref14}B. Lapeyre, $\acute{E}$. Pardoux and R. Sentis: \textit{M$\acute{e}$thodes de Monte-Carlo pour les $\acute{e}$quations de transport et de diffusion}. Springer-Verlag, Berlin, (1998).
\bibitem[37]{ref19}E. Mariucci, M. Reiß: Wasserstein and total variation distance between marginals of L\'{e}vy processes. \textit{Electronic Journal of Statistics} \textbf{12}, 2482-2514 (2018).
\bibitem[38]{ref31}S. M\'{e}l\'{e}ard: Asymptotic behaviour of some interacting particle systems; McKean-Vlasov and Boltzmann models.
\textit{Probabilistic models for nonlinear partial differential equations} volume \textbf{1627} of
Lecture Notes in Math., pages 42–95. Springer, Berlin (1996).
\bibitem[39]{ref41}M. A. Mezerdi: On the convergence of carath\'{e}odory numerical scheme for Mckean-Vlasov equations. \textit{Stochastic Analysis and Applications}, \textbf{39}(5): 804-818 (2021).
\bibitem[40]{ref39}R. Naito, T. Yamada: A higher order weak approximation of McKean–Vlasov type SDEs. \textit{BIT Computer Science and Numerical Mathematics} United States (2022).
\bibitem[41]{ref15}D. Nualart: \textit{The Malliavin calculus and related topics}. Springer-Verlag, (2006).
\bibitem[42]{ref16}P. Protter and D. Talay: The Euler scheme for L\'{e}vy driven
stochastic differential equations. \textit{Ann. Probab.} Vol \textbf{25}, No.1, pg
393-423 (1997).
\bibitem[43]{ref36}K. Sato. L\'{e}vy processes and infinitely divisible distributions. \textit{Cambridge University press}, Cambridge (1999).
\bibitem[44]{ref37}K. Sato. Basic results on L\'{e}vy processes. In O.E. Barndorff-Nielsen, T. Mikosch, and S.I. Resnick,
editors, \textit{L\'{e}vy processes. Theory and applications}, pages 3–37. Birkh$\ddot{a}$user (2001).
\bibitem[45]{ref17}Y. Song and X. Zhang: Regularity of density for SDEs driven by degenerate L\'{e}vy noises. arXiv:1401.4624 (2014).
\bibitem[46]{ref32}A.-S. Sznitman: \'{E}quations de type de Boltzmann, spatialement homog$\grave{e}$nes. \textit{Z. Wahrsch. Verw. Gebiete},
\textbf{66}(4):559–592 (1984).
\bibitem[47]{ref40}L. Szpruch, S. Tan, A. Tse:
Iterative multilevel particle approximation for
Mckean–Vlasov SDEs. \textit{The Annals of Applied Probability}
Vol. \textbf{29}, No. 4, 2230–2265 (2019).
\bibitem[48]{ref20}H. Tanaka: Probabilistic treatment of the Boltzmann equation of Maxwellian molecules. \textit{Z. Wahrsch. Verw. Gebiete.}  \textbf{46}(1), 67-105 (1978).
\bibitem[49]{ref21}H. Tanaka: Stochastic differential equation corresponding to the spatially homogeneous Boltzmann equation of Maxwellian and noncutoff type. \textit{J. Fac. Sci. Univ. Tokyo Sect. IA Math.}  \textbf{34}(2), 351-369 (1987).
\bibitem[50]{ref28}C. Villani: On a new class of weak solutions to the spatially homogeneous Boltzmann and Landau equations. \textit{Arch. Rational Mech. Anal.}, \textbf{143}(3), 273-307 (1998).
\bibitem[51]{ref22}C. Villani:  \textit{Optimal Transport} Springer-Verlag, (2009).
\bibitem[52]{ref18}X. Zhang: Densities for SDEs driven by degenerate $\alpha$-stable processes. \textit{Ann. Probab.} Vol \textbf{42}, No.5, 1885-1910 (2014).
\end{thebibliography}
\end{document}